\documentclass[11pt]{article}
\usepackage{xcolor}
\usepackage[centering,marginparwidth=0.5cm]{geometry}  

\usepackage[most]{tcolorbox}
            
\usepackage{setspace}
\usepackage[title]{appendix}
\usepackage[round]{natbib}
\usepackage{subcaption}
\usepackage{amssymb}
\usepackage{amsmath}
\usepackage{bbm}
\usepackage{mathtools}
\usepackage{multicol}
\usepackage{graphicx}
\usepackage{array}
\usepackage{bm}
\usepackage{soul}
\usepackage{dirtytalk}
\usepackage{makecell}
\usepackage{caption}
\usepackage[noend]{algpseudocode}
\usepackage{bm}

\usepackage{accents}

\def\LPGA{{\texttt{LPGA}}}



\usepackage{algorithm,algpseudocode, caption}

\makeatletter
\newcommand\fs@boxedtopcap{\def\@fs@cfont{\bfseries}\let\@fs@capt\floatc@plain
	\def\@fs@pre{\setbox\@currbox\vbox{\hbadness10000
			\moveleft3.4pt\vbox{\advance\hsize by6.8pt
				\hrule \hbox to\hsize{\vrule\kern3pt
					\vbox{\kern3pt\box\@currbox\kern3pt}\kern3pt\vrule}\hrule}}}%
	\def\@fs@mid{\kern2pt}%
	\def\@fs@post{}\let\@fs@iftopcapt\the iftrue}
\makeatother

\floatstyle{boxedtopcap}
\restylefloat{algorithm}

\usepackage{threeparttable}

\usepackage{enumitem}
\setlist[enumerate]{noitemsep, topsep=1pt}
\setlist[itemize]{noitemsep, topsep=1pt}

\usepackage{extarrows}
\usepackage{diagbox}

\usepackage{anysize}
\usepackage{latexsym}
\usepackage{amsthm}
\usepackage{epsfig}

\usepackage[T1]{fontenc}
\usepackage{mathrsfs}
\usepackage{float}
\usepackage{setspace}
\usepackage{color}
\definecolor{lawngreen}{RGB}{0,250,154}

\usepackage{titletoc}

\usepackage{color}
\usepackage{dsfont}
\usepackage{booktabs}
\definecolor{darkblue}{rgb}{0.0,0.0,0.5}
\definecolor{winered}{rgb}{0.5,0,0}
\definecolor{deeppink}{RGB}{255,20,147}
\usepackage[bookmarks,colorlinks,breaklinks]{hyperref}  %
\hypersetup{linkcolor=winered,citecolor=winered,filecolor=winered,urlcolor=winered} %
 
\allowdisplaybreaks[1]

\graphicspath{{./Figures/}}

\usepackage{etoolbox}
\newcommand{\zerodisplayskips}{%
  \setlength{\abovedisplayskip}{5pt}%
  \setlength{\belowdisplayskip}{5pt}%
  \setlength{\abovedisplayshortskip}{5pt}%
  \setlength{\belowdisplayshortskip}{5pt}}
\appto{\normalsize}{\zerodisplayskips}
\appto{\small}{\zerodisplayskips}
\appto{\footnotesize}{\zerodisplayskips}

\usepackage{titlesec}

\titlespacing*{\subsection}
{0pt}        %
{5pt}        %
{1pt}        %

\titlespacing*{\section}
{0pt}        %
{8pt}       %
{3pt}        %

\titlespacing*{\subsubsection}
{0pt}        %
{3pt}        %
{1pt}        %

\titlespacing*{\paragraph}
{0pt}        %
{3pt}        %
{1em}        %

\usepackage{lipsum}
\makeatletter
\patchcmd{\@addmarginpar}{\ifodd\c@page}{\ifodd\c@page\@tempcnta\m@ne}{}{}
\makeatother

\usepackage{thmtools}

\usepackage{authblk}

\theoremstyle{definition}
\newtheorem{theorem}{Theorem}
\newtheorem{definition}{Definition}
\newtheorem{lemma}{Lemma}
\newtheorem{claim}{Claim}

\newtheorem{corollary}{Corollary}

\newtheorem*{informalqptas}{Theorem~\ref{thm:qptas} (Informal)}
\newtheorem*{informallp}{Theorem~\ref{thm:LPGA_approx_ratio} (Informal)}

\newtheorem{proposition}{Proposition}

\def\dd{\downdownarrows}
\def\truncate{\textrm{tr}}

\newcommand{\eps}{\epsilon}

\def\Pbb{\mathbb{P}}

\def\Ebb{\mathbb{E}}

\def\Dcal{\mathcal{D}}

\def\Acal{\mathcal{A}}

\def\Scal{\mathcal{S}}

\def\Rcal{\mathcal{R}}

\def\STT{\texttt{S}}

\def\Fcal{\mathcal{F}}

\newcommand{\Pp}{\mathbb P}
\newcommand{\E}{\mathbb E}
\newcommand{\OPT}{\mathrm{OPT}}
\newcommand{\ALG}{\mathrm{ALG}}
\newcommand{\LP}{\mathrm{LP}}
\newcommand{\Zp}{\mathbb Z_+}

\def\phcomments{1}
\newcounter{note}[section]
\renewcommand{\thenote}{\thesection.\arabic{note}}
\newcommand{\noteR}[2]{\refstepcounter{note}\marginpar{\tiny\bf \textcolor{red}{#1~\thenote}}
$\ll${\sf \textcolor{red}{#1~\thenote:}} 
{\color{red}{#2}}$\gg$}

\newcommand{\noteP}[2]{\refstepcounter{note}\marginpar{\tiny\bf \textcolor{deeppink}{#1~\thenote}}
$\ll${\sf \textcolor{deeppink}{#1~\thenote:}} 
{\color{deeppink}{#2}}$\gg$}

\newcommand{\noteO}[2]{\refstepcounter{note}\marginpar{\tiny\bf \textcolor{orange}{#1~\thenote}}
$\ll${\sf \textcolor{orange}{#1~\thenote:}} 
{\color{orange}{#2}}$\gg$}

\newcommand{\noteS}[2]{\refstepcounter{note}\marginpar{\tiny\bf \textcolor{blue}{#1~\thenote}}
$\ll${\sf \textcolor{orange}{#1~\thenote:}} 
{\color{blue}{#2}}$\gg$}

\ifnum\phcomments=1
\newcommand{\chris}[1]{\noteP{Chris}{#1}}
\newcommand{\zhenyu}[1]{\noteR{Zhenyu}{#1}}
\newcommand{\joline}[1]{\noteO{JU}{#1}}
\newcommand{\hs}[1]{\noteS{HS}{#1}}
\else
\newcommand{\chris}[1]{}
\newcommand{\zhenyu}[1]{}
\newcommand{\joline}[1]{}
\newcommand{\hs}[1]{}
\fi

\usepackage[nameinlink]{cleveref}
\crefname{assumption}{Assumption}{Assumptions}
\crefname{lemma}{Lemma}{Lemmas}
\crefname{theorem}{Theorem}{Theorems}
\crefname{corollary}{Corollary}{Corollaries}
\crefname{proposition}{Proposition}{Propositions}
\crefname{claim}{Claim}{Claims}
\crefname{procedure}{Procedure}{Procedures}
\crefname{algorithm}{Algorithm}{Algorithms}
\crefname{figure}{Figure}{Figures}
\crefname{remark}{Remark}{Remarks}
\crefname{section}{Section}{Sections}
\crefname{appendix}{Appendix}{Appendices}
\crefname{procedure}{Procedure}{Procedures}
\crefname{example}{Example}{Examples}
\crefname{table}{Table}{Tables}
\crefname{equation}{}{}
\crefname{enumi}{}{}
\crefname{notation}{Notation}{Notations}
\AtBeginEnvironment{appendices}{\crefalias{section}{appendix}}
\AtBeginEnvironment{appendices}{\crefalias{subsection}{appendix}}



\begin{document}

\title{\Large Dynamic Service Recommendation with Congestion-Dependent Joining: Near-Optimal and Constant-Factor Approximation Algorithms}

\author{Yi-Chun Akchen$^\diamond$, Sena Asl{\i} Bozkurt$^\diamond$, Chen-An Lin$^\star$}
\affil{\footnotesize $^\diamond$School of Management, University College London.\\$^\star$Mitch Daniels School of Business, Purdue University.\\
{\footnotesize \texttt{yi-chun.akchen@ucl.ac.uk}, \texttt{sena.bozkurt.25@ucl.ac.uk}, \texttt{lin1800@purdue.edu}}
}
\date{}
\maketitle

\vspace{-2.0cm}

\begin{abstract}
Modern service platforms often provide customers with real-time congestion information, such as anticipated waiting times, before they decide whether to use a service. This creates an intertemporal tradeoff in service recommendation: directing a customer to a service may generate immediate value, but the resulting congestion can make that service less attractive to future customers. We study this tradeoff through a finite-horizon stochastic optimization problem in which a platform dynamically recommends among multiple services, each represented by a queue, with customers' joining probabilities decreasing with congestion. 

We first formulate the problem as a Markov decision process and show that optimal decisions generally depend on the joint congestion state, giving rise to a prohibitively large state space. We develop two complementary approximation algorithms that overcome this curse of dimensionality in different ways. First, we provide a quasi-polynomial-time approximation scheme that uses truncation and rounding of the model primitives to compress the joint congestion state, while carefully accounting for how these approximations affect the stochastic evolution of the system to establish near-optimality. Second, we develop a polynomial-time LP-guided algorithm that replaces the joint-state representation with service-level marginal information and achieves a \((1-1/e)\)-approximation guarantee under substantially more general service and joining dynamics. Finally, we show that monotone joining behavior---whereby customers become less likely to join as a service becomes more congested---marks a fundamental tractability boundary: without monotonicity, the problem is NP-hard to approximate within any constant factor. This establishes monotonicity as a key behavioral property that enables tractable dynamic service recommendation despite the complexity introduced by congestion-dependent joining.
\end{abstract}

\section{Introduction}
\label{sec:intro}

Waiting is an inherent part of many service experiences. Increasingly, modern service platforms provide real-time congestion information, such as anticipated waiting times, to customers \emph{before} they decide whether to use a service. Food-delivery platforms display estimated delivery times for restaurants \citep{xie2024strategic}; healthcare systems provide information about waiting times for hospitals and treatment providers \citep{dong2019impact}; and ride-hailing platforms display estimated pickup times to passengers before they request a ride \citep{yu2022delay}. As customers arrive over time and congestion evolves, such information can directly affect customers' joining behavior: a service that is attractive when immediately available may become substantially less appealing when the anticipated waiting time is long \citep{guo2007analysis,akcsin2017impact}.

Such congestion-dependent joining behavior is an important consideration for service platforms when deciding which services to present, recommend, or promote. For example, Uber Eats describes incorporating operational congestion into its restaurant-recommendation decisions, including ``down-ranking restaurants with longer pick-up times,'' which can also lead to longer delivery times \citep{liu2018food}. Importantly, customers ultimately retain the decision of whether to join a recommended service after observing its anticipated waiting time \citep{economou2021impact,snitkovsky2026foresee}. A customer's decision to join, in turn, affects the congestion experienced by subsequent customers, creating a \emph{congestion externality}. Thus, recommendation decisions and customer joining behavior jointly shape how congestion evolves over time.

This dynamic interaction between operational decisions, such as which service to recommend, and congestion distinguishes service recommendation from traditional product recommendation, inventory planning, and resource allocation. In these settings, product features that determine customer preferences, such as quality and brand, are typically treated as \emph{exogenous}: decisions made for one customer do not change how subsequent customers value the same product. In service recommendation, by contrast, waiting time is an \emph{endogenous} service attribute: recommending a service can increase its congestion and make it less attractive to future customers, while withholding recommendations allows congestion to dissipate and can restore its attractiveness. This interaction raises a natural dynamic service recommendation problem: \emph{how should a platform dynamically recommend services when customers' willingness to join depends on the congestion generated by past recommendations?}

In this paper, we introduce a stylized model of dynamic service recommendation that isolates this interaction. A platform operates multiple services whose congestion levels evolve stochastically over time. Customers arrive randomly, and the platform may recommend a service to each arriving customer. Upon receiving a recommendation, the customer decides whether to join the recommended service, with a joining probability that decreases with the service's current congestion. Customers who join increase congestion, while service completions alleviate it over time. The platform dynamically observes congestion across services and decides which service to recommend based on each service's underlying attractiveness, revenue potential, and evolving congestion. Its objective is to maximize the expected revenue generated by customers who ultimately complete service within the planning horizon.

Unlike classical routing and load-balancing problems, in which the decision maker typically controls where an arriving customer is assigned \citep{winston1977optimality,ahn2013flexible}, the platform here controls only the \emph{recommendation}. Congestion consequently affects the system through two channels: it reflects the number of existing customers awaiting service while simultaneously influencing new customers' willingness to join. Platform recommendations, customer joining behavior, and queueing dynamics are therefore intrinsically intertwined.

In the remainder of this introduction, Section~\ref{subsec:model} describes the model in greater detail. Section~\ref{subsec:contributions} then provides a high-level overview of our main results and highlights the key technical ideas underlying our approaches. Finally, Section~\ref{subsec:literature} discusses the most closely related streams of literature and positions our contributions within them.

\subsection{Model Description}
\label{subsec:model}

We formulate dynamic service recommendation as a finite-horizon, discrete-time multistage stochastic optimization problem. A platform operates $n$ parallel services over $T$ periods, during which customers arrive stochastically. In each period with a customer arrival, the platform may recommend a service to the arriving customer or make no recommendation. The platform observes the evolving congestion of each service and dynamically adapts its recommendations to maximize its expected net revenue over the horizon. We describe the setting in detail as follows.

\paragraph{Platform and services.}
We consider a platform that operates \(n\) parallel services, indexed by \(j\in[n]\), over a finite horizon of \(T\) periods, indexed by \(t\in[T]\). Let \(Q_j(t)\in\mathbb Z_+\) denote the queue length of service \(j\) at the beginning of period \(t\). We assume that all services are initially empty, i.e., \(Q_j(1)=0\) for each \(j\in[n]\). At each service, only the customer at the head is being served, while all other customers wait for their turn. Customers are served according to a first-in-first-out (FIFO) discipline. Consequently, a customer who joins service $j$ with $q$ customers ahead must wait for these $q$ customers to complete service before being served.

Each service can serve at most one customer per period, with a service completion occurring with probability \(\xi_j\) whenever the service is nonempty. Specifically, for each service~$j$ and period~$t$, let
\begin{equation}
	\label{eq:service_bernoulli}
	B_{j,t}\sim\operatorname{Bernoulli}(\xi_j),
	\qquad \xi_j\in(0,1],
\end{equation}
indicate whether a service-completion opportunity occurs. We assume that \(\{B_{j,t}\}_{j \in [n],t \in [T]}\) are mutually independent. If service $j$ has a nonempty queue, service for the customer at the head of the queue is completed in period $t$ whenever $B_{j,t}=1$. Consequently, once a customer reaches the head of the queue at service \(j\), their remaining service time is geometrically distributed with mean \(1/\xi_j\). Thus, $1/\xi_j$ represents the expected service time once a customer reaches the head of the queue.

\paragraph{Customer arrivals, recommendations, and joining.}
We divide the planning horizon into sufficiently short periods so that at most one customer arrives between two consecutive decision epochs. We allow for non-stationary customer arrivals: at the beginning of each period $t\in[T]$, the platform observes whether such a customer has arrived, which occurs with probability $\lambda_t\in(0,1]$, independently across periods and independently of the service-completion opportunities. If a customer arrives, the platform may recommend one service; it may also choose not to make a recommendation. %

Suppose that service \(j\) is recommended when its queue length is \(q\). Motivated by the observation that customers become less willing to join as their anticipated waiting time increases \citep{guo2007analysis,akcsin2017impact}, we model the customer's joining probability using the following exponential specification \citep{whitt1999improving,inoue2023estimating}
\begin{equation}
	a_j(q)
	:=
	p_j\gamma^{-q/\xi_j},
	\qquad p_j\in[0,1],\quad \gamma>1.
	\label{eq:joining-probability}
\end{equation}
Here, \(p_j\) represents the baseline attractiveness of service \(j\) in the absence of congestion and may capture factors such as service quality, ratings, price, or brand. The factor \(\gamma^{-q/\xi_j}\) captures the reduction in willingness to join due to congestion. Since each incumbent customer requires an expected service time of \(1/\xi_j\), the quantity \(q/\xi_j\) is the expected time until a newly joining customer reaches the head of the queue. Thus, \(\gamma^{-q/\xi_j}\) can be interpreted as an exponential discount for expected waiting time, with \(1/\gamma\in(0,1)\) representing the per-unit-time discount factor on the customer's willingness to join.

One could alternatively model customers as discounting according to their expected time until \emph{completion} of service rather than their expected time until reaching the head of the queue. This would give
$
a_j(q) = p_j\gamma^{-(q+1)/\xi_j}
=
\left(p_j\gamma^{-1/\xi_j}\right)\gamma^{-q/\xi_j}.
$
Hence, this alternative interpretation has exactly the same functional form as~\eqref{eq:joining-probability}, after absorbing the additional factor \(\gamma^{-1/\xi_j}\) into the baseline attractiveness parameter. Our formulation thus accommodates either interpretation.

An important feature of~\eqref{eq:joining-probability} is that joining becomes less likely as congestion increases:
\begin{equation}
	a_j(q+1)\le a_j(q),
	\qquad q\in\mathbb Z_+.
	\label{eq:monotone-a}
\end{equation}

\paragraph{Within-period dynamics.}
Events within each period occur in the following order: a customer (if any) first arrives, the platform makes a recommendation, the customer decides whether to join, and service completions may then occur. A customer who does not join leaves the system immediately. Let \(A_{j,t}\in\{0,1\}\) indicate whether a customer joins service $j$ in time period $t$. For notational convenience, define $Q_j^+(t):=Q_j(t)+A_{j,t}
$ as the post-arrival queue length. By the end of the period, the queue then evolves according to
\begin{equation}
	Q_j(t+1)
	=
	\bigl(Q_j^+(t)-B_{j,t}\bigr)^+
	=
	\bigl(Q_j(t)+ A_{j,t} -B_{j,t}\bigr)^+.
	\label{eq:true-dynamics}
\end{equation}
Thus, a customer who joins an empty queue may be served immediately in the same period.

The platform observes the queue lengths and all past arrival and joining outcomes when making its recommendation decisions. It need not directly observe the latent service opportunity \(B_{j,t}\). When \(Q_j^+(t)>0\), its realization is revealed by the subsequent queue transition; when \(Q_j^+(t)=0\), its realization has no effect on the system state.

\paragraph{Policies and objective.}
The platform maximizes its net revenue, defined as the revenues collected from customers who join minus the refunds issued to customers who remain unserved at the end of the horizon. Specifically, a customer who joins service \(j\) generates an initial revenue \(r_j\). However, if the customer remains unserved at the end of the horizon, the platform provides a full refund of \(r_j\). This payment structure prevents the platform from benefiting from recommending services that attract customers but fail to serve them within the relevant horizon.

A recommendation policy \(\pi\) specifies, in each period \(t\), a recommendation decision based on the information available to the platform up to that period. We allow \(\pi\) to be randomized and history dependent. Accordingly, under policy \(\pi\), the platform's expected net revenue is
\begin{equation}
	\Rcal(\pi)
	:=
	\E_\pi\left[
	\sum_{t=1}^{T}\sum_{j=1}^n r_j A_{j,t}
	-
	\sum_{j=1}^n r_j Q_j(T+1)
	\right].
	\label{eq:original_objective}
\end{equation}
Here, the expectation is taken over all system randomness and any randomization induced by policy \(\pi\), under which the joining decisions and queue lengths evolve. Because all queues are initially empty, every customer remaining in \(Q_j(T+1)\) corresponds to a customer who joined service \(j\) during the horizon but has not completed service. Consequently, the objective in~\eqref{eq:original_objective} is equivalently the expected total revenue from customers who both join a recommended service and complete service by the end of the horizon. Thus, from the platform's net-revenue perspective, our formulation is equivalent to one in which customers pay only upon service completion.

For clarity, \(t\in[T]\) indexes the \(T\) decision periods, during which recommendations, joining decisions, and service completions occur. We use \(t=T+1\) only to denote the terminal state: no recommendation, arrival, or service event occurs in period \(T+1\). Hence, \(Q_j(T+1)\) is the number of customers remaining in service \(j\)'s queue after all events in period \(T\) have occurred.

\paragraph{The dynamic service recommendation problem.}
Let \(\Pi\) denote the class of all recommendation policies. The \emph{dynamic service recommendation} problem (\ref{problem:DSR-abstract}) is
\begin{equation}
	\label{problem:DSR-abstract}
	\tag{\texttt{DSR}}
	\OPT
	:=
	\sup_{\pi\in\Pi} \Rcal(\pi).
\end{equation}
Note that for any given horizon \(T\), maximizing the expected net revenue \(\Rcal(\pi)\) is equivalent to maximizing the expected average net revenue \(\Rcal(\pi)/T\) per period.

\subsection{Contributions}
\label{subsec:contributions}

In Section~\ref{sec:preliminaries}, we first cast~\ref{problem:DSR-abstract} as a Markov decision process (MDP) and characterize its optimal policy through the Bellman equation. A central challenge is the high dimensionality of the underlying state space. In particular, Proposition~\ref{prop:non-indexability} shows that, in general, an optimal policy cannot be represented by service-specific indices that depend only on the local state of each service. Instead, the optimal recommendation may depend on the entire vector of queue lengths,
\[
\bm{Q}(t)=\bigl(Q_1(t),\ldots,Q_n(t)\bigr).
\]
Since \(Q_j(t)\in[t]_0\), the number of possible joint queue states can grow as \(O(T^n)\), making a direct implementation of the Bellman recursion computationally prohibitive when the number of services is large or the horizon is long. To address this curse of dimensionality, we develop two approximation algorithms that offer complementary tradeoffs among approximation quality, computational efficiency, and modeling generality.

\paragraph{A quasi-polynomial-time approximation scheme (Section~\ref{sec:QPTAS}).}
Our first contribution is a quasi-polynomial-time approximation scheme (QPTAS) for~\ref{problem:DSR-abstract}. To simplify the runtime expression, we use \(\widetilde O_\epsilon(f(n,T))\) to suppress polynomial dependence on \(1/\epsilon\) and polylogarithmic factors in \(n\), \(T\), and \(1/\epsilon\); that is, $
\widetilde O_\epsilon(f(n,T))
=
O\!\left(
\operatorname{poly}(1/\epsilon)\,
f(n,T)\,
\log^k(nT/\epsilon)
\right)$ for some constant \(k\ge0\). Informally, our main result is as follows.

\begin{informalqptas}
	\label{thm:qptas-informal}
	For any accuracy level $\epsilon \in \left( 0,\frac{1}{10} \right)$, there is a deterministic
	\(O\!\left(
	T\cdot n^{\widetilde O_\epsilon(1)}
	\right)\)
	-time algorithm that computes a policy whose expected net revenue is at least \((1-\epsilon)  \OPT\).
\end{informalqptas}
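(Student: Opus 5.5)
The plan is to compress the joint queue state $\bm Q(t)$ into a state of quasi-polynomial size and run the Bellman recursion on that compressed MDP, losing only an $\epsilon$ fraction of $\OPT$ along the way. The compression has three ingredients.

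\textbf{Step 1: discard unprofitable services and truncate queues.} Let $r_{\max}=\max_j r_j p_j\xi_j$-type quantities. I would first show that $\OPT\ge c\cdot\max_j r_j p_j \min\{1,\ldots\}$, roughly the value of always recommending the single best service. Services whose per-customer value $r_jp_j$ is below $\epsilon\,\OPT/(nT)$ can then be dropped at a total loss of at most $\epsilon\OPT$. Next, the exponential joining probability $a_j(q)=p_j\gamma^{-q/\xi_j}$ makes large queues worthless for joining. Once $q/\xi_j\ge \log_\gamma(nT/\epsilon)$, recommending $j$ yields expected value at most $\epsilon\OPT/(nT)$. So I would define a truncated policy class that never recommends $j$ above a threshold $K_j=\lceil \xi_j\log_\gamma(nT/\epsilon)\rceil$. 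These queues then never exceed $K_j$, and I would argue that this restriction loses at most $\epsilon\OPT$ by a coupling: simulate the optimal policy and replace every recommendation to a queue above threshold by ``no recommendation''. This only shortens queues, and by the monotonicity~\eqref{eq:monotone-a} shorter queues weakly increase both the future joining probabilities and the completion probabilities. The care needed is that shorter queues change the future trajectory, so the coupling must be a dominance argument (queue-length monotone coupling) rather than a pathwise identity.

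\textbf{Step 2: group services into classes.} Round $p_j$, $r_j$ and $\xi_j$ geometrically to powers of $(1+\epsilon)$ within the relevant ranges, which have polynomial spread after Step 1. This gives $\widetilde O_\epsilon(1)$ types, polylog in $nT/\epsilon$. Services of the same type are interchangeable, so the state becomes, for each type $k$ and each truncated queue length $q\le K$, the count of services of type $k$ with queue $q$. The number of such states is $n^{(\#\text{types})\cdot K}=n^{\widetilde O_\epsilon(1)}$, and the DP over $T$ periods runs in $O(T\cdot n^{\widetilde O_\epsilon(1)})$.

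\textbf{Step 3: bound the rounding error.} I would show that the rounded instance's optimal value is within a $(1\pm O(\epsilon))$ factor of the original. For $r_j$ and $p_j$ this is immediate by linearity of the revenue in them along a fixed policy, using coupling of join coins.

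\textbf{Main obstacle.} The main obstacle is rounding $\xi_j$, which changes both the service dynamics and the exponent $q/\xi_j$. A multiplicative perturbation of $\xi_j$ by $(1+\epsilon)$ alters $\gamma^{-q/\xi_j}$ by a factor $\gamma^{O(\epsilon q/\xi_j)}$. This factor is $1+O(\epsilon\log_\gamma(nT/\epsilon))$ on truncated queues, which is too large, so I would round $\xi_j$ with finer granularity $\epsilon/\log(nT/\epsilon)$; the type count stays polylogarithmic.

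For the service process, I would couple $B_{j,t}\sim\mathrm{Bernoulli}(\xi_j)$ with the rounded-down $\mathrm{Bernoulli}(\tilde\xi_j)$ by thinning, so that the rounded system is slower. Using monotonicity, I would then show that each completed customer is lost with probability at most $O(\epsilon)$ relative to the original, for example by arguing that each customer's completion event changes only with probability bounded by the thinning rate times the expected number of service attempts. This last step is the delicate one, since thinning over up to $T$ periods could accumulate. I would first try to handle it by scaling time, i.e.\ comparing a customer's completion by $T$ in the slower system against completion by $(1-\epsilon)T$ in the original, combined with a stationarity-free bound that $\OPT$ restricted to horizon $(1-\epsilon)T$ is at least $(1-\epsilon)\OPT$ by subadditivity.

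The policy computed on the rounded, truncated instance is finally mapped back to the original instance, with the same losses bounding its value. This gives $(1-\epsilon)\OPT$ after rescaling $\epsilon$.
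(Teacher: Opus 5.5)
Your architecture (truncate queues, round parameters into polylogarithmically many types, run backward induction on a histogram state) matches the paper's. The gap is in the step you flag as the main obstacle, rounding $\xi_j$ in the service process: it is not closed, and your fallback fails.

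\textbf{Service rounding.} Customers arrive at fixed periods with nonstationary $\lambda_t$, so time cannot be rescaled. The claim that $\OPT$ on horizon $(1-\epsilon)T$ is at least $(1-\epsilon)\OPT$ is false: take $\lambda_t$ tiny for $t<T$ and $\lambda_T=1$. Per-customer accounting does not work directly either, because slower service lengthens queues, which lowers later joining probabilities and changes which customers join at all. If you round $\xi_j$ in both of its roles at once, slower service pushes queues up while smaller joining probabilities push them down, so no monotone coupling survives. The paper therefore splits $\xi_j$ into a service copy and a joining copy and treats them in separate steps.

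For the service copy, the missing idea is a dominance coupling with per-period completion accounting. Run the slow system under a policy that recommends $\pi^*(t,\bm Q(t))$, computed on a simulated copy $\bm Q(t)$ of the original system. Couple joins through $A^{\mathrm S}=\mathbb{I}\{V\le \bar a(Q^{\mathrm S})\}$ versus $A=\mathbb{I}\{V\le \bar a(Q)\}$, and services through $B^{\mathrm S}=\mathbb{I}\{U\le \xi_j^{\downarrow}\}\le B=\mathbb{I}\{U\le \xi_j\}$.
\begin{itemize}
\item Integer queue lengths and monotone joining give $Q^{\mathrm S}_j(t)\ge Q_j(t)$ for all $t$: equal queues join identically, and a strictly longer queue cannot be overtaken by a single join.
\item Net revenue equals $\sum_{j,t} r_j D_{j,t}$ with $D_{j,t}$ the completion indicator, and the slow queue is nonempty whenever the copy's is.
\item Hence, since $B^{\mathrm S}_{j,t}$ is independent of $Q_j^+(t)$, $\E[D^{\mathrm S}_{j,t}]\ge \xi_j^{\downarrow}\,\Pp(Q_j^+(t)>0)\ge \E[D_{j,t}]/(1+\epsilon_\xi)$ period by period.
\end{itemize}
Nothing accumulates over the horizon.

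\textbf{Joining rounding.} Your claim that rounding $p_j$ is ``immediate by linearity'' is a second gap. Linearity holds for $r_j$, which leaves the dynamics unchanged, but not for $p_j$ or the exponent. Lowering joining probabilities changes the queue trajectory, hence the decisions of a state-dependent policy and which customers complete.

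What you need is that a uniform bound $\beta a_j(q)\le a_j^{\downarrow}(q)\le a_j(q)$ on the truncated range implies the rounded optimum is at least $\beta\,\OPT$. The paper again follows a simulated copy of the unrounded optimal policy, now with $Q^{\downarrow}\le Q$. It adds an auxiliary $\beta$-thinned copy admitting a customer only when $V\le \beta\bar a(Q)$.
\begin{itemize}
\item Its joins are a subset of the rounded system's joins, so it completes no more per period.
\item It retains each completed customer of the copy independently with probability $\beta$, and each retained customer finishes no later.
\end{itemize}
The same kind of coupling is needed for your final ``map back'' step, which you only assert.

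\textbf{Smaller issues.} Your Step~1 gives polynomial spread neither for $\xi_j$ nor for $p_j$ separately from $r_jp_j$. A polylogarithmic type count therefore needs the regularity conditions $\lambda_{\max},p_j,\xi_j\ge T^{-O(1)}$ and $\gamma=\Theta(1)$. Also, the available lower bound $\OPT\ge\lambda_{\max}\max_j r_jp_j\xi_j$ carries the factor $\xi_j$. Your truncation threshold should therefore be $\xi_j\log_\gamma(T/(\xi_j\epsilon))$ rather than $\xi_j\log_\gamma(nT/\epsilon)$, so that each suppressed recommendation costs only an $\epsilon/T$ fraction of $\OPT$.
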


In contrast to the $O(T^n)$ state-space dependence of the direct Bellman recursion, this running time is quasi-polynomial in both $n$ and $T$ for any fixed $\epsilon$. The QPTAS is built around two main ideas. First, we show that the model parameters can be rounded and the range of relevant queue lengths can be truncated while incurring an arbitrarily small loss in net revenue. Establishing these approximations is nontrivial because perturbing a model parameter changes not only the revenue associated with a given state, but also the stochastic evolution of the queues and, consequently, the future states encountered by a recommendation policy. We address this difficulty through a sequence of probabilistic coupling arguments that relate policies and queue trajectories across the original and modified systems.

Second, these reductions allow us to substantially compress the state space by grouping services with identical rounded parameters into the same \emph{type}. Rather than tracking the queue length of every individual service, the compressed MDP records a histogram of queue lengths for each type, counting the number of services at each relevant congestion level. We can then solve the compressed MDP exactly by backward induction in quasi-polynomial time and transfer the resulting policy back to the original system with performance guarantee.

Beyond its approximation guarantee, the QPTAS provides a structural insight into the information required for near-optimal dynamic recommendation. Proposition~\ref{prop:non-indexability} demonstrates that exact optimal decisions can depend on the joint congestion state across services. The QPTAS shows that this joint information can be substantially compressed while retaining near-optimality: after an arbitrarily small loss in revenue, individual service identities can be replaced by aggregate counts of approximately homogeneous services across a bounded range of relevant congestion levels. Thus, while the high dimensionality of the exact MDP is structural, much of this dimensionality becomes dispensable when near-optimal rather than exact decisions are sought.

\paragraph{A polynomial-time constant-factor approximation (Section~\ref{sec:LP}).}
Our second contribution is a polynomial-time approximation algorithm based on linear programming. We formulate an occupancy-measure linear program (LP) that tracks the marginal queue states and recommendation decisions of individual services, thereby avoiding the exponentially large joint state space. We then develop a contention-resolution procedure to convert an optimal LP solution into a feasible dynamic recommendation policy. We state the resulting guarantee informally below.

\begin{informallp}
	\label{thm:LP-approximation-informal}
	There is an LP-based, polynomial-time algorithm that computes a policy whose expected net revenue is at least
	\(
	\left(1-\frac{1}{e}\right)\OPT.
	\)
\end{informallp}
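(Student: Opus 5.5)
Our plan has three parts: an occupancy-measure LP relaxation whose value upper-bounds $\OPT$, an online rounding policy in which each service follows its own LP-prescribed marginal dynamics, and a contention-resolution step across services that loses at most a factor $1-1/e$.

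\textbf{LP relaxation.} For each service $j$, period $t$, and queue length $q\in[t]_0$, introduce variables $y_{j,t}(q)$ for the probability that $Q_j(t)=q$, and $x_{j,t}(q)\le y_{j,t}(q)$ for the probability that a customer arrives, $Q_j(t)=q$, and $j$ is recommended. The constraints are:
\begin{itemize}
\item flow conservation for each service separately, obtained from~\eqref{eq:true-dynamics}: a fraction $x_{j,t}(q)a_j(q)$ of the mass moves to $q+1$ before the service step, and the service step moves $q^+>0$ to $q^+-1$ with probability $\xi_j$;
\item a per-service bound $x_{j,t}(q)\le \lambda_t y_{j,t}(q)$;
\item a coupling constraint $\sum_j\sum_q x_{j,t}(q)\le \lambda_t$ for every $t$, since at most one service is recommended per arriving customer.
\end{itemize}
The objective is the linearized expectation of~\eqref{eq:original_objective}, namely $\sum_{j,t,q} r_j a_j(q)x_{j,t}(q)-\sum_{j,q} r_j q\,y_{j,T+1}(q)$. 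Taking the marginals of any policy, including an optimal one, gives a feasible point, so $\LP\ge\OPT$. The LP has $O(nT^2)$ variables and is solvable in polynomial time.

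\textbf{Rounding policy.} Let $(x^*,y^*)$ be an optimal LP solution. In period $t$, upon an arrival, each service $j$ in state $Q_j(t)=q$ independently becomes a candidate with probability $x^*_{j,t}(q)/(\lambda_t y^*_{j,t}(q))$. The obstacle is that the actual queue of $j$ only moves when $j$ is actually recommended, so its law drifts away from $y^*_j$. To handle this, we simulate for each service a virtual process that follows exactly the LP's marginal dynamics, using the candidate coins and fresh join coins. The real policy recommends a candidate only when doing so is consistent with that service's virtual trajectory. Non-consistent or unselected candidates are treated as "virtual-only" events, and we aim to bound their effect by monotonicity~\eqref{eq:monotone-a}.

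Among the candidates in period $t$, we select one by a random-order contention resolution scheme for a rank-one (single-item) constraint. Since $\sum_j \Pr[j\text{ is a candidate}]\le 1$, the standard bound says each candidate is selected with probability at least $1-\prod_j(1-p_j)\big/\sum_j p_j\ge 1-1/e$. Using the Poissonization argument of Feldman--Svensson--Zenklusen, this holds uniformly per period.

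\textbf{Main obstacle: the inter-temporal effect of dropped recommendations.} Reducing the joining probability of service $j$ at time $t$ changes its future queue. We would argue per service, through a coupling, that the realized queue is stochastically dominated by the virtual LP queue. Each dropped join lowers congestion, so by~\eqref{eq:monotone-a} future joining probabilities weakly increase and the terminal refunds weakly decrease.

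The revenue of service $j$ is then expressed as a sum of per-customer contributions $r_j\Pr[\text{join at }t\text{ and served by }T]$. A customer who joins a shorter queue is served earlier, so this contribution is monotone in the queue length. Scaling each LP customer's contribution by the selection probability $\ge 1-1/e$ then gives $\Rcal\ge(1-1/e)\LP\ge(1-1/e)\OPT$.

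The delicate point is making this coupling rigorous under FIFO. The plan is to charge each real customer to a virtual customer with a weakly earlier service start; this is where monotonicity is essential, consistent with the paper's hardness result. If the direct coupling fails, we would instead restrict the LP to "served-by-$T$" customer flows, tracking $(q,\text{residual})$ to encode completion, so that the objective is linear in terms that are only ever reduced by dropping recommendations.
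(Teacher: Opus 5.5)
Your architecture matches the paper's: an occupancy LP, per-service virtual queues carrying the LP marginals, single-item contention resolution, and monotonicity for dominance. But the step you defer as the ``main obstacle'' is the heart of the proof, and as written your scheme does not go through. The contention-resolution bound $\Pr[\text{selected}\mid\text{candidate}]\ge\bigl(1-\prod_j(1-p_j)\bigr)/\sum_j p_j$ requires candidacy events that are \emph{independent across services}. You evaluate the candidacy probability at the actual state $Q_j(t)$. Its law is not $y^*_j$, so $\sum_j\Pr[j\text{ is a candidate}]\le 1$ need not hold, and $x^*_{j,t}(q)/y^*_{j,t}(q)$ may be undefined at states the actual queue visits. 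The actual queues are also correlated through the one-recommendation-per-period constraint. Switching to the virtual state is not enough on its own. Candidacy, and hence every virtual join, happens only ``upon an arrival'', an event shared by all services, so the virtual queues become correlated across services. With correlated (e.g.\ positively correlated) candidacies the single-item bound can fail badly. The paper avoids this in two ways. It draws activations before the arrival is observed (the contingent recommendation). It also gives each activated-but-unselected service its own independent virtual join with probability $\bar a_{k,t}(Q_k(t))=\lambda_t a_k(Q_k(t))$. Conditional on $\mathcal F_t$, the virtual arrivals are then independent Bernoulli$\bigl(\theta_{j,\widehat Q_j(t),t}\,\bar a_{j,t}(\widehat Q_j(t))\bigr)$ with law depending only on $\widehat Q_j(t)$. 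This is exactly what yields the LP marginals together with mutual independence (Lemma~\ref{lem:virtual-marginals}).

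Second, you never construct the coupling, and the two ingredients you mention pull against each other. With fresh join coins the virtual process has the LP law, but at equal queue lengths an actual customer can join while the virtual one does not, which breaks $Q_j(t)\le\widehat Q_j(t)$. If instead you tie the virtual join to the actual one, dominance survives, but the virtual joining probability becomes $a_j(Q_j(t))\ge a_j(\widehat Q_j(t))$ and the marginals drift away from $y^*$. The missing idea is to correct by the factor $\bar a_{j,t}(\widehat Q_j(t))/\bar a_{j,t}(Q_j(t))\le 1$, which is well defined precisely because of monotonicity and dominance. The paper builds this factor into the activation probability $w_{j,t}$ in~\eqref{eq:activation_set_prob} and copies the actual join into the virtual queue for the selected service. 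Thinning the actual join by this factor instead would serve the same purpose. You also need the virtual service capacity to equal the actual latent capacity, resampled from the conditional law~\eqref{eq:bhat-conditional} when the actual queue empties; independent service coins would also break dominance.

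Once dominance holds, no FIFO charging of individual customers is needed. Conditional on $\mathcal F_t$, the expected revenue from recommending $j$ is $r_j\bar a_{j,t}(Q_j(t))\kappa_{j,Q_j(t),t}\ge r_j\bar a_{j,t}(\widehat Q_j(t))\kappa_{j,\widehat Q_j(t),t}$, since both factors are nonincreasing in the queue length. Finally, to compare this with your revenue-minus-refund LP objective, you still need an identity the paper proves via the terminal-backlog potential. In the paper's normalization it says that on every feasible solution the objective equals $\sum_{t,j,q} r_j\kappa_{j,q,t}\bar a_{j,t}(q)x_{j,q,t}$.
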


The LP-based approach trades approximation quality for computational efficiency and modeling generality. In particular, it does not require the exponential specification of the joining probabilities in~\eqref{eq:joining-probability}: the guarantee continues to hold for general congestion-dependent joining probabilities \(a_j(\cdot)\) satisfying the monotonicity condition~\eqref{eq:monotone-a}. The Bernoulli service assumption can likewise be relaxed to allow general discrete service-capacity distributions.

The LP approach also provides a complementary perspective on the source of complexity in dynamic service recommendation. Recall that each service evolves according to its own congestion dynamics, while the services interact through the constraint that at most one service can be recommended to each arriving customer. Our algorithm exploits this structure by using the LP to construct a virtual queue for each service, with the virtual queues evolving independently of one another according to the corresponding LP marginals. Contention resolution \citep{chekuri2011submodular,feldman2016online} then coordinates the recommendations generated by these virtual queues into a feasible policy for the actual system. %

Taken together, the two algorithms reveal two complementary ways in which the curse of dimensionality can be overcome. The QPTAS exploits structure in the model primitives to compress the joint congestion state while retaining near-optimality. The LP approach goes further by replacing the joint-state representation with service-level marginal information, obtaining a polynomial-time constant-factor guarantee under substantially more general model primitives. Thus, the two results characterize a tradeoff between the amount of system-level information retained by the algorithm and the resulting approximation quality, computational efficiency, and modeling generality.

\paragraph{The role of customers' congestion-sensitive joining (Section~\ref{sec:impatience}).}
A key feature of our model is that the joining probability $a_j(q)$ is assumed to be nonincreasing in the queue length $q$, capturing customers' congestion aversion (Condition~\eqref{eq:monotone-a}). This behavioral assumption is widely adopted in the queueing literature \citep{hassin2016rational,economou2021impact} and plays a fundamental role in establishing the approximation guarantees of both of our algorithms. In the QPTAS, it provides the stochastic stability needed to establish near-optimality under successive approximations; in the LP-based algorithm, it enables us to establish a performance guarantee through a comparison between actual and virtual queues.

More fundamentally, monotonicity marks a sharp tractability boundary for congestion-dependent joining behavior. To formalize this insight, in Section~\ref{sec:impatience} we consider a natural extension of the model that allows nonempty initial queues. Under monotone joining behavior, the generalized problem remains approximable in polynomial time: our LP-based algorithm continues to attain a $(1-1/e)$ approximation guarantee. In sharp contrast, once monotonicity is removed, we show through a reduction from the maximum independent set problem that, for any fixed $\epsilon>0$, the resulting \ref{problem:DSR-abstract} problem is NP-hard to approximate within a factor of $O(n^{1-\epsilon})$. These results establish monotonicity as a fundamental behavioral property that enables efficient approximation of dynamic service recommendation despite the complexity introduced by congestion-dependent joining.

\subsection{Related Works}
\label{subsec:literature}

Our work relates to several streams of literature.

\paragraph{Assortment and allocation with reusable resources.}
Our work is related to online assortment and allocation with reusable resources, where allocated resources become temporarily unavailable and return after random usage durations \citep{rusmevichientong2020dynamic,feng2024near,huang2024basic,goyal2025asymptotically,hu2025constant}. This literature similarly confronts high-dimensional stochastic resource states and develops LP-based approximation policies. In these models, past allocation decisions affect future resource availability, while customer demand is typically governed by exogenous choice primitives. In our setting, services remain available as their queues grow, but past recommendations generate congestion that endogenously changes future customers' willingness to join. Thus, the system state affects future demand rather than resource availability.

\paragraph{Dynamic and stationary matching.}
Our LP-based approach in Section~\ref{sec:LP} is methodologically related to the literature on dynamic and stationary matching, which develops LP-based approximation algorithms for stochastic matching systems \citep{aouad2022dynamic,pollner2024improved,macrury2025random}. In particular, contention-resolution methods translate fractional matching probabilities into feasible online decisions \citep{pollner2024improved,macrury2025random}, while recent work develops adaptive approximation algorithms that explicitly leverage queue-length information \citep{amanihamedani2024improved,amanihamedani2025adaptive}. Our setting differs in that queue lengths not only describe the system state but also endogenously affect future customers' joining probabilities. Our LP-guided algorithm addresses this feedback through a virtual queueing system that connects the LP occupancy distributions to the actual queues.

\paragraph{Queueing, routing, and congestion-sensitive joining.}
Our work is related to the queueing literature on routing and load balancing across parallel queues, including join-the-shortest-queue and power-of-two-choice policies \citep{winston1977optimality,vvedenskaya1996queueing,jhunjhunwala2026join}. In these models, the routing decision determines which queue receives an arriving customer. In our setting, the platform instead recommends a service, and the customer subsequently decides whether to join based on its congestion; thus, the recommendation affects the arrival process without directly controlling it. Our work is also related to models of observable queues, where customers decide whether to join or balk based on congestion information and anticipated waiting times \citep{naor1969regulation,hassin2016rational,economou2021impact,lin2023wait,snitkovsky2026foresee}.

\paragraph{Most related works.} Our work is closely related to a stream of research on assortment optimization in service systems. 
\citet{guo2025dynamic} study dynamic assortment optimization in a \emph{single}-queue service system where customer choices depend on real-time congestion. They characterize the optimal control through a Hamilton--Jacobi--Bellman equation and derive structural properties of the optimal assortment. In contrast, our model features \(n\) parallel queues that evolve separately, resulting in a high-dimensional state space and motivating our focus on approximation algorithms. Relatedly, \citet{wang2024anticipated} and \citet{liu2024assortment} study \emph{static} assortment and pricing decisions in service systems, incorporating anticipated congestion and customer decision times, respectively. In contrast, we study a \emph{dynamic} problem, where the platform chooses which service to recommend as the congestion states of multiple separately evolving queues change over time.

Our work is also closely related to \citet{segev2025near}, who develops a QPTAS for maximizing the expected reward from adaptively serving stochastically departing customers, building on the model introduced by \citet{cygan2013catch}. The QPTAS establishes stability of the adaptive optimum under successive approximations of the problem primitives, using probabilistic coupling arguments to control the resulting loss. Our QPTAS is inspired by this methodology and also relies on successive approximations and coupling arguments. However, the state-dependent dynamics in our setting introduce an additional challenge. In \citet{segev2025near}, the stochastic dynamics are governed by exogenous customer-level departure probabilities that do not depend on the evolving system state. In our model, by contrast, the transition dynamics are state dependent: discrepancies in queue lengths between two coupled systems affect their subsequent joining probabilities and may amplify over time. Our coupling arguments therefore exploit monotone joining behavior to control the propagation of these discrepancies: a more congested queue induces a lower joining probability, limiting further divergence between the coupled systems. 
A related notion of stochastic stability under parameter perturbations has also been exploited by \citet{aouad2023stability} to develop an approximation scheme for choice-based inventory planning under the MNL model. Related to the stochastic-departure model of \citet{cygan2013catch} and \citet{segev2025near}, \citet{xu2024sequential} consider sequential selection with stochastic expirations and evaluation times.

\section{Preliminaries}
\label{sec:preliminaries}

In this section, we formulate \ref{problem:DSR-abstract} as a finite-horizon MDP. We then highlight its key computational challenge: optimal decisions generally depend on the joint queue-length vector and cannot be reduced to service-specific indices based only on local queue states.

\subsection{Dynamic programming formulation}
\label{subsec:DP}

Recall that $\bm{Q}(t)=(Q_1(t),\ldots,Q_n(t))\in\Zp^n$ denotes the vector of queue lengths at the beginning of period $t$. Because arrivals and service opportunities are independent across periods and their distributions depend on the past only through the current queue lengths and the current recommendation, $(t,\bm{Q}(t))$ is a sufficient state for \ref{problem:DSR-abstract}.

\paragraph{The Bellman equation.} Let \(V(t,\bm{q})\) denote the maximum expected net revenue from periods \(t,\ldots,T\), conditional on \( \bm Q(t)= \bm q\). The terminal value is $
V(T+1, \bm q) = -\sum_{j=1}^n r_jq_j$. For convenience, we use $\bm e_j$ to denote the $j$-th unit vector for $j\in[n]$. We also let $j=0$ represent the action of making no recommendation, and define $\bm e_0:= \bm 0$, $a_0(q):=0$ for any $q \in \Zp$, and $r_0:=0$. The Bellman equation, for $t\in[T]$, is then
\begin{align*}
	V(t,\bm q)
	= 
	\max_{j\in[n]_+}
	\Bigg\{
	\E \big[
	V & \left(t+1,(\bm q-\bm B_t)^+\right)
	\big] 
	+ \\ &
	\lambda_t \cdot a_j(q_j) \cdot
	\bigg(
	r_j
	+
	\E \big[
	V \left(t+1,(\bm q+ \bm e_j- \bm B_t)^+\right)  
	- V \left(t+1,(\bm q- \bm B_t)^+\right)
	\big]
	\bigg)
	\Bigg\},
\end{align*}
where $\bm B_t=(B_{1,t},\ldots,B_{n,t})$, the nonnegative part $\left( \cdot \right)^+$ is taken componentwise, and the expectations are with respect to the independent service opportunities $B_{j,t}\sim\operatorname{Bernoulli}(\xi_j)$. The optimal objective value $\OPT$ of \ref{problem:DSR-abstract} is simply $V(1,\bm 0)$.

\paragraph{Markov policies and state-space complexity.} The Bellman recursion also shows that restricting attention to Markov policies entails no loss of optimality. More precisely, because the horizon and action space are finite, at every state $(t,\bm q)$ one can select an action attaining the maximum in the Bellman equation. Consequently, there exists an optimal deterministic Markov policy whose recommendation at period $t$ depends on the observed history only through $(t,\bm Q(t))$. In particular, although our benchmark $\OPT$ is defined over all admissible, possibly randomized and history-dependent policies, it can be attained by a deterministic Markov policy.

This observation does not, however, make the problem computationally tractable. Starting from empty queues, $Q_j(t)\in\{0,1,\ldots,t-1\}$ for every $j$, so the number of possible joint queue states $\bm Q(t)$ at a single period can be as large as $t^n=O(T^n)$. Thus, a direct implementation of the Bellman recursion faces a state space that is exponential in the number of services. A natural question is whether this high-dimensional state can be circumvented by assigning each service an index based only on its own local congestion. We next show that, in general, it cannot.

\subsection{Non-indexability of optimal recommendations}
\label{subsec:nonindexability}

We first formalize the class of local-state index policies. At period \(t\), such a policy assigns each service \(j\) a score
\begin{equation}
	I_{j,t}\bigl(Q_j(t)\bigr),
	\label{eq:index-def}
\end{equation}
where \(I_{j,t}(\cdot)\) may depend arbitrarily on time and on all primitive parameters of the problem instance, but its dependence on the current system state is restricted to the local queue length \(Q_j(t)\). 
The recommendation is determined by comparing these local-state indices, with ties resolved according to a rule fixed independently of the realized joint queue state. %
The following proposition shows that this restriction is consequential even in a small instance.

\begin{proposition}
	\label{prop:non-indexability}
	There exists an instance with three services for which the optimal recommendation cannot be represented by service-specific local-state indices of the form~\eqref{eq:index-def}.
\end{proposition}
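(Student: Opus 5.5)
The plan is to exhibit two joint states that share the same period $t$ and the same local queue lengths $(q_1,q_2)$ of services $1$ and $2$, but differ in $q_3$. In the first state the unique optimal action is service $1$; in the second it is service $2$; and in neither state is service $3$ (or making no recommendation) optimal.

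This suffices, and the reduction is the first thing to record. Under any local-state index rule, the comparison between services $1$ and $2$ at period $t$ depends only on $I_{1,t}(q_1)$, $I_{2,t}(q_2)$ and a tie-breaking rule that is fixed independently of the joint state. So if neither service $3$ nor the no-recommendation option is selected in either state, the rule must select the same service among $\{1,2\}$ in both, which is a contradiction. I will make all optimality gaps strict, so that no tie-breaking subtlety arises. I will also make sure both states are reachable from $\bm Q(1)=\bm 0$ with positive probability under the optimal policy. This rules out the objection that the optimal action at an unreachable state is arbitrary.

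The economic mechanism is a "reservation" effect near the end of the horizon. Take a short horizon, $T=3$, with decisions at $t=2$ and $t=3$ mattering, and $\lambda_t=1$. Service $1$ is the best service for a final-period customer: it has large $r_1$ and $p_1$, and $\xi_1$ moderate, so that a customer who joins at $t=2$ may still be waiting at $t=3$. Service $2$ is slightly worse at $t=2$ in immediate expected value $a_2(0)\,r_2\,\Pr(\text{completion by }T)$ than service $1$. Service $3$ is a good service whose attractiveness at $t=3$ depends sharply on $q_3$: take $\xi_3$ small, so that $\gamma^{-q_3/\xi_3}$ collapses when $q_3=1$, but $p_3r_3$ large.

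The comparison then runs as follows. When $q_3=0$ at $t=2$, the final customer will be sent to service $3$ anyway, so at $t=2$ the platform should act greedily and recommend service $1$. When $q_3=1$, service $3$ is useless at $t=3$, so the final customer should go to service $1$. Recommending service $1$ at $t=2$ would congest it, lowering $a_1$ and risking non-completion. Hence service $2$ becomes optimal at $t=2$. The state with $q_3=1$, $q_1=q_2=0$ at $t=2$ is reached by recommending service $3$ at $t=1$, when it joins and is not served. The state with $q_3=0$ is reached when it does not join or is served. So I will choose parameters making service $3$ the strictly optimal action at $t=1$.

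The concrete steps, in order, are:
\begin{enumerate}
\item Fix $\gamma$ and the parameters $(p_j,r_j,\xi_j)_{j\in[3]}$.
\item Compute $V(3,\cdot)$ in closed form. At the last period only the myopic value $\lambda_3 a_j(q_j) r_j \xi_j$ matters, together with the terminal refund.
\item Evaluate the four action values at $t=2$ in both states via the Bellman equation.
\item Check the strict orderings.
\item Check that recommending service $3$ is optimal at $t=1$.
\end{enumerate}

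The main obstacle is this parameter tuning. The exponential form~\eqref{eq:joining-probability} couples $p_j$, $\xi_j$ and the congestion penalty, and the Bernoulli service affects both completion probabilities and the penalty. So the inequalities must be satisfied simultaneously, with the additional requirement that service $3$ is strictly dominated at $t=2$ in both states. My first attempt would be $\xi_1=\xi_2=1/2$, $\xi_3$ small (say $1/10$, so that $\gamma^{-10}$ is negligible), $p_3=1$ with $r_3$ tuned, and $r_2$ slightly below $r_1$. I would then verify the inequalities numerically and, if needed, adjust $r_2$ within the interval that makes the flip occur.
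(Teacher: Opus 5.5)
Your reduction is sound, and it is the paper's route. You need two states at the same period with equal $(q_1,q_2)$ and different $q_3$ whose Bellman-optimal actions are different services in $\{1,2\}$. The paper uses two periods to go, $\gamma=1.1$, $p_j=\xi_j=1/2$, $r=(1,5,4)$, and the states $(0,1,1)$ and $(0,1,3)$.

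\textbf{The proposed regime cannot produce the instance.} The proposition is nothing more than such an instance, and you never produce one. Worse, the regime you propose provably cannot supply one. In your design ($T=3$, $\lambda\equiv1$, states $(0,0,q_3)$ at $t=2$), a customer who joins a nonempty queue in period~$3$ is never served. So the last customer values service $j$ at $c_j:=p_jr_j\xi_j$ if $Q_j(3)=0$ and at $0$ otherwise. Your formula $\lambda_3a_j(q_j)r_j\xi_j$ is wrong for $q_j\ge1$, and $\gamma$ plays no role at $t=3$. Let $g_j:=c_j(2-\xi_j)$ be the completion-adjusted value of filling empty service $j$ at $t=2$. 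Let $\mu_j:=p_j(1-\xi_j)$ be the probability that it is then still busy at $t=3$.
\begin{itemize}
\item If $c_3\ge c_1\ge c_2$, then at $(0,0,0)$ recommending $3$ beats recommending $1$ by at least $c_1(\xi_1-\xi_3)$.
\item If $c_1\ge c_3\ge c_2$, add the inequality ``$1$ beats $3$ at $(0,0,0)$'' to ``$2$ beats $1$ at $(0,0,1)$''. This gives
\[
c_2\bigl[2-\xi_2-\mu_1(1-\xi_3)\bigr]>c_3\bigl[2-\xi_3-\mu_1(1-\xi_3)\bigr],
\]
which, since $c_2\le c_3$, forces $\xi_2<\xi_3$.
\item In every other ordering of $(c_1,c_2,c_3)$, the availability of service $3$ either leaves the $1$-versus-$2$ comparison unchanged or shifts it the other way. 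The mirror-image flip is excluded by the same argument with services $1$ and $2$ swapped.
\end{itemize}
Hence with $\xi_1=\xi_2=1/2>\xi_3$, no value of $r_2$, or of any other parameter, works. A slow service good enough to replace service~$1$ for the last customer is even more valuable to fill at $t=2$, where its customer gets two service opportunities.

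The fix is to make the \emph{sacrificed} service slow. Take $p\equiv1$, $\xi=(1/2,1/20,1/2)$, $r=(2,14.8,1.9)$ and any $\gamma$. Up to the common contribution of the customer already in queue~$3$, the values of actions $1,2,3$ and no recommendation are:
\begin{itemize}
\item at $(0,0,0)$: $2.475$, $2.443$, $2.425$, $1$;
\item at $(0,0,1)$: $2.4225$, $2.443$, $1+0.475\gamma^{-2}$, $1$.
\end{itemize}

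\textbf{The reachability step also fails, and it is unnecessary.} In both admissible orderings above, the inequalities forcing the flip also force
\[
p_3r_3\bigl(1-(1-\xi_3)^3\bigr)<p_2r_2\bigl(1-(1-\xi_2)^3\bigr).
\]
Since $c_2$ is then the smallest $c_j$, congesting service~$2$ at $t=1$ costs nothing later: from $(0,1,0)$, recommending $1$ earns exactly what it earns from $(0,0,0)$. Congesting service~$3$, by contrast, leads to the strictly worse state $(0,0,1)$. So recommending $2$ at $t=1$ strictly beats recommending $3$, and no optimal policy ever visits $(0,0,1)$ at $t=2$. Simply drop this requirement. As in the paper, the optimal recommendation is the Bellman-maximizing action at each state $(t,\bm q)$, and an index rule would have to reproduce it at both states regardless of reachability.
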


The proof, provided in Appendix~\ref{subsec:proof-non-indexability}, constructs a simple three-service example in which the optimal recommendation between services 1 and 2 changes as only the congestion of service 3 varies. The mechanism is an \emph{intertemporal substitution effect}. Service 2 offers a relatively high revenue and is therefore valuable both as a current recommendation and as an option for future customers, whereas service~1 offers a lower revenue but is currently uncongested. When service~3 is lowly congested, it remains a service that the platform can recommend to future customers, allowing the platform to recommend the high-revenue service~2 now. When service~3 becomes heavily congested, its usefulness for serving future customers deteriorates, increasing the value of preserving service~2 for subsequent customers; the platform instead recommends the uncongested service~1 now. Importantly, the local queue lengths of services~1 and~2 are identical in the two scenarios. The optimal recommendation between them nevertheless changes because the congestion of service~3 changes the opportunity cost of recommending service~2 now---a cross-service dependence that a local-state index cannot capture.

Proposition~\ref{prop:non-indexability} establishes that the high dimensionality of the Bellman equation is structural: even when two services have unchanged local states, which of them is optimal to recommend may depend on the congestion of other services. Thus, in general, the optimal policy cannot be reduced to independent service-specific priorities. This observation motivates the approximation approaches developed in the remainder of the paper to address this high dimensionality. %

\section{Quasi-Polynomial-Time Approximation Scheme}
\label{sec:QPTAS}

In this section, we develop a quasi-polynomial-time approximation scheme (QPTAS) for~\ref{problem:DSR-abstract}. We first state mild regularity conditions used throughout this section and then present the main theorem and the key steps underlying the approximation scheme.

\subsection{Regularity conditions}
\label{subsec:QPTAS-assumptions}

We impose mild regularity conditions throughout this section. First, we treat the congestion-sensitivity parameter $\gamma>1$ as a fixed constant independent of $n$ and $T$. This is natural in our setting: $\gamma$ captures customers' sensitivity to waiting and is therefore a behavioral characteristic of customers, rather than a characteristic of the platform. In particular, it should not depend on how many services the platform has ($n$) or how long the platform operates ($T$). %

We further assume that the arrival, joining, and service probabilities satisfy mild inverse-polynomial lower bounds. Specifically, there exist fixed constants \(c_\lambda,c_p,c_\xi>0\) such that
\begin{equation}
	\lambda_{\max}:=\max_{t\in[T]}\lambda_t\ge T^{-c_\lambda}, \qquad
	p_j\ge T^{-c_p},
	\qquad\text{and}\qquad
	\xi_j\ge T^{-c_\xi},
	\qquad j\in[n].
	\label{eq:qptas-regularity}
\end{equation}
These conditions rule out degenerate regimes in which the relevant probabilities are exponentially small relative to the planning horizon. If \(\lambda_{\max}\) were exponentially small in \(T\), then the expected total number of arrivals over the entire horizon, \(\sum_{t\in[T]}\lambda_t\), would also be exponentially small in \(T\), implying that the platform expects to receive only a negligible number of customers to serve. Similarly, if \(p_j\) were exponentially small in \(T\), then even recommending service \(j\) in every period while it remains uncongested would yield only an exponentially small expected number of joining customers over the horizon. Finally, if \(\xi_j\) were exponentially small in \(T\), then the expected service time for a customer, \(1/\xi_j\), would be exponentially larger than the horizon. Thus, violating either condition places service $j$ in a degenerate regime in which either customer joining or service completion occurs at a time scale far beyond the planning horizon.

For ease of exposition, throughout the remainder of this section we present the algorithm and analysis for the special case $c_\lambda = c_p=c_\xi=1$ so that $\lambda_{\max} \geq \frac{1}{T}$ and $p_j,\xi_j\ge \frac{1}{T}$ for each $j \in [n]$. This choice is made only to simplify the presentation. The same arguments extend directly to any fixed \(c_\lambda,c_p,c_\xi>0\), with these constants affecting only the constants in the exponent of the quasi-polynomial running time.

\subsection{Main Result}

We now present our main result, a QTPAS for~\ref{problem:DSR-abstract}.
\begin{theorem}
	\label{thm:qptas}
	Under the assumptions in Section~\ref{subsec:QPTAS-assumptions}, for every $n,T \in \mathbb{N}$ and  \(\epsilon\in(0,1/10)\), there is a deterministic
	\(
	O\left(
	T \cdot n^{O(\log^8 (T/\epsilon) / \epsilon^3)}
	\right)
	\)
	-time algorithm that computes a policy whose expected net revenue is at least \((1-\epsilon) \cdot \OPT\).
\end{theorem}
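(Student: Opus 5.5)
The plan is a chain of structural reductions. Each step modifies the instance or restricts the policy class, and each loses at most an $O(\epsilon)$ fraction of $\OPT$. The chain ends in a compressed MDP that can be solved exactly by backward induction.

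\emph{Step 1: truncate the revenues.} Every policy collects revenue only from completed customers, so $\OPT\ge \max_j r_j \cdot \Pr[\text{some customer joins } j \text{ and completes}]$. Under the regularity condition~\eqref{eq:qptas-regularity} this probability is at least $\mathrm{poly}(1/T)$. Services with $r_j\le \epsilon\,\OPT/(nT)$ can therefore be discarded at negligible cost. The remaining revenues, together with $p_j$, $\xi_j$, and $\lambda_t$, are rounded to powers of $(1+\epsilon/T^{c})$ for a suitable constant $c$, within a range spanning $\mathrm{poly}(T/\epsilon)$. This leaves $O(\log^2(T/\epsilon)/\epsilon)$ distinct values per parameter, so $\widetilde O_\epsilon(1)$ service \emph{types} in total.

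\emph{Step 2: justify the rounding by coupling.} For $p_j$ and $\xi_j$ I would couple the original and rounded systems through common uniforms that drive the joining and service events. Under the coupling, the two trajectories agree except on an event whose probability is at most the total perturbation, summed over the $T$ periods.

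The subtle point is that once the queues diverge, the joining probabilities $a_j(q)$ diverge too. Here monotonicity~\eqref{eq:monotone-a} is essential. I would show that a discrepancy in queue $j$ is self-correcting rather than amplifying: the longer queue has the smaller joining probability. This lets one bound the expected lost revenue by $\epsilon\,\OPT$ via a charging argument, in which each divergence is charged to the revenue it affects.

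\emph{Step 3: truncate the queue lengths.} The factor $\gamma^{-q/\xi_j}$ makes joining at queue length $q\ge L\xi_j^{-1}$, with $L=O(\log(T/\epsilon))$, yield probability at most $\epsilon/T^{2}$. Consider a policy that never recommends a service whose queue exceeds $K_j:=\lceil L/\xi_j\rceil$. Restricting to such policies loses only $\epsilon\,\OPT$.

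The remaining difficulty is that $K_j$ can be as large as $T\log T$. I would further round queue lengths into $O(\log(T/\epsilon)/\epsilon)$ geometric \emph{congestion levels}. Within a level, $\gamma^{-q/\xi_j}$ varies by at most a $(1+\epsilon)$ factor, again using that $\gamma$ is constant. The policy then depends only on levels.

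I expect this level-aggregation step to be the main obstacle. Queue dynamics inside a level are not Markov in the level alone, so I would handle it with a coupling argument. The idea is to replace the exact queue by a simulated level process and to exploit monotonicity to compare completions. The resulting loss is paid by the refund term $-r_jQ_j(T+1)$, which is bounded by noting that the total mass of customers still waiting at the end is controlled by the truncation.

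\emph{Step 4: solve the compressed MDP.} After these reductions, the state is $t$ together with, for each type, a histogram recording how many services sit at each congestion level. There are $\widetilde O_\epsilon(1)$ types and $\mathrm{polylog}$ levels, and each count is at most $n$. The number of states per period is therefore $n^{O(\log^8(T/\epsilon)/\epsilon^3)}$, the exponent being the product of the type and level counts. Services of the same type and level are exchangeable, so the Bellman recursion on histograms is exact for the compressed instance. Running backward induction over $T$ periods then gives the stated time bound.

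Finally, the optimal compressed policy is mapped back to the original system by recommending an arbitrary service of the chosen type and level. The coupling bounds of Steps 1–3, applied in reverse, certify expected revenue at least $(1-\epsilon)\OPT$ after rescaling $\epsilon$ by a constant.
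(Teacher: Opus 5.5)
Your rounding step cannot deliver both the accuracy and the type count you claim. Powers of $1+\epsilon/T^{c}$ over a range of ratio $\mathrm{poly}(T/\epsilon)$ number $\Theta(T^{c}\log(T/\epsilon)/\epsilon)$, not $O(\log^2(T/\epsilon)/\epsilon)$. The number of types is then polynomial in $T$, and the histogram state space becomes $n^{\mathrm{poly}(T)}$. That fine grid is exactly what your Step~2 coupling needs, because it asks the two trajectories to agree except on an event whose probability is the per-period perturbation summed over the horizon. With a polylogarithmic grid (ratio $1+\epsilon$ or $1+\epsilon/\log(T/\epsilon)$), the probability of some discrepancy over $T$ periods and $n$ services is no longer small. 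The ``self-correcting charging argument'' then carries the entire proof, and it is left unspecified. Worse, if one policy reads each system's own state, a single discrepancy changes later recommendations and spreads to every service, which per-queue monotonicity does not control.

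The missing idea is the one behind Lemmas~\ref{lemma:QPTAS_rounding-xi-OPT-comparison}, \ref{lem:truncation}, and~\ref{lemma:QPTAS_rounding-a-OPT-comparison}. Round one-sidedly (down). Let the policy for the modified system simulate a virtual copy of the unmodified system under its optimal policy, and always recommend what that copy prescribes. Couple through common uniforms so that the queues are pathwise \emph{ordered} rather than equal; monotonicity makes the conditional thinning probabilities valid, and integrality preserves the order after a single join. Each perturbation then costs one global factor rather than a per-period additive loss. With slower service, the rounded queue is nonempty whenever the virtual one is, so expected completions drop by at most a factor $1+\epsilon_\xi$ in every period. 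With smaller joining probabilities, a $\beta$-thinned auxiliary copy keeps each completed customer with probability $\beta$ and is dominated by the rounded system. Since nothing accumulates over $T$, ratio $1+\epsilon$ suffices for $r_j,p_j$ and $1+\epsilon/\ln(T/\epsilon)$ for $\xi_j$, which is what yields polylogarithmically many types.

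Two further points follow from this. The paper rounds the two roles of $\xi_j$ separately: slower service lengthens the rounded queues while smaller joining probabilities shorten them, so a single monotone coupling cannot order both. The transfer back to the original system also needs its own virtual-copy argument with a partially rounded auxiliary system (Lemma~\ref{lemma:QPTAS_policy-transfer}); it is not obtained by running the bounds in reverse.

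Your truncation threshold is also inverted, and this creates the obstacle you single out. Since $a_j(q)=p_j\gamma^{-q/\xi_j}$, we have $\gamma^{-q/\xi_j}\le\delta$ if and only if $q\ge\xi_j\log_\gamma(1/\delta)$. The cap is therefore $\Lambda_j=O(\xi_j\log(T/\epsilon))=O(\log(T/\epsilon))$ because $\xi_j\le 1$, not $L/\xi_j$; a slow service becomes unattractive after very few customers. By Corollary~\ref{collorary:QPTAS-policy-hard-cap}, every queue under a truncated policy stays in $\{0,\ldots,\lceil\Lambda_j\rceil\}$. The histogram can thus record \emph{exact} queue lengths, and no level aggregation is needed.

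As proposed, that aggregation would actually break Step~4. A histogram over levels is not a sufficient statistic. Two services of the same type and level but different exact lengths empty and complete customers at different times, so they are not exchangeable. The Bellman recursion over level histograms is therefore not exact for the truncated instance. (Rounding $\lambda_t$ is unnecessary: it does not define types, and backward induction handles time-varying arrival probabilities directly.)
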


For any fixed \(\epsilon>0\), the running time in Theorem~\ref{thm:qptas} is quasi-polynomial in both the number of services \(n\) and the planning horizon \(T\). Thus, the result replaces the exponential dependence on \(n\) arising from the original \(O(T^n)\) state space with a quasi-polynomial dependence $O(T  n^{\widetilde{O}_\epsilon(1)})$, while achieving an arbitrarily close-to-optimal approximation guarantee. At a high level, the QPTAS achieves this by showing that, up to an arbitrarily small loss in optimality, both the heterogeneity across services and the range of relevant queue lengths can be substantially compressed. This enables a histogram-based representation of the joint queue state, which forms the basis of our compressed MDP. The construction of the QPTAS proceeds in two main steps:
\paragraph{Step 1: Instance simplification.}
In the original instance, each of the $n$ services is characterized by a parameter tuple $ (r_j,p_j,\xi_j) $. We refer to services with the same parameter tuple as belonging to the same \emph{service type}; thus, the original instance may contain as many as $n$ distinct types. In this step, we round each of these parameters down to $r_j^\downarrow$, $p_j^\downarrow$, and $\xi_j^\downarrow$, respectively. After rounding, the $n$ resulting tuples $\{ (r_j^\downarrow,p_j^\downarrow,\xi_j^\downarrow)\}_{j \in [n]}$ take only $K$ distinct values, where $K$ is polylogarithmic in $T/\epsilon$. We also truncate the queue lengths to a logarithmic range in $T/\epsilon$. Together, these reductions substantially compress both the heterogeneity across services and the state space associated with each service.

The main technical difficulty is that these transformations alter the immediate revenues, joining probabilities, and service probabilities, the latter two of which in turn affect the stochastic evolution of congestion and the future decisions of a policy. In Appendix~\ref{sec:appendix-QPTAS}, we address this through probabilistic coupling arguments that compare appropriately constructed policies across the original and modified systems. These arguments show that the cumulative loss from rounding and truncation can be made arbitrarily small. We also show that any policy for the resulting reduced instance can be transferred back to the original system while preserving its performance up to the desired approximation loss.

At the end of Step~1, we therefore obtain an approximately equivalent instance with only $K$ distinct service types and $O(\log(T/\epsilon))$ relevant queue lengths for each type. Services of the same type are identical except for their current queue lengths, creating the symmetry that enables the state-space compression in Step~2.

\paragraph{Step 2: State-space compression and dynamic programming.}
We then exploit the symmetry created in Step 1 to replace the original queue vector $\bm Q(t)$ with a histogram-based aggregate state. For each service type and each relevant queue length, the compressed state records only the number of services at that queue length, rather than the identities of those services.

The resulting compressed MDP has quasi-polynomially many states and can be solved exactly by backward induction within the runtime stated in Theorem~\ref{thm:qptas}. Combining its optimal policy with the policy-transfer result from Step~1 yields a policy for the original system whose expected net revenue is at least $(1-O(\epsilon))\cdot\OPT$.

\subsection{Step 1: Rounding and Truncation}

In this subsection, we progressively simplify the original problem through a sequence of rounding and truncation steps. Throughout the step, we fix an $\epsilon \in (0,\frac{1}{10})$.

\subsubsection{Step 1a: Rounding Revenues}

We begin with the service revenues $r_1,\ldots,r_n$, as rounding them is simpler than rounding the other model parameters. For any fixed recommendation policy, changing the revenues does not affect the stochastic evolution of the system; it only changes the revenues associated with the realized outcomes. We show that the revenues can be rounded so that only $O\left(
\frac{1}{\epsilon}
\log\left(\frac{T^3}{\epsilon}\right)
\right)$ distinct values remain, while incurring only a small loss in the optimal expected net revenue.

\paragraph{Rounded revenues.}
Let $r_{\max}:=\max_{j\in[n]} r_j$ denote the largest service revenue. Define the geometric grid
\[
\texttt{Grid}_r
:=
\{0\}
\cup
\left\lbrace
r_{\max}(1+\epsilon)^{-m}
:
m=0,1,\ldots,
\left\lceil
\log_{1+\epsilon}
\left(
\frac{T r_{\max}}
{\epsilon U_{\mathrm{LB}}}
\right)
\right\rceil
\right\rbrace,
\]
where $U_{\mathrm{LB}} := \lambda_{\max} \cdot \max_{j \in [n]}  r_j p_j \xi_j$ is a lower bound on the optimal objective value (see discussion in Appendix~\ref{subsec:appendix-lower-bound}).
For each service $j\in[n]$, define its rounded-down revenue by
\[
r^{\downarrow}_j
:=
\max\left\{
z\in\texttt{Grid}_r : z\le r_j
\right\}.
\]
Thus, every revenue above the threshold $\epsilon U_{\mathrm{LB}}/T$ is rounded down geometrically to the nearest grid point, whereas every revenue below this threshold is rounded to zero. Recall from Section~\ref{subsec:QPTAS-assumptions} that $\lambda_{\max} \geq \frac{1}{T}$ and $p_j, \xi_j \geq \frac{1}{T}$ for all $j \in [n]$. We thus have $U_{\mathrm{LB}}
\ge
\frac{r_{\max}}{T^3}$, which implies 
\[|\texttt{Grid}_r| = O \left(   
\log_{1+\epsilon}
\left(
\frac{T r_{\max}}
{\epsilon U_{\mathrm{LB}}}
\right)
 \right)
=
O\left(
\frac{1}{\epsilon}
\log\left(\frac{T^4}{\epsilon}\right)
\right).
\]

\paragraph{Effect of revenue rounding.}
Let $\Rcal_{(r)}(\pi)$ and $\Rcal_{(r^{\downarrow})}(\pi)$ denote the expected net revenues of a policy $\pi$ under the original revenue parameters $\{r_j\}_{j\in[n]}$ and the rounded ones $\{r_j^{\downarrow}\}_{j\in[n]}$, respectively. Let $\pi^*$ be an optimal policy for the original instance.

\begin{lemma}
	\label{lemma:QPTAS_rounding-rewards}
	For every policy $\pi \in \Pi$, we have $
	(1-\epsilon) \cdot \Rcal_{(r)}(\pi)
	-
	\epsilon \cdot \Rcal_{(r)}(\pi^*)
	\le
	\Rcal_{(r^{\downarrow})}(\pi)
	\le
	\Rcal_{(r)}(\pi)$.
\end{lemma}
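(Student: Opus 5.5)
The plan is to exploit the fact that, for a fixed policy $\pi$, the distribution of the sample path $(\bm A_t,\bm Q(t))_{t}$ does not depend on the revenues. So $\Rcal_{(r)}(\pi)$ is linear in the revenue vector: writing $N_j(\pi)$ for the expected number of customers who join service $j$ \emph{and} complete service by the end of the horizon, the equivalence noted after \eqref{eq:original_objective} gives
\[
\Rcal_{(r)}(\pi)=\sum_{j=1}^n r_j\,N_j(\pi),\qquad \Rcal_{(r^\downarrow)}(\pi)=\sum_{j=1}^n r_j^\downarrow\,N_j(\pi),
\]
with $N_j(\pi)\ge 0$. (Each customer remaining in $Q_j(T+1)$ is matched one-to-one with a joining customer of service $j$ who did not complete, so the net contribution per service is $r_j$ times the number of completed joiners.) The upper bound $\Rcal_{(r^\downarrow)}(\pi)\le\Rcal_{(r)}(\pi)$ then follows immediately from $r_j^\downarrow\le r_j$ and $N_j(\pi)\ge0$.

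For the lower bound, I split the services into $J_{\mathrm{big}}=\{j: r_j\ge \epsilon U_{\mathrm{LB}}/T\}$ and $J_{\mathrm{small}}$ its complement. For $j\in J_{\mathrm{big}}$ the grid is geometric with ratio $1+\epsilon$ down to the last grid point, which lies at or below $\epsilon U_{\mathrm{LB}}/T$ by the choice of the ceiling. Hence $r_j^\downarrow\ge r_j/(1+\epsilon)\ge(1-\epsilon)r_j$. For $j\in J_{\mathrm{small}}$ we only use $r_j^\downarrow\ge0$ and $r_j<\epsilon U_{\mathrm{LB}}/T$. This gives
\[
\Rcal_{(r^\downarrow)}(\pi)\ge(1-\epsilon)\sum_{j}r_jN_j(\pi)-\sum_{j\in J_{\mathrm{small}}}r_jN_j(\pi)\ge(1-\epsilon)\Rcal_{(r)}(\pi)-\frac{\epsilon U_{\mathrm{LB}}}{T}\sum_j N_j(\pi).
\]
Since at most one customer arrives per period, $\sum_jN_j(\pi)\le T$. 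The error term is therefore at most $\epsilon U_{\mathrm{LB}}$.

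It remains to invoke $U_{\mathrm{LB}}\le\OPT=\Rcal_{(r)}(\pi^*)$, which is the lower-bound fact cited from Appendix~\ref{subsec:appendix-lower-bound}. If I had to justify it, I would take $j^*=\arg\max_j r_jp_j\xi_j$ and $t^*=\arg\max_t\lambda_t$. The policy recommends $j^*$ only in period $t^*$, where the queue is empty. The customer then joins with probability $p_{j^*}$ and is served in that same period with probability $\xi_{j^*}$, giving expected net revenue at least $\lambda_{t^*}r_{j^*}p_{j^*}\xi_{j^*}=U_{\mathrm{LB}}$.

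The main obstacle is only the bookkeeping in the first step: making sure the per-service decomposition with nonnegative weights $N_j(\pi)$ is valid despite the refund term. I would handle this carefully via the FIFO/empty-initial-queue identity, rather than bounding the two terms of \eqref{eq:original_objective} separately, since a separate treatment could produce sign issues. Everything else is a direct comparison.
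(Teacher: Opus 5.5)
Your proposal is correct and is essentially the paper's proof. You write the objective as revenues weighted by completed joiners; the paper does the same via per-period completion indicators $D_{j,t}$. You then cap total completions by $T$ and invoke $U_{\mathrm{LB}}\le\OPT$. Your split into large and small revenues is just an unpacked form of the paper's single bound $r_j^\downarrow\ge r_j/(1+\epsilon)-\epsilon U_{\mathrm{LB}}/T$.
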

The proof follows a straightforward rounding argument similar to that of Lemma~2.3 in \citet{segev2025near}; we include it in Appendix~\ref{subsec:QPTAS_proof-rounding-r} for completeness. Lemma~\ref{lemma:QPTAS_rounding-rewards} allows us to work with the rounded revenues throughout the remainder of the analysis at only a small additional loss. To see this, let $\tilde{\pi}^*$ be an optimal policy for the rounded instance, and suppose that $\tilde{\pi}$ is an $\alpha$-approximation for that instance. Then
\begin{align*}
	\Rcal_{(r)}(\tilde\pi)
	\ge
	\Rcal_{(r^{\downarrow})}(\tilde\pi)
	\ge
	\alpha\,\Rcal_{(r^{\downarrow})}(\tilde\pi^*)\ge
	\alpha\,\Rcal_{(r^{\downarrow})}(\pi^*)
	\geq
	\alpha(1-2\epsilon) \cdot \Rcal_{(r)}(\pi^*),
\end{align*}
where we invoke Lemma~\ref{lemma:QPTAS_rounding-rewards} in the first and last inequalities. Hence, an $\alpha$-approximation for the rounded instance yields an $\alpha(1-2\epsilon)$-approximation for the original instance. For the remainder of the analysis, we therefore replace the original revenues $\{r_j\}_{j\in[n]}$ by the rounded revenues $\{r^{\downarrow}_j\}_{j\in[n]}$, with the understanding that this step incurs an additional multiplicative approximation factor of $1-2\epsilon$.

\subsubsection{Step 1b: Rounding the Service Rates}
\label{subsubsec:QPTAS-Step-rounding-xi}

We next turn to the service-completion process. Unlike the revenue rounding in Step~1a, modifying the service probabilities changes the stochastic evolution of congestion. Consequently, even when the same policy is applied, the original and rounded systems may follow different queue-length trajectories, which can in turn lead to different future recommendations. Controlling the effect of this rounding is therefore substantially more delicate.

A further complication is that \(\xi_j\) plays two distinct roles in our model: it governs the service-completion process and also enters the joining probability. We first isolate the former effect by rounding \(\xi_j\) only in its role in the service-completion process, while leaving the joining probabilities unchanged. To distinguish these two roles, we introduce parameters \(\xi_j^S\) and \(\xi_j^A\), corresponding respectively to the service-completion and joining processes. Initially, we have $\xi_j^S=\xi_j^A=\xi_j$. In this step, we round down \(\xi_j^S\) to \(\xi_j^{S\downarrow}\) while retaining \(\xi_j^A=\xi_j\). We will round the parameters \(\xi_j^A\) in a subsequent step.

Let \(\epsilon_\xi \in (0,1)\) be a service-parameter rounding parameter that depends on \(\epsilon\) and whose value will be specified at the end of Step~1. We round \(\xi_1^S,\ldots,\xi_n^S\) so that only $O\left(
\frac{1}{\epsilon_\xi}\log T
\right)$ distinct values remain, while incurring only a small loss in the optimal expected net revenue.

\paragraph{Rounded Bernoulli service parameters.}
Recall from Section~\ref{subsec:QPTAS-assumptions} that $\xi_j^S\in[1/T,1]$ for every $j\in[n]$. Let $\epsilon_\xi>0$ be a positive number and define the geometric grid
\[
\texttt{Grid}_\xi
:=
\left\lbrace
\frac{1}{T}(1+\epsilon_\xi)^m
:
m=0,1,\ldots,
\left\lceil
\log_{1+\epsilon_\xi}T
\right\rceil
\right\rbrace.
\]
For each service $j\in[n]$, define its rounded-down service parameter by
\[
\xi_j^{S\downarrow}
:=
\max\left\{
z\in\texttt{Grid}_\xi:z\le\xi_j^S
\right\}.
\]
By construction, we have $\frac{\xi_j^S}{1+\epsilon_\xi}
\le
\xi_j^{S\downarrow}
\le
\xi_j^S$ and $|\texttt{Grid}_\xi|
= O \left(  \log_{1+\epsilon_\xi}T \right) = 
O\left(
\frac{1}{\epsilon_\xi}\log T
\right)$.

We refer to the system in which the Bernoulli service-completion parameters $\{\xi_j^S\}_{j\in[n]}$ are replaced by $\{\xi_j^{S\downarrow}\}_{j\in[n]}$ as the \emph{slow system}, reflecting its slower service-completion process. 
We emphasize that the joining probabilities remain unchanged at this stage: the original parameters $\xi_j^A=\xi_j$ continue to be used wherever they enter the joining probabilities. Thus, the term ``slow system'' refers exclusively to the modification of the service-completion dynamics.

\paragraph{Effect of service-rate rounding.}
With a slight abuse of notation, let $\Rcal_{(\xi^S)}(\pi)$ and $\Rcal_{(\xi^{S\downarrow})}(\pi)$ denote the expected net revenues of a policy $\pi$ in the original and slow systems, respectively, both evaluated under the rounded revenue parameters. Define
\[
\OPT_{(\xi^S)}
:=
\sup_{\pi\in\Pi} \left[
\Rcal_{(\xi^S)}(\pi) \right],
\qquad
\OPT_{(\xi^{S\downarrow})}
:=
\sup_{\pi\in\Pi} \left[
\Rcal_{(\xi^{S\downarrow})}(\pi)\right].
\]
The following lemma bounds the loss in the optimal objective value caused by rounding the service-completion parameters. We provide the proof in Appendix~\ref{subsec:QPTAS-proof-of-rounding-xi-part1}.

\begin{lemma}
	\label{lemma:QPTAS_rounding-xi-OPT-comparison}
	$\OPT_{(\xi^{S\downarrow})}
	\ge
	(1 - \epsilon_\xi) \cdot
	\OPT_{(\xi^S)}$.
\end{lemma}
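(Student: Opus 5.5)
The plan is to take an optimal policy $\pi^*$ for the original system (with the rounded revenues) and build from it a policy $\pi'$ for the slow system. The goal is to show $\Rcal_{(\xi^{S\downarrow})}(\pi')\ge(1-\epsilon_\xi)\,\Rcal_{(\xi^S)}(\pi^*)$, which gives the lemma immediately. The construction rests on a coupling of the two service-completion processes by thinning.

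For each $(j,t)$, draw the slow opportunity $B^{\downarrow}_{j,t}\sim\operatorname{Bernoulli}(\xi_j^{S\downarrow})$. Then draw an independent auxiliary bit $D_{j,t}$ with parameter chosen so that $B_{j,t}:=\max\{B^{\downarrow}_{j,t},D_{j,t}\}\sim\operatorname{Bernoulli}(\xi_j^S)$. In this way every slow opportunity is also an original opportunity. Conditional on $B_{j,t}=1$, the opportunity is also a slow opportunity with probability $\xi_j^{S\downarrow}/\xi_j^S\ge 1/(1+\epsilon_\xi)\ge 1-\epsilon_\xi$.

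The policy $\pi'$ internally simulates a copy of the original system, driven by the same arrivals, by the bits $B_{j,t}$ (the $D_{j,t}$ are sampled privately by the policy), and by a common uniform $U_t$ for the joining decision. Whenever $\pi^*$ recommends $j$ in the simulated system, $\pi'$ recommends $j$ in the slow system. The simulated customer joins iff $U_t\le a_j(\hat Q_j(t))$, and the real customer joins iff $U_t\le a_j(Q^{\downarrow}_j(t))$. The joining probabilities use the unchanged $\xi_j^A$, so $a_j$ is the same function in both systems and only the queue argument differs.

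The core structural claim, proved by induction on $t$, is that $Q^{\downarrow}_j(t)\ge \hat Q_j(t)$ in every period. Together with monotonicity~\eqref{eq:monotone-a}, this implies that the set of real joiners at service $j$ is a subset of the simulated joiners. The induction step needs care when a simulated customer joins but the real one does not, since the real queue then gains nothing while the simulated queue gains one. I expect the inequality to survive because the slow queue was already strictly larger in that case (otherwise the joining thresholds would coincide). It also needs the fact that the slow system completes only on a subset of the opportunities.

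Next I would account for revenue customer by customer. In FIFO order, each simulated customer who completes service is matched with a specific opportunity $B_{j,t}=1$ that finishes them. I would call such a customer \emph{good} if (i) the corresponding real customer also joined, and (ii) every opportunity that the simulated system used while this customer was at the head, together with all those used on earlier customers in the same queue, was also a slow opportunity. The aim is to show that good simulated customers complete in the slow system as well, and to bound the expected revenue of non-good completed customers by $\epsilon_\xi\,\Rcal_{(\xi^S)}(\pi^*)$. The probability bound should come from the thinning: each completion in the simulated system is slow-valid with conditional probability at least $1-\epsilon_\xi$, independently of the past.

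The main obstacle is that failures cascade. One non-slow completion leaves an extra customer in the real queue, which delays everyone behind it and lowers later joining probabilities. A naive per-customer bound therefore charges each failure to many customers, not just one. To handle this I would not insist on full pathwise agreement. Instead I would use an exchange argument: at the moment of a non-slow completion, I "relabel" so that the real system is viewed as having lost exactly the customer at the head. Under FIFO and the subset-of-opportunities coupling, every later slow completion then serves the simulated customer it would have served one position earlier, so that each failed opportunity costs at most one completed customer's revenue.

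If that relabeling proves too delicate because of the joining discrepancy, my fallback is to let $\pi'$ recommend nothing at $j$ while $Q^{\downarrow}_j>\hat Q_j$. That keeps the two queues synchronized up to the lost customers, and the loss would then be charged through $r_j$ and the per-opportunity thinning probability. Summing the expected charged losses over all completions gives the factor $1-\epsilon_\xi$.
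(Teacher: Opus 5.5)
\textbf{Gap: the revenue comparison is never established.} Your construction matches the paper's. You simulate the original system under $\pi^*$ inside the slow-system policy, couple joining through a common uniform, nest the service opportunities ($B^{\downarrow}_{j,t}\le B_{j,t}$), and prove by induction that the slow queue dominates the simulated one. The problem is the next step, which you leave as an open obstacle.

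The customer-level accounting you sketch does not yield the factor $1-\epsilon_\xi$. Condition (ii) requires every opportunity used on all earlier customers of the same queue to be slow-valid. For the $k$-th completion at service $j$ this event has conditional probability $(\xi^{S\downarrow}_j/\xi^S_j)^{k}$, which can be as small as $(1+\epsilon_\xi)^{-k}$. So the good customers are not a $(1-\epsilon_\xi)$-fraction of completions. Condition (i) discards simulated customers whose real counterparts balked because the slow queue was longer, and your per-completion thinning bound says nothing about how many such customers there are. The relabeling argument and the fallback (suppressing recommendations while $Q^{\downarrow}_j>\hat Q_j$) are only sketched, so as written the bound is not proved.

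\textbf{Missing idea: count completions per period, not customers.} Because queues start empty and unserved customers are fully refunded, $\Rcal(\pi)=\sum_{j}r_j\sum_{t}\E[D_{j,t}]$ with $D_{j,t}=\mathbb{I}\{Q^+_j(t)>0\}B_{j,t}$. Only the number of completions at each service matters, not which customer completes. Your induction also gives the post-arrival ordering $Q^{+\downarrow}_j(t)\ge \hat Q^{+}_j(t)$, hence pathwise
\[
D^{\downarrow}_{j,t}=\mathbb{I}\{Q^{+\downarrow}_j(t)>0\}\,B^{\downarrow}_{j,t}\;\ge\;\mathbb{I}\{\hat Q^{+}_j(t)>0\}\,B^{\downarrow}_{j,t}.
\]
Since $B^{\downarrow}_{j,t}$ and $B_{j,t}$ are independent of $\hat Q^{+}_j(t)$, taking expectations gives
\[
\E[D^{\downarrow}_{j,t}]\ge \xi^{S\downarrow}_j\,\Pp(\hat Q^{+}_j(t)>0)\ge \frac{1}{1+\epsilon_\xi}\,\xi^{S}_j\,\Pp(\hat Q^{+}_j(t)>0)=\frac{1}{1+\epsilon_\xi}\,\E[\hat D_{j,t}].
\]
Multiplying by $r_j$ and summing over $j,t$ yields $\Rcal_{(\xi^{S\downarrow})}(\pi')\ge \OPT_{(\xi^S)}/(1+\epsilon_\xi)\ge(1-\epsilon_\xi)\,\OPT_{(\xi^S)}$. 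This is exactly the paper's proof.

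The extra customers and lower joining rates in the slow system only keep its server busier, so there is no cascade to control. The subset relation between real and simulated joiners is never needed. Your intuition that each failed opportunity costs at most one completion is correct, and the per-period domination above is the clean way to make it rigorous.

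A minor implementability point: your policy observes $B^{\downarrow}_{j,t}$ only when $Q^{+\downarrow}_j(t)>0$. Otherwise $\hat Q^{+}_j(t)=0$ by dominance, so $B_{j,t}$ can simply be sampled afresh from $\operatorname{Bernoulli}(\xi^S_j)$.
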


Lemma~\ref{lemma:QPTAS_rounding-xi-OPT-comparison} is qualitatively different from Lemma~\ref{lemma:QPTAS_rounding-rewards}. Under revenue rounding, the stochastic evolution of the queueing system is unchanged for any fixed policy, allowing us to compare directly the objective value of the same policy before and after rounding. In contrast, rounding the service-completion parameters changes the queueing dynamics. In particular, even if the same policy is applied to the original and slow systems, the resulting queue-length trajectories may differ substantially. Because joining probabilities depend on congestion, these differences can in turn alter future joining outcomes and further amplify the divergence between the two systems. A direct comparison of the same policy across the two systems is therefore unavailable.

To prove Lemma~\ref{lemma:QPTAS_rounding-xi-OPT-comparison}, we use a coupling argument. Let $\pi^*_{(\xi^S)}$ be an optimal policy for the original system. We construct a policy for the slow system whose expected net revenue is at least $1/(1+\epsilon_\xi)$ of $\OPT_{(\xi^S)}$. The constructed policy maintains a virtual copy of the original system and makes recommendations in the slow system according to the decisions of $\pi^*_{(\xi^S)}$ in the virtual system. We then couple the joining and service processes of the slow and virtual systems so that, under the coupling, each queue in the slow system is always at least as long as its counterpart in the virtual system. The key to this coupling is to augment the joining outcomes and service capacities observed in the slow system with additional randomness to generate the corresponding outcomes in the virtual system, while preserving this queue-length ordering. Consequently, whenever a customer is available for service in the virtual system, a customer is also available for service in the corresponding queue of the slow system. The remaining loss arises solely from the reduction in the service-completion probability, which is by at most a factor of $1/(1+\epsilon_\xi)$. %

\subsubsection{Step 1c: Truncation of the Policy Class}

After rounding the revenue parameters $\{r_j\}_{j\in[n]}$ and the service parameters $\{\xi_j^S\}_{j\in[n]}$ governing the service-completion process, the natural next step is to round the parameters appearing in the joining probabilities $a_j(\cdot)$. However, a direct rounding argument encounters an important difficulty: because $a_j(q)$ takes the form $a_j(q)=p_j\gamma^{-q/\xi_j^A}$, the joining probability can become exponentially small in $T$ when the queue length $q$ is as large as $O(T)$. Accurately approximating $a_j(q)$ over this entire range would therefore require an excessively fine discretization. 

To address this difficulty, we observe that once the joining probability of a service becomes sufficiently small, recommending that service contributes only negligibly to the optimal objective value. This observation motivates us to restrict attention to policies that stop recommending a service once its queue becomes sufficiently long. Under this restriction, the joining probabilities encountered when a service is recommended are bounded away from excessively small values, making the subsequent rounding step tractable.

\paragraph{Truncated policy class.}
Define
\[
\Lambda(z)
:=
z \cdot (1 + \epsilon_\xi) \cdot
\log_\gamma\left(
\frac{T}{z \cdot \epsilon}
\right),
\]
and, for each service $j\in[n]$, let $\Lambda_j:=\Lambda(\xi_j^{S\downarrow})$, defined in terms of the rounded service rate $\xi_j^{S\downarrow}$.

\begin{definition}
	A policy belongs to $\Pi^{\mathrm{tr}}$ if it never recommends service $j$ at any state satisfying $Q_j(t)\ge \Lambda_j$.
\end{definition}

With a slight abuse of notation, throughout this step we let $\Rcal(\pi)$ denote the expected net revenue of policy $\pi$ for the instance obtained after rounding $\{r_j \}_{j\in[n]}$ and $\{\xi_j^S \}_{j\in[n]}$ in Steps~1a and~1b. The following lemma shows that restricting attention to $\Pi^{\mathrm{tr}}$ incurs only a small multiplicative loss. We provide the proof in Appendix~\ref{subsec:QPTAS_proof-policy-truncation}.

\begin{lemma}
	\label{lem:truncation}
	$\sup_{\pi\in\Pi^{\mathrm{tr}}}\Rcal(\pi)
	\ge
	(1-\epsilon) \cdot
	\sup_{\pi\in\Pi}\Rcal(\pi)$.
\end{lemma}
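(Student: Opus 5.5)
We would prove Lemma~\ref{lem:truncation} by a simulation-and-coupling argument of the same flavor as the one sketched for Lemma~\ref{lemma:QPTAS_rounding-xi-OPT-comparison}. Fix an optimal (deterministic Markov) policy $\pi^*$ for the instance obtained after Steps~1a and~1b. We construct a randomized, history-dependent policy $\pi^{\mathrm{tr}}\in\Pi^{\mathrm{tr}}$. It maintains a virtual copy of the system driven by $\pi^*$, and shares arrivals and the service opportunities $B_{j,t}$ with the actual system. The plan is to show that the actual queues are dominated by the virtual ones, and that the only revenue lost is from virtual customers who join at states with $Q_j^{\mathrm{virt}}(t)\ge\Lambda_j$.

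\textbf{Construction and coupling.} In period $t$, if a customer arrives and $\pi^*$ recommends $j$ in the virtual system, there are two cases.
\begin{itemize}
\item If $Q_j^{\mathrm{virt}}(t)\ge\Lambda_j$, the virtual customer joins with probability $a_j(Q_j^{\mathrm{virt}}(t))$, using independent randomness. Such a customer is ``dropped'': $\pi^{\mathrm{tr}}$ makes no recommendation.
\item Otherwise, $\pi^{\mathrm{tr}}$ recommends $j$ with probability $a_j(Q_j^{\mathrm{virt}}(t))/a_j(Q_j(t))$. This ratio is at most $1$ by the monotonicity~\eqref{eq:monotone-a}, once we know $Q_j(t)\le Q_j^{\mathrm{virt}}(t)$. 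The virtual join is then defined to coincide with the actual join, so both occur with the correct probability $a_j(Q_j^{\mathrm{virt}}(t))$.
\end{itemize}
This thinning also makes $\pi^{\mathrm{tr}}$ recommend $j$ only when $Q_j(t)\le Q_j^{\mathrm{virt}}(t)<\Lambda_j$, so indeed $\pi^{\mathrm{tr}}\in\Pi^{\mathrm{tr}}$.

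By induction, each actual queue is the virtual queue with the dropped customers deleted. Since both systems use the same $B_{j,t}$ and FIFO service, deleting customers from a queue can only make every remaining customer complete service weakly earlier. This is a standard pathwise argument that we would verify by induction on $t$; it also gives $Q_j(t)\le Q_j^{\mathrm{virt}}(t)$, which the previous paragraph needs. Because the net revenue equals the revenue from customers who join and complete service by the horizon, we obtain
\[
\Rcal(\pi^{\mathrm{tr}})\ \ge\ \Rcal(\pi^*)\ -\ \E\Big[\textstyle\sum_{\text{dropped customers}} r_j\Big].
\]

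\textbf{Bounding the dropped revenue.} A dropped customer joins at a queue length $q\ge\Lambda_j$, where
\[
\Lambda_j=\xi_j^{S\downarrow}(1+\epsilon_\xi)\log_\gamma\!\big(T/(\xi_j^{S\downarrow}\epsilon)\big)\ \ge\ \xi_j\log_\gamma\!\big(T/(\xi_j\epsilon)\big),
\]
using $\xi_j\le(1+\epsilon_\xi)\xi_j^{S\downarrow}$ and $\xi_j^{S\downarrow}\le\xi_j$. Here $\xi_j^A=\xi_j$ is the parameter in the joining probability. Hence
\[
a_j(q)\ \le\ p_j\gamma^{-\Lambda_j/\xi_j}\ \le\ \frac{\epsilon\, p_j\xi_j}{T}.
\]
In each period at most one service is recommended, so the expected dropped revenue per period is at most $\lambda_t\max_j r_j\cdot\epsilon p_j\xi_j/T$. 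Summing over $t\in[T]$ gives at most $\epsilon\lambda_{\max}\max_j r_jp_j\xi_j=\epsilon U_{\mathrm{LB}}$. This is at most $\epsilon\,\Rcal(\pi^*)$, provided $U_{\mathrm{LB}}$ remains a lower bound on the optimum of the current instance.

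\textbf{Main obstacle.} We expect the main obstacle to be this last point, namely that $U_{\mathrm{LB}}$ is still a lower bound on the optimum after Steps~1a and~1b. Rounding the revenues and slowing the service to $\xi_j^{S\downarrow}$ may shave it by $(1+\epsilon)(1+\epsilon_\xi)$-type factors. Our first attempt would be to redo the lower-bound argument of Appendix~\ref{subsec:appendix-lower-bound} with $r_j^\downarrow$ and $\xi_j^{S\downarrow}$, and to absorb the resulting constant into $\epsilon$ (or to define $\Lambda_j$ with a slightly larger logarithm). A secondary point to check carefully is the FIFO monotonicity claim under the shared service opportunities.
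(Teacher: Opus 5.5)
Your argument goes through by a different coupling from the paper's, once the last step is tightened. The ``main obstacle'' you flag comes from a loose bound, and neither of your proposed fixes is needed. Absorbing a constant into $\epsilon$ would not give the stated factor $1-\epsilon$ anyway.

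The fix is to keep $\xi_j^{S\downarrow}$ in the exponent rather than replacing it by $\xi_j$. Since $\xi_j=\xi_j^A\le(1+\epsilon_\xi)\xi_j^{S\downarrow}$, the definition of $\Lambda_j$ gives directly
\[
\frac{\Lambda_j}{\xi_j}\ \ge\ \frac{\Lambda_j}{(1+\epsilon_\xi)\,\xi_j^{S\downarrow}}\ =\ \log_\gamma\Big(\frac{T}{\xi_j^{S\downarrow}\epsilon}\Big),
\qquad\text{hence}\qquad
a_j(q)\ \le\ \frac{\epsilon\, p_j\,\xi_j^{S\downarrow}}{T}\quad\text{for } q\ge\Lambda_j .
\]
Weighting by the current revenues $r_j^\downarrow$, the expected dropped revenue is at most $\epsilon\lambda_{\max}\max_j r_j^\downarrow p_j\xi_j^{S\downarrow}$.

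The quantity $\lambda_{\max}\max_j r_j^\downarrow p_j\xi_j^{S\downarrow}$ is exactly the value, in the post-Step-1b instance itself, of the one-shot policy of Appendix~\ref{subsec:appendix-lower-bound}. That policy has join probability $a_j(0)=p_j$, same-period completion probability $\xi_j^{S\downarrow}$, and revenue $r_j^\downarrow$. So this quantity is at most $\sup_{\pi\in\Pi}\Rcal(\pi)$, no comparison with the original $U_{\mathrm{LB}}$ is needed, and you get exactly $1-\epsilon$. This is why $\Lambda_j$ is built from $\xi_j^{S\downarrow}$ with the factor $1+\epsilon_\xi$, and it is how the paper concludes.

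\textbf{How your coupling differs from the paper's.} The paper does not thin. Its truncated policy recommends $\pi$'s choice whenever the \emph{actual} queue is below $\Lambda_j$, and it couples actual and virtual joins through a common uniform. The actual system, having shorter queues, may therefore acquire \emph{extra} joins.

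The paper consequently compares completion counts rather than individual customers. Write $\Acal_j,\Dcal_j$ for the actual cumulative joins and completions, and $\Acal_j^{\mathrm{virt}},\Dcal_j^{\mathrm{virt}}$ for the virtual ones. Conservation gives $\Dcal_j^{\mathrm{virt}}-\Dcal_j=(\Acal_j^{\mathrm{virt}}-\Acal_j)+\bigl(Q_j(T+1)-Q_j^{\mathrm{virt}}(T+1)\bigr)\le \Acal_j^{\mathrm{virt}}-\Acal_j$. The right side is at most the number of virtual joins in suppressed periods, because the per-period join difference is nonpositive outside them. The paper also suppresses based on the actual queue, so it needs the ordering to get $Q_j^{\mathrm{virt}}(t)\ge\Lambda_j$ on suppressed events. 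You drop based on the virtual queue, so the joining probability is controlled directly.

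Your attenuation by $a_j(Q_j^{\mathrm{virt}}(t))/a_j(Q_j(t))$ is the same device as $w_{j,t}$ in \texttt{LPGA}. It buys a cleaner sample-path picture: actual joins are exactly the virtual joins minus the dropped ones. The cost is that the policy voluntarily declines some admissible recommendations.

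Your secondary FIFO concern is easy to close. A customer joining at $t$ with $q$ customers ahead completes at the $(q+1)$-st service opportunity from $t$ on, and $q$ is weakly smaller in the actual system by the queue ordering. Alternatively, the conservation identity above applies verbatim to your coupling and makes the customer-level argument unnecessary.

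Finally, state explicitly that the policy cannot observe $B_{j,t}$ when the actual post-arrival queue is empty. The virtual copy must resample it in that case, as the paper does; this does not change the law of the virtual system.
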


The proof shows that each time a recommendation is suppressed by the truncation rule, the resulting expected loss is at most an \(\epsilon/T\) fraction of the optimal objective value. Since at most one recommendation can be suppressed in each period, these losses accumulate to at most an \(\epsilon\) fraction over the entire horizon.

\paragraph{State-space compression.}
Beyond its approximation guarantee, truncation provides a second important benefit: it places an effective cap on every queue length and thereby substantially reduces the relevant state space.

\begin{corollary}
	\label{collorary:QPTAS-policy-hard-cap}
	Under every policy in $\Pi^{\mathrm{tr}}$, we have $Q_j(t)\le \lceil\Lambda_j\rceil$
	for all $j\in[n]$ and $t\in[T+1]$.
\end{corollary}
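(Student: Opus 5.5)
We argue by induction on $t$, using only the queue dynamics~\eqref{eq:true-dynamics} and the defining property of $\Pi^{\mathrm{tr}}$. Fix a policy in $\Pi^{\mathrm{tr}}$ and a service $j\in[n]$. The base case $t=1$ holds because all queues start empty, so $Q_j(1)=0\le\lceil\Lambda_j\rceil$; here we use that $\Lambda_j\ge 0$, which follows from $\xi_j^{S\downarrow}\le 1$ and $\epsilon<1$, so that $T/(\xi_j^{S\downarrow}\epsilon)\ge T\ge 1$ and the logarithm is nonnegative.

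For the inductive step, assume $Q_j(t)\le\lceil\Lambda_j\rceil$ for some $t\in[T]$. By~\eqref{eq:true-dynamics},
\[
Q_j(t+1)=\bigl(Q_j(t)+A_{j,t}-B_{j,t}\bigr)^+\le Q_j(t)+A_{j,t}.
\]
We split into two cases according to whether $Q_j(t)\ge\Lambda_j$.

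\emph{Case $Q_j(t)\ge\Lambda_j$.} By the definition of $\Pi^{\mathrm{tr}}$, service $j$ is not recommended in period $t$. Joining requires a recommendation, so $A_{j,t}=0$, and therefore $Q_j(t+1)\le Q_j(t)\le\lceil\Lambda_j\rceil$.

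\emph{Case $Q_j(t)<\Lambda_j$.} Since $Q_j(t)$ is an integer strictly below $\Lambda_j$, we have $Q_j(t)\le\lceil\Lambda_j\rceil-1$. Because $A_{j,t}\le 1$, it follows that $Q_j(t+1)\le\lceil\Lambda_j\rceil$.

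In both cases $Q_j(t+1)\le\lceil\Lambda_j\rceil$, which completes the induction over $t\in[T+1]$. The bound holds pathwise on every sample path, so it holds for randomized and history-dependent policies alike.

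The only point needing care is the integer step in the second case: from $Q_j(t)<\Lambda_j$ one obtains $Q_j(t)\le\lceil\Lambda_j\rceil-1$ and not merely $Q_j(t)\le\lfloor\Lambda_j\rfloor$. This is exactly why the cap is $\lceil\Lambda_j\rceil$ rather than $\Lambda_j$. Beyond that, the argument is routine.
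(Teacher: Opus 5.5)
Your proof is correct and is the natural induction the paper relies on. The paper states this corollary without proof, treating it as immediate from the definition of $\Pi^{\mathrm{tr}}$: a queue gains at most one customer per period and gains none once it reaches $\Lambda_j$, and your case split on $Q_j(t)\ge\Lambda_j$ versus $Q_j(t)<\Lambda_j$ makes exactly this precise.
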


Consequently, for each service $j$, the number of queue-length states that need to be tracked decreases from $O(T)$ to
\begin{align*}
	O(\Lambda_j)
	=
	O\left(
	\xi_j^{S\downarrow}
	\log_\gamma\left(
	\frac{T}
	{\xi_j^{S\downarrow}\epsilon }
	\right)
	\right) =
	O\left(
	\log\left(\frac{T}{\epsilon}\right)
	\right),
\end{align*}
where the second equality follows from $\gamma=\Theta(1)$ and $1
\ge
\xi_j^{S\downarrow}
\ge
\frac{\xi_j^S}{1+\epsilon_\xi}
\ge
\frac{1}{2T}$.
Thus, truncation reduces the number of relevant queue-length states for each service from linear in $T$ to logarithmic in $T/\epsilon$. This reduction is also what makes it possible to round the joining probabilities using a sufficiently coarse grid in the next step, and will ultimately be instrumental in constructing the compressed MDP in Step~2.

\subsubsection{Step 1d: Rounding the Joining Probabilities}

The final rounding step before constructing the compressed MDP concerns the joining probabilities. Recall that the joining probability for service $j$ is $a_j(q)=p_j\gamma^{-q/\xi_j^A}$. The truncation developed in Step~1c allows us to restrict attention to queue lengths $q\le\lceil\Lambda_j\rceil$, over which the joining probabilities are bounded away from exponentially small values. We then round the parameters of the joining functions, with this restriction ensuring that the resulting joining probabilities remain within a uniform multiplicative factor of their original values.

Our analysis proceeds in two stages. We first establish a general perturbation result showing that a uniform multiplicative approximation of the joining probabilities leads to a corresponding multiplicative approximation of the optimal objective value. We then round the parameters $p_j$ and $\xi_j^A$ so that the resulting joining functions satisfy the uniform multiplicative condition.

\paragraph{Effect of rounding the joining probabilities.}
Throughout this step, we restrict attention to policies in the truncated class $\Pi^{\mathrm{tr}}$. Consider rounded-down joining functions
$\{a_j^\downarrow(\cdot)\}_{j\in[n]}$ satisfying
\begin{equation}
	\label{eq:QPTAS_a-rounding-condition}
	\beta a_j(q)
	\le
	a_j^\downarrow(q)
	\le
	a_j(q),
	\qquad
	\forall j\in[n],\quad
	0\le q < \Lambda_j,
\end{equation}
for some $\beta\in(0,1]$. Thus, throughout the truncated state space, rounding decreases each joining probability by at most a multiplicative factor $\beta$.

All other primitives of the problem---including the rounded revenues $\{r^\downarrow_j \}_{j \in [n]}$ and rounded service-completion parameters $\{ \xi^{S \downarrow}_j \}_{j \in [n]}$---are held fixed. For a policy $\pi$, let $\Rcal_{(a)}(\pi)$ and $\Rcal_{(a^\downarrow)}(\pi)$ denote its expected net revenue under the joining functions $\{a_j(\cdot)\}_{j\in[n]}$ and $\{a_j^\downarrow(\cdot)\}_{j\in[n]}$, respectively. In particular, $\Rcal_{(a)}(\pi)$ is the same objective value denoted by $\Rcal(\pi)$ in Step~1c. 
Define
\[
\OPT_{(a)}^{\mathrm{tr}} 
:= 
\sup_{\pi\in\Pi^{\mathrm{tr}}} \bigg[
\Rcal_{(a)}(\pi) \bigg] \qquad \text{ and } 
\qquad
\OPT_{(a^\downarrow)}^{\mathrm{tr}}
:=
\sup_{\pi\in\Pi^{\mathrm{tr}}} \bigg[
\Rcal_{(a^\downarrow)}(\pi) \bigg].
\]
The following lemma shows that the multiplicative approximation in
\eqref{eq:QPTAS_a-rounding-condition} translates into a multiplicative approximation of the optimal objective value.

\begin{lemma}
	\label{lemma:QPTAS_rounding-a-OPT-comparison}
	$\OPT_{(a^\downarrow)}^{\mathrm{tr}}
	\ge
	\beta \cdot \OPT_{(a)}^{\mathrm{tr}}$.
\end{lemma}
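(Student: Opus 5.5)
We prove Lemma~\ref{lemma:QPTAS_rounding-a-OPT-comparison} with a coupling argument in the spirit of Lemma~\ref{lemma:QPTAS_rounding-xi-OPT-comparison}. We build a policy $\tilde\pi\in\Pi^{\mathrm{tr}}$ for the rounded system that earns at least $\beta\cdot\Rcal_{(a)}(\pi)$ for an arbitrary $\pi\in\Pi^{\mathrm{tr}}$; taking the supremum over $\pi$ then gives the claim.

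\textbf{Construction of $\tilde\pi$.} The policy $\tilde\pi$ runs a virtual copy of the original system (joining functions $a_j$) controlled by $\pi$. Every real recommendation reveals a Bernoulli$(a_j^\downarrow(q))$ outcome. We augment it with independent randomness as follows.
\begin{itemize}
\item If the real customer joins, a virtual customer joins as well.
\item If the real customer does not join, a virtual customer joins with the conditional probability $\frac{a_j(q)-a_j^\downarrow(q)}{1-a_j^\downarrow(q)}$.
\end{itemize}
This reproduces the correct virtual joining law.

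\textbf{Why naive tracking fails.} Following $\pi$'s recommendations literally would make the real queues shorter than the virtual ones. The real system would then face larger joining probabilities, and the state-dependent dynamics would desynchronize.

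\textbf{Thinning instead.} We therefore let the real system be a thinning of the virtual one. Each virtual customer is marked ``real'' independently with probability $\beta$; for this we need marginal real joining probability exactly $\beta a_j(q)$. The real system tracks only the marked customers.
\begin{itemize}
\item The real queue $Q_j^{\mathrm{R}}(t)$ then satisfies $Q_j^{\mathrm{R}}(t)\le Q_j^{\mathrm{V}}(t)$.
\item Monotonicity~\eqref{eq:monotone-a} gives $a_j^\downarrow(Q^{\mathrm{R}}_j)\ge \beta a_j(Q^{\mathrm{R}}_j)\ge \beta a_j(Q^{\mathrm{V}}_j)$.
\item So when $\pi$ recommends $j$ in the virtual system, $\tilde\pi$ recommends $j$ in the real system. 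The real joining probability $a_j^\downarrow(Q^{\mathrm{R}}_j)$ dominates the target $\beta a_j(Q^{\mathrm{V}}_j)$, and we can couple so that every marked virtual joiner is a real joiner.
\item Extra real joins beyond the marked ones would break the dominance. To prevent them, $\tilde\pi$ randomizes: it recommends $j$ only with probability $\beta a_j(Q^{\mathrm{V}}_j)/a_j^\downarrow(Q^{\mathrm{R}}_j)\le 1$ and otherwise makes no recommendation. The real join event then has probability exactly $\beta a_j(Q^{\mathrm{V}}_j)$ and can be identified with the marked virtual join.
\end{itemize}
Truncation is preserved: whenever $\tilde\pi$ recommends $j$, we have $Q^{\mathrm{R}}_j\le Q^{\mathrm{V}}_j<\Lambda_j$. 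This also guarantees that condition~\eqref{eq:QPTAS_a-rounding-condition} is only ever invoked in its valid range $q<\Lambda_j$.

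\textbf{Service coupling.} We use the same service opportunities $B_{j,t}$ in both systems. Because the real queue contains a FIFO-ordered subset of the virtual customers, a real customer may complete earlier than its virtual twin. The cleanest way to handle this is to observe that net revenue counts customers who complete service. We then show that each marked customer completes in the real system whenever it completes in the virtual system: under FIFO, a marked customer has weakly fewer customers ahead of it in the real queue than in the virtual queue, and the real queue receives the same service opportunities. This requires a pathwise induction showing that the real position of each marked customer is at most its virtual position.

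\textbf{Conclusion.} Each virtual joiner is marked with probability $\beta$, independently of the virtual trajectory. Hence
\[
\Rcal_{(a^\downarrow)}(\tilde\pi)\ \ge\ \beta\,\Ebb\Big[\textstyle\sum r_j\cdot\#\{\text{virtual completions}\}\Big]=\beta\,\Rcal_{(a)}(\pi).
\]

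\textbf{Main obstacle.} The hard step is the service-side comparison: proving that thinning the virtual queue keeps every marked customer's completion time no later in the real system. I would prove it by induction over periods on the invariant that the real queue, as an ordered list, is the marked subsequence of the virtual queue with possibly some marked customers already completed. The key point is that a service opportunity removes the real head whenever it removes a marked virtual head.
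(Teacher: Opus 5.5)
Your proof is correct, but it builds a different witness policy from the paper's. The paper's policy $\pi^\downarrow$ is exactly the ``naive tracking'' you set aside: it always recommends $\pi(t,\bm Q(t))$, with no throttling.
\begin{itemize}
\item It couples joins through common uniforms, $A^\downarrow_{j,t}=\mathbb I\{V_{j,t}\le\bar a^\downarrow_{j,t}(Q^\downarrow_j(t))\}$ and $A_{j,t}=\mathbb I\{V_{j,t}\le\bar a_{j,t}(Q_j(t))\}$, and shares service opportunities.
\item It obtains $Q^\downarrow_j(t)\le Q_j(t)$ from integrality alone. When the two queues are equal, $a^\downarrow_j\le a_j$ gives $A^\downarrow_{j,t}\le A_{j,t}$. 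When $Q^\downarrow_j(t)<Q_j(t)$, we have $Q^\downarrow_j(t)+1\le Q_j(t)$.
\item Your $\beta$-thinned process appears there only as an auxiliary system, with joins $\mathbb I\{V_{j,t}\le\beta\,\bar a_{j,t}(Q_j(t))\}$.
\end{itemize}
The paper then makes two comparisons. First, the auxiliary joins are a subset of the actual rounded joins, so post-join queue ordering gives $D^\beta_{j,t}\le D^\downarrow_{j,t}$ pathwise. Second, $\Ebb[\Dcal^\beta_j]\ge\beta\,\Ebb[\Dcal_j]$ by the same marking/FIFO argument you give.

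So your diagnosis that literal tracking fails is not right. The extra real joins you throttle away would break your customer-level subsequence invariant. They do not break what the revenue bound needs: completion counts are monotone in the arrival set under common service opportunities.

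What your throttling buys is a single comparison. Your realized rounded system has the same joint law with the virtual system as the paper's auxiliary $\beta$-system, so no third process is needed. Your chain $a_j^\downarrow(Q^{\mathrm R}_j)\ge\beta a_j(Q^{\mathrm R}_j)\ge\beta a_j(Q^{\mathrm V}_j)$ also uses monotonicity of $a_j$ only, whereas the paper invokes monotonicity of $a_j^\downarrow$.

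The price is a randomized witness policy that deliberately forgoes joins. The paper shows that the natural unthrottled policy already attains the bound. It in fact earns weakly more in expectation than yours, since $\Ebb[\Dcal^\downarrow_j]\ge\Ebb[\Dcal^\beta_j]$. It also reuses the same coupling template as the policy-transfer lemma.

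One implementability detail is missing from your write-up. ``The same $B_{j,t}$'' must be realized as in the paper. When the real post-join queue is nonempty, copy the observed completion. Otherwise, sample a fresh Bernoulli$(\xi^{S\downarrow}_j)$, because the latent opportunity is not observed at an empty queue.
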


The proof of Lemma~\ref{lemma:QPTAS_rounding-a-OPT-comparison}, provided in Appendix~\ref{subsec:QPTAS-proof_rounding-a-OPT-comparison}, requires a more delicate coupling argument than that used for rounding the service probabilities. Rounding down a service probability has a relatively direct connection to the objective: the net revenue can be viewed as the revenue generated by customers who are ultimately served, and the service probability directly governs service completions. By contrast, the joining probability governs whether a customer enters the queue, rather than whether that customer ultimately generates net revenue. Consequently, even if the rounded joining probability at every relevant queue length is within a multiplicative factor of its original value, such a pointwise comparison does not immediately imply a corresponding multiplicative comparison of the objective values. Establishing this connection requires a more careful comparison of the customers who join and are ultimately served in the two systems.

To overcome this difficulty, we construct a coupling involving the actual rounded system and two virtual systems. The first virtual system simulates the system with the original joining functions under its optimal policy $\pi^*_{(a)}$, whose recommendation decisions are then implemented by the constructed policy $\pi^\downarrow$ in the actual rounded system. We couple the actual rounded system and the first virtual system to establish that the latter is always at least as congested as the former. We then introduce a second, auxiliary virtual system solely for the analysis by retaining each joining event in the first virtual system independently with probability $\beta$. This auxiliary system allows us to establish the desired performance guarantee through two comparisons. First, its joining events can be coupled to form a subset of those in the actual rounded system. Second, relative to the first virtual system, the $\beta$-thinning reduces congestion in the auxiliary system, allowing us to show that the auxiliary system achieves at least a $\beta$ fraction of the first virtual system's expected net revenue. Combining these two comparisons yields the desired $\beta$-factor guarantee.

\paragraph{Rounding $p_j$ and $\xi^A_j$.}
Recall from Section~\ref{subsec:QPTAS-assumptions} that $p_j\in[1/T,1]$. Define the geometric grid
\[
\texttt{Grid}_p
:=
\left\{
\frac{1}{T}(1+\epsilon)^m
:
m=0,1,\ldots,
\left\lceil
\log_{1+\epsilon}T
\right\rceil
\right\}.
\]
For each service $j\in[n]$, define $p_j^\downarrow
:=
\max\left\{
z\in\texttt{Grid}_p:z\le p_j
\right\}$. By construction, $\frac{p_j}{1+\epsilon}
\le
p_j^\downarrow
\le
p_j$. Also, $|\texttt{Grid}_p|
= O \left( \log_{1+\epsilon} T\right) = 
O\left(
\frac{1}{\epsilon}\log T
\right)$. We round $\xi_j^A$ in the same way as how we round the service probability $\xi_j^S$ in Section~\ref{subsubsec:QPTAS-Step-rounding-xi}. Using the same parameter $\epsilon_\xi>0$ and same geometric grid $\texttt{Grid}_\xi$, we define $
\xi_j^{A\downarrow}
:=
\max\left\{
z\in\texttt{Grid}_\xi:z\le\xi_j^A
\right\}$. Thus, $\xi^{A\downarrow} = \xi^{S \downarrow}$ and $ \frac{\xi_j^A}{1+\epsilon_\xi} \le
\xi_j^{A\downarrow}
\le
\xi_j^A$.

It remains to choose $\epsilon_\xi$ so that the uniform bound~\eqref{eq:QPTAS_a-rounding-condition} holds over all queue lengths retained after truncation. We have the following claim. The proof is provided in Appendix~\ref{subsec:QPTAS-choosing-eps-proof}. 

\begin{claim}\label{claim:QPTAS-selecting-epsilons}
	Choose $\epsilon_\xi = \frac{\epsilon}{\ln(T/\epsilon)}$. Then Condition~\eqref{eq:QPTAS_a-rounding-condition} holds with $\beta=1-5\epsilon$.
\end{claim}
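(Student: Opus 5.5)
The plan is a direct pointwise verification. Both parameters are rounded down and the exponent is negative, so the upper bound in \eqref{eq:QPTAS_a-rounding-condition} is immediate. For the lower bound, I will control the ratio $a_j^\downarrow(q)/a_j(q)$ uniformly over $0\le q<\Lambda_j$. The truncation threshold $\Lambda_j$ was built so that $q/\xi_j$ is only logarithmic in $T/\epsilon$ on this range. Combined with $\epsilon_\xi=\epsilon/\ln(T/\epsilon)$, this should make the exponent perturbation $O(\epsilon)$.

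\textbf{Upper bound.} Here $a_j^\downarrow(q)=p_j^\downarrow\gamma^{-q/\xi_j^{A\downarrow}}$. Since $p_j^\downarrow\le p_j$, $\xi_j^{A\downarrow}\le\xi_j^A=\xi_j$ and $\gamma>1$, we get $\gamma^{-q/\xi_j^{A\downarrow}}\le\gamma^{-q/\xi_j}$. Hence $a_j^\downarrow(q)\le a_j(q)$ for every $q$.

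\textbf{Lower bound.} Write
\[
\frac{a_j^\downarrow(q)}{a_j(q)}=\frac{p_j^\downarrow}{p_j}\cdot\gamma^{-q\left(1/\xi_j^{A\downarrow}-1/\xi_j\right)}.
\]
The first factor is at least $1/(1+\epsilon)\ge1-\epsilon$. For the second factor, $\xi_j^{A\downarrow}\ge\xi_j/(1+\epsilon_\xi)$ gives $1/\xi_j^{A\downarrow}-1/\xi_j\le\epsilon_\xi/\xi_j$.

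Next I use $q<\Lambda_j=\xi_j^{S\downarrow}(1+\epsilon_\xi)\log_\gamma\bigl(T/(\xi_j^{S\downarrow}\epsilon)\bigr)$. Since $\xi_j^{S\downarrow}\le\xi_j$, this yields $q/\xi_j\le(1+\epsilon_\xi)\log_\gamma\bigl(T/(\xi_j^{S\downarrow}\epsilon)\bigr)$. The second factor is therefore at least
\[
\exp\Bigl(-\epsilon_\xi(1+\epsilon_\xi)\ln\tfrac{T}{\xi_j^{S\downarrow}\epsilon}\Bigr).
\]
Because $\xi_j\ge1/T$ and the grid starts at $1/T$, we have $\xi_j^{S\downarrow}\ge1/T$. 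Hence $\ln\bigl(T/(\xi_j^{S\downarrow}\epsilon)\bigr)\le\ln(T^2/\epsilon)\le2\ln(T/\epsilon)$.

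Substituting $\epsilon_\xi=\epsilon/\ln(T/\epsilon)$, the exponent is at least $-2\epsilon(1+\epsilon_\xi)$. Since $\ln(T/\epsilon)>\ln 10>2$, we have $\epsilon_\xi<\epsilon/2<1/20$, so the exponent is at least $-2.1\epsilon$. Using $e^{-x}\ge1-x$, the second factor is at least $1-2.1\epsilon$.

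Multiplying the two factors gives
\[
\frac{a_j^\downarrow(q)}{a_j(q)}\ge(1-\epsilon)(1-2.1\epsilon)\ge1-3.1\epsilon\ge1-5\epsilon,
\]
which is Condition~\eqref{eq:QPTAS_a-rounding-condition} with $\beta=1-5\epsilon$.

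The only real care point is the middle step: relating $q/\xi_j$ to $\Lambda_j$. This requires the direction $\xi_j^{S\downarrow}\le\xi_j$, the identification $\xi^{A\downarrow}=\xi^{S\downarrow}$, and the lower bound $\xi_j^{S\downarrow}\ge1/T$, so that the logarithm is at most $2\ln(T/\epsilon)$ and is absorbed by the choice of $\epsilon_\xi$. Everything else is elementary.
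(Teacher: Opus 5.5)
Your proof is correct and follows essentially the same route as the paper. Both factor the ratio into $p_j^\downarrow/p_j\ge 1/(1+\epsilon)$ and an exponential term, bound $q/\xi_j$ on $q<\Lambda_j$ using $\xi_j^{S\downarrow}\le\xi_j$ and $\xi_j^{S\downarrow}\ge 1/T$ to get a logarithm of order $\ln(T/\epsilon)$, and absorb it with $\epsilon_\xi=\epsilon/\ln(T/\epsilon)$. The differences are only cosmetic: you keep the factor $1+\epsilon_\xi$ instead of bounding it by $2$, which yields $1-3.1\epsilon$ rather than the paper's exactly $1-5\epsilon$, and you explicitly verify the upper bound $a_j^\downarrow\le a_j$, which the paper leaves implicit.
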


Recall that $\xi_j^A$ and $\xi_j^S$ are two copies of the same original parameter $\xi_j$. Because they are rounded down using the same geometric grid and the same rounding parameter $\epsilon_\xi$, their rounded values coincide. Hence, from this point onward, we simplify the notation and write $\xi_j^\downarrow
:= \xi_j^{A\downarrow} = \xi_j^{S\downarrow}$ for each $j \in [n]$.

\subsubsection{Step 1: Summary and Policy Transfer}

We conclude Step~1 by combining the implications of Steps~1a--1d. Consider the following rounded and truncated version of~\ref{problem:DSR-abstract}, referred to as the modified DSR problem (\ref{problem:reduced-MDP}):
\begin{align}
	\label{problem:reduced-MDP}
	\tag{\texttt{DSR-M}}
	\overline{\OPT}
	:=
	\sup_{\pi\in\Pi^{\truncate}} \bigg[
	\overline{\Rcal}(\pi) \bigg].
\end{align}
Here, $\overline{\Rcal}$ denotes the objective value under the rounded parameters $\{r_j^\downarrow\}_{j\in[n]}$,
$\{p_j^\downarrow\}_{j\in[n]}$, and $
\{\xi_j^\downarrow\}_{j\in[n]}$, where the latter two parameter sets determine the joining probabilities and service dynamics. The truncated policy class $\Pi^{\truncate}$ is likewise defined with respect to the rounded parameters $\{\xi_j^\downarrow\}_{j\in[n]}$. Specifically, service $j$ cannot be recommended whenever its current queue length is at least
\[
\Lambda_j
=
\xi_j^\downarrow
\left(
1+
\frac{\epsilon}{\ln(T/\epsilon)}
\right)
\log_\gamma
\left(
\frac{T}
{\xi_j^\downarrow\epsilon}
\right).
\]
Thus, both the dynamics and the truncation thresholds of \texttt{DSR-M} are completely specified by the rounded parameters $\{p_j^\downarrow\}_{j\in[n]}$, and $
\{\xi_j^\downarrow\}_{j\in[n]}$. Importantly, Lemmas~\ref{lemma:QPTAS_rounding-rewards},
\ref{lemma:QPTAS_rounding-xi-OPT-comparison},
\ref{lem:truncation}, and
\ref{lemma:QPTAS_rounding-a-OPT-comparison}
together imply that
\begin{equation}
	\label{eq:QPTAS-nearly-optimality-of-fully-rounded-instance}
	\overline{\OPT}
	\ge
	(1-9\epsilon) \cdot \OPT,
\end{equation}
where $\OPT$ denotes the optimal objective value of the original, unmodified~\ref{problem:DSR-abstract}.

In Step~2, we will show that \texttt{DSR-M} can be solved in quasi-polynomial time. In particular, we will obtain an optimal deterministic Markov policy $\bar\pi^*$ for \texttt{DSR-M}. The following lemma shows that any such deterministic Markov policy can be transferred back to the original problem without any additional loss in objective value. We provide the proof in Appendix~\ref{subsec:QPTAS-proof_policy-transfer}.

\begin{lemma}
	\label{lemma:QPTAS_policy-transfer}
	Let $\bar\pi$ be any deterministic Markov policy feasible for \texttt{DSR-M}. Then we can construct a policy $\pi$ for the original instance such that $\Rcal(\pi)
	\ge
	\overline{\Rcal}(\bar\pi)$.
\end{lemma}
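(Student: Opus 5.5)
The approach is a monotone coupling: we run a virtual copy of \texttt{DSR-M} under $\bar\pi$ and act on its decisions in the original system. The original instance has weakly larger revenues ($r_j\ge r_j^\downarrow$), weakly faster service ($\xi_j\ge\xi_j^\downarrow$), and weakly larger joining probabilities at equal queue lengths, since $p_j\ge p_j^\downarrow$ and $\xi_j\ge\xi_j^\downarrow$ give $a_j(q)=p_j\gamma^{-q/\xi_j}\ge p_j^\downarrow\gamma^{-q/\xi_j^\downarrow}=a_j^\downarrow(q)$. The difficulty is that a faster-serving original queue may be shorter, and hence have a \emph{higher} joining probability. So we cannot simply copy the virtual decisions; we must make the original system dominate the virtual one in served revenue.

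\textbf{Construction of $\pi$.} Policy $\pi$ maintains a virtual \texttt{DSR-M} state $\bar{\bm Q}(t)$ and will keep the invariant $Q_j(t)\le \bar Q_j(t)$ for every $j$ and $t$.
\begin{enumerate}
\item The arrival is shared by both systems. If $\bar\pi$ recommends $j$ at $(t,\bar{\bm Q}(t))$, then $\pi$ also recommends $j$.
\item Draw $U\sim\mathrm{Unif}[0,1]$. The real customer joins iff $U\le a_j(Q_j(t))$, and the virtual customer joins iff $U\le a_j^\downarrow(\bar Q_j(t))$. Under the invariant and monotonicity \eqref{eq:monotone-a}, $a_j^\downarrow(\bar Q_j)\le a_j^\downarrow(Q_j)\le a_j(Q_j)$, so every virtual join is also a real join.
\item For service, set $\bar B_{j,t}=B_{j,t}\cdot C_{j,t}$ with an independent $C_{j,t}\sim\mathrm{Bernoulli}(\xi_j^\downarrow/\xi_j)$, so that a virtual completion opportunity implies a real one.
\end{enumerate}
Because the real system may have extra joins and those customers sit in the real queues, the invariant $Q_j\le\bar Q_j$ can break. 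To handle this I would instead track \emph{coupled customers}: the real queue is a superset of the virtual customers in FIFO order, with possible extra customers interleaved. Unlike Step~1b, we then need to show that every virtual customer who is served by $T$ is also served in reality. That is hard when extra real customers sit ahead of them.

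\textbf{Fallback construction (the one I expect to use).} This is exactly why the lemma is stated for a policy we may construct freely. Policy $\pi$ simply forbids the excess: it recommends $j$ only with a thinning. Concretely, when $\bar\pi$ recommends $j$, $\pi$ recommends $j$ only when $U\le a_j^\downarrow(\bar Q_j)$ would hold. Since the platform cannot condition on $U$ before the customer decides, we implement this by recommending $j$ with probability
\[
a_j^\downarrow(\bar Q_j(t))/a_j(Q_j(t)) \le 1,
\]
which is valid as long as $Q_j\le\bar Q_j$. Otherwise $\pi$ makes no recommendation. A real join then occurs with probability exactly $a_j^\downarrow(\bar Q_j(t))$. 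We couple it so that the real and virtual joins coincide, and the virtual system's joining outcomes are generated from the real ones with extra randomness to the same effect. With identical join streams and $\bar B_{j,t}\le B_{j,t}$, a simple induction on~\eqref{eq:true-dynamics} gives $Q_j(t)\le\bar Q_j(t)$ pathwise, since $(x+A-B)^+$ is monotone in $x$ and decreasing in $B$. This keeps the probability ratio above at most one at all times, closing the induction.

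\textbf{Comparing revenues.} Joins are identical in the two systems. Terminal queues satisfy $Q_j(T+1)\le\bar Q_j(T+1)$, so real served counts, $\sum_t A_{j,t}-Q_j(T+1)$, dominate the virtual ones pathwise. Weighting by $r_j\ge r_j^\downarrow\ge 0$ gives $\Rcal(\pi)\ge\overline{\Rcal}(\bar\pi)$. One further check is needed: the $Q_j(t)=0$ convention and unobserved $B_{j,t}$ do not matter, because the virtual $\bar B$ can be sampled from the revealed real transitions plus fresh coins, preserving the correct marginals. The main obstacle is the joining coupling in Step~2, and in particular keeping the thinning probability at most one via the invariant. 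I would verify that it relies only on monotonicity of $a_j^\downarrow$ together with $a_j^\downarrow\le a_j$ pointwise.
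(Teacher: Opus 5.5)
Your fallback construction is correct, and it follows a genuinely different route from the paper's.

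Your first construction is in fact exactly the paper's policy: recommend whatever $\bar\pi$ recommends, couple joins through a common uniform, and thin the service opportunities. You are also right that the invariant $Q_j(t)\le\bar Q_j(t)$ can fail under it. The paper does not repair this by thinning recommendations. Instead, purely for the analysis, it introduces a third ``partially rounded'' system $\bm Q^\downarrow(t)$. This system is driven by the same recommendations and uniforms, but uses the original joining functions $a_j(\cdot)$ together with the rounded service rates.

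The auxiliary system dominates both of the others:
\begin{itemize}
\item $Q_j(t)\le Q_j^\downarrow(t)$, since it has the same joining function and slower service.
\item $\bar Q_j(t)+\bar A_{j,t}\le Q_j^{+\downarrow}(t)$, since it has the same service and a larger joining function.
\end{itemize}
Completions are then compared in two stages. First, the original system has at least as many joins as the auxiliary one, by monotonicity of $a_j$ at the dominated queue. It also has a smaller terminal backlog, so it serves at least as many customers. Second, the auxiliary system serves at least as many customers as the fully rounded one, because its post-joining queue is longer under identical service opportunities.

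Your thinning makes the two join streams identical. As a result, a single two-system induction on $(x+A-B)^+$ suffices, and the completion comparison takes one line. The price is a somewhat artificial transferred policy that voluntarily forgoes recommendations. The paper's argument, by contrast, shows that the natural copy-the-decision policy, which never suppresses a recommendation, is already safe.

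Both arguments use exactly the same ingredients: $a_j^\downarrow\le a_j$ pointwise plus monotonicity. You use them to keep the thinning probability at most one; the paper uses them to order the join indicators.

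One small point you should make explicit. When $Q_j(t)+A_{j,t}=0$, the freshly sampled $\bar B_{j,t}$ need not satisfy $\bar B_{j,t}\le B_{j,t}$. No such inequality is needed in that case, because then $Q_j(t+1)=0$.
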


The proof of Lemma~\ref{lemma:QPTAS_policy-transfer} be viewed as reversing the sequence of comparisons used in the preceding rounding steps, while also providing an explicit construction of $\pi$ from $\bar\pi$. Applying Lemma~\ref{lemma:QPTAS_policy-transfer} to an optimal deterministic Markov policy $\bar\pi^*$ for \texttt{DSR-M} and using~\eqref{eq:QPTAS-nearly-optimality-of-fully-rounded-instance} yields
\[
\Rcal(\pi)
\ge
\overline{\Rcal}(\bar\pi^*)
=
\overline{\OPT}
\ge
(1-9\epsilon) \cdot \OPT.
\]
Thus, once an optimal policy for \texttt{DSR-M} can be computed in quasi-polynomial time, we can construct a $(1-9\epsilon)$-approximate policy for the original problem without any further loss.

\subsection{Step 2: Compressed MDP}

After Step 1, each service is characterized by rounded parameters drawn from finite geometric grids, and its queue length is restricted by the truncation threshold. We now exploit these properties to compress the state space of \texttt{DSR-M}. The key observation is that services sharing the same rounded parameters, and hence belonging to the same type, are statistically identical. Consequently, rather than tracking the queue length of every individual service, it suffices to track, for each service type, the number of services currently at each possible queue length. We formulate the resulting compressed MDP and show that it can be solved exactly in quasi-polynomial time. We first make a few definitions:

\begin{itemize}
	
	\item {\bf Service types.} After Step 1, service $j$ has parameters $(r^\downarrow_j,p^\downarrow_j,\xi^\downarrow_j)$. Define two services $i$ and $j$ to have the same type if $(r^\downarrow_i,p^\downarrow_i,\xi^\downarrow_i) = (r^\downarrow_j,p^\downarrow_j,\xi^\downarrow_j)$. Therefore, we define a service type $k \in [K]$, which is fully characterized by $(r^\downarrow_k,p^\downarrow_k,\xi^\downarrow_k)$. For notational simplicity in this Step 2, we drop the superscript $\downarrow$, with the understanding that parameters $\{r_j\}_{j\in[n]}$,
$\{p_j\}_{j\in[n]}$, and $
\{\xi_j\}_{j\in[n]}$ have been rounded. Totally, the number of customer types is bounded as
\begin{align}
K \leq \,\, & | \texttt{Grid}_r  | \cdot | \texttt{Grid}_p  | \cdot | \texttt{Grid}_\xi  |  \notag \\ = \,\, & O\left(
\frac{1}{\epsilon}
\log\left(\frac{T^4}{\epsilon}\right)
\right) \cdot O \left(  \frac{1}{\epsilon} \log T \right) \cdot O \left(  \frac{1}{\epsilon_\xi}\log T  \right) =  O\left(   \frac{ \log^7 \left( {T}/{ \epsilon } \right)  }{\epsilon^3}   \right). \label{eq:number_of_types}
\end{align}

\item {\bfseries Queue length.} For each service type $k$, with slight abuse of notation, we write its length threshold as
\begin{align}
\Lambda_k := \xi_k \cdot \left( 1 + \frac{ \epsilon  }{ \ln(T/\epsilon) } \right) \cdot \log_\gamma \left( \frac{ T }{ \xi_k \epsilon }\right) = O \left(  \log \left(\frac{T}{\epsilon}\right) \right).
\label{eq:type-queue-length}
\end{align}
We define $H_k := \lceil \Lambda_k \rceil$. By Corollary~\ref{collorary:QPTAS-policy-hard-cap}, for each service type $k$, its length under the truncated policy class is in $\{  0,1,\ldots, H_k  \}$. 

\end{itemize}

\subsubsection{State and Action}

\paragraph{Histogram state.} For each service type \(k\) and queue length \(q\in\{0,\ldots,H_k\}\), we use $N_{k,q}(t)$ to denote the number of type-$k$ services with queue length $q$ at the beginning of time period $t$. The histogram-based compressed state is
\[
\bm N(t) := \bigl(N_{k,q}(t)\bigr)_{k \in [K],q \in [H_k]_0}.
\]
This aggregation loses no information relevant to future decisions. Conditional on their current queue lengths, services belonging to the same rounded type have identical joining and service primitives. Consequently, they are exchangeable, and the distribution of the next state \(\bm N(t+1)\) depends on the current system only through \(\bm N(t)\) and the chosen action.

\paragraph{Action.}
In the compressed MDP, an action either makes no recommendation, denoted by $0$, or selects a type-state pair $(k,q)$ with $N_{k,q}(t)>0$ and recommends one of the services of type $k$ whose current queue length is $q$. Thus, given a compressed state $\bm N(t) = \bm N$, the feasible action set is
\begin{equation}
	\mathcal A(\bm N)
	:=
	\{0\}
	\cup
	\left\{
	(k,q):
	N_{k,q}>0,\;
	q<\Lambda_k
	\right\}.
	\label{eq:compressed-action-set}
\end{equation}
The condition $N_{k,q}>0$ ensures that there exists at least one service of type $k$ whose current queue length is $q$, while $q < \Lambda_k$ enforces the truncated recommendation rule. Because all services represented by the same type-state pair $(k,q)$ have identical rounded parameters and the same current queue length, they are interchangeable from the perspective of the compressed MDP. To make the correspondence with the original labeled system deterministic, whenever action $(k,q)$ is selected, we adopt the convention of recommending the smallest-index service of type $k$ whose current queue length is $q$.

\subsubsection{One-Period Transition Law}
\label{subsubsec:compressed-transition}

We next explicitly characterize the one-period transition kernel of the compressed MDP. Consider a time period $t \in [T]$. For any fixed state $\bm N(t)$ and feasible action $a$, we show how to compute $\Pp\bigl( \bm N(t+1) = \bm N' \mid \bm N(t),a\bigr)$
for all possible next states $\bm N'$ in quasi-polynomial time.

Fix a current state $\bm N (t) = \bm N$ and a nonnull action $a=(k^*,q^*)\in\mathcal A( \bm N )$. One service represented by $(k^*,q^*)$ is distinguished as the recommended service. We temporarily remove this service from the \emph{background population}, which consists of all services that are not recommended in the current period, and define
\begin{equation*}
	\eta_{k,q}
	:=
	N_{k,q}
	-
	\mathbb I\{(k,q)=(k^*,q^*)\}.
\end{equation*}
We separately characterize the state transitions of the recommended service and the background population.

\paragraph{Background services.}
No background service receives an admission in the current period. For every pair \((k,q)\) with \(q\ge1\), let
\begin{equation*}
	Z_{k,q}
	\sim
	\operatorname{Binomial}
	\bigl(\eta_{k,q},\xi_k\bigr),
\end{equation*}
independently across type-state pairs. Here \(Z_{k,q}\) is the number of background services in cell \((k,q)\) that experience a service opportunity and hence move from queue length \(q\) to \(q-1\). Services represented by \((k,0)\) remain at queue length zero regardless of whether an unobserved service opportunity occurs.

Conditional on a realization \( \bm z =(z_{k,q})_{k \in [K], q \in [H_k]_0} \), the background histogram after service is
\begin{equation*}
	\bar N_{k,q}( \bm z )
	=
	\eta_{k,q}
	-
	\mathbb I\{q\ge1\} \cdot z_{k,q}
	+
	\mathbb I\{q+1\le H_k\} \cdot z_{k,q+1},
	\qquad
	q=0,\ldots,H_k.
\end{equation*}
Indeed, for each $q\ge1$, $z_{k,q}$ of the $\eta_{k,q}$ services represented by the type-state pair $(k,q)$ experience a service completion and move to queue length $q-1$. Conversely, $z_{k,q+1}$ services of type $k$ move from queue length $q+1$ to $q$. The indicator functions account for the boundary queue lengths. The probability of the background realization $z$ is
\begin{equation*}
	\Pp_{\mathrm{bg}}( \bm z \mid \bm N,a)
	=
	\prod_{k=1}^K
	\prod_{q=1}^{H_k}
	\binom{\eta_{k,q}}{z_{k,q}}
	(\xi_k)^{\,z_{k,q}}
	(1- \xi_k)^{\,\eta_{k,q}-z_{k,q}},
\end{equation*}
where \(0\le z_{k,q}\le \eta_{k,q}\).

\paragraph{Recommended service.} Recall that $a_k(q)
=p_k\gamma^{-q/\xi_k}$
is the rounded joining probability conditional on the presence of an arriving customer. Since a customer arrives with probability \(\lambda\), the admission indicator and service opportunity of the recommended service satisfy, independently,
\begin{equation*}
	A^*
	\sim
	\operatorname{Bernoulli}\!\left(\lambda_t a_{k^*}(q^*)\right),
	\qquad
	B^*
	\sim
	\operatorname{Bernoulli}\!\left(\xi_{k^*}\right).
\end{equation*}
The next queue length is $q_{\mathrm{new}}^*
=
\left(q^*+A^*-B^*\right)^+$. For \(u,v\in\{0,1\}\), define
\begin{equation*}
	\Pp_{\mathrm{rec}}(u,v\mid \bm N,a)
	:=
	\Pp(A^* = u ) \cdot \Pp(B^*=v).
\end{equation*}

\paragraph{Transition kernel.} Given realizations \(\bm z\), \(u\), and \(v\), the resulting next compressed state is
\begin{equation*}
	\Phi(\bm N,a; \bm z,u,v)_{kq}
	:=
	\bar N_{k,q}( \bm z )
	+
	\mathbb I
	\left\{
	(k,q)
	=
	\left(
	k^*,
	\left( q^* + u - v \right)^+
	\right)
	\right\}.
\end{equation*}
The first term captures the transitions of all background services, while the second places the recommended service into its resulting type-state pair after its admission and service realizations. It follows that the compressed transition kernel can be written explicitly as
\begin{align}
	\Pp_t( \bm N' \mid \bm N,a)
	&=
	\sum_{ \bm 0 \leq \bm z \leq  \bm \eta}
	\sum_{u=0}^1
	\sum_{v=0}^1
	\Pp_{\mathrm{bg}}( \bm z\mid \bm N,a) \cdot
	\Pp_{\mathrm{rec}}(u,v \mid \bm N,a) \cdot
	\mathbb I
	\left\{
	\Phi( \bm N,a; \bm z, u ,v)= \bm N'
	\right\}.
	\label{eq:compressed-transition-kernel}
\end{align}
Different realizations of the primitive randomness may result in the same next compressed state \(\bm N'\); equation~\eqref{eq:compressed-transition-kernel} aggregates the probabilities of all such realizations.

For the null action \(a=0\), no service is distinguished and no admission occurs. We therefore set \(\eta_{k,q}=N_{k,q}\) and apply the independent background transitions to all type-state pairs.

\paragraph{Computability of the kernel.}
For a fixed state-action pair, each type-state pair \((k,q)\) with \(q\ge1\) contributes \(\eta_{k,q}+1\) possible values of \(Z_{k,q}\). Hence, conditional on \(\bm N \) and \(a\), the support size of \(\bm Z = (Z_{k,q})_{k \in [K],q \in [H_k]} \) is
\begin{equation*}
	\prod_{k=1}^K\prod_{q=1}^{H_k}(\eta_{k,q}+1)
	\le
	(n+1)^M,
\end{equation*}
where $M:=\sum_{k=1}^K(H_k+1)
$ is the total number of type-state pairs represented in the compressed state. In addition, the recommended service contributes at most four possible outcome pairs \((A^*,B^*)\in\{0,1\}^2\). Therefore, the transition kernel~\eqref{eq:compressed-transition-kernel} can be evaluated by enumerating at most \(4(n+1)^M\) primitive outcomes for each state-action pair, corresponding to the summation over $\bm z$, $u$, and $v$. Since the probability of each primitive outcome,
$\Pp_{\mathrm{bg}}( \bm z\mid \bm N,a) \cdot \Pp_{\mathrm{rec}}(u,v \mid \bm N,a) $,
can be evaluated in $O(M)$ time, the complete transition
distribution from a fixed state-action pair can be
enumerated in running time
\begin{equation}
	O\left(
	M(n+1)^M
	\right).
	\label{eq:kernel-computation}
\end{equation}

\subsubsection{Bellman Recursion for \ref{problem:reduced-MDP}}

Let $\overline V(t, \bm N)$ denote the optimal net revenue from periods $t,\ldots,T$, conditional on the compressed state $N$ at the beginning of period $t$. The terminal value is
\begin{equation}
	\overline{V}(T+1, \bm N)
	=
	-\sum_{k=1}^K
	r_k
	\sum_{q=0}^{H_k}qN_{kq},
	\label{eq:bellman-terminal}
\end{equation}
which accounts for refunds to customers remaining at the end of the horizon. For a nonnull action $a=(k,q)$, let $A^*$ be the joining indicator of the recommended service and define the immediate revenue as $R( \bm N,a):=r_k A^*$, while $R( \bm N,0):=0$. The Bellman recursion is
\begin{equation}
	\overline V(t,\bm N)
	=
	\max_{a\in\mathcal A( \bm N)}
	\Ebb\left[
	R(\bm N,a)+ \overline V(t+1,\bm N(t+1))
	\,\middle|\,
	\bm N(t) = \bm N,a
	\right],
	\qquad t\in[T],
	\label{eq:bellman}
\end{equation}
where $\bm N(t+1)$ follows the transition kernel characterized in~\eqref{eq:compressed-transition-kernel}. Since the compressed state and action spaces are finite, backward induction yields an optimal deterministic Markov policy for the compressed MDP.

\paragraph{Lifting to the original state representation.}
Any deterministic Markov policy for the compressed MDP naturally induces a deterministic Markov policy for \texttt{DSR-M} under its original labeled state representation. Given a queue-length vector \(\bm Q(t)\), construct the corresponding compressed state \(\bm N(t)\) and apply the compressed policy. If the selected action is \((k,q)\), recommend the smallest-index service of type \(k\) whose current queue length is \(q\); such a service exists because \((k,q)\) is feasible only if \(N_{kq}(t)>0\). Since all services represented by the same type-state pair \((k,q)\) have identical rounded parameters and the same current queue length, this deterministic tie-breaking rule does not affect the immediate revenue or the distribution of the next compressed state. Hence, an optimal deterministic Markov policy for the compressed MDP induces an optimal deterministic Markov policy for \texttt{DSR-M}.

\subsubsection{Counting States, Actions, and Transitions}
Each compressed state $\bm N(t) = \left(  N_{k,q}(t) \right)_{k \in [K], q \in [H_k]_0}$ satisfies $\sum_{k \in [K]} \sum_{q \in [H_k]_0} N_{k,q}(t) = n$, as there are totally $n$ services. This implies that each component $N_{k,q}(t)$ satisfies $0 \leq N_{k,q}(t) \leq n$. Let $\Scal$ denote the set of possible compressed states. We thus have
\begin{align}
| \Scal | \leq (n+1)^{ | \{ (k,q) \, : \, k \in [K], q \in [H_k]_0 \} | } = (n+1)^M,
\label{eq:state-count}
\end{align}
where we recall that $M := \sum_{k=1}^M (H_k + 1)$ is the total number of type-state pairs.

At every state, there are at most $M$ feasible type-state pairs and therefore at most $|\mathcal A(\bm N)|\le M+1$ feasible actions, including the null action. By~\eqref{eq:kernel-computation}, the transition distribution for each state-action pair can be enumerated in $O\left(M(n+1)^M\right)$ time. Therefore, one Bellman backup at a fixed state can be performed in $O\left(
(M+1) \cdot M(n+1)^M
\right)$ time. Combining this bound with~\eqref{eq:state-count}, a straightforward backward-induction implementation over $T$ periods runs in
\begin{equation}
	O\left( T \cdot |\Scal| \cdot 
	M(M+1)(n+1)^{M}
	\right) = 
	O\left(
	T M^2 (n+1)^{2M}
	\right)
	=
	T\cdot
	n^{
		O\left(
		\log^8(T/\epsilon)/\epsilon^3
		\right)},
	\label{eq:runtime-general-M}
\end{equation}
where the second equality uses
\[
M
\le
K \left(1+\max_k H_k\right)
=
\underbrace{O\left(
\frac{\log^7(T/\epsilon)}{\epsilon^3}
\right)}_{\text{by~\eqref{eq:number_of_types}}}
\cdot
\underbrace{O\left(
\log \left( \frac{T}{\epsilon} \right)
\right)}_{\text{by~\eqref{eq:type-queue-length}}}
=
O\left(
\frac{\log^8(T/\epsilon)}{\epsilon^3}
\right).
\]
Thus, the rounded and truncated instance can be solved exactly in quasi-polynomial time. In particular, the running time in~\eqref{eq:runtime-general-M} matches that stated in Theorem~\ref{thm:qptas}.

\paragraph{Role of the reductions in Step 1.} The preceding construction also highlights why both parameter rounding and queue truncation in Step 1 are essential to the QPTAS. Parameter rounding limits the number of distinct service types, allowing individual service identities to be replaced by aggregate counts. Queue truncation, in turn, limits the number of queue lengths that must be tracked within each type to \(O(\log(T/\epsilon))\). Without this truncation, a service could occupy \(O(T)\) different queue lengths, causing the dimension \(M\) of the compressed state to grow too rapidly for the resulting dynamic program to run in quasi-polynomial time. Thus, the QPTAS relies on compression along two dimensions: across services through parameter rounding, and across congestion levels through queue truncation.

\section{LP-based Approximation Algorithm}
\label{sec:LP}

Having established a QPTAS in the preceding section, we now develop an alternative approach to handling the joint congestion state. While the QPTAS explicitly tracks a compressed representation of the joint state, our LP-based approach instead captures the evolution of each service through the probability distribution of its queue length at each period. We then use these distributions to construct a feasible recommendation policy without explicitly optimizing over the joint state space. This approach yields a polynomial-time $(1-1/e)$-approximation for \ref{problem:DSR-abstract}. Moreover, it applies under substantially more general modeling assumptions, accommodating arbitrary monotone congestion-dependent joining probabilities and general discrete service-capacity distributions.

\subsection{Model}

We generalize the dynamic service recommendation model in Section~\ref{subsec:model} along two dimensions. First, we no longer require the joining probability $a_j(\cdot)$ to take the exponential form in~\eqref{eq:joining-probability}. Instead, $a_j(\cdot)$ may be any function satisfying the monotonicity condition~\eqref{eq:monotone-a}; that is, customers become weakly less likely to join as the queue length increases. Second, we allow the latent service capacity $B_{j,t}$ to follow an arbitrary nonnegative discrete distribution: for each $j\in[n]$,
\begin{equation}
	\Pp(B_{j,t}=b)=\xi_{j,b},
	\qquad b\in\Zp,
	\label{eq:service-law}
\end{equation}
where $\xi_{j,b} \geq 0$ and $\sum_{b=0}^\infty \xi_{j,b}=1$. The collection $\{B_{j,t} \}_{j \in[n],t\in[T]}$ is mutually independent and independent of all customer-arrival, customer-choice, and platform randomness. We continue to refer to the resulting problem as~\ref{problem:DSR-abstract}, with the understanding that the joining probabilities and service-capacity distributions are generalized as above.

We introduce two quantities associated with the service-capacity distributions that will be useful throughout this section. First, for $q,q'\in\Zp$, define the service \emph{transition kernel}
\begin{equation}
	K_j(q,q')
	:=
	\Pp\!\left( (q-B_{j,t})^+=q'\right)
	  =
	\begin{cases}
		\sum_{b \geq q }^\infty \xi_{j,b}  & q'=0,\\[1mm]
		\xi_{j,q-q'}, & 1\le q' \le q,\\[1mm]
		0, & q'>q.
	\end{cases}
\end{equation}
Thus, $K_j(q,q')$ is the probability that a queue of length $q$ transitions to length $q'$ after one period of service. Second, we define the \emph{completion factor}
 \begin{equation}
 	\kappa_{j,q,t}
 	:=
 	\Pp\!\left(
 	\sum_{\tau=t}^{T} B_{j,\tau}\ge q+1
 	\right).
 	\label{eq:kappa}
 \end{equation}
 Under FIFO, if a customer joins service $j$ in period $t$ when the pre-arrival queue length is $q$, then $\kappa_{j,q,t}$ is exactly the probability that this customer completes service by the end of period $T$. It follows immediately that $
 	\kappa_{j,q+1,t}
 	\le
 	\kappa_{j,q,t}$. 
 The completion factors can be computed by the backward
 recursion $\kappa_{j,q,t}
 =
 \sum_{b=0}^{q}\xi_{j,b}\kappa_{j,q-b,t+1}
 +
 \sum_{b\ge q+1}\xi_{j,b}$, with boundary conditions
 $\kappa_{j,q,T+1}=0$ for all $q\ge0$. Computing all completion factors through this recursion takes $O(nT^3)$ time.

\subsection{Marginal occupancy LP}

We formulate an LP relaxation based on marginal queue-state probabilities.

\paragraph{Variables.}
We call the service selected by the policy before observing
whether a customer arrives in period~$t$ the
\emph{contingent recommendation}. This is equivalent to
selecting a service upon a customer's arrival, but is
convenient for defining the LP variables. For $ j \in [n]$, $t \in [T]$, and $q \in [t-1]_0$,
we use variable $x_{j,q,t}$ to denote the \emph{unconditional} probability
that service $j$ has queue length $q$ at the beginning
of period $t$ and is the contingent recommendation, i.e.,
\[
x_{j,q,t}
:=
\Pp\!\left(
Q_j(t)=q,\;
\text{service \(j\) is the contingent recommendation in period \(t\)}
\right).
\]
We further introduce $y_{j,q,t}:=\Pp(Q_j(t)=q)$ to denote the marginal
probability that service $j$ has queue length $q$ at the
beginning of period $t$. As we will see, the values of $y_{j,q,t}$ are uniquely
determined by the variables $x_{j,q,t}$ through the
constraints and initial system conditions.

\paragraph{The linear program.} We first define $\bar{a}_{j,t}(q) := \lambda_t \cdot a_j(q)$ for all $j \in [n]$ and $q \in [T]_0$. We consider the following linear program
\begin{align}
		Z^{\LP} \, := \, \max_{\bm x \geq \bm 0, \bm y \geq \bm 0} \quad
		& \sum_{t=1}^{T} \sum_{j=1}^n \sum_{q=0}^{t-1}
		r_j \kappa_{j,q,t} \bar{a}_{j,t}(q) x_{j,q,t} \tag{\texttt{MOLP}}
		\label{eq:MOLP}
		\\ 
		\text{s.t. }\quad
		&(1)~~ x_{j,q,t}\le y_{j,q,t},
		&&\forall j  ,\  t ,\ q \in [t-1]_0, \nonumber \\
		&(2)~~ \sum_{j=1}^n \sum_{q=0}^{t-1} x_{j,q,t} \leq 1, &&\forall t, \nonumber \\
		&(3)~~ y_{j,q',t+1} = \sum_{q=0}^{t-1} \bigg[
		\bigl(y_{j,q,t} - \bar{a}_{j,t}(q) \cdot x_{j,q,t}\bigr) \cdot K_j(q,q')
		&& \nonumber \\
		& 
		\qquad \qquad \qquad \qquad \qquad   + \bar{a}_{j,t}(q)\cdot x_{j,q,t} \cdot K_j(q+1,q')
		\bigg],
		&&\forall j , \ t ,\ q' \in [t]_0, \nonumber \\
		&(4)~~ y_{j,0,1}=1, && \forall j, \nonumber \\
		& (5)~~ y_{j,q,1}=0, &&\forall j , \ q \geq 1.
\end{align}
We refer to this linear program as the \emph{marginal occupancy} LP,
\ref{eq:MOLP}, since it tracks the time-varying marginal occupancy
(i.e., marginal queue-state) distribution of each queue separately,
rather than the joint distribution of the queue-length vector.

\paragraph{Objective.}
A direct LP analogue of the original net-revenue objective~\ref{problem:DSR-abstract} would be
\begin{equation}
\label{eq:LP-original-objective}
\sum_{t=1}^{T}\sum_{j=1}^n\sum_{q=0}^{t-1}
r_j\,\bar a_{j,t}(q)\,x_{j,q,t}
-
\sum_{j=1}^n r_j\sum_{q=0}^T q\,y_{j,q,T+1}.
\end{equation}
The first term records the expected revenues collected from customers who join the recommended services, while the second subtracts the expected refunds associated with customers remaining in the queues at the end of the horizon.

For our analysis, however, it is more convenient to account for the terminal refund at the time a customer joins. If a customer joins service \(j\) in period \(t\) when the pre-arrival queue length is \(q\), then, under FIFO, she completes service by the end of the horizon with probability \(\kappa_{j,q,t}\). Hence, from the perspective of expected net revenue, such an admission contributes $r_j\kappa_{j,q,t}$ rather than the full revenue \(r_j\). This leads directly to the objective in~\ref{eq:MOLP}, which assigns each potential admission its expected realized revenue and avoids an explicit terminal refund term. In the proof of Proposition~\ref{prop:LP_upperbound}, we show that, for every feasible solution to~\ref{eq:MOLP}, this completion-factor objective is exactly equivalent to the natural revenue-minus-refund expression above.

\paragraph{Constraints.}
Constraints (1), (2), (4) and (5) have straightforward interpretations: Constraint (1) requires that the recommendation mass $x_{j,q,t}$ cannot exceed the available occupancy mass $y_{j,q,t}$. Constraint (2) captures the fact that at most one service can be recommended in each period. Constraints (4) and (5) specify the initial condition that every queue is empty at the beginning of time period $1$. Constraint (3) refers to the flow equations of the probability mass. Fix a state $q'$ at the beginning of time $t+1$. Conditional on $Q_j(t)=q$, there is no accepted arrival with probability $y_{j,q,t} - \bar{a}_{j,t}(q) x_{j,q,t}$, in which case the pre-service state is $q$ and leads to transition kernel $K_j(q,q')$ to state $q'$. Meanwhile, there is an
accepted arrival with mass $\bar{a}_{j,t}(q)x_{j,q,t}$, in which case the pre-service state is
$q+1$ and leads to transition kernel $K_j(q+1,q')$ to state $q'$.  Therefore, for every $q'=0,\ldots,t$, we have
\begin{equation}
	y_{j,q',t+1}
	=
	\sum_{q=0}^{t-1}
	\bigg[
	\underbrace{\bigl(y_{jqt} -  \bar{a}_{j,t}(q) \cdot x_{j,q,t} \bigr) \cdot K_j(q,q')}_{\text{no joining happens in time $t$}}
	+
	\underbrace{ \bar{a}_{j,t}(q) \cdot x_{j,q,t} \cdot K_j(q+1,q')}_{\text{A joining happens in time $t$}}
	\bigg].
	\label{eq:flow-kernel}
\end{equation}

\paragraph{Upper bound.}  We next establish that the marginal occupancy LP provides an upper bound on the optimal expected net revenue of the original stochastic control problem. The key observation is that any policy can be mapped to a feasible solution of \ref{eq:MOLP} while preserving its expected net revenue. We provide the proof in Appendix~\ref{subsec:LP-upperbound-proof}.

\begin{proposition}
	\label{prop:LP_upperbound}
	Every policy induces a feasible solution to~\ref{eq:MOLP} with the same expected net revenue. Therefore, $Z^{\LP}\geq \OPT$, where $\OPT$ is the optimal objective value of \ref{problem:DSR-abstract}.
\end{proposition}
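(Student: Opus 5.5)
Fix an arbitrary, possibly randomized and history-dependent policy $\pi\in\Pi$, and define the candidate LP solution directly from its induced law:
\[
y_{j,q,t}:=\Pp_\pi(Q_j(t)=q), \qquad x_{j,q,t}:=\Pp_\pi\bigl(Q_j(t)=q,\ \text{$j$ is the contingent recommendation in period $t$}\bigr).
\]
Since the arrival indicator in period $t$ is independent of the past and of the policy's randomization, we may, without loss, let $\pi$ choose its contingent recommendation before observing the arrival. The plan is to check feasibility constraint by constraint, and then show that the \ref{eq:MOLP} objective at $(\bm x,\bm y)$ equals $\Rcal(\pi)$. Taking the supremum over $\pi$ then gives $Z^{\LP}\ge\OPT$.

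\textbf{Feasibility.} Nonnegativity is immediate. Constraint (1) holds because $x_{j,q,t}$ is the probability of a subevent of $\{Q_j(t)=q\}$. Constraint (2) holds because the events ``$j$ is the contingent recommendation'' are disjoint across $j$. Constraints (4)--(5) follow from the empty initial queues. For the flow constraint (3), I would condition on $Q_j(t)=q$ and split according to whether a joining to $j$ occurs in period $t$.

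The probability that a joining occurs is $\Pp(Q_j(t)=q,\ j\text{ recommended},\ \text{arrival},\ \text{join})$. Conditional on the history up to the recommendation, the arrival has probability $\lambda_t$, independent of history. The joining then has probability $a_j(q)$, since the customer's decision depends only on the current queue length. Hence this probability equals $\bar a_{j,t}(q)x_{j,q,t}$, and the no-join mass is $y_{j,q,t}-\bar a_{j,t}(q)x_{j,q,t}$.

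Next, $B_{j,t}$ is independent of everything realized before service in period $t$ (including the policy's internal randomization). So the post-service transition from the pre-service length $q$ or $q+1$ follows $K_j(q,\cdot)$ or $K_j(q+1,\cdot)$ respectively. Summing over $q$ gives (3).

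The one subtlety is that $x$ is defined via a possibly history-dependent recommendation. The argument only needs the conditional independence of $(\text{arrival},\text{join},B_{j,t})$ from the history given $Q_j(t)$ and the recommendation, which holds by the model primitives. I would state this explicitly via the tower property.

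\textbf{Objective equality (main obstacle).} I would write $\Rcal(\pi)$ as the expected revenue from customers who join and complete service by period $T$, as noted after \eqref{eq:original_objective}. This sum runs over periods $t$, services $j$, and pre-arrival lengths $q$, of $r_j\,\Pp(\text{join at }(j,q,t)\text{ and this customer completes by }T)$.

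Under FIFO, a customer joining with $q$ ahead completes by $T$ iff $\sum_{\tau=t}^T B_{j,\tau}\ge q+1$. This holds regardless of future joins, which queue behind her, so her completion is unaffected by the policy's later choices. The capacities $B_{j,\tau}$ for $\tau\ge t$ are independent of the joining event at time $t$. (Here we need that the joining event is determined before $B_{j,t}$ is drawn, which matches the within-period order.) Thus the probability factors as $\bar a_{j,t}(q)x_{j,q,t}\kappa_{j,q,t}$, matching the objective of \ref{eq:MOLP}.

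To also certify the equivalence with the revenue-minus-refund form \eqref{eq:LP-original-objective} for every feasible LP solution, as the paper promises, I would additionally argue algebraically. By linearity, the solution of (3) decomposes into independent "customer-level" contributions, and a telescoping argument expresses the expected terminal queue mass $\sum_q q\,y_{j,q,T+1}$ as $\sum (1-\kappa_{j,q,t})\bar a_{j,t}(q)x_{j,q,t}$. This is the step I expect to require the most care. The identity, for the probabilistic construction, already follows from the pathwise FIFO argument, so for the proposition itself the pathwise argument suffices.
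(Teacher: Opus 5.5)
Your proposal is correct. The induced solution and the feasibility check are the same as in the paper; the difference is in how you prove the objective equality. You compute $\Rcal(\pi)$ directly at the customer level. A customer who joins service $j$ in period $t$ with $q$ customers ahead completes by $T$ if and only if $\sum_{\tau=t}^{T}B_{j,\tau}\ge q+1$. Since $(B_{j,\tau})_{\tau\ge t}$ is independent of the joining event, each admission contributes $r_j\kappa_{j,q,t}\bar a_{j,t}(q)x_{j,q,t}$ in expectation. This is the same FIFO reasoning the paper itself uses later in Lemma~\ref{lem:completion-reward-max}, and it fully suffices for $Z^{\LP}\ge\OPT$.

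The paper takes two steps instead. First, it shows that the induced solution reproduces $\Rcal(\pi)$ in the revenue-minus-refund form \eqref{eq:LP-original-objective}: expected admissions minus $r_j\,\E_\pi[Q_j(T+1)]=r_j\sum_q q\,\bar y_{j,q,T+1}$. Second, it proves algebraically that \eqref{eq:LP-original-objective} and the \ref{eq:MOLP} objective agree on every feasible solution. It does this with the terminal-backlog potential $h_{j,t}(q)=\E[(q-\sum_{\tau=t}^{T}B_{j,\tau})^+]$, which satisfies $h_{j,t}(q)=\sum_{q'}K_j(q,q')h_{j,t+1}(q')$ and $h_{j,t}(q+1)-h_{j,t}(q)=1-\kappa_{j,q,t}$. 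Multiplying constraint (3) by $h_{j,t+1}(q')$, summing over $q'$, and telescoping over $t$ gives $\sum_q q\,y_{j,q,T+1}=\sum_{t,q}(1-\kappa_{j,q,t})\bar a_{j,t}(q)x_{j,q,t}$. Your route is shorter. The paper's route buys a stronger structural fact: \ref{eq:MOLP} is exactly the natural revenue-minus-refund relaxation on all feasible points, not just on policy-induced ones.

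Your sketch of that extra identity (``decomposes into independent customer-level contributions'') is too vague as written. Besides the potential-function argument above, you could make it rigorous as follows. Realize any feasible $(x,y)$ for a single service as a Markov chain that admits with probability $(x_{j,q,t}/y_{j,q,t})\,\bar a_{j,t}(q)$. Its marginals are $y$, exactly as in Lemma~\ref{lem:virtual-marginals}, so you can then reuse your pathwise FIFO argument. As you note, the proposition itself does not need this identity.
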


\subsection{LP-guided algorithm (LPGA)}
\label{subsec:LPGA}

We now describe our LP-guided algorithm, which utilizes the optimal solution to \ref{eq:MOLP}. Throughout the section, we refer this algorithm as \LPGA.

\paragraph{Preliminaries.} We first solve \ref{eq:MOLP}, which involves $O(nT^2)$ variables and $O(n T^2)$ constraints and thus can be solved in polynomial time. We use $(x^*,y^*)$ to denote an optimal solution to \ref{eq:MOLP} and define
\begin{equation}
	\theta_{j,q,t}
	:=
	\begin{cases}
		x^*_{j,q,t}/y^*_{j,q,t}, & y^*_{j,q,t}>0,\\
		0, & y^*_{j,q,t}=0.
	\end{cases}
	\label{eq:theta}
\end{equation}
By Constraint (1) of \ref{eq:MOLP}, we have $0 \le \theta_{j,q,t} \le1$. A natural interpretation of $\theta_{j,q,t}$ is the probability of attempting to recommend service $j$ in period $t$, conditional on its length being $q$. However, directly applying these probabilities to the realized queue lengths is not necessarily feasible: for a realized state $\bm Q(t)$, we may have $
\sum_{j=1}^n \theta_{j,Q_j(t),t} > 1$. Thus, $\theta_{j,q,t}$ \emph{cannot} in general be implemented directly as a policy while respecting the requirement that at most one service be recommended in each period. This motivates the use of a contention-resolution mechanism \citep{chekuri2011submodular,feldman2016online}, which appropriately \emph{scales down} these service recommendation probabilities so that at most one service is ultimately recommended. To connect this mechanism to the marginal probabilities prescribed by the LP, {\LPGA} maintains an auxiliary system of virtual queues.

\paragraph{Virtual queues.}
A key component in both the implementation and analysis of {\LPGA} is an auxiliary virtual queue process \(\bm{\widehat Q}(t)\), which the algorithm maintains alongside the actual queue process \(\bm Q(t)\). The virtual queues serve as a bridge between the optimal LP solution and the actual system. On the one hand, they are constructed so that 
their marginal occupancy distributions coincide with the optimal LP solution \(y^*\), i.e., $\Pbb \left( \widehat Q_j(t) = q \right)  = y^*_{j,q,t} $, leading to
\[\sum_{j=1}^n
\Ebb\!\left[\theta_{j,\widehat Q_j(t),t}\right]
= \sum_{j=1}^n \sum_{q=0}^{t-1} \theta_{j,q,t} \cdot \Pbb \left( \widehat Q_j(t) = q \right) = \sum_{j=1}^n \sum_{q=0}^{t-1} \theta_{j,q,t} \cdot y^*_{j,q,t} =  \sum_{j=1}^n\sum_{q=0}^{t-1}x^*_{j,q,t}
\le 1,\]
where the inequality follows from Constraint~(2) of~\ref{eq:MOLP}. This bound need not hold for the actual queues \(\bm {Q}(t)\), whose marginal occupancy distributions generally differ from \(y^*\). The virtual queues therefore provide the probabilistic structure needed to implement the contention-resolution mechanism prescribed by the LP. On the other hand, we couple the virtual and actual systems so that each virtual queue is always at least as long as its actual counterpart. This coupling connects the LP-guided decisions generated from the virtual queues to the performance of the actual system and plays a central role in the approximation analysis.

We initialize the two systems with \(Q_j(1)=\widehat Q_j(1)=0\) for all \(j\in[n]\). We next define the LP-guided policy together with the virtual-queue dynamics.

\paragraph{Algorithm in period $t$.}
At the beginning of period $t$, the platform observes the actual queue vector $\bm Q(t)$ and maintains the virtual queue vector $\bm{\widehat Q}(t)$. For each service $j$, define
\begin{equation}
	w_{j,t}
	:=
	\begin{cases}
			\displaystyle
			\theta_{j,\widehat Q_j(t),t} \cdot
			\frac{\bar a_{j,t}(\widehat Q_j(t))}
			{\bar a_{j,t}(Q_j(t))},
			& \bar a_{j,t}(Q_j(t))>0,\\[3mm]
			0,
			& \bar a_{j,t}(Q_j(t))=0.
		\end{cases}
	\label{eq:activation_set_prob}
\end{equation}
As we show in Lemma~\ref{lem:path-dom}, by construction, $Q_j(t)\le\widehat Q_j(t)$ for all $t \in [T+1]$. By monotonicity of $\bar a_{j,t}(\cdot)$, we have $\bar a_{j,t}(Q_j(t))\ge\bar a_{j,t}(\widehat Q_j(t))$, and hence $0 \leq w_{j,t} \leq \theta_{j,\widehat{Q}_j(t),t} \leq 1$. Thus, $w_{j,t}$ is a scaled-down version of the LP-prescribed recommendation probability. 

We now describe {\LPGA} in time period $t$.
\begin{itemize}[leftmargin=1em]
	
	\item {\bfseries Step 1: Activate candidate services.} Independently for each service \(j\), activate \(j\) with probability \(w_{j,t}\). Let  $I_{j,t}\sim\operatorname{Bernoulli}(w_{j,t})$ and $S_t:=\{j\in[n]:I_{j,t}=1\}$, where \(S_t\) denotes the set of activated candidate services.

	\item {\bfseries Step 2: Select a contingent recommendation.} If \(S_t=\emptyset\), set \(J_t=0\). Otherwise, among the activated services, select the one with the largest completion-adjusted expected revenue: \begin{equation}
	J_t\in \arg\max_{j\in S_t} \left\{ r_j\,\bar a_{j,t}(Q_j(t))\,\kappa_{j,Q_j(t),t} \right\}, 
	\label{eq:priority}
	\end{equation} with ties broken according to any fixed rule. %
	
	\item {\bfseries Step 3: Observe the arrival and make the recommendation.}
	The platform observes whether a customer arrives in period $t$. If a customer arrives and $J_t=j$, service $j$ is recommended to the customer. Otherwise, no recommendation is made.
	
	\item {\bfseries Step 4: Observe the actual queue evolution.}
	For each service \(j\in[n]\), let
	\[
	A_{j,t}
	:=
	\mathbb I\{\text{a customer joins service \(j\) in period \(t\)}\}.
	\]
	By construction, \(A_{j,t}=0\) for all \(j\neq J_t\). After the joining decision, the actual queues undergo service according to their service dynamics. The platform then observes \(\bm Q(t+1)\).%

	\item {\bfseries Step 5: Update the virtual queues.}
	We construct virtual accepted-arrival indicators \(\widehat A_{j,t}\) and service capacities \(\widehat B_{j,t}\), and use them to update \(\bm{\widehat Q}(t)\) to \(\bm{\widehat Q}(t+1)\).
	
	\begin{enumerate}
		
		\item[(a)] {\itshape Construct virtual accepted arrivals.}
		If \(J_t=j\), set $\widehat A_{j,t}:=A_{j,t}$, so the selected service uses the same accepted-arrival outcome in the actual and virtual systems. For each activated but unselected service \(k\in S_t\setminus\{J_t\}\), independently sample
		\[
		\widehat A_{k,t}
		\sim
		\operatorname{Bernoulli}\!\left(\bar a_{k,t}(Q_k(t))\right).
		\]
		For each inactive service \(k\notin S_t\), set \(\widehat A_{k,t}:=0\).
		
		Here, $\widehat A_{k,t}$ represents a virtual accepted arrival. In particular, when an activated but unselected service $k$ has $\widehat A_{k,t}=1$, we interpret this outcome as a virtual customer joining the corresponding virtual queue. This customer exists only in the virtual system and does not correspond to any actual customer arriving to or joining the actual system.

		\item[(b)] {\itshape Construct virtual service capacities.} For each \(j\in[n]\), we construct  \(\widehat B_{j,t}\) as the service capacity underlying the observed transition of the actual queue. When this capacity is not uniquely determined by the transition, we sample it from its conditional distribution. Specifically, if \(Q_j(t+1)>0\), the transition uniquely determines the realized service capacity, and we set
		\[
		\widehat B_{j,t}
		=
		Q_j^+(t)-Q_j(t+1) = Q_j(t) + A_{j,t} - Q_j(t+1) .
		\]
		If \(Q_j(t+1)=0\), the transition reveals only that the service capacity is at least \(Q_j^+(t)\). We therefore sample
		\begin{equation}
			\Pp\!\left(
			\widehat B_{j,t}=b
			\,\middle|\,
			Q_j^+(t),Q_j(t+1)=0
			\right)
			=
			\frac{\xi_{j,b}}{ \sum_{b' \geq Q_j^+(t)} \xi_{j,b'}},
			\qquad b\ge Q_j^+(t),
			\label{eq:bhat-conditional}
		\end{equation}
		and assign probability zero to \(b<Q_j^+(t)\).
		
		\item[(c)] {\itshape Update the virtual queues.}
		For each \(j\in[n]\), set $\widehat Q_j(t+1)
		=
		\left(
		\widehat Q_j(t)
		+\widehat A_{j,t}
		-\widehat B_{j,t}
		\right)^+$.

	\end{enumerate}

\end{itemize}

\paragraph{Discussion.}
{\LPGA} incorporates the global planning information encoded
in the optimal LP solution through the activation probabilities
$w_{j,t}$, which determine the activated set $S_t$ of
candidate services. Recall from~\eqref{eq:theta} that
\begin{equation}
	w_{j,t}\cdot\bar a_{j,t}(Q_j(t))
	=
	\theta_{j,\widehat Q_j(t),t}
	\cdot\bar a_{j,t}(\widehat Q_j(t)).
	\label{eq:attenuation}
\end{equation}
The left-hand side is the joining probability after attenuation.
On the right-hand side, multiplying by
$r_j\kappa_{j,\widehat Q_j(t),t}$ and taking expectations
gives
$\sum_{q=0}^{t-1}
r_j\kappa_{j,q,t}\bar a_{j,t}(q)x^*_{j,q,t}$,
which is the contribution of service $j$ in period $t$ to the
optimal LP objective. Here, we use
$\Pp(\widehat Q_j(t)=q)=y^*_{j,q,t}$,
formally established in
Lemma~\ref{lem:virtual-marginals}, and
$\theta_{j,q,t}=x^*_{j,q,t}/y^*_{j,q,t}$.

Thus, the activation step preserves the contribution of each service to the LP in expectation. However, while multiple services may be activated in a period, only one can be recommended. Selecting the highest-value service among the activated candidates yields the factor $(1-1/e)$ relative to their aggregate LP contribution. We formalize this argument in the performance analysis of {\LPGA} in Appendix~\ref{subsec:proof_of_LP_performance}.

To further highlight the role of the LP-based activation step,
consider a natural alternative to~\eqref{eq:priority}
in Step~2 of {\LPGA}: instead of selecting from the activated
set $S_t$, we apply the same selection rule to all services
$j\in[n]$, dispensing with the attenuation step and the
activated set altogether. This yields a completion-adjusted
greedy policy:
\[
J_t^{\mathtt{Greedy}}\in \arg\max_{j\in[n]}
\left\{
r_j \bar a_{j,t}(Q_j(t))\kappa_{j,Q_j(t),t}
\right\}.
\]
Although the greedy policy accounts for the probability that
a customer admitted now will complete service by the horizon,
it does not account for the \emph{congestion externality}
that the current recommendation may impose on future joining
opportunities. By contrast, the LP incorporates these intertemporal
effects through its flow constraints (Constraint~(3)
in~\ref{eq:MOLP}), and the LP-based activation probabilities
bring this global planning information into the otherwise
local selection rule~\eqref{eq:priority}.

\subsection{Main Result}
\label{subsec:LP-main-theorem}

Let $R^{\ALG}$ denote the realized net revenue earned by {\LPGA}.
The following theorem establishes that its expected net revenue
is at least a $(1-1/e)$ fraction of the optimal expected net revenue. The proof is provided in Appendix~\ref{subsec:proof_of_LP_performance},
and we outline its main ideas in this section.

\begin{theorem}
	\label{thm:LPGA_approx_ratio}
	Algorithm {\LPGA} is well defined and satisfies
	\begin{equation}
		\Ebb[R^{\ALG}]
		\ge
		\left(1-\frac{1}{e}\right)\OPT.
		\label{eq:main-guarantee}
	\end{equation}
\end{theorem}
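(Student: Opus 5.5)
The plan is to compare, period by period, the expected revenue of {\LPGA} with the LP contribution $\sum_{j,q} r_j\kappa_{j,q,t}\bar a_{j,t}(q)x^*_{j,q,t}$, and then sum over $t$ and invoke Proposition~\ref{prop:LP_upperbound} ($Z^{\LP}\ge\OPT$).

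\textbf{Revenue accounting.} First I would rewrite $\Ebb[R^{\ALG}]$ using completion factors. A customer who joins service $j$ in period $t$ with $q$ customers ahead completes service by $T$ with probability $\kappa_{j,q,t}$. This probability depends only on future capacities, which are independent of the past. Hence
\[
\Ebb[R^{\ALG}]=\sum_t\Ebb\bigl[r_{J_t}\bar a_{J_t,t}(Q_{J_t}(t))\kappa_{J_t,Q_{J_t}(t),t}\bigr],
\]
with the convention that the term is $0$ when $J_t=0$. This is the same identity used in Proposition~\ref{prop:LP_upperbound}.

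\textbf{Structural lemmas, in order.}

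(i) Well-definedness and path dominance (Lemma~\ref{lem:path-dom}). I would induct on $t$, assuming $Q_j(t)\le\widehat Q_j(t)$. Then $w_{j,t}\le 1$ by monotonicity of $a_j$. Moreover $\widehat A_{j,t}\ge A_{j,t}$ for every $j$: the two are equal for $J_t$, and $A_{j,t}=0$ otherwise. Both systems use the same capacity $\widehat B_{j,t}$, which is consistent with the actual transition by construction. Since $(q+a-b)^+$ is monotone in $q$ and $a$, the inequality propagates to $t+1$.

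(ii) Virtual marginals (Lemma~\ref{lem:virtual-marginals}). I would show that, conditional on the whole history, $\widehat B_{j,t}$ has the law $\xi_{j,\cdot}$. This follows because $B_{j,t}$ is independent of the history, and sampling from its conditional law given the observed transition reproduces its distribution. I would also show that $\Pp(\widehat A_{j,t}=1\mid\text{history})=w_{j,t}\bar a_{j,t}(Q_j(t))=\theta_{j,\widehat Q_j(t),t}\bar a_{j,t}(\widehat Q_j(t))$ by \eqref{eq:attenuation}. This holds both when $j$ is selected and when it is activated but not selected, since in both cases the acceptance probability is $\bar a_{j,t}(Q_j(t))$. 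It follows that each $\widehat Q_j$ is a Markov chain whose one-step law is exactly the flow in Constraint~(3), so $\Pp(\widehat Q_j(t)=q)=y^*_{j,q,t}$. I would additionally try to prove that $\{\widehat Q_j\}_j$ are mutually independent across $j$. The conditional transition probabilities of $\widehat Q_j$ depend only on $\widehat Q_j(t)$ and on fresh, service-specific randomness, so a joint-law induction should work.

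\textbf{Contention-resolution step.} Let $v_{j,t}:=r_j\bar a_{j,t}(Q_j(t))\kappa_{j,Q_j(t),t}$.
\begin{itemize}
\item Since $Q_j\le\widehat Q_j$ and $\kappa$ is decreasing in $q$, we have $w_{j,t}v_{j,t}\ge r_j\theta_{j,\widehat Q_j,t}\bar a_{j,t}(\widehat Q_j)\kappa_{j,\widehat Q_j,t}$. Taking expectations, the ``activation-weighted'' value $\sum_j\Ebb[w_{j,t}v_{j,t}]$ is at least the period-$t$ LP contribution.
\item Given the history, activations are independent and {\LPGA} selects the maximum-value activated service. Ordering services by value gives
\[
\Ebb[v_{J_t}\mid\mathcal H_t]=\sum_j w_jv_j\prod_{i\succ j}(1-w_i)\ge\frac{1-e^{-W_t}}{W_t}\sum_j w_jv_j,\qquad W_t=\sum_j w_{j,t}.
\]
\item The obstacle is that $W_t$ need not be at most $1$ pathwise. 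We only know $\Ebb[\sum_j\theta_{j,\widehat Q_j(t),t}]\le 1$ from Constraint~(2) and (ii), and the values are correlated with $W_t$.
\item To handle this, I would not condition on the whole state. Instead, fix $j$ and bound $\Pp(\text{no higher-priority } i \text{ activated}\mid j \text{ activated},\mathcal H^{(j)})$. Using $w_i\le\theta_{i,\widehat Q_i,t}$ and the cross-service independence from (ii), the blocking probability is controlled by the unconditional masses $\rho_i:=\Ebb[\theta_{i,\widehat Q_i(t),t}]$, which satisfy $\sum_i\rho_i\le1$.
\item Then the standard Feldman--Svensson--Zenklusen style argument applies: sort by value and use $\sum_j \rho_j\prod_{i<j}(1-\rho_i)\ge 1-\prod_i(1-\rho_i)\ge 1-1/e$, together with an exchange/summation-by-parts step. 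This yields $\Ebb[v_{J_t}]\ge(1-1/e)\sum_j\Ebb[w_{j,t}v_{j,t}]$.
\end{itemize}
I expect this step to be the main difficulty. The values are random through $Q_j(t)$ rather than fixed, and the priorities among services therefore change with the state. If the independence route fails, my fallback is a conditional argument: condition on $\widehat Q_j$ and $Q_j$ only, and bound the blocking by services $i\ne j$ through their virtual states.

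\textbf{Conclusion.} Summing the per-period bound over $t$ gives $\Ebb[R^{\ALG}]\ge(1-1/e)Z^{\LP}$. Combined with $Z^{\LP}\ge\OPT$ from Proposition~\ref{prop:LP_upperbound}, this yields \eqref{eq:main-guarantee}.
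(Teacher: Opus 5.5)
Your revenue accounting, the path-dominance induction (i), and the marginal/independence induction (ii) are sound and match the paper. The gap is in the contention-resolution step. You try to prove $\Ebb[v_{J_t,t}]\ge(1-1/e)\sum_j\Ebb[w_{j,t}v_{j,t}]$ by bounding, for each activated $j$, the chance that a higher-priority service is also activated, using $w_{i,t}\le\theta_{i,\widehat Q_i(t),t}$ and the masses $\rho_i$.

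However, (ii) gives independence only among the \emph{virtual} queues. Both the priority order and the event you condition on ($j$ activated, with value $v_{j,t}$) are functions of the \emph{actual} queues. $Q_j(t)$ is correlated with $\widehat Q_i(t)$, $i\neq j$, through the shared history: whether $i$ was activated and selected earlier determines whether $j$ received customers. So once you condition on $Q_j(t)$, or on which services outrank $j$, the other services' activation probabilities are no longer controlled by $\rho_i$. The Feldman--Svensson--Zenklusen argument needs a fixed priority order and independent activations, and you have neither. Your fallback, which conditions on $Q_j$ and $\widehat Q_j$, has the same defect.

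Your target is also stronger than what is needed. $w_{j,t}v_{j,t}$ exceeds $\theta_{j,\widehat Q_j(t),t}\widehat v_{j,t}$ by the factor $\kappa_{j,Q_j(t),t}/\kappa_{j,\widehat Q_j(t),t}$. Nothing prevents this inflation from concentrating on states with $W_t>1$, where your per-state factor $(1-e^{-W_t})/W_t$ drops below $1-1/e$. (Separately, $\sum_j\rho_j\prod_{i<j}(1-\rho_i)=1-\prod_i(1-\rho_i)$, which is at least $(1-1/e)\sum_i\rho_i$, not $1-1/e$.)

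The missing idea is to apply the correlation gap only to virtual quantities. Conditional on $\mathcal F_t$, let $X_j:=v_{j,t}I_{j,t}$. Let $Y_j$ be independent, equal to $\widehat v_{j,t}:=r_j\bar a_{j,t}(\widehat Q_j(t))\kappa_{j,\widehat Q_j(t),t}$ with probability $\theta_{j,\widehat Q_j(t),t}$ and $0$ otherwise.

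You have both $v_{j,t}\ge\widehat v_{j,t}$ and $w_{j,t}v_{j,t}\ge\theta_{j,\widehat Q_j(t),t}\widehat v_{j,t}$ pointwise, not merely the second in expectation. Together they give $w_{j,t}(v_{j,t}-m)^+\ge\theta_{j,\widehat Q_j(t),t}(\widehat v_{j,t}-m)^+$ for every $m\ge0$. Replacing $X_j$ by $Y_j$ one coordinate at a time then yields $\Ebb[\max_jX_j\mid\mathcal F_t]\ge\Ebb[\max_jY_j\mid\mathcal F_t]$. The left side is exactly the expected period-$t$ revenue of \texttt{LPGA}.

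Each $Y_j$ depends only on $\widehat Q_j(t)$ and its own coin. By (ii), the $Y_j$ are therefore independent \emph{unconditionally}, with $\sum_j\Pp(Y_j>0)\le\sum_{j,q}x^*_{j,q,t}\le1$. The rank-one correlation gap gives $\Ebb[\max_jY_j]\ge(1-1/e)\sum_j\Ebb[Y_j]$. Finally, $\sum_j\Ebb[Y_j]$ is exactly the period-$t$ LP contribution, which is all you need.
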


We prove the theorem by comparing the expected net revenue of {\LPGA} with the optimal value $Z^{\LP}$ of \ref{eq:MOLP}. Specifically, we establish $\Ebb[R^{\ALG}]
\ge
\left(1- 1 / e \right) Z^{\LP}$. The result then follows from Proposition~\ref{prop:LP_upperbound}. The proof proceeds in three steps.

\begin{itemize}
	\item {\bf Step 1: Compare the actual and virtual queues.} We first show in Lemma~\ref{lem:path-dom} that $Q_j(t)\le \widehat Q_j(t)$ for every service $j$ and period $t$. By monotonicity of the joining probabilities, $\bar a_{j,t}(Q_j(t))\ge \bar a_{j,t}(\widehat Q_j(t))$. This guarantees that the attenuation probability $w_{j,t}$ is well defined and that the actual queue is always at least as attractive as its virtual counterpart.
	
	\item {\bfseries Step 2: Connect the virtual queues to \ref{eq:MOLP}.} We next show that the virtual queues are mutually independent and that their marginal distributions exactly reproduce the optimal LP solution, i.e., $\Pp\left(\widehat Q_j(t)=q\right)=y^*_{j,q,t}$. In particular, the virtual queue dynamics reproduce the flow constraints of~\ref{eq:MOLP}. Together with the attenuation identity~\eqref{eq:attenuation}, this connects the accepted-arrival probabilities generated by {\LPGA} directly to the optimal objective value of~\ref{eq:MOLP}, as discussed above.
	
	\item {\bfseries Step 3: Bound the loss from contention.} Steps~1 and~2 connect the activation of services in the virtual system to the optimal LP solution. The key complication is that multiple virtual services may be activated in the same period, while {\LPGA} can recommend only one. If all activated services could be recommended, their total expected completion-adjusted revenue would recover the corresponding LP objective. Instead, {\LPGA} selects the activated service with the largest completion-adjusted expected revenue. Using the independence established in Step~2, we apply a correlation-gap bound for the maximum of independent nonnegative random variables to show that this selection retains at least a $(1-1/e)$ fraction of the corresponding LP objective in each period. Summing over $t\in[T]$ gives $\Ebb[R^{\ALG}]
	\ge
	\left(1-\frac{1}{e}\right)Z^{\LP}$, which, together with $Z^{\LP}\ge\OPT$, proves the theorem.
	
\end{itemize}

\section{The Role of Congestion-Sensitive Joining Behavior}
\label{sec:impatience}

A central assumption of our model is that the joining probability $a_j(q)$ is nonincreasing in the queue length $q$, as specified in~\eqref{eq:monotone-a}. This monotonicity captures customers' congestion-sensitive behavior: a service becomes less attractive as its congestion increases. Importantly, it also provides a form of stochastic stability that underlies both of our approximation algorithms. In the QPTAS, we repeatedly compare the original system with systems obtained by rounding or truncating the model primitives. When such an approximation makes one system more congested, monotonicity reduces its subsequent joining probability, limiting further divergence between the two systems. This \emph{self-correcting} effect is central to the coupling arguments used throughout the QPTAS. Monotonicity plays a different but equally important role in the LP-based approach. The virtual queues are constructed to dominate the actual queues, so monotonicity ensures that the actual joining probability is at least its virtual counterpart. This makes the attenuation probabilities in~\eqref{eq:activation_set_prob} well defined and enables the coupling between the actual and virtual systems.

\paragraph{Nonempty initial queues.}
To further understand the role of this assumption, we consider a natural extension of the model studied in Section~\ref{sec:LP} in which the initial queues $Q_j(1)$ may be nonempty, with $\max_{j\in[n]}Q_j(1)$ polynomially bounded in $T$. We retain the same objective: revenue \(r_j\) is earned from a customer who joins service \(j\) during the planning horizon only if the customer completes service by the end of period \(T\). Customers initially present in the system do not directly contribute to the objective, but affect subsequent joining decisions through the congestion they create.

Importantly, the generalized problem remains approximable within a constant factor. Specifically, under monotone joining
probabilities, {\LPGA} continues to provide a polynomial-time
$(1-1/e)$-approximation. To obtain this guarantee, we initialize the LP occupancy variables $\bm y$ and the virtual queues according to the given initial queue lengths rather than at zero. The LP objective continues to account only for customers
who join during the planning horizon and subsequently complete service, while the virtual-queue construction and contention-resolution analysis remain unchanged.

\paragraph{Without monotone joining probabilities.}
In contrast, if joining probabilities are allowed to be
nonmonotone, the generalized problem becomes NP-hard to
approximate within any constant factor.

\begin{theorem}
	\label{thm:nonmonotone-hardness}
	For any fixed \(\epsilon>0\), \ref{problem:DSR-abstract} with nonempty initial queues and general, possibly nonmonotone joining probability functions is NP-hard to approximate within a factor of \(O(n^{1-\epsilon})\).
\end{theorem}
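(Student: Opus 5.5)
We will reduce from Maximum Independent Set (MIS), which is NP-hard to approximate within $|V|^{1-\epsilon}$ for any fixed $\epsilon>0$ (Håstad; Zuckerman). The idea is to make services correspond to vertices and edges correspond to time periods in which recommending one endpoint destroys the attractiveness of the other. Nonmonotone joining probabilities let us require that a service be \emph{exactly} at a prescribed queue length before it becomes attractive, and nonempty initial queues let us place services at chosen starting congestion levels.

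\textbf{Construction.} Given a graph $G=(V,E)$ with $V=[n]$, create one service per vertex, all with $r_j=1$ and deterministic service capacity. We take $B_{j,t}\equiv 0$ during the horizon, which the general discrete service law of Section~\ref{sec:LP} permits; if completion is required, we add a final stretch of periods with large capacity. With zero capacity a queue only grows, so $Q_j(t)$ equals $Q_j(1)$ plus the number of customers who have joined $j$. We let $\lambda_t=1$ and design $a_j(\cdot)$ so that service $j$ has $a_j(q)=1$ only at one ``target'' length $q=h$, and $a_j(q)=0$ elsewhere. All services start at $Q_j(1)=h$.

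\textbf{Encoding edges.} The difficulty is that with the same target for every vertex, any set of services can each be recommended once, so nothing prevents a non-independent set. Edges must couple services, yet queues are separate. The coupling can come only from the one-recommendation-per-period constraint together with timing. So we instead use a time-indexed schedule. Give vertex $j$ its own window of periods, and make $a_j$ depend on the queue length in a way that encodes $j$'s edge incidences. Concretely, we aim for the following structure.

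The horizon is split into $n$ phases. In phase $j$, the platform may ``select'' vertex $j$ (gain revenue $1$) only if, in earlier phases, it spent the periods associated with edges $\{i,j\}$, $i<j$, on \emph{other} actions. Equivalently, we use gadget services per edge whose queue length records whether an endpoint was chosen, with $a$ nonmonotone so that a gadget at a length reflecting ``one endpoint taken'' blocks the other.

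Making this gadget sound is the main obstacle. Queues interact only through the one-recommendation-per-period constraint, so ``blocking'' must come from timing: selecting $i$ consumes the unique period in which $j$ would be attractive. I would first try assigning each vertex $j$ a set of periods $P_j$ such that $j$ is attractive only while its queue follows a prescribed count of joins over $P_j$, and such that adjacent vertices share a period in $P_i\cap P_j$. Because $a$ is nonmonotone, a vertex's reward can be made to require being recommended at \emph{every} one of its periods. Its reward is $1$ upon reaching length $h+|P_j|$, with all intermediate joins having $\kappa$ tied to completion, and payoff realized through a large $r$ at the last step with tiny $r$ before. Shared periods then force adjacent vertices to conflict. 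Independent sets of size $k$ yield revenue $k$, since the chosen vertices have disjoint period sets. Conversely, each fully rewarded vertex set must have pairwise disjoint $P_j$, which makes it an independent set.

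\textbf{Finishing the argument.} Polynomial horizon ($T=O(|V|+|E|)$), polynomially bounded initial queues, and $n=|V|$ services keep the reduction polynomial. We must also check that partial recommendations earn only negligible revenue, via $r$ scaling. With that, $\OPT=\alpha(G)$ up to lower-order terms, so an $O(n^{1-\epsilon})$-approximation would approximate MIS within $|V|^{1-\epsilon}$, which is NP-hard.
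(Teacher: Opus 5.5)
\textbf{Gap 1: your construction has no clock.} Your combinatorial skeleton is the right one: edges become periods, vertex $j$ pays off only if it is joined in every period of an incident edge, and a shared period forces adjacent vertices to conflict. But the construction you commit to cannot implement it. With $B_{j,t}\equiv 0$, you get $Q_j(t)=Q_j(1)+(\text{joins of } j \text{ so far})$. So $a_j(Q_j(t))$ depends only on how often $j$ has been joined, never on $t$, and $\lambda_t$ is common to all services. Nothing can then make $j$ ``attractive only during $P_j$.'' The only coupling left across services is the budget of one recommendation per period, which carries no graph structure. Per-edge gadget services do not help either, since recommending $i$ never changes another service's queue. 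Also, the service law in this model is stationary in $t$, so ``zero capacity, then a final stretch of large capacity'' is not an admissible instance.

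The missing idea is to use the nonempty initial queues as a \emph{clock}. Take deterministic capacity $C=T+2$ and $Q_j(1)=C(T-1)+1$. Then initial customers are still present at the start of every period $t\le T$. Newly joined customers are served only in period $T$, so there are no refunds and revenue equals the number of joins. Moreover $Q_j(t)=C(T-t)+1+\mathcal A_j(t-1)$ with $\mathcal A_j(t-1)<C$, so the queue length encodes the pair (period, joins so far) injectively. A nonmonotone $a_j$ can then equal $1$ exactly at the length corresponding to the period of $j$'s $k$-th incident edge with $k-1$ earlier joins, and $0$ elsewhere. This is precisely where the nonempty-initial-queue hypothesis is used; in your construction the initial queues play no role.

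\textbf{Gap 2: your payoff mechanism is not expressible.} The revenue $r_j$ is attached to the service, so every join into $j$ pays the same amount; ``tiny $r$ before, large $r$ at the last step'' is unavailable. Joins collected during edge periods by vertices that are later blocked still pay in full. There can be up to $m$ of them, which is not of lower order relative to $\alpha(G)\le n$, so $\mathrm{OPT}$ is not $\alpha(G)$ up to lower-order terms.

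The paper handles this by amplification. After the $m$ edge periods, each vertex $j$ gets its own block of $L=4mn$ periods. In that block, $a_j$ equals $1$ only at the lengths corresponding to ``exactly $d_j$ edge-phase joins plus $\ell$ block joins.'' Hence $j$ collects $L$ joins iff it was joined at all of its incident-edge periods. This yields $L\,\alpha(G)\le \mathrm{OPT}\le m+L\,\alpha(G)$. An $n^{1-\epsilon}$-approximation for the resulting DSR instance then gives, for large $n$, an $n^{1-\epsilon/2}$-approximation for maximum independent set, contradicting H{\aa}stad's inapproximability result.
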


The proof, provided in Appendix~\ref{subsec:proof-MIS-reduction}, is based on a reduction from the maximum independent set problem, which is NP-hard to approximate within a factor of \(O(n^{1-\epsilon})\), where \(n\) denotes the number of vertices \citep{haastad1999clique}. We construct a \ref{problem:DSR-abstract} instance from any given graph and show that an approximation for the resulting \ref{problem:DSR-abstract} instance can be translated into an approximation for its maximum independent set, thereby transferring the inapproximability result.

\paragraph{Implication.}
Theorem~\ref{thm:nonmonotone-hardness} highlights the
fundamental role of monotonicity in dynamic service
recommendation. Congestion-dependent joining behavior
introduces an intertemporal dependence between current
recommendations and future joining decisions through endogenous congestion, making the problem computationally challenging. Nevertheless, when joining probabilities decrease with congestion, this monotonicity provides the structure exploited by our approximation algorithms. Without monotonicity, the problem becomes NP-hard to approximate within any constant factor.

\section{Conclusion}
\label{sec:conclusion}

We study dynamic service recommendation when customers' willingness to join depends on the congestion of the recommended service. This feedback between recommendation, joining behavior, and endogenous congestion creates a high-dimensional dynamic optimization problem. We develop two complementary approaches to overcome this challenge. Our QPTAS exploits the parametric structure of congestion-dependent joining to obtain a near-optimal policy, while our LP-guided algorithm uses service-level marginal information and a virtual-queue construction to achieve a polynomial-time \((1-1/e)\) approximation under substantially more general model primitives.

Our results also highlight the fundamental role of monotonicity
in customers' congestion-dependent joining behavior. Although
congestion-dependent joining introduces additional dynamics
into the recommendation problem, its natural monotonicity---
customers become less willing to join as queues grow---provides
the structure that enables our approximation guarantees.
Without it, the problem becomes hard to approximate even
within $O(n^{1-\epsilon})$. More broadly, our results suggest
that endogenous customer responses to congestion need not
merely complicate dynamic service systems; when these
responses have economically natural structure, they can also
be a source of computational tractability.

\section*{Acknowledgements}
We are grateful to Jake Feldman for valuable discussions
on our QPTAS and to Rouba Ibrahim for helpful discussions
on connections to the queueing literature.

\makeatletter
\newcommand*\mysize{%
	\@setfontsize\mysize{10.0}{11.0}%
}
\makeatother

\renewcommand{\bibfont}{\mysize}
{\setlength{\bibsep}{3pt}
	\bibliographystyle{plainnat}
	\bibliography{dynamic_recommend.bib}
}

\newpage

\pagenumbering{arabic}
\renewcommand*{\thepage}{A.\arabic{page}}
\renewcommand{\thelemma}{A.\arabic{lemma}}
\renewcommand{\thesection}{A.\arabic{section}}
\renewcommand{\theproposition}{A.\arabic{proposition}}
\renewcommand{\thecorollary}{A.\arabic{corollary}}
\renewcommand{\theequation}{A.\arabic{equation}}
\renewcommand{\theremark}{A.\arabic{remark}}
\renewcommand{\theexample}{A.\arabic{example}}
\setcounter{lemma}{0}
\setcounter{section}{0}
\setcounter{proposition}{0}
\setcounter{equation}{0}
\setcounter{corollary}{0}
\setcounter{example}{0}

\begin{appendices}

\section{Omitted Proofs from Section~\ref{sec:preliminaries}}

\subsection{Proof of Proposition~\ref{prop:non-indexability}}
\label{subsec:proof-non-indexability}

Consider a decision epoch with two periods remaining and $\lambda_t=1$ for all $t$. Let $\gamma=1.1$, $p_1=p_2=p_3= 1/2$, and $\xi_1=\xi_2=\xi_3= 1/ 2$. We also let the revenues be $(r_1,r_2,r_3)=(1,5,4)$. Consider the following two joint congestion states: $\bm q^G=(0,1,1)$ and $\bm q^B=(0,1,3)$. The local states of services 1 and 2 are identical across the two contexts: service 1 is empty and service 2 has one customer. Only the congestion of service 3 differs.

A direct evaluation of the two-period Bellman recursion gives the continuation values in Table~\ref{tab:index-counterexample}, where the first-period recommendation is fixed to the indicated service and the platform behaves optimally thereafter.

Thus, when service 3 is moderately congested at $\bm q^G$, it is optimal to recommend service 2, whereas when service 3 is substantially more congested at $\bm q^B$, it is optimal to recommend service 1. In other words, the optimal recommendation between services 1 and 2 reverses even though neither service's own local state has changed. Since a local-state index assigns the same two scores $I_{1,t}(0)$ and $I_{2,t}(1)$ in both contexts, no such index rule can reproduce both optimal decisions. This proves Proposition~\ref{prop:non-indexability}.

\begin{table}[h]
	\centering
	\begin{tabular}{lccc}
		\toprule
		& Recommend 1 & Recommend 2 & Recommend 3 \\
		\midrule
		$\bm q^G=(0,1,1)$ & $-0.9531$ & $\mathbf{-0.9251}$ & $-0.9768$ \\
		$\bm q^B=(0,1,3)$ & $\mathbf{-8.1563}$ & $-8.1901$ & $-8.5000$ \\
		\bottomrule
	\end{tabular}
	\caption{Optimal continuation values when the current recommendation is fixed. The negative levels arise from refunds owed to customers who already joined in the platform.}
	\label{tab:index-counterexample}
\end{table}

\section{Omitted Proofs from Section~\ref{sec:QPTAS}}
\label{sec:appendix-QPTAS}

\subsection{Lower bound $U_{\mathrm{LB}}$}
\label{subsec:appendix-lower-bound}

We show that $U_{\mathrm{LB}} := \lambda_{\max} \cdot
\max_{j\in[n]} r_j p_j \xi_j$ is a lower bound on the optimal objective value of \ref{problem:DSR-abstract}, i.e., $\OPT\ge U_{\mathrm{LB}}$. We first define time index $t'$ and service index $j'$ such that $\lambda_{t'} = \lambda_{\max}$ and $r_{j'} p_{j'} \xi_{j'} = \max_{j\in[n]} r_j p_j \xi_j$. Consider a policy that recommends a \(j'\) in time period $t'$ and make no recommendations in other periods. Therefore, all services remains empty until the beginning of period $t'$. In this period, a customer arrives with probability $\lambda_{\max}$ and joins the recommended service $j'$ with probability \(a_{j'}(0)=p_{j'}\), and, conditional on joining, completes service within the same period with probability \(\xi_{j'}\). If service is completed, the platform retains revenue \(r_{j'}\); otherwise, the revenue collected upon joining is eventually fully refunded. The expected net revenue of this policy is therefore \(\lambda_{\max} r_{j'}p_{j'}\xi_{j'}\), which is $U_{\mathrm{LB}}$.

\subsection{Proof of Lemma~\ref{lemma:QPTAS_rounding-rewards}}
\label{subsec:QPTAS_proof-rounding-r}

We fix a policy $\pi$. We use $D_{j,t}$ as an indicator that whether service $j$ completes serving a customer during the time period $t$. Therefore, we obviously have $\sum_{t \in [T]} \sum_{j \in [n]} D_{j,t} \leq T$, as the platform welcomes at most $T$ customers during the horizon and thus the total number of serviced customers during the horizon across all services is at most $T$. Moreover, recall that once we fix the policy, the revenue rounding procedure does not change the stochasticity of the queue system, since the revenues have no effects on state transition. Therefore,  we have $\Rcal_{(r)}(\pi) = \Ebb \left[  \sum_{t \in [T]} \sum_{j \in [n]} r_j \cdot D_{j,t}  \right]$ and $\Rcal_{({r^\downarrow})}(\pi) = \Ebb \left[  \sum_{t \in [T]} \sum_{j \in [n]} r^\downarrow_j \cdot D_{j,t}  \right]$. 

\paragraph{Proof of the first inequality.} Notice that for all $j \in [n]$, by definition we have
\[
{r}^\downarrow_j  \geq \frac{r_j}{ 1 + \epsilon} - \frac{ \epsilon \cdot U_{\textrm{LB}}   }{T}.
\]
Therefore, we obtain that
\begin{align}
	\sum_{t \in [T]} \sum_{j \in [n]} {r}^\downarrow_j \cdot D_{j,t}  \geq \,  & \sum_{t \in [T]} \sum_{j \in [n]} \left(  \frac{r_j}{ 1 + \epsilon} - \frac{ \epsilon \cdot U_{\textrm{LB}}   }{T}  \right) \cdot D_{j,t} \nonumber \\
	= \, & \frac{1}{1 + \epsilon} \cdot  \sum_{t \in [T]} \sum_{j \in [n]} r_j D_{j,t}  - \frac{ \epsilon \cdot U_{\textrm{LB}}   }{T}  \cdot \sum_{t \in [T]} \sum_{j \in [n]}  D_{j,t} \nonumber \\
	\geq \, & (1- \epsilon) \cdot \sum_{t \in [T]} \sum_{j \in [n]} r_j D_{j,t} - \epsilon \cdot \Rcal_{(r)}(\pi^*), \label{eq:QPTAS-rounding-r-lower-bound}
\end{align}
where in the last inequality we use the fact that $\sum_{t \in [T]} \sum_{j \in [n]} D_{j,t} \leq T$ and $U_{\textrm{LB}}$ is a lower bound to the optimal objective value $\Rcal_{(r)}(\pi^*)$. Taking expectations on both sides of \eqref{eq:QPTAS-rounding-r-lower-bound}, we obtain the first inequality in Lemma~\ref{lemma:QPTAS_rounding-rewards}.

\paragraph{Proof of the second inequality.} We simply use the fact that ${r}^\downarrow_j \leq r_j$ and thus
\[
\Rcal_{({r}^\downarrow)}(\pi) = \Ebb \left[  \sum_{t \in [T]} \sum_{j \in [n]} {r}^\downarrow_j \cdot D_{j,t}  \right] \leq \Ebb \left[  \sum_{t \in [T]} \sum_{j \in [n]} {r}_j \cdot D_{j,t}  \right] \leq \Rcal_{({r})}(\pi).
\]

\subsection{Proof of Lemma~\ref{lemma:QPTAS_rounding-xi-OPT-comparison}}
\label{subsec:QPTAS-proof-of-rounding-xi-part1}

Fix an optimal deterministic Markov policy
$\pi := \pi^*_{(\xi^S)}$ corresponding to $\OPT_{(\xi^S)}$,
the optimal objective value of the original system before
rounding $ \{\xi^S_j\}_{j \in [n]}$. We construct a policy $\pi^\STT$ for
the slow system by internally maintaining a virtual copy
of the original system operated under $\pi$. Specifically,
at the beginning of each period $t$, $\pi^\STT$ observes
the state $\bm Q^\STT(t)$ of the slow system and
maintains a virtual original system with state $\bm Q(t)$,
where $Q_j(t)$ denotes the queue length of service $j$
in the virtual system. The policy $\pi^\STT$ then recommends
the service $\pi(t,\bm Q(t))$, as prescribed by $\pi$.
After observing the joining and service outcomes and the
updated state $\bm Q^\STT(t+1)$ of the slow system,
$\pi^\STT$ generates additional randomness to update
the virtual state from $\bm Q(t)$ to $\bm Q(t+1)$. Our goal is to show that the expected net revenue of
the constructed policy $\pi^\STT$ in the slow system
is at least $(1-\epsilon_\xi)$ times that of $\pi$
in the original system.

We define $\bar a_{j,t}(q):=\lambda_t a_j(q)$.
For notational simplicity, we assume throughout that
$\lambda_t<1$ for all $t\in[T]$, ensuring that all joining
probabilities $\bar a_{j,t}(q)$ are strictly below one.
The proof extends easily to the case where $\lambda_t=1$
for some $t\in[T]$.

\paragraph{Policy construction.} The detailed construction of $\pi^\STT$ in period $t$ is
given below. We initialize both systems with
$Q_j(1)=Q_j^\STT(1)=0$ for all $j\in[n]$.

\begin{center}
	\begin{tcolorbox}[breakable, title={The constructed policy ${\pi}^{\STT}$ in period $t$}]
		\begin{itemize}[leftmargin=*]
			\item \textbf{Input.} The actual state $\bm{Q}^{\STT}(t)$ and the internally maintained virtual state $\bm Q(t)$,  satisfying $
			Q^{\STT}_j(t)\ge Q_j(t)$ for all $j \in [n]$.
			
			\item \textbf{Recommendation.}
			\begin{itemize}
			\item Set $j' =\pi(t,\bm Q(t))$.
			\item Recommend service $j'$ in the slow system.
			\end{itemize}
			
			\item \textbf{Observe the slow-system evolution.}
			\begin{itemize}
				\item Let $
				A^{\STT}_{j,t}
				:=
				\mathbb{I}\{\text{the arriving customer joins service $j$ in period $t$}\}$, for each $j \in [n]$.
				\item Let $D^{\STT}_{j,t}
				:=
				\mathbb{I}\{\text{service $j$ completes a customer in period $t$}\}$, for each $j \in [n]$.
				\item Observe the resulting state $\bm{Q}^{\STT}(t+1)$.
			\end{itemize}
			
			\item \textbf{Update the virtual original system.}
			\begin{itemize}
				\item \emph{Virtual joining.} Consider two cases for each $j \in [n]$.
				\begin{enumerate}
				\item If $j \neq j'$, set $A_{j,t}=0$. 
				
				\item If $j= j'$, set $
				A_{j,t}
				=
				A^{\STT}_{j,t}
				+(1-A^{\STT}_{j,t}) \cdot E_{j,t}$, 
				where conditional on $\bm{Q}^{\STT}(t)$ and $\bm Q(t)$, $E_{j,t}$ is an independent Bernoulli random variable with parameter
				\[
				\frac{
					\bar{a}_{j,t}(Q_j(t))
					-
					\bar{a}_{j,t}(Q^{\STT}_j(t))
				}{
					1 - \bar{a}_{j,t} (Q^{\STT}_j(t))
				}.
				\]
				\end{enumerate}

				\item \emph{Virtual service.}
				Define $Q_j^{+\STT}(t)
				:=
				Q_j^{\STT}(t)+A^{\STT}_{j,t}$. Consider three cases for $j \in [n]$:
				\begin{enumerate}
				\item If $Q_j^{+\STT}(t)\ge1$ and $\xi^{S \downarrow}_j<1$, set
				\[
				B_{j,t}
				=
				D^{\STT}_{j,t}
				+(1-D^{\STT}_{j,t}) \cdot F_{j,t},
				\]
				where $F_{j,t}$ is an independent Bernoulli random variable with parameter $(\xi^S_j- \xi^{S \downarrow}_j)/(1- \xi^{S \downarrow}_j)$.
				\item If $Q_j^{+\STT}(t)\ge1$ and $\xi^{S \downarrow}_j=1$, then set $B_{j,t}=D^{\STT}_{j,t}=1$.
				\item Finally, if $Q_j^{+\STT}(t)=0$, independently generate $B_{j,t}\sim\operatorname{Bernoulli}(\xi^S_j)$.
				\end{enumerate}
				
				\item \emph{Virtual-state update.} Define $Q_j^{+}(t)
				:=
				Q_j(t)+A_{j,t}$ and set
				\[
				Q_j(t+1)
				=
				\bigl(Q_j^+(t)-B_{j,t}\bigr)^+,
				\qquad j\in[n].
				\]
			\end{itemize}
		\end{itemize}
	\end{tcolorbox}
\end{center}
\vspace{1em}

\paragraph{Verification of the marginal distribution.} We first verify that this construction is well defined and produces the correct marginal dynamics for the virtual original system. Because $\bar{a}_{j}(\cdot)$ is nonincreasing and $Q_j(t)\le Q_j^{\STT}(t)$, we have $\bar{a}_{j,t}(Q_j(t))
\ge \bar{a}_{j,t} (Q_j^{\STT}(t))$. Hence, the Bernoulli parameter used to generate $E_{j,t}$ lies in $[0,1]$. Moreover,
{\small
\begin{align*}
	\Pp\left(A_{j',t}=1\mid \bm Q(t), \bm{Q}^{\STT}(t) \right)
	=
	\bar{a}_{j',t} (Q^{\STT}_{j'}(t)) +
	\bigl(1-\bar a_{j',t}(Q^{\STT}_{j'}(t))\bigr) \cdot
	\frac{
		\bar a_{j',t}(Q_{j'}(t))
		-\bar a_{j',t}(Q^{\STT}_{j'}(t))
	}{
		1-\bar a_{j',t}(Q^{\STT}_{j'}(t))
	} =
	\bar a_{j',t}(Q_{j'}(t)),
\end{align*}
}while $A_{j,t} = 0$ for $j \neq j'$. Thus, the virtual joining indicators $\left( A_{j,t} \right)_{j \in [n]}$ have exactly the same joint probability prescribed by the original system under policy $\pi$ and given state $\bm Q(t)$. 

Similarly, whenever $Q_j^{+\STT}(t)\ge1$ and $\xi^{S \downarrow}_j<1$, we have
\begin{align*}
	\Pp(B_{j,t}=1 \mid \bm Q(t), \bm{Q}^{\STT}(t)) =
	\xi^{S \downarrow}_j
	+
	(1-\xi^{S \downarrow}_j) \cdot
	({\xi^S_j- \xi^{S \downarrow}_j}) / ({1-\xi^{S \downarrow}_j}) =
	\xi^S_j.
\end{align*}
For the other two cases, we have that $B_{j,t}$ follows the correct Bernoulli$(\xi^S_j)$ distribution. Therefore, the virtual service capacity also has the correct marginal distribution. It follows inductively that $\bm Q(t+1)$ evolves exactly as the state of the original system operated under policy $\pi$.

\paragraph{Coupling.}
We give an equivalent representation of the joint
evolution of the slow and virtual systems under $\pi^\STT$
via common uniform random variables.
Conditional on the states at the beginning of each period,
this representation induces the same joint distribution of
the actual and virtual joining and service outcomes as the
construction above. It will be useful for comparing the
queue lengths of the two systems.

Let $V_{1,t},\ldots,V_{n,t},
U_{1,t},\ldots,U_{n,t}
\sim\operatorname{Uniform}([0,1])$
be mutually independent uniform random variables, which are also independent across periods. Conditional on the states $\bm{Q}^{\STT}(t)$ and $\bm Q(t)$ at the beginning of period $t$, let $j' = \pi(t,\bm Q(t))$. If $j' \in [n]$, we represent the joining indicators as
\begin{alignat*}{2}
	A^{\STT}_{j',t}
	&=
	\mathbb{I}
	\left\{
	V_{j',t} \le
	\bar a_{j',t}(Q^{\STT}_{j'}(t))
	\right\},
	\qquad&
	A_{j',t}
	&=
	\mathbb{I}
	\left\{
	V_{j',t}\le
	\bar a_{j',t}(Q_{j'}(t))
	\right\},
\end{alignat*}
with $A^{\STT}_{j,t} = A_{j,t} = 0$ for all $j \neq j'$. If $j' = 0$, we set $A^{\STT}_{j,t} = A_{j,t} = 0$ for all $j \in [n]$. 

Likewise, the latent service-capacity indicators can be represented as
\[
B^{\STT}_{j,t}
=
\mathbb{I}\{U_{j,t}\le \xi^{S \downarrow}_j\},
\qquad
B_{j,t}
=
\mathbb{I}\{U_{j,t}\le\xi^S_j\},
\qquad j\in[n].
\]
The realized service completions are
\[
D^{\STT}_{j,t}
=
\mathbb{I}\{Q_j^{+\STT}(t)>0\} \cdot B^{\STT}_{j,t},
\qquad
D_{j,t}
=
\mathbb{I}\{Q_j^+(t)>0\} \cdot B_{j,t}.
\]
Accordingly, the two systems evolve as
\begin{align*}
	Q^{\STT}_j(t+1)
	&=
	\left(
	Q^{\STT}_j(t)+A^{\STT}_{j,t}-B^{\STT}_{j,t}
	\right)^+, \qquad
	Q_j(t+1)
	=
	\left(
	Q_j(t)+A_{j,t}-B_{j,t}
	\right)^+, \qquad j\in[n].
\end{align*}

\paragraph{Ordering of the two systems.} We next establish the key ordering of the two systems.

\begin{claim}
	Under the above coupling, we have $Q_j(t)\le Q^{\STT}_j(t)$ for all $j\in[n]$ and $t\in[T+1]$. Moreover, $
	Q_j^+(t)\le Q_j^{+\STT}(t)$ for all $j\in[n]$ and $t\in[T]$.
\end{claim}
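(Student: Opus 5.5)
We argue by induction on $t$, proving both parts of the claim together. The base case holds because $Q_j(1)=Q^{\STT}_j(1)=0$ for every $j$. For the inductive step, suppose $Q_j(t)\le Q^{\STT}_j(t)$ for all $j$. We first compare the post-arrival queues. If $j\neq j'$ (or if $j'=0$), then $A_{j,t}=A^{\STT}_{j,t}=0$, so $Q_j^+(t)=Q_j(t)\le Q_j^{\STT}(t)=Q_j^{+\STT}(t)$. If $j=j'$, there are two cases.
\begin{itemize}
\item If $Q_{j'}(t)=Q^{\STT}_{j'}(t)$, the two joining thresholds coincide, so $A_{j',t}=A^{\STT}_{j',t}$ and the ordering is preserved.
\item If $Q_{j'}(t)\le Q^{\STT}_{j'}(t)-1$, then $A_{j',t}\le 1$ gives $Q_{j'}^+(t)\le Q_{j'}(t)+1\le Q^{\STT}_{j'}(t)\le Q^{+\STT}_{j'}(t)$.
\end{itemize}
Monotonicity of $\bar a_{j',t}$ also gives $A^{\STT}_{j',t}\le A_{j',t}$ under the common uniform $V_{j',t}$. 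This inequality is not needed for the ordering here, but it is consistent with the construction. Either way, the second assertion of the claim, $Q_j^+(t)\le Q_j^{+\STT}(t)$, holds at $t$.

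Next we pass through the service step. Under the coupling, $\xi^{S\downarrow}_j\le \xi^S_j$ gives $B^{\STT}_{j,t}=\mathbb I\{U_{j,t}\le\xi^{S\downarrow}_j\}\le \mathbb I\{U_{j,t}\le \xi^S_j\}=B_{j,t}$. Write $x:=Q_j^+(t)\le y:=Q_j^{+\STT}(t)$ and $b:=B_{j,t}\ge b':=B^{\STT}_{j,t}$. Then $(x-b)^+\le (x-b')^+\le (y-b')^+$, since $z\mapsto (z-c)^+$ is nondecreasing in $z$ and nonincreasing in $c$. Hence $Q_j(t+1)\le Q^{\STT}_j(t+1)$, which completes the induction.

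The one point needing care is that this uniform-variable representation must be the same joint law as the boxed construction that $\pi^\STT$ actually runs. That construction generates $E_{j,t}$ and $F_{j,t}$ from slow-system observations, and when $Q_j^{+\STT}(t)=0$ it draws $B_{j,t}$ independently. We check each piece conditional on the period-$t$ states. For joining, $\mathbb I\{V\le \bar a(Q^{\STT})\}$ together with $\mathbb I\{V\le \bar a(Q)\}$ has exactly the law of $A^{\STT}$ plus thinning-completion by $E$. For service, when $Q_j^{+\STT}(t)\ge1$ we have $D^{\STT}_{j,t}=B^{\STT}_{j,t}$, and the pair $(B^{\STT},B)$ has the same law as $(D^{\STT},\,D^{\STT}+(1-D^{\STT})F)$. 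When $Q_j^{+\STT}(t)=0$, the slow-system capacity is unobserved and irrelevant, so an independent $B_{j,t}\sim\operatorname{Bernoulli}(\xi^S_j)$ has the same law as $\mathbb I\{U\le\xi^S_j\}$ marginalized over the unobserved $B^{\STT}$. In that case the ordering also holds trivially, because $Q_j^+(t)\le 0$ forces $Q_j(t+1)=0$. These checks show that the pathwise ordering proved under the representation transfers to the construction.
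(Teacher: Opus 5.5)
Your proof is correct and follows the paper's argument essentially step for step. You use induction from empty queues, then split the recommended service into two cases: equal queue lengths give identical joining indicators under the common uniform, and a gap of at least one absorbs any single join. For the service step you combine $B^{\STT}_{j,t}\le B_{j,t}$ with the monotonicity of $(\cdot)^+$. Your closing check that the common-uniform representation has the same conditional joint law as the boxed construction of $\pi^{\STT}$ is also correct; the paper only asserts this.
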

\begin{proof}
We proceed inductively. The claim holds at $t=1$ because both systems are initialized empty. Suppose that $Q_j(t)\le Q_j^{\STT}(t)$ for all $j \in [n]$. We first show that this ordering continues to hold immediately after the joining step, i.e.,
\begin{equation}
	\label{eq:QPTAS-rounding-xi-monotone-proof-1}
	Q_j^+(t)
	=
	Q_j(t)+A_{j,t}
	\le
	Q_j^{\STT}(t)+A^{\STT}_{j,t}
	=
	Q_j^{+\STT}(t).
\end{equation}
Let $j' = \pi(t,\bm Q(t))$. For the case of $j \neq  j'$, both joining indicators $A_{j,t}$ and $A^\STT_{j,t}$ are zero, so~\eqref{eq:QPTAS-rounding-xi-monotone-proof-1} follows immediately. Consider therefore $j= j'$. If $Q_j^{\STT}(t)=Q_j(t)$, then the two systems have the same joining probability $\bar a_{j,t}(Q^{\STT}_{j}(t)) = \bar a_{j,t}(Q_{j}(t))$, implying that $
A^{\STT}_{j,t}=A_{j,t}$ under the coupling. Hence their post-joining queue lengths remain equal. Otherwise, if $Q_j^{\STT}(t) > Q_j(t)$, because the queue lengths are integer valued, we have $
Q_j^{\STT}(t)\ge Q_j(t)+1$. %
Since both indicators belong to $\{0,1\}$,
\[
Q_j^{\STT}(t)+A^{\STT}_{j,t}
\ge
Q_j(t)+1+A^{\STT}_{j,t}
\ge
Q_j(t)+A_{j,t}.
\]
This proves~\eqref{eq:QPTAS-rounding-xi-monotone-proof-1}.

Furthermore, because $\xi^{S \downarrow}_j \le \xi^S_j$, the service-capacity coupling gives $B^{\STT}_{j,t}\le B_{j,t}$ for all $j \in [n]$. Combining this with~\eqref{eq:QPTAS-rounding-xi-monotone-proof-1},
\[
Q^{\STT}_j(t+1)
=
\left(
Q_j^{+\STT}(t)-B^{\STT}_{j,t}
\right)^+
\ge
\left(
Q_j^+(t)-B^{\STT}_{j,t}
\right)^+
\ge
\left(
Q_j^+(t)-B_{j,t}
\right)^+
=
Q_j(t+1).
\]
The induction is complete.
\end{proof}

\paragraph{Revenue comparison.} We now compare the expected net revenues of the two systems. By definition, $\Rcal_{(\xi^{S \downarrow})}(\pi^{\STT})
=
\sum_{t\in[T]} \sum_{j\in[n]}
r^\downarrow_j\,\Ebb[D^{\STT}_{j,t}]$, whereas, because the virtual system has exactly the same distribution as the original system operated under $\pi$, we have $
\Rcal_{(\xi^S)}(\pi)
=
\sum_{t\in[T]}\sum_{j\in[n]}
r^\downarrow_j\,\Ebb[D_{j,t}]$. Since $Q_j^{+\STT}(t)\ge Q_j^+(t)$ as established in the claim above, we have
\[
D^{\STT}_{j,t}
=
\mathbb{I}\{Q_j^{+\STT}(t)>0\} \cdot B^{\STT}_{j,t}
\ge
\mathbb{I}\{Q_j^+(t)>0\} \cdot B^{\STT}_{j,t}.
\]
Because $B^{\STT}_{j,t}$ is independent of the post-joining queue length $Q_j^+(t)$, we establish that
\begin{align*}
\Ebb[D^{\STT}_{j,t}]
&\ge
\Ebb\left[
\mathbb{I}\{Q_j^+(t)>0\} \cdot B^{\STT}_{j,t}
\right]
\\
&=
\Pp(Q_j^+(t)>0) \cdot \xi^{S \downarrow}_j
\\
&\ge
\frac{1}{1+\epsilon_\xi} \cdot
\Pp(Q_j^+(t)>0) \cdot \xi^S_j
\\
&=
\frac{1}{1+\epsilon_\xi} \cdot 
\Ebb\left[
\mathbb{I}\{Q_j^+(t)>0\} \cdot B_{j,t}
\right]
\\
&=
\frac{1}{1+\epsilon_\xi} \cdot 
\Ebb[D_{j,t}].
\end{align*}
Multiplying by $r^\downarrow_j$, summing over all services and periods, and using the fact that the virtual system reproduces the law of the original system under $\pi$, we obtain
\begin{align*}
\Rcal_{(\xi^{S \downarrow})}(\pi^{\STT}) =
\sum_{t\in[T]}\sum_{j\in[n]}
r^{\downarrow}_j\,\Ebb[D^{\STT}_{j,t}] \ge
\frac{1}{1+\epsilon_\xi}
\sum_{t\in[T]}\sum_{j\in[n]}
r^{\downarrow}_j\,\Ebb[D_{j,t}]  =
\frac{\Rcal_{(\xi^S)}(\pi)}{1+\epsilon_\xi}
 = \frac{\OPT_{(\xi^S)}}{1 + \epsilon_\xi},
\end{align*}
where the last equality follows by the definition of $\pi$. We therefore have proved Lemma~\ref{lemma:QPTAS_rounding-xi-OPT-comparison} since $\OPT_{(\xi^{S \downarrow})} \geq \Rcal_{(\xi^{S \downarrow})}(\pi^{\STT}) \geq \OPT_{(\xi^S)} / (1 + \epsilon_\xi) \geq (1 - \epsilon_\xi) \cdot \OPT_{(\xi^S)}$.

\subsection{Proof of Lemma~\ref{lem:truncation}}
\label{subsec:QPTAS_proof-policy-truncation}

Let $\pi$ be an optimal deterministic Markov policy for the instance
obtained after Steps~1a and~1b. Thus,
$\Rcal(\pi)=\sup_{\pi'\in\Pi}\Rcal(\pi')$.
Recall that, in Lemma~\ref{lem:truncation}, $\Rcal(\pi)$
denotes the expected net revenue of policy $\pi$ for the
instance obtained after rounding $\{r_j\}_{j\in[n]}$
and $\{\xi_j^S\}_{j\in[n]}$ to $\{r^\downarrow_j\}_{j\in[n]}$
and $\{\xi_j^{S\downarrow}\}_{j\in[n]}$, respectively.

We construct a policy $\pi^{\truncate}\in\Pi^{\truncate}$
by internally maintaining a virtual copy of the system operated under $\pi$. Specifically, at the beginning
of each period $t$, $\pi^{\truncate}$ observes the state
$\bm Q^{\truncate}(t)$ of the actual system and maintains
a virtual system with state $\bm Q(t)$.
Let $j'=\pi(t,\bm Q(t))$ be the service prescribed by $\pi$ based on the virtual system state $\bm Q(t)$.
The policy $\pi^{\truncate}$ recommends service $j'$ if $j' \in [n]$ and
$Q_{j'}(t)<\Lambda_{j'}$, and makes no recommendation otherwise.
After observing the joining and service outcomes and the
updated state $\bm Q^{\truncate}(t+1)$, $\pi^{\truncate}$
generates additional randomness to update the virtual state
from $\bm Q(t)$ to $\bm Q(t+1)$. Our goal is to show that the expected net revenue of
$\pi^{\truncate}$ is at least $(1-\epsilon)$ times that
of $\pi$.

Let $\bar a_{j,t}(q):=\lambda_t p_j\gamma^{-q/\xi_j^A}$
denote the unconditional probability that a customer joins
service $j$ in period $t$ when service $j$ is recommended
at queue length $q$. Recall that, after Step~1b,
$\xi_j^A=\xi_j\le(1+\epsilon_\xi)\xi_j^{S\downarrow}$.

\paragraph{Policy construction.} We now give the detailed construction of $\pi^{\truncate}$.
We initialize both systems with empty queues:
$\bm Q^{\truncate}(1)=\bm Q(1)=\bm 0$.

	\begin{center}
		\begin{tcolorbox}[breakable, title={The policy $\pi^{\truncate}$ in period $t$}]
			\begin{itemize}[leftmargin=*]
				
				\item \textbf{Input.}
				The actual state $\bm Q^{\truncate}(t)$ and the internally maintained
				virtual state $\bm Q(t)$, satisfying $Q^{\truncate}_j(t)\le Q_j(t)$, $j\in[n]$.
				
				\item \textbf{Recommendation.}
				\begin{itemize}
					\item Let $j'=\pi(t, \bm Q(t))$.
					
					\item If $j'\in[n]$ and
					$Q^{\truncate}_{j'}(t)<\Lambda_{j'}$, recommend service $j'$
					in the actual system. Otherwise, make no recommendation.
				\end{itemize}
				
				\item \textbf{Observe the actual-system evolution.}
				\begin{itemize}
					\item Let $A^{\truncate}_{j,t}
					:=
					\mathbb I
					\{\text{the arriving customer joins service $j$ in period $t$}\}$, for each $j \in [n]$.
					
					\item Let $D^{\truncate}_{j,t}
					:=
					\mathbb I
					\{\text{service $j$ completes a customer in period $t$}\}$, for each $j \in [n]$.
					
					\item Observe the resulting actual-system state
					$\bm Q^{\truncate}(t+1)$.
				\end{itemize}
				
				\item \textbf{Update the virtual system.}
				\begin{itemize}
					
					\item \emph{Virtual joining.}
					For each $j\in[n]$, define $A_{j,t}$ as follows:
					\begin{enumerate}
						\item If $j\neq j'$, set $A_{j,t}=0$.
						
						\item If $j=j'$ and
						$Q^{\truncate}_j(t)<\Lambda_j$, set $A_{j,t}
						=
						A^{\truncate}_{j,t} \cdot E_{j,t}$,
						where conditional on $\bm Q^{\truncate}(t)$ and $\bm Q(t)$,
						$E_{j,t}$ is an independent Bernoulli random variable with
						parameter ${\bar a_{j,t}(Q_j(t))}/
						{\bar a_{j,t}(Q^{\truncate}_j(t))}$.
						
						\item If $j=j'$ and
						$Q^{\truncate}_j(t)\ge\Lambda_j$, generate $A_{j,t}
						\sim
						\operatorname{Bernoulli}
						\bigl(\bar a_{j,t}(Q_j(t))\bigr)$ independently.
					\end{enumerate}
					
					\item \emph{Virtual service opportunity.} Define $Q_j^{+ \truncate}(t) = Q_j^{\truncate}(t) + A^{\truncate}_{j,t}$. For each $j\in[n]$, define the virtual service-opportunity
					indicator $B_{j,t}$ as follows:
					\begin{enumerate}
						\item If $Q_j^{+\truncate}(t)>0$, set $B_{j,t}
						=D^{\truncate}_{j,t}$.
						
						\item If $Q_j^{+\truncate}(t)=0$, independently generate $B_{j,t}
						\sim
						\operatorname{Bernoulli}(\xi_j^{S \downarrow})$.
					\end{enumerate}
					
					\item \emph{Virtual-state update.}
					Define $Q_j^+(t):=Q_j(t)+A_{j,t}$
					and update
					\[
					Q_j(t+1)
					=
					\bigl(Q_j^+(t)-B_{j,t}\bigr)^+,
					\qquad j\in[n].
					\]
				\end{itemize}
				
			\end{itemize}
		\end{tcolorbox}
	\end{center}
	\vspace{1em}
	
	\paragraph{Verification of the marginal distribution.}
	We first verify that the construction is well defined and that the virtual system has the correct marginal dynamics. Since $\bar a_{j,t}(\cdot)$ is nonincreasing and $Q_j^{\truncate}(t)\le Q_j(t)$, we have $\bar a_{j,t}(Q_j^{\truncate}(t)) \ge \bar a_{j,t}(Q_j(t))$. Hence, the Bernoulli parameter used to generate $E_{j,t}$ lies in $[0,1]$. 
	
	Assume that $j' \in [n]$. When $Q^{\truncate}_{j'}(t)<\Lambda_{j'}$, we have
	\begin{align*}
		\Pp\left(
		A_{j',t}=1
		\,\middle|\,
		\bm Q^{\truncate}(t),\bm Q(t)
		\right)
		=
		\bar a_{j',t}(Q^{\truncate}_{j'}(t))
		\cdot
		\frac{
			\bar a_{j',t}(Q_{j'}(t))
		}{
			\bar a_{j',t}(Q^{\truncate}_{j'}(t))
		}
		=
		\bar a_{j',t}(Q_{j'}(t)).
	\end{align*}
	When $Q^{\truncate}_{j'}(t)\ge\Lambda_{j'}$, the same equality holds directly by construction. In the meantime, for all $j \in [n]$ and $j \neq j'$, we have $A_{j,t} = 0$. For the case that $j' = 0$, we simply have $A_{j,t} = 0$ for all $j \in [n]$. Thus, conditional on the current virtual state $\bm Q(t)$ and the action $j'$ prescribed by $\pi$, the virtual joining indicators $(A_{j,t})_{j \in [n]}$ has exactly the joint probability prescribed by the virtual system.
	
	The virtual service opportunity has the correct distribution as well. If $Q_j^{+\truncate}(t)\ge1$, then $B_{j,t}=D^{\truncate}_{j,t}$, and $D^{\truncate}_{j,t}$ is Bernoulli with parameter $\xi_j^{S \downarrow}$. If $Q_j^{+\truncate}(t)=0$, we instead generate $B_{j,t}\sim\operatorname{Bernoulli}(\xi_j^{S \downarrow})$ independently. Hence, in either case, $B_{j,t}$ has the required Bernoulli distribution with parameter $\xi_j^{S \downarrow}$. Therefore, conditional on the virtual state $\bm Q(t)$, both the joining and service transitions coincide with those of the system under the action selected by $\pi$. It follows inductively that the virtual process $\{\bm Q(t)\}_{t\in[T+1]}$ has exactly the same law as the original system operated under $\pi$.

\paragraph{Coupling.} We give an equivalent representation of the joint
evolution of $\bm Q^\truncate(t)$ and  $\bm Q(t)$ under $\pi^{\truncate}$
via common uniform random variables. This representation will be useful for comparing the
queue lengths of the two systems.

Let $V_{1,t},\ldots,V_{n,t},
U_{1,t},\ldots,U_{n,t}
\sim\operatorname{Uniform}([0,1])$
be mutually independent uniform random variables, which are also independent across periods. Conditional on the states $\bm Q^\truncate(t)$ and $\bm Q(t)$ at the beginning of period $t$, we again let $ j'=\pi(t,\bm Q(t))$ denote the service selected by the virtual policy $\pi$. If $j'\in[n]$, we equivalently represent the joining indicators as
\begin{alignat*}{2}
	A^{\truncate}_{j',t}
	&=
	\mathbb I
	\left\{
	V_{j',t}\le
	\bar{a}_{j',t}(Q^{\truncate}_{j'}(t))
	\right\},
	\qquad&
	A_{j',t}
	&=
	\mathbb I
	\left\{
	V_{j',t}\le
	\bar a_{j',t}(Q_{j'}(t))
	\right\},
\end{alignat*}
with $A^{\truncate}_{j,t}=A_{j,t}=0$ for all $j\neq j'$. If $j'=0$, we set
$A^{\truncate}_{j,t}=A_{j,t}=0$ for every $j\in[n]$.

For the service processes, introduce the latent service-opportunity indicators
\[
B^{\truncate}_{j,t}
=
\mathbb I\{U_{j,t}\le\xi_j^{S \downarrow}\},
\qquad
B_{j,t}
=
\mathbb I\{U_{j,t}\le\xi_j^{S \downarrow}\},
\qquad j\in[n].
\]
The corresponding realized service-completion indicators are
\[
D^{\truncate}_{j,t}
=
\mathbb I\{Q_j^{+\truncate}(t)>0\}B^{\truncate}_{j,t},
\qquad
D_{j,t}
=
\mathbb I\{Q_j^+(t)>0\}B_{j,t}.
\]
Accordingly, the two queueing systems evolve according to
\begin{align*}
	Q^{\truncate}_j(t+1)
	=
	\left(
	Q^{\truncate}_j(t)
	+A^{\truncate}_{j,t}
	-B^{\truncate}_{j,t}
	\right)^+,
	\quad\quad
	Q_j(t+1)
	=
	\left(
	Q_j(t)
	+A_{j,t}
	-B_{j,t}
	\right)^+.
\end{align*}

Under this representation, the two systems share the same service-opportunity realization for each service, while their joining indicators are coupled through the common random variable $V_{j',t}$. This structure preserves the desired queue-length ordering, as formalized below.

\begin{claim}
	\label{claim:truncation-monotonicity}
	Under the above coupling, we have $Q^{\truncate}_j(t)\le Q_j(t)$ for all $j\in[n]$ and $t\in[T+1]$. Moreover, $
	Q^{+\truncate}_j(t)\le Q_j^+(t)$ for all $j\in[n]$ and $t\in[T]$.
\end{claim}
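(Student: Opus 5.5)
We argue by induction on $t$, following the same template as the ordering claim in the proof of Lemma~\ref{lemma:QPTAS_rounding-xi-OPT-comparison}, but with the roles of the two systems reversed: here the virtual system (under $\pi$) is the more congested one. The base case $t=1$ holds because both systems start empty. For the inductive step, we assume $Q^{\truncate}_j(t)\le Q_j(t)$ for all $j$. We then first establish the post-joining ordering $Q^{+\truncate}_j(t)\le Q^+_j(t)$, and afterwards propagate it through the common service step.

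For the joining step, let $j'=\pi(t,\bm Q(t))$. For $j\neq j'$, or for $j'=0$, both joining indicators are zero, and the ordering carries over directly. For $j=j'$ we split into cases.

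\emph{Case 1: $Q^{\truncate}_j(t)\ge\Lambda_j$.} The actual system makes no recommendation, so $A^{\truncate}_{j,t}=0$, and $Q^{+\truncate}_j(t)=Q^{\truncate}_j(t)\le Q_j(t)\le Q_j^+(t)$.

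\emph{Case 2: $Q^{\truncate}_j(t)=Q_j(t)$.} The two joining thresholds coincide. Since both indicators are driven by the same $V_{j,t}$, we get $A^{\truncate}_{j,t}=A_{j,t}$, and the post-joining queues are equal.

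\emph{Case 3: $Q^{\truncate}_j(t)<Q_j(t)$.} Integrality gives $Q^{\truncate}_j(t)+1\le Q_j(t)$. Hence
\[
Q^{\truncate}_j(t)+A^{\truncate}_{j,t}\le Q^{\truncate}_j(t)+1\le Q_j(t)\le Q_j(t)+A_{j,t}.
\]
Note that monotonicity of $\bar a_{j,t}$ is not even needed in this case. Under the uniform coupling, however, monotonicity gives $A_{j,t}\le A^{\truncate}_{j,t}$, and this is exactly why the integrality argument is required instead of a pointwise comparison of the indicators.

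For the service step, the coupling uses the same uniform $U_{j,t}$ with the same threshold $\xi^{S\downarrow}_j$ in both systems, so $B^{\truncate}_{j,t}=B_{j,t}$. Since $x\mapsto(x-b)^+$ is nondecreasing, we obtain
\[
Q^{\truncate}_j(t+1)=\bigl(Q^{+\truncate}_j(t)-B_{j,t}\bigr)^+\le\bigl(Q^+_j(t)-B_{j,t}\bigr)^+=Q_j(t+1).
\]
This closes the induction and proves both statements of the claim.

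The one point needing care is consistency: the uniform-variable representation must reproduce the boxed construction of $\pi^{\truncate}$. In particular, in the boxed policy we have $B_{j,t}=D^{\truncate}_{j,t}$ whenever $Q^{+\truncate}_j(t)>0$, and this agrees with $B_{j,t}=B^{\truncate}_{j,t}$ on that event. In Case~1, the virtual joining indicator is generated independently from $V_{j,t}$, but the argument there only uses $A^{\truncate}_{j,t}=0$ together with $A_{j,t}\ge 0$, so it is unaffected. I expect this bookkeeping to be the only mildly delicate step; the comparisons themselves are routine.
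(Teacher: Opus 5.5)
Your proof is correct and follows the paper's argument essentially verbatim. Both use induction on $t$ and the same case split for the recommended service: suppressed by truncation, equal queue lengths with a common $V_{j,t}$, and a strictly shorter truncated queue handled by integrality. Both then close with the common service opportunity $B^{\truncate}_{j,t}=B_{j,t}$ and monotonicity of $x\mapsto(x-b)^+$. One detail should be made explicit: your Case~2, where $A^{\truncate}_{j,t}=A_{j,t}$, holds only under $Q^{\truncate}_j(t)<\Lambda_j$, as in the paper's four-case split, since Case~1 already covers the suppressed situation.
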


\begin{proof}
	We proceed by induction on $t$. At $t=1$, both systems are initialized empty, so $Q_j^{\truncate}(1)=Q_j(1)=0$ for all $j\in[n]$. Thus, the desired ordering holds initially. Suppose that
	$Q_j^{\truncate}(t)\le Q_j(t)$ for all $j\in[n]$. We first show that the ordering is preserved after the joining decisions, i.e., $Q_j^{+\truncate}(t)\le Q_j^+(t)$ for all $j\in[n]$. 
	
	Let $j'$ be the recommended at time period $t$. If $j'=0$, then
	$A^{\truncate}_{j,t}=A_{j,t}=0$ for every $j\in[n]$, and hence $Q_j^{+\truncate}(t)
	= Q_j^{\truncate}(t) \le Q_j(t) = Q_j^+(t)$. Now suppose $j'\in[n]$. We consider the following four cases for each $j \in [n]$.
	
	\begin{itemize}[leftmargin=*]
		
		\item \underline{$j\neq j'$.}
		In this case,
		$A^{\truncate}_{j,t}=A_{j,t}=0$, and therefore $Q_j^{+\truncate}(t)
		=
		Q_j^{\truncate}(t)
		\le
		Q_j(t)
		=
		Q_j^+(t)$.
		
		\item \underline{$j=j'$ and $Q_j^{\truncate}(t)\ge\Lambda_j$.}
		Since $\pi^\truncate$ would not recommend service $j$, we obtain
		$A^{\truncate}_{j,t}=0$ and
		\[
		Q_j^{+\truncate}(t)
		=
		Q_j^{\truncate}(t)
		\le
		Q_j(t)
		\le
		Q_j(t)+A_{j,t}
		=
		Q_j^+(t).
		\]
		
		\item \underline{$j=j'$, $Q_j^{\truncate}(t)<\Lambda_j$, and
			$Q_j^{\truncate}(t)=Q_j(t)$.}
		Since $Q_j^{\truncate}(t)=Q_j(t)$, we know $A^{\truncate}_{j,t}=A_{j,t}$. Therefore, $Q_j^{+\truncate}(t)
		=
		Q_j^{\truncate}(t)+A^{\truncate}_{j,t}
		=
		Q_j(t)+A_{j,t}
		=
		Q_j^+(t)$.
		
		\item \underline{$j=j'$, $Q_j^{\truncate}(t)<\Lambda_j$, and
			$Q_j^{\truncate}(t)<Q_j(t)$.}
		Since the queue lengths are integer-valued, we have $
		Q_j^{\truncate}(t)+1\le Q_j(t)$. Moreover, $A^{\truncate}_{j,t}\le1$. Consequently,
		\[
		Q_j^{+\truncate}(t)
		=
		Q_j^{\truncate}(t)+A^{\truncate}_{j,t}
		\le
		Q_j^{\truncate}(t)+1
		\le
		Q_j(t)
		\le
		Q_j(t)+A_{j,t}
		=
		Q_j^+(t).
		\]
	\end{itemize}
Thus, in all four cases, we have $Q_j^{+\truncate}(t)\le Q_j^+(t)$. Finally, the common service-opportunity coupling gives
	$B^{\truncate}_{j,t}=B_{j,t}$ for every $j$. We thus obtain
	\[
	Q_j^{\truncate}(t+1)
	=
	\left(Q_j^{+\truncate}(t)-B^{\truncate}_{j,t}\right)^+
	\le
	\left(Q_j^+(t)-B_{j,t}\right)^+
	=
	Q_j(t+1).
	\]
	This completes the induction and proves both claimed inequalities.
\end{proof}

\paragraph{Revenue comparison.} Since both systems start empty and each unfinished customer is fully refunded at the end of the horizon, the realized revenue from service $j$ equals $r^\downarrow_j$ times the number of customers completed by the end of period $T$. Define
\[
\Acal_j^{\truncate}:=\sum_{t=1}^T A^{\truncate}_{j,t},
\qquad
\Acal_j:=\sum_{t=1}^T A_{j,t},
\]
and
\[
\Dcal_j^{\truncate}:=\sum_{t=1}^T D^{\truncate}_{j,t},
\qquad
\Dcal_j:=\sum_{t=1}^T D_{j,t}.
\]
We have $Q_j^{\truncate}(T+1)=\Acal_j^{\truncate}-\Dcal_j^{\truncate}$ and $
Q_j(T+1)=\Acal_j-\Dcal_j$. Therefore, following the queue-ordering claim established above, we know $Q_j^{\truncate}(T+1) \leq Q_j(T+1)$ and thus
\begin{align*}
	\Dcal_j-\Dcal_j^{\truncate} =
	(\Acal_j-\Acal_j^{\truncate})
	+\bigl(Q_j^{\truncate}(T+1)-Q_j(T+1)\bigr) \le \Acal_j-\Acal_j^{\truncate}.
\end{align*}
Let $H_{j,t}:=
\left\{j=\pi(t,\bm Q(t)),\;Q_j^{\truncate}(t)\ge\Lambda_j, j \in [n]\right\}$ denote the event that the policy $\pi$ selects service $j$ in the virtual system but the recommendation is suppressed by truncation in the actual system. Outside $H_{j,t}$, the common-uniform coupling gives $A^{\truncate}_{j,t}\ge A_{j,t}$ since $Q^{\truncate}_{j}(t) \leq Q_{j}(t)$. On event $H_{j,t}$, we have $A^{\truncate}_{j,t}=0$. Hence, under the coupling, we obtain that 
\[
A_{j,t}-A^{\truncate}_{j,t}
\le A_{j,t} \cdot \mathbb{I}\{H_{j,t}\} \quad \text{almost surely}.
\]
We therefore have
\[
\Dcal_j-\Dcal_j^{\truncate}  \leq \Acal_j-\Acal_j^{\truncate} = \sum_{t \in [T]} \left( A_{j,t}-A^{\truncate}_{j,t} \right) \leq \sum_{t \in [T]} A_{j,t} \cdot \mathbb{I}\{H_{j,t}\}.
\]
On $H_{j,t}$, the monotone coupling implies
$Q_j(t)\ge Q_j^{\truncate}(t)\ge\Lambda_j$. Therefore,
\begin{align}
	\E\!\left[A_{j,t} \cdot \mathbb{I}\{H_{j,t}\}\right] =
	\E\!\left[\bar a_{j,t}(Q_j(t)) \cdot \mathbb{I}\{H_{j,t}\}\right] \le
	\lambda_t p_j\gamma^{-\Lambda_j/\xi_j^A} \cdot 
	\Pp\bigl(j=\pi(t,\bm Q(t))\bigr).
	\label{eq:suppressed-admission-first}
\end{align}
Recall the definitions of $\xi_j$, $\xi^A_j$, $\xi^{S \downarrow}_j$, and $\Lambda_j$. These definitions lead to
\[
\frac{\Lambda_j}{\xi_j^A} = \frac{\Lambda_j}{\xi_j} \geq \frac{\Lambda_j}{\xi^{S \downarrow}_j \cdot (1 + \epsilon_\xi)}
=
\log_\gamma\!\left(\frac{T}{\xi_j^{S \downarrow} \cdot \epsilon}\right),
\]
and hence $
	\gamma^{-\Lambda_j/\xi_j^A}
	\le
	{\xi_j^{S \downarrow} \cdot \epsilon } / {T}$. Substituting this into \eqref{eq:suppressed-admission-first}, multiplying by $r^\downarrow_j$, and summing over $j$ gives that, for every period $t$,
\begin{align}
	\sum_{j\in[n]}r^\downarrow_j \cdot \E\!\left[A_{j,t} \cdot  \mathbb{I}\{H_{j,t}\}\right] \le
	\frac{\epsilon }{T}
	\sum_{j\in[n]}
	\lambda_t r^\downarrow_jp_j\xi_j^{S \downarrow} \cdot 
	\Pp\bigl(j=\pi(t,\bm Q(t))\bigr) \le
	\frac{\epsilon \cdot \lambda_{\max}  }{T}\cdot
	\max_{j\in[n]} r^\downarrow_jp_j\xi_j^{S \downarrow},
	\label{eq:per-period-truncation-loss}
\end{align}
where the last inequality uses the fact that $\pi$ selects at most one service in each period and $\lambda_t \leq \lambda_{\max}$. Summing $r^\downarrow_j \left( \Dcal_j - \Dcal^{\truncate}_j \right)$ over $j$ and using \eqref{eq:per-period-truncation-loss} yields
\begin{equation}
	\E\!\left[
	\sum_{j\in[n]}r^\downarrow_j (\Dcal_j-\Dcal_j^{\truncate})
	\right]
	\le
	\E\!\left[
	\sum_{j\in[n]}r^\downarrow_j \left( \sum_{t \in [T]} A_{j,t} \cdot \mathbb{I} \left\lbrace H_{j,t} \right\rbrace  \right)
	\right]
	\leq 
	\epsilon \lambda_{\max}
	\max_{j\in[n]} r^\downarrow_jp_j\xi_j^{S \downarrow} \leq \epsilon  \sup_{\pi' \in \Pi} \Rcal(\pi').
	\label{eq:total-truncation-loss}
\end{equation}
In the last inequality, we use the fact that $\lambda_{\max} r^\downarrow_jp_j\xi_j^{S \downarrow}$ is a lower bound to the optimal objective value, akin to lower bound argument in Appendix~\ref{subsec:appendix-lower-bound}. Moreover, because the virtual system has exactly the law of the pre-truncation system under $\pi$, we have $\E\!\left[\sum_{j\in[n]}r^\downarrow_j\Dcal_j\right]= \Rcal(\pi) = \sup_{\pi' \in \Pi} \Rcal(\pi')$, by the definition of $\pi$. Combining this identity with \eqref{eq:total-truncation-loss}, we obtain
\[
\Rcal(\pi^{\truncate}) = \E\!\left[\sum_{j\in[n]}r^\downarrow_j\Dcal_j^{\truncate}\right]
\ge
(1-\epsilon ) \cdot \sup_{\pi' \in \Pi} \Rcal(\pi').
\]
Thus $\pi^{\truncate}$ achieves the desired approximation guarantee. We also remark that $\pi^{\truncate} \in {\Pi}^{\truncate}$ since it recommends a service $j \in [n] $ at time period $t$ only if $j = \pi(t,\bm Q(t))$ and $Q^{\truncate}_j(t) < \Lambda_j$. This implies that $\pi^{\truncate}$ never recommends a service from $j \in [n]$ if the corresponding queue length is longer than $\Lambda_j$. We thus have $\sup_{\pi \in \Pi^{\truncate}} \Rcal(\pi) \geq \Rcal(\pi^{\truncate}) \ge
(1-\epsilon ) \cdot \sup_{\pi' \in \Pi} \Rcal(\pi')$.

\subsection{Proof of Lemma~\ref{lemma:QPTAS_rounding-a-OPT-comparison}}
\label{subsec:QPTAS-proof_rounding-a-OPT-comparison}

Let $\pi := \pi^*_{(a)} \in \Pi^{\truncate}$ be an optimal deterministic Markov policy corresponding to $\OPT_{(a)}^{\mathrm{tr}}$, the optimal objective value of the truncated system before rounding the joining probabilities $\{ a_j(\cdot) \}_{j \in [n]}$.

We construct a policy
$\pi^\downarrow\in\Pi^{\truncate}$ for the rounded-down
system by internally maintaining a virtual copy of the
pre-rounded system operated under $\pi$. Specifically, at
the beginning of each period $t$, $\pi^\downarrow$ observes
the state $\bm Q^\downarrow(t)$ of the rounded-down system
and maintains a virtual pre-rounded system with state
$\bm Q(t)$. The policy
$\pi^\downarrow$ then recommends the service
$\pi(t,\bm Q(t))$, as prescribed by $\pi$. After observing
the joining and service outcomes and the updated state
$\bm Q^\downarrow(t+1)$ of the rounded-down system,
$\pi^\downarrow$ generates additional randomness to update
the virtual state from $\bm Q(t)$ to $\bm Q(t+1)$.

We define
$\bar a_{j,t}^\downarrow(q):=\lambda_t a_j^\downarrow(q)$.
As in Appendix~\ref{subsec:QPTAS-proof-of-rounding-xi-part1},
we assume for notational simplicity that $\lambda_t<1$
for all $t\in[T]$, ensuring that all joining probabilities
are strictly below one. The proof extends easily to the case where
$\lambda_t=1$ for some $t\in[T]$.

\paragraph{Policy construction.}
The detailed construction of $\pi^\downarrow$ in period $t$
is given below. We initialize both systems with empty queues:
$Q_j(1)=Q_j^\downarrow(1)=0$ for all $j\in[n]$.

\begin{center}
	\begin{tcolorbox}[breakable, title={The policy ${\pi}^\downarrow$ in period $t$}]
		\begin{itemize}[leftmargin=*]
			\item \textbf{Input.}
			The actual rounded-system state
			$\bm Q^\downarrow(t)$ and the internally maintained virtual state $\bm Q(t)$, satisfying $Q^\downarrow_j(t) \leq Q_j(t)$ for all $j \in [n]$
			
			\item \textbf{Recommendation.}
			\begin{itemize}
				\item Set $j'=\pi(t,\bm Q(t))$.
				\item Recommend service $j'$ in the actual rounded-down system.
			\end{itemize}
			
			\item \textbf{Observe the actual-system evolution.}
			\begin{itemize}
				\item Let $A^\downarrow_{j,t}
				:=
				\mathbb{I}
				\{\text{the arriving customer joins service $j$ in period $t$}\}$, for each $j \in [n]$.
				\item Let $
				D^\downarrow_{j,t}
				:=
				\mathbb{I}
				\{\text{service $j$ completes a customer in period $t$}\}$, for each $j \in [n]$.
				\item Observe the resulting state $\bm Q^\downarrow(t+1)$.
			\end{itemize}
			
			\item \textbf{Update the virtual system.}
			\begin{itemize}
				\item \emph{Virtual joining.}
				Define the virtual joining indicator
				$A_{j,t}$ for each $j \in [n]$:
				\begin{enumerate}
					\item If $j\neq j'$, set $A_{j,t}=0$.
					
					\item If $j=j'$ and $\bar{a}_{j,t}^\downarrow(Q_j^\downarrow(t))
					\leq \bar{a}_{j,t}(Q_j(t))$,
					set $A_{j,t}
					=
					A^\downarrow_{j,t}
					+
					(1-A^\downarrow_{j,t}) \cdot E_{j,t}$, where conditional on $\bm Q^\downarrow(t)$ and $\bm Q(t)$, $E_{j,t}$ is an independent Bernoulli random variable with parameter
					\[
					\frac{
						\bar a_{j,t}(Q_j(t))
						-
						\bar a_{j,t}^\downarrow(Q_j^\downarrow(t))
					}{
						1-\bar a_{j,t}^\downarrow(Q_j^\downarrow(t))
					}.
					\]
					
					\item If $j=j'$ and $\bar a_{j,t}^\downarrow(Q_j^\downarrow(t)) > \bar a_{j,t}(Q_j(t))$,
					set $
					A_{j,t}
					=
					A^\downarrow_{j,t} \cdot F_{j,t}$, 
					where, conditional on $\bm Q^\downarrow(t)$ and $\bm Q(t)$,
					$F_{j,t}$ is an independent Bernoulli random variable with parameter
					\[
					\frac{
						\bar a_{j,t}(Q_j(t))
					}{
						\bar a_{j,t}^\downarrow(Q_j^\downarrow(t))
					}.
					\]
				\end{enumerate}
				
				\item \emph{Virtual service.}
				Define $Q_j^{+\downarrow}(t)
				:=
				Q_j^\downarrow(t)+A^\downarrow_{j,t}$.
				For each $j\in[n]$, construct the virtual service-opportunity indicator
				$B_{j,t}$ as follows:
				\begin{enumerate}
					\item If $Q_j^{+\downarrow}(t)\geq1$, set $B_{j,t}=D^\downarrow_{j,t}$.
					
					\item If $Q_j^{+\downarrow}(t)=0$, independently generate $B_{j,t}\sim\operatorname{Bernoulli}(\xi^{S \downarrow}_j)$.
				\end{enumerate}
				
				\item \emph{Virtual-state update.}
				Define $Q_j^+(t):=Q_j(t)+A_{j,t}$
				and update
				\[
				Q_j(t+1)
				=
				\bigl(Q_j^+(t)-B_{j,t}\bigr)^+,
				\qquad j\in[n].
				\]
			\end{itemize}
		\end{itemize}
	\end{tcolorbox}
\end{center}
\vspace{1em}

\paragraph{Verification of the marginal dynamics.}
We first verify that the above construction is well defined and that the virtual system has exactly the same marginal dynamics as the system with joining functions $\{a_j(\cdot)\}_{j\in[n]}$ operated under policy $\pi$. Fix a period $t$ and condition on the current states $\bm Q(t)$ and $\bm Q^\downarrow(t)$. Let $j'=\pi(t,\bm Q(t))$ denote the recommended service. Assume $j' \in [n]$. If
$\bar a^\downarrow_{j',t}(Q^\downarrow_{j'}(t))
\leq
\bar a_{j',t}(Q_{j'}(t))$, then, by construction,
{\small
\begin{align*}
	\Pp(A_{j',t}=1\mid \bm Q(t),\bm Q^\downarrow(t))
	=
	\bar a^\downarrow_{j',t}(Q^\downarrow_{j'}(t))
	 +
	\left(
	1-\bar a^\downarrow_{j',t}(Q^\downarrow_{j'}(t))
	\right)
	\frac{
		\bar a_{j',t}(Q_{j'}(t))
		-
		\bar a^\downarrow_{j',t}(Q^\downarrow_{j'}(t))
	}{
		1-\bar a^\downarrow_{j',t}(Q^\downarrow_{j'}(t))
	}
	=
	\bar a_{j',t}(Q_{j'}(t)).
\end{align*}
}On the other hand, if $\bar a^\downarrow_{j',t}(Q^\downarrow_{j'}(t)) > \bar a_{j',t}(Q_{j'}(t))$, then
\begin{align*}
	\Pp(A_{j',t}=1\mid \bm Q(t), \bm Q^\downarrow(t))
	=
	\bar a^\downarrow_{j',t}(Q^\downarrow_{j'}(t))
	\frac{
		\bar a_{j',t}(Q_{j'}(t))
	}{
		\bar a^\downarrow_{j',t}(Q^\downarrow_{j'}(t))
	}
	=
	\bar a_{j',t}(Q_{j'}(t)).
\end{align*}
For $j \in [n]$ and $j \neq j'$, we have $A_{j,t} = 0$. Similarly, if $j'  = 0$, then $A_{j,t} = 0$ for all $j \in [n]$. Thus, conditional on the current virtual state $\bm Q(t)$,
the virtual joining indicators $(A_{j,t})_{j\in[n]}$
have the same joint distribution as the joining indicators
in the original system under the recommendation
$j'=\pi(t,\bm Q(t))$.

The virtual service process has the correct distribution as well. If
$Q_j^{+\downarrow}(t)\geq1$, then
\[
\Pp(B_{j,t}=1\mid \bm Q(t),\bm Q^\downarrow(t) )
=
\Pp(D^\downarrow_{j,t}=1\mid Q_j^{+\downarrow}(t)\geq1)
=
\xi_j^{S \downarrow}.
\]
If $Q_j^{+\downarrow}(t)=0$, we instead generate
$B_{j,t}\sim\operatorname{Bernoulli}(\xi_j^{S \downarrow})$ directly. Hence, in either case, $B_{j,t}$ has the required Bernoulli service-opportunity distribution. It follows inductively that the virtual process $\bm Q(t)$ has exactly the same distribution as the queue-length process generated by operating policy $\pi$ in the system with joining functions $\{a_j(\cdot)\}_{j\in[n]}$.

\paragraph{Coupling.} We give an equivalent representation of the joint
evolution of $\bm Q^\downarrow(t)$ and  $\bm Q(t)$ under $\pi^\downarrow$
via common uniform random variables. This representation will be useful for comparing the
queue lengths of the two systems.

Let $V_{1,t},\ldots,V_{n,t},
U_{1,t},\ldots,U_{n,t}
\sim\operatorname{Uniform}([0,1])$
be mutually independent uniform random variables, which are also independent across periods. Conditional on the states at the beginning of period $t$, let $j'=\pi(t, \bm Q(t))$ denote the service selected by the virtual policy. If $j'\in[n]$, represent the joining indicators by
\begin{alignat*}{2}
	A^\downarrow_{j',t}
	&=
	\mathbb I
	\left\{
	V_{j',t}
	\leq
	\bar a^\downarrow_{j',t}(Q^\downarrow_{j'}(t))
	\right\},
	\qquad&
	A_{j',t}
	&=
	\mathbb I
	\left\{
	V_{j',t}
	\leq
	\bar a_{j',t}(Q_{j'}(t))
	\right\}.
\end{alignat*}
For every $j\neq j'$, set
$A^\downarrow_{j,t}=A_{j,t}=0$. If $j'=0$, set
$A^\downarrow_{j,t}=A_{j,t}=0$ for every $j\in[n]$.

Similarly, we couple the service opportunities by setting
\[
B^\downarrow_{j,t}
=
B_{j,t}
=
\mathbb I\{U_{j,t}\leq\xi_j^{S \downarrow}\},
\qquad j\in[n].
\]
The corresponding realized service-completion indicators are
\[
D^\downarrow_{j,t}
=
\mathbb I\{Q_j^{+\downarrow}(t)>0\}B^\downarrow_{j,t},
\qquad
D_{j,t}
=
\mathbb I\{Q_j^+(t)>0\}B_{j,t}.
\]
Thus, the queue-length processes evolve according to
\begin{align*}
	Q^\downarrow_j(t+1)
	=
	\left(
	Q^\downarrow_j(t)
	+A^\downarrow_{j,t}
	-B^\downarrow_{j,t}
	\right)^+,
	\qquad
	Q_j(t+1)
	=
	\left(
	Q_j(t)
	+A_{j,t}
	-B_{j,t}
	\right)^+, \quad j \in [n].
\end{align*}
Under this coupling, the two systems share the same service-opportunity realization for each service, while their joining indicators are generated using common uniform random variables. The following claim establishes the resulting queue-length ordering.

\begin{claim}
	Under the above coupling, $Q^\downarrow_j(t)\leq Q_j(t)$ for all $j \in [n]$ and $t \in [T+1]$. Moreover, $Q_j^{+\downarrow}(t)\leq Q_j^+(t)$ for all $j \in [n]$ and $t \in [T]$.
\end{claim}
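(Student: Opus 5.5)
We argue by induction on $t$, following the template of the ordering claims in Appendices~\ref{subsec:QPTAS-proof-of-rounding-xi-part1} and~\ref{subsec:QPTAS_proof-policy-truncation}. At $t=1$ both systems are empty, so $Q^\downarrow_j(1)=Q_j(1)=0$. Suppose $Q^\downarrow_j(t)\le Q_j(t)$ for all $j\in[n]$. The plan has two parts: first show the ordering survives the joining step, i.e., $Q_j^{+\downarrow}(t)\le Q_j^+(t)$; then show it survives the service step.

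For the joining step, fix $j$ and let $j'=\pi(t,\bm Q(t))$. If $j'=0$ or $j\neq j'$, both joining indicators vanish and the inequality is inherited from the induction hypothesis. For $j=j'$ we split into two cases.
\begin{itemize}
\item If $Q^\downarrow_j(t)<Q_j(t)$, integrality gives $Q^\downarrow_j(t)+1\le Q_j(t)$. Hence $Q_j^{+\downarrow}(t)\le Q^\downarrow_j(t)+1\le Q_j(t)\le Q_j^+(t)$, whatever the joining realizations are.
\item If $Q^\downarrow_j(t)=Q_j(t)=q$, note that $q<\Lambda_j$, because $\pi\in\Pi^{\truncate}$ recommends $j$ at virtual length $q$. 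The rounding condition~\eqref{eq:QPTAS_a-rounding-condition} then applies and gives $a^\downarrow_j(q)\le a_j(q)$, so $\bar a^\downarrow_{j,t}(q)\le\bar a_{j,t}(q)$. Under the common-uniform coupling, $A^\downarrow_{j,t}=\mathbb I\{V_{j,t}\le \bar a^\downarrow_{j,t}(q)\}\le \mathbb I\{V_{j,t}\le\bar a_{j,t}(q)\}=A_{j,t}$, and the inequality follows.
\end{itemize}

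This second case is the only delicate point. Unlike the earlier claims, where monotonicity of a single function sufficed, here we compare two different functions $a^\downarrow_j$ and $a_j$ evaluated at possibly different arguments. The argument works because we only need that comparison when the arguments coincide, and then only inside the truncated range where~\eqref{eq:QPTAS_a-rounding-condition} holds. When the arguments differ, integrality of the queue lengths does the work. Note also that $\pi^\downarrow$ recommends $j'$ whenever $\pi$ does, and $Q^\downarrow_{j'}(t)\le Q_{j'}(t)<\Lambda_{j'}$, which is consistent with $\pi^\downarrow\in\Pi^{\truncate}$.

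For the service step, the coupling sets $B^\downarrow_{j,t}=B_{j,t}$. Since $x\mapsto(x-b)^+$ is nondecreasing, we get $Q^\downarrow_j(t+1)=(Q_j^{+\downarrow}(t)-B_{j,t})^+\le(Q_j^+(t)-B_{j,t})^+=Q_j(t+1)$. This completes the induction and proves both inequalities in the claim.
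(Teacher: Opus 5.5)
Your proof is correct and follows the paper's argument. The paper also inducts on $t$, uses the same case split for the joining step (non-recommended services; equal queue lengths, handled by the common uniform $V_{j,t}$; a strictly shorter rounded queue, handled by integrality), and uses the shared service indicator $B^\downarrow_{j,t}=B_{j,t}$ for the service step. Your check that $q<\Lambda_j$, because $\pi\in\Pi^{\truncate}$ recommends $j$ there, before invoking condition~\eqref{eq:QPTAS_a-rounding-condition} is a small gain in rigor: the paper simply says the joining probabilities are rounded down, although the lemma's hypothesis only assumes that condition on the truncated range.
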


\begin{proof}
	We proceed by induction on $t$. At $t=1$, both systems are initialized empty, so $Q_j^\downarrow(1)=Q_j(1)=0$ for all $j \in [n]$. Suppose that $Q_j^\downarrow(t)\leq Q_j(t)$ for each $j \in [n]$. We first show that this ordering is preserved after the joining step.
	
	Let $j'$ denote the service recommended in period $t$. If $j' = 0$, then $A^{\downarrow}_{j,t}=A_{j,t}=0$ for every $j\in[n]$. Hence,
	$Q_j^{+\downarrow}(t)
	=
	Q_j^{\downarrow}(t)
	\le
	Q_j(t)
	=
	Q_j^+(t)$ for all $j\in[n]$. We now assume $j' \neq 0$. We consider the following cases for each $j \in [n]$.
	
	\begin{itemize}[leftmargin=*]
		
		\item \underline{$j\neq j'$.}
		Again, $A^{\downarrow}_{j,t}=A_{j,t}=0$. Therefore, $Q_j^{+\downarrow}(t)
		=
		Q_j^{\downarrow}(t)
		\le
		Q_j(t)
		=
		Q_j^+(t)$.
		
		\item \underline{$j=j'$ and $Q_j^{\downarrow}(t)=Q_j(t)$.}
		Since the two queue lengths are equal and the joining probabilities are rounded down, we have $\bar a_{j,t}^\downarrow(Q_j^\downarrow(t))
		=
		\bar a_{j,t}^\downarrow(Q_j(t))
		\le
		\bar a_{j,t}(Q_j(t))$. Because the two joining indicators are generated using the same uniform random variable $V_{j,t}$, we know $A^\downarrow_{j,t}\le A_{j,t}$. It follows that
		\[
		Q_j^{+\downarrow}(t)
		=
		Q_j^\downarrow(t)+A^\downarrow_{j,t}
		\le
		Q_j(t)+A_{j,t}
		=
		Q_j^+(t).
		\]
		
		\item \underline{$j=j'$ and $Q_j^{\downarrow}(t)<Q_j(t)$.}
		Since the queue lengths are integer-valued, we have $Q_j^\downarrow(t)+1\le Q_j(t)$. Moreover, $A^\downarrow_{j,t}\le1$ and $A_{j,t}\ge0$. Therefore,
		\[
		Q_j^{+\downarrow}(t)
		=
		Q_j^\downarrow(t)+A^\downarrow_{j,t}
		\le
		Q_j^\downarrow(t)+1
		\le
		Q_j(t)
		\le
		Q_j(t)+A_{j,t}
		=
		Q_j^+(t).
		\]
		
	\end{itemize}
	
	Thus, we have shown that $Q_j^{+\downarrow}(t)\le Q_j^+(t)$ for all $j \in [n]$. It remains to show that the ordering is preserved through the service step. This is easy to show: since $B_{j,t}^{\downarrow} = B_{j,t}$, we have
	\[
	Q^\downarrow_j(t+1) = \left(  Q_j^{+\downarrow}(t) - B_{j,t}^{\downarrow} \right)^+ \leq  \left(  Q_j^{+}(t) - B_{j,t}^{\downarrow} \right)^+ = \left(  Q_j^{+}(t) - B_{j,t} \right)^+ = Q_j(t+1).
	\]
	\end{proof}

\paragraph{The $\beta$-virtual system.}
Recall that we have constructed a virtual system with state $\bm Q(t)$ that evolves according to the joining functions $\{a_j(\cdot)\}_{j\in[n]}$ under policy $\pi$. We now introduce a second virtual system, with state $\bm Q^\beta(t)$, which we refer to as the $\beta$-virtual system. This auxiliary system will allow us to compare the performance of the virtual system with that of the actual rounded-down system.

We couple the $\beta$-virtual system with the other two systems using the same uniform random variables
$\{V_{j,t},U_{j,t}\}_{j\in[n],t\in[T]}$, and initialize $Q_j^\beta(1)=0$ for each $j\in[n]$. In period $t$, let $j'=\pi(t,\bm Q(t))$ denote the service selected based on the state of the virtual system. Recall that $j'$ is also the service recommended by ${\pi}^\downarrow$ in the actual rounded-down system. For each $j\in[n]$, define
\[
A^\beta_{j,t}
=
\begin{cases}
	\mathbb I
	\left\{
	V_{j,t}
	\leq
	\beta\,\bar a_{j,t}(Q_j(t))
	\right\},
	&\text{if }j=j',\\[1mm]
	0,
	&\text{if }j\neq j',
\end{cases}
\]
and define the service-opportunity indicator by
\[
B^\beta_{j,t}
=
\mathbb I\{U_{j,t}\leq\xi_j^{S\downarrow}\}.
\]
Thus, $B^\beta_{j,t}
=
B^\downarrow_{j,t}
=
B_{j,t}$ for all $j\in[n]$. Moreover, whenever $j=j'$, the rounding condition~\eqref{eq:QPTAS_a-rounding-condition} and the monotonicity
$Q_j^\downarrow(t)\leq Q_j(t)$ imply
\[
\beta\,\bar a_{j,t}(Q_j(t))
\leq
\bar a_{j,t}^\downarrow(Q_j(t))
\leq
\bar a_{j,t}^\downarrow(Q_j^\downarrow(t)),
\]
where the second inequality follows because
$a_j^\downarrow(\cdot)$ is nonincreasing. Since $A^\beta_{j,t}$ and
$A^\downarrow_{j,t}$ are generated using the same uniform random variable
$V_{j,t}$, it follows that
\[
A^\beta_{j,t}\leq A^\downarrow_{j,t}.
\]
The same inequality trivially holds when $j\neq j'$. Hence, $A^\beta_{j,t}\leq A^\downarrow_{j,t}$ for all $j\in[n]$ and $t\in[T]$. We define $Q_j^{+\beta}(t)
:=
Q_j^\beta(t)+A^\beta_{j,t}$ and
$D^\beta_{j,t}
:=
\mathbb I\{Q_j^{+\beta}(t)>0\} \cdot B^\beta_{j,t}$, and update the $\beta$-virtual queue according to
\[
Q_j^\beta(t+1)
=
\left(
Q_j^{+\beta}(t)-B^\beta_{j,t}
\right)^+, \qquad \forall j \in [n].
\]
Because the $\beta$-virtual and actual rounded-down systems both start with empty queues,
the former has weakly fewer arrivals in every period, and the two systems share
the same service opportunities, a straightforward induction gives
\[
Q_j^\beta(t)\leq Q_j^\downarrow(t),
\qquad
Q_j^{+\beta}(t)\leq Q_j^{+\downarrow}(t),
\]
for every $j\in[n]$ and every applicable $t$.

\paragraph{Revenue comparison.}
For each service $j$, define
\[
\Dcal^\beta_j
:=
\sum_{t\in[T]}D^\beta_{j,t},
\qquad
\Dcal^\downarrow_j
:=
\sum_{t\in[T]}D^\downarrow_{j,t},
\qquad
\Dcal_j
:=
\sum_{t\in[T]}D_{j,t}.
\]
Thus, $\Dcal^\beta_j$, $\Dcal^\downarrow_j$, and $\Dcal_j$ denote the numbers of customers who complete service $j$ by the end of the horizon in the $\beta$-virtual, actual rounded-down, and virtual systems, respectively.

We compare these quantities in two steps.
\begin{itemize}[leftmargin=*]
	
	\item \underline{Comparison between the $\beta$-virtual and rounded-down systems.}
	Since
	$Q_j^{+\beta}(t)\leq Q_j^{+\downarrow}(t)$ and
	$B^\beta_{j,t}=B^\downarrow_{j,t}$, we have that
	\begin{align*}
		D^\beta_{j,t} =
		\mathbb I\{Q_j^{+\beta}(t)>0\} \cdot B^\beta_{j,t}\leq
		\mathbb I\{Q_j^{+\downarrow}(t)>0\} \cdot B^\downarrow_{j,t} =
		D^\downarrow_{j,t}.
	\end{align*}
	Summing over $t\in[T]$ gives $\Dcal^\beta_j\leq\Dcal^\downarrow_j$ almost surely, and therefore
	\begin{equation}
		\Ebb[\Dcal^\beta_j]
		\leq
		\Ebb[\Dcal^\downarrow_j].
		\label{eq:QPTAS-proof-beta-process-and-rounded-process}
	\end{equation}
	
	\item \underline{Comparison between the $\beta$-virtual and virtual systems.}
	Fix a service $j$ and condition on the complete trajectory of the virtual system, including all of its joining and service-opportunity realizations. Let $\Scal_j$ denote the set of customers who complete service $j$ by the end of the horizon in this system, so that $|\Scal_j|=\Dcal_j$. We index each customer in $\Scal_j$ by the period in which that customer joined service $j$.
	
	Consider any customer who joins service $j$ in period $t$ in the virtual system. Conditional on this joining event, we have
	\[
	V_{j,t}
	\leq
	\bar a_{j,t}(Q_j(t)).
	\]
	By construction, the same customer is admitted into the $\beta$-virtual system if
	\[
	V_{j,t}
	\leq
	\beta\,\bar a_{j,t}(Q_j(t)).
	\]
	Conditional on the virtual-system trajectory, this occurs with probability exactly $\beta$. Moreover, the indicators determining whether each joining customer of the virtual system also joins the coupled $\beta$-virtual system are independent across customers, as they are generated from independent uniform random variables across periods.
	
	Now consider the customers in $\Scal_j$ that are retained in the $\beta$-virtual system. Because the $\beta$-virtual system contains only a subset of the arrivals to the virtual system and shares exactly the same service opportunities, every such retained customer completes service $j$ in the $\beta$-virtual system no later than it does in the virtual system. Consequently,
	\[
	\Dcal^\beta_j
	\geq
	\sum_{t\in\Scal_j}
	\mathbb I
	\left\{
	V_{j,t}
	\leq
	\beta\,\bar a_{j,t}(Q_j(t))
	\right\}.
	\]
	Conditional on the virtual-system trajectory, the random variable on the right-hand side is distributed as
	$\operatorname{Binomial}(\Dcal_j,\beta)$. Therefore,
	\begin{equation}
		\Ebb\!\left[
		\Dcal^\beta_j
		\,\middle|\,
		\text{virtual-system trajectory}
		\right]
		\geq
		\beta\,\Dcal_j.
		\label{eq:expected-beta-completions}
	\end{equation}
	Taking expectations yields $\Ebb[\Dcal^\beta_j]
	\geq
	\beta\,\Ebb[\Dcal_j]$.
\end{itemize}
Combining the last inequality with \eqref{eq:QPTAS-proof-beta-process-and-rounded-process} gives
\[
\Ebb[\Dcal^\downarrow_j]
\geq
\Ebb[\Dcal^\beta_j]
\geq
\beta\,\Ebb[\Dcal_j],
\qquad j\in[n].
\]
Under the definitions of $\Rcal_{(a^\downarrow)}(\cdot)$ and $\Rcal_{(a)}(\cdot)$, along with $\Rcal_{(a)}(\pi) = \OPT_{(a)}^{\mathrm{tr}}$, we have
\begin{align*}
	\Rcal_{(a^\downarrow)}({\pi^\downarrow})
	=
	\sum_{j\in[n]}
	r^\downarrow_j\,\Ebb[\Dcal^\downarrow_j] \geq
	\beta
	\sum_{j\in[n]}
	r^\downarrow_j\,\Ebb[\Dcal_j] =
	\beta\,\Rcal_{(a)}(\pi) = \beta \cdot \OPT_{(a)}^{\mathrm{tr}}.
\end{align*}
Along with the fact that $\pi^\downarrow \in \Pi^{\truncate}$, we have $\OPT_{(a^\downarrow)}^{\mathrm{tr}} \geq \Rcal^{(a^\downarrow)}({\pi^\downarrow}) \geq \beta \cdot \OPT_{(a)}^{\mathrm{tr}}$, which completes the proof. Notice that $\pi^\downarrow$ recommends $j \in [n]$ if $\pi(t,\bm Q(t)) = j$, which implies $Q_j(t) < \Lambda_j$ since $\pi \in \Pi^{\truncate}$. Because $Q^\downarrow_j(t) \leq Q_j(t)$ for all $t$, we know that $Q^\downarrow_j(t ) < \Lambda_j$. Thus, $\pi^\downarrow \in \Pi^{\truncate}$.

\subsection{Proof of Lemma~\ref{lemma:QPTAS_policy-transfer}}
\label{subsec:QPTAS-proof_policy-transfer}

Recall that $\bar{\pi}^*$ is an optimal deterministic Markov
policy for the fully rounded system, in which the joining
probability functions $a_j(\cdot)$ and service probabilities
$\xi_j^S$ are rounded down to $a_j^\downarrow(\cdot)$ and
$\xi_j^{S\downarrow}$, respectively, for all $j\in[n]$.
Here, $a_j^\downarrow(\cdot)$ is obtained by rounding down
both $p_j$ and $\xi_j^A$. For this proof, we write
$\pi^\dd:=\bar{\pi}^*$ and use the superscript $\dd$ to
denote policies, events, and quantities associated with
the fully rounded system. This distinguishes it from the
partially rounded system introduced later, in which only
the service probabilities $\xi^{S}_j$ are rounded down.

We construct a policy $\pi$ for the original, pre-rounded
system by internally maintaining a virtual copy of the
fully rounded system operated under $\pi^\dd$. Specifically,
at the beginning of each period $t$, $\pi$ observes the
state $\bm Q(t)$ of the original system and maintains a
virtual fully rounded system with state $\bm Q^\dd(t)$,
where $Q_j^\dd(t)$ denotes the queue length of service $j$
in the virtual system. The policy $\pi$ then recommends
the service $\pi^\dd(t,\bm Q^\dd(t))$, as prescribed by
$\pi^\dd$. After observing the joining and service outcomes
and the updated state $\bm Q(t+1)$ of the original system,
$\pi$ generates additional randomness to update the
virtual state from $\bm Q^\dd(t)$ to $\bm Q^\dd(t+1)$.

We define $\bar a_{j,t}(q):=\lambda_t a_j(q)$ and
$\bar a_{j,t}^\downarrow(q):=\lambda_t a_j^\downarrow(q)$.
As in Appendices~\ref{subsec:QPTAS-proof-of-rounding-xi-part1}
and~\ref{subsec:QPTAS-proof_rounding-a-OPT-comparison},
we assume for notational simplicity that $\lambda_t<1$
for all $t\in[T]$. The proof extends to the case where
$\lambda_t=1$ for some $t\in[T]$ by handling the case
of unit joining probability separately.

\paragraph{Policy construction.} The detailed construction of $\pi$ in period $t$ is given
below. We initialize both systems with empty queues:
$Q_j(1)=Q_j^\dd(1)=0$ for all $j\in[n]$.

\begin{center}
	\begin{tcolorbox}[breakable, title={The policy $\pi$ in period $t$}]
		\begin{itemize}[leftmargin=*]
			\item \textbf{Input.} The states $\bm Q(t)$ and $\bm Q^{\dd}(t)$.
			
			\item \textbf{Recommendation.}
			\begin{itemize}
				\item Let $j'=\pi^{\dd}(t,\bm Q^{\dd}(t))$
				be the service selected by $\pi^{\dd}$ in the virtual rounded system.
				\item Recommend service $j'$ in the original system.
			\end{itemize}
			
			\item \textbf{Observe the original-system evolution.}
			\begin{itemize}
				\item Let $A_{j,t}
				:=
				\mathbb I\{\text{the arriving customer joins service $j$ in period $t$} \}$, for each $j \in [n]$.
				\item Let $
				D_{j,t}
				:=
				\mathbb I\{\text{service $j$ completes a customer in period $t$}\}$, for each $j \in [n]$.
				\item Observe the resulting state $\bm Q(t+1)$.
			\end{itemize}
			
			\item \textbf{Update the virtual rounded system.}
			\begin{itemize}
				\item \emph{Virtual joining.}
				For each $j\in[n]$, consider the following three cases:
				\begin{enumerate}
					\item If $j\neq j'$, set $A^{\dd}_{j,t}=0$.
					
					\item If $j=j'$ and $\bar a_{j,t}^\downarrow(Q_j^{\dd}(t)) \geq \bar a_{j,t}(Q_j(t))$, set $A^{\dd}_{j,t}
					=
					A_{j,t}
					+
					(1-A_{j,t}) \cdot E^{(1)}_{j,t}$, where, conditional on $\bm Q(t)$ and $\bm Q^{\dd}(t)$,
					$E^{(1)}_{j,t}$ is an independent Bernoulli random variable with parameter
					\[
					\frac{
						\bar{a}_{j,t}^\downarrow(Q_j^{\dd}(t))
						-
						\bar{a}_{j,t}(Q_j(t))
					}{
						1-\bar{a}_{j,t}(Q_j(t))
					}.
					\]
					
					\item If $j=j'$ and
					$\bar a_{j,t}^\downarrow(Q_j^{\dd}(t))
					<
					\bar a_{j,t}(Q_j(t))$, set $A^{\dd}_{j,t}
					=
					A_{j,t} \cdot E^{(2)}_{j,t}$, where, conditional on $\bm Q(t)$ and $\bm Q^{\dd}(t)$,
					$E^{(2)}_{j,t}$ is an independent Bernoulli random variable with parameter
					\[
					\frac{
						\bar a_{j,t}^\downarrow(Q_j^{\dd}(t))
					}{
						\bar a_{j,t}(Q_j(t))
					}.
					\]
				\end{enumerate}
				
				\item \emph{Virtual service.}
				Define $Q_j^+(t):=Q_j(t)+A_{j,t}$. For each $j\in[n]$, consider the following two cases:
				\begin{enumerate}
					\item If $Q_j^+(t)\geq1$, set $B^{\dd}_{j,t}
					=
					D_{j,t} \cdot F_{j,t}$,
					where, conditional on $\bm Q(t)$ and $\bm Q^{\dd}(t)$,
					$F_{j,t}$ is an independent Bernoulli random variable with parameter ${\xi_j^{S\downarrow}} \big\slash{\xi_j^S}$.
					
					\item If $Q_j^+(t)=0$, independently generate $B^{\dd}_{j,t}
					\sim
					\operatorname{Bernoulli}(\xi_j^{S\downarrow})$.
				\end{enumerate}
				
				\item \emph{Virtual-state update.}
				Define $Q_j^{+\dd}(t)
				:=
				Q_j^{\dd}(t)+A^{\dd}_{j,t}$
				and set
				\[
				Q_j^{\dd}(t+1)
				=
				\bigl(
				Q_j^{+\dd}(t)-B^{\dd}_{j,t}
				\bigr)^+,
				\qquad j\in[n].
				\]
			\end{itemize}
		\end{itemize}
	\end{tcolorbox}
\end{center}
\vspace{1em}

\paragraph{Verification of the virtual-system dynamics.}
The construction is well defined, and the virtual process
has exactly the transition law of the fully rounded system
operated under $\pi^{\dd}$. This follows by the same arguments
as in Appendices~\ref{subsec:QPTAS-proof-of-rounding-xi-part1},
~\ref{subsec:QPTAS_proof-policy-truncation},
and~\ref{subsec:QPTAS-proof_rounding-a-OPT-comparison}.
We omit the verification to avoid repetition.

\paragraph{Coupling.}
As in Appendices~\ref{subsec:QPTAS-proof-of-rounding-xi-part1},
~\ref{subsec:QPTAS_proof-policy-truncation},
and~\ref{subsec:QPTAS-proof_rounding-a-OPT-comparison},
we give an equivalent representation of the joint
evolution of $\bm Q(t)$ and $\bm Q^\dd(t)$ under
the constructed policy $\pi$ via common uniform random
variables. This representation will be useful for comparing
the queue lengths of the two systems.

Let $V_{1,t},\ldots,V_{n,t},U_{1,t},\ldots,U_{n,t} \sim \operatorname{Uniform}([0,1])$ be mutually independent uniform random variables, which are also independent across time periods. Fix $j'=\pi^\dd(t,\bm Q^\dd(t))$. Using the common uniform variable $V_{j'}$, define
\begin{alignat*}{2}
	A_{j',t}
	&=
	\mathbb I
	\left\{
	V_{j',t}\le
	\bar a_{j',t}(Q_{j'}(t))
	\right\},
	\qquad&
	A^{\dd}_{j',t}
	&=
	\mathbb I
	\left\{
	V_{j',t}\le
	\bar a_{j',t}^\downarrow(Q^{\dd}_{j'}(t))
	\right\}.
\end{alignat*}
For every $j\neq j'$, set $A_{j,t}=A^{\dd}_{j,t}=0$. Likewise, using the common uniform variables $U_{j,t}$, define the latent service-capacity indicators
\[
B_{j,t}
=
\mathbb I\{U_{j,t}\le\xi^S_j\},
\qquad
B^{\dd}_{j,t}
=
\mathbb I\{U_{j,t}\le\xi^{S\downarrow}_j\},
\qquad j\in[n].
\]
The corresponding realized service-completion indicators are
\[
D_{j,t}
=
\mathbb I\{Q_j^+(t)>0\} \cdot B_{j,t},
\qquad
D^{\dd}_{j,t}
=
\mathbb I\{Q_j^{+\dd}(t)>0\} \cdot B^{\dd}_{j,t},
\]
where $Q_j^+(t)=Q_j(t)+A_{j,t}$ and  $Q_j^{+\dd}(t)=Q_j^{\dd}(t)+A^{\dd}_{j,t}$. Accordingly, the two systems evolve as
\begin{align*}
	Q_j(t+1)
	=
	\left(
	Q_j(t)+A_{j,t}-B_{j,t}
	\right)^+,
	\quad \text{ and }\quad
	Q_j^{\dd}(t+1)
	=
	\left(
	Q_j^{\dd}(t)+A^{\dd}_{j,t}-B^{\dd}_{j,t}
	\right)^+.
\end{align*}

\paragraph{An auxiliary partially rounded system.}
For the analysis, we introduce a third process that serves as an intermediate comparison system between the fully rounded and original systems. We call it the \emph{partially rounded system} and denote its state by $\bm Q^\downarrow(t)$. This system corresponds to the instance in which only the service probabilities are rounded down, from $\xi_j^S$ to $\xi_j^{S\downarrow}$, while the original joining probability functions $a_j(\cdot)$ are retained. Equivalently, it is the instance obtained after the rounding in Section~\ref{subsubsec:QPTAS-Step-rounding-xi}. Because only one set of parameters is rounded, we use the superscript $\downarrow$ for its state and associated random variables.

We couple this auxiliary system with the other two using the same uniform random variables $V_{1,t},\ldots,V_{n,t},U_{1,t},\ldots,U_{n,t}$ and, importantly, the same recommended service $j'=\pi^\dd(t,\bm Q^\dd(t))$. Thus, the recommendation continues to be determined solely by the state of the fully rounded virtual system; the partially rounded system is introduced only for the purpose of analysis. Specifically, define
\[
A^\downarrow_{j',t}
=
\mathbb I
\left\{
V_{j',t}\le
\bar a_{j',t}(Q^\downarrow_{j'}(t))
\right\},
\]
and set $A^\downarrow_{k,t}=0$ for 
$k\neq j'$.
For each $j\in[n]$, define the latent service-capacity indicator
\[
B^\downarrow_{j,t}
=
\mathbb I
\left\{
U_{j,t}\le\xi_j^{S\downarrow}
\right\},
\]
and the corresponding realized service-completion indicator
\[
D^\downarrow_{j,t}
=
\mathbb I
\left\{
Q_j^{+\downarrow}(t)>0
\right\} \cdot
B^\downarrow_{j,t},
\]
where $Q_j^{+\downarrow}(t)
:=
Q_j^\downarrow(t)+A^\downarrow_{j,t}$. The state then evolves according to
\[
Q_j^\downarrow(t+1)
=
\left(
Q_j^{+\downarrow}(t)-B^\downarrow_{j,t}
\right)^+.
\]

By construction, conditional on the recommended service $j'$, this process has exactly the one-period transition law of the partially rounded system. Conceptually, one could associate with it a policy $\pi^\downarrow$ that, like $\pi$, maintains the fully rounded virtual system and recommends $j'=\pi^\dd(t,\bm Q^\dd(t))$. However, introducing $\pi^\downarrow$ as a separate policy is unnecessary: throughout the proof, $\{\bm Q^\downarrow(t)\}_{t \in [T]}$ is used only as an auxiliary process for comparing the fully rounded and original systems. Particularly, we have the following queue-length ordering result.

\begin{claim}
	Under the above coupling of the three systems, the following inequalities hold almost surely:
	\begin{enumerate}
		\item[(i)] $Q_j(t)\le Q_j^\downarrow(t)$ for all 
		$j\in[n]$ and $t\in[T+1]$.
		
		\item[(ii)] $Q_j^\dd(t)\le Q_j^\downarrow(t)$ for all $j\in[n]$ and $t\in[T+1]$. Moreover, $Q_j^{+\dd}(t)\le Q_j^{+\downarrow}(t)$ for $j\in[n]$ and $t\in[T]$.
	\end{enumerate}
\end{claim}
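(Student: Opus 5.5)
We prove both parts by induction on $t$, using the same template as the ordering claims in Appendices~\ref{subsec:QPTAS-proof-of-rounding-xi-part1} and~\ref{subsec:QPTAS-proof_rounding-a-OPT-comparison}. At $t=1$ all three systems are empty, so every inequality holds trivially. Assume that (i) and (ii) hold at the beginning of period $t$, and let $j'=\pi^\dd(t,\bm Q^\dd(t))$ be the common recommended service. For $j\neq j'$ (or $j'=0$), all three joining indicators vanish. Hence the post-joining queues inherit the ordering directly. It therefore suffices to treat $j=j'$ and then the service step.

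\emph{Part (i): original versus partially rounded.} Both systems use the same joining function $\bar a_{j,t}(\cdot)$ and the same uniform $V_{j,t}$. The comparison is exactly the one from the slow-system claim in Appendix~\ref{subsec:QPTAS-proof-of-rounding-xi-part1}. If $Q_j(t)=Q_j^\downarrow(t)$, then $A_{j,t}=A^\downarrow_{j,t}$. If $Q_j(t)<Q_j^\downarrow(t)$, integrality gives $Q_j(t)+1\le Q_j^\downarrow(t)$, so $Q_j^+(t)\le Q_j^{+\downarrow}(t)$ regardless of the joining outcomes. For the service step, $\xi_j^{S\downarrow}\le\xi_j^S$ gives $B^\downarrow_{j,t}\le B_{j,t}$. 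Then
\[
Q_j(t+1)=(Q_j^+(t)-B_{j,t})^+\le(Q_j^{+\downarrow}(t)-B^\downarrow_{j,t})^+=Q_j^\downarrow(t+1).
\]

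\emph{Part (ii): fully rounded versus partially rounded.} Here the service indicators coincide, $B^\dd_{j,t}=B^\downarrow_{j,t}=\mathbb I\{U_{j,t}\le\xi_j^{S\downarrow}\}$. The two systems differ only in their joining functions, namely $\bar a^\downarrow_{j,t}\le\bar a_{j,t}$ pointwise. This mirrors the claim in Appendix~\ref{subsec:QPTAS-proof_rounding-a-OPT-comparison}. If $Q^\dd_j(t)=Q^\downarrow_j(t)$, then $\bar a^\downarrow_{j,t}(Q^\dd_j(t))\le\bar a_{j,t}(Q^\downarrow_j(t))$, and the common $V_{j,t}$ yields $A^\dd_{j,t}\le A^\downarrow_{j,t}$. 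If $Q^\dd_j(t)<Q^\downarrow_j(t)$, integrality again absorbs any joining discrepancy. Either way $Q_j^{+\dd}(t)\le Q_j^{+\downarrow}(t)$. Subtracting the common $B$ and taking positive parts preserves the order at $t+1$.

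The one point needing care, and the main potential obstacle, concerns the pointwise inequality $a^\downarrow_j\le a_j$. It is only guaranteed on the truncated range $q<\Lambda_j$ by \eqref{eq:QPTAS_a-rounding-condition}. In the equal-queue case, however, $j'$ is recommended by $\pi^\dd\in\Pi^{\truncate}$, which forces $Q^\dd_j(t)<\Lambda_j$, so the inequality is available exactly where it is used. More directly, $p_j^\downarrow\le p_j$ and $\xi_j^{A\downarrow}\le\xi_j^A$ give $p_j^\downarrow\gamma^{-q/\xi_j^{A\downarrow}}\le p_j\gamma^{-q/\xi_j^A}$ for all $q$, so the inequality in fact holds globally. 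Monotonicity of the joining functions is not even needed in the unequal case, because the integer gap of one absorbs the single possible joining. The induction therefore closes, and this gives both (i) and (ii).
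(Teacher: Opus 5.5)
Your proof is correct and follows essentially the same route as the paper. Both use the same induction with three cases: $j\neq j'$; equal queues, handled with the common $V_{j,t}$; and strictly ordered queues, absorbed by integrality. The service step then uses $B^\downarrow_{j,t}\le B_{j,t}$ for (i) and $B^\dd_{j,t}=B^\downarrow_{j,t}$ for (ii). Your added remark is also correct: $a_j^\downarrow\le a_j$ holds for all $q$, since $p_j^\downarrow\le p_j$ and $\xi_j^{A\downarrow}\le\xi_j^A$. This is a slightly more explicit justification of the step the paper attributes simply to ``the joining probabilities are rounded down.''
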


\begin{proof}
	We first prove Claim~(i). We proceed by induction on $t$. The claim holds at $t=1$ because both systems are initialized empty. Suppose that $Q_j(t)\le Q_j^\downarrow(t)$ for all $j\in[n]$. Let $j'=\pi^\dd(t,\bm Q^\dd(t))$ denote the service recommended in period $t$. We first show that the ordering is preserved immediately after the joining step. For each $j\in[n]$, consider the following three cases.
	\begin{itemize}
		\item If $j\neq j'$, then $A_{j,t}=A^\downarrow_{j,t}=0$.
		Hence $Q_j^+(t)
		=
		Q_j(t)
		\le
		Q_j^\downarrow(t)
		=
		Q_j^{+\downarrow}(t)$.
		
		\item If $j=j'$ and $Q_j(t)=Q_j^\downarrow(t)$, then the two joining indicators the same, i.e., $A_{j,t}=A^\downarrow_{j,t}$, since they are defined using the same uniform random variable and the same joining probability function. Therefore, $Q_j^+(t)
		=
		Q_j(t)+A_{j,t}
		\le
		Q_j^\downarrow(t)+A^\downarrow_{j,t}
		=
		Q_j^{+\downarrow}(t)$.
		
		\item If $j=j'$ and $Q_j(t)<Q_j^\downarrow(t)$, then, because the queue lengths are integer-valued, we have $Q_j(t)+1\le Q_j^\downarrow(t)$. Since $A_{j,t}\le1$, it follows that
		\[
		Q_j^+(t)
		=
		Q_j(t)+A_{j,t}
		\le
		Q_j(t)+1
		\le
		Q_j^\downarrow(t)
		\le
		Q_j^{+\downarrow}(t).
		\]
	\end{itemize}
	Thus, we establish that $Q_j^+(t)\le Q_j^{+\downarrow}(t)$ for all $j \in [n]$. Moreover, because $\xi_j^S\ge\xi_j^{S\downarrow}$, the common-uniform service coupling gives $B_{j,t}\ge B^\downarrow_{j,t}$ for all $j \in [n]$. Consequently,
	\begin{align*}
		Q_j(t+1)
		=
		\left(
		Q_j^+(t)-B_{j,t}
		\right)^+
		\le
		\left(
		Q_j^{+\downarrow}(t)-B_{j,t}
		\right)^+
		\le
		\left(
		Q_j^{+\downarrow}(t)-B^\downarrow_{j,t}
		\right)^+
		=
		Q_j^\downarrow(t+1).
	\end{align*}
	This completes the induction and proves Claim~(i).
	
	We next prove Claim~(ii). Again, we proceed by induction on $t$. The claim holds at $t=1$ because both systems are initialized empty. Suppose that $Q_j^\dd(t)\le Q_j^\downarrow(t)$ $j\in[n]$. We first show that the ordering is preserved immediately after the joining step. Recall that both systems use the same recommendation $j'=\pi^\dd(t,\bm Q^\dd(t))$. For each $j\in[n]$, consider the following three cases.
	\begin{itemize}
		\item If $j\neq j'$, then $A^\dd_{j,t}=A^\downarrow_{j,t}=0$. Hence $
		Q_j^{+\dd}(t)
		=
		Q_j^\dd(t)
		\le
		Q_j^\downarrow(t)
		=
		Q_j^{+\downarrow}(t)$.
		
		\item If $j=j'$ and $Q_j^\dd(t)=Q_j^\downarrow(t)$, then, because the joining probabilities are rounded down,
		\[
		\bar a_{j,t}^\downarrow(Q_j^\dd(t))
		\le
		\bar a_{j,t}(Q_j^\dd(t))
		=
		\bar a_{j,t}(Q_j^\downarrow(t)).
		\]
		Since the two joining indicators are generated using the same uniform random variable $V_{j,t}$, this implies $A^\dd_{j,t}\le A^\downarrow_{j,t}$. Therefore, $
		Q_j^{+\dd}(t)
		=
		Q_j^\dd(t)+A^\dd_{j,t}
		\le
		Q_j^\downarrow(t)+A^\downarrow_{j,t}
		=
		Q_j^{+\downarrow}(t)$.
		
		\item If $j=j'$ and $Q_j^\dd(t)<Q_j^\downarrow(t)$, then, because the queue lengths are integer-valued, we have $Q_j^\dd(t)+1\le Q_j^\downarrow(t)$.
		Since $A^\dd_{j,t}\le1$, it follows that
		\[
		Q_j^{+\dd}(t)
		=
		Q_j^\dd(t)+A^\dd_{j,t}
		\le
		Q_j^\dd(t)+1
		\le
		Q_j^\downarrow(t)
		\le
		Q_j^{+\downarrow}(t).
		\]
	\end{itemize}
Thus, we have established that $Q_j^{+\dd}(t)\le Q_j^{+\downarrow}(t)$ for all $ j\in[n]$. In the meantime, under the common-uniform coupling, $B^\dd_{j,t} = B^\downarrow_{j,t}$ for each service $j$ because both systems have service probability $\xi_j^{S\downarrow}$. Therefore,
	\begin{align*}
		Q_j^\dd(t+1)
		=
		\left(
		Q_j^{+\dd}(t)-B^\dd_{j,t}
		\right)^+
		\le
		\left(
		Q_j^{+\downarrow}(t)-B^\downarrow_{j,t}
		\right)^+
		=
		Q_j^\downarrow(t+1).
	\end{align*}
	This completes the induction. In particular, the argument also establishes $Q_j^{+\dd}(t)\le Q_j^{+\downarrow}(t)$ for all $j \in [n]$ and $t \in [T]$, as claimed.
\end{proof}

\paragraph{Completion comparison.}
We next compare the \emph{cumulative} numbers of service completions over the entire horizon across the three coupled systems. For each service $j\in[n]$, define
\[
\Acal_j
:=
\sum_{t=1}^T A_{j,t},
\qquad
\Acal_j^\downarrow
:=
\sum_{t=1}^T A_{j,t}^\downarrow,
\qquad
\Acal_j^\dd
:=
\sum_{t=1}^T A_{j,t}^\dd,
\]
as the cumulative numbers of customers joining service $j$ in the original, partially rounded, and fully rounded systems, respectively. Similarly, define
\[
\Dcal_j
:=
\sum_{t=1}^T D_{j,t},
\qquad
\Dcal_j^\downarrow
:=
\sum_{t=1}^T D_{j,t}^\downarrow,
\qquad
\Dcal_j^\dd
:=
\sum_{t=1}^T D_{j,t}^\dd,
\]
as the corresponding cumulative numbers of service completions.

\paragraph{Original system vs.\ partially rounded system.}
We first compare the original and partially rounded systems. Queue conservation gives $Q_j(T+1)
= \Acal_j-\Dcal_j$ and 
$Q_j^\downarrow(T+1)
= \Acal_j^\downarrow-\Dcal_j^\downarrow$. Subtracting the two identities yields
\begin{equation}
	\Dcal_j-\Dcal_j^\downarrow
	=
	\bigl(\Acal_j-\Acal_j^\downarrow\bigr)
	+
	\bigl(Q_j^\downarrow(T+1)-Q_j(T+1)\bigr).
	\label{eq:cumulative-completion-transfer}
\end{equation}
By Claim~(i), $Q_j(t)\le Q_j^\downarrow(t)$ for all $j\in[n]$ and $t\in[T]$. Since $\bar a_{j,t}(\cdot)$ is nonincreasing, we have $\bar a_{j,t}(Q_j(t))
\ge
\bar a_{j,t}(Q_j^\downarrow(t))$.
The two joining indicators are generated using the same uniform random variable $V_{j,t}$ whenever service $j$ is recommended. Hence,
\[
A_{j,t}\ge A_{j,t}^\downarrow
\qquad
\text{for every }j\in[n],\ t\in[T],
\]
and therefore $\Acal_j\ge\Acal_j^\downarrow$. Claim~(i) also gives $Q_j^\downarrow(T+1)\ge Q_j(T+1)$.
Thus, both terms on the right-hand side of
\eqref{eq:cumulative-completion-transfer} are nonnegative, implying $\Dcal_j\ge\Dcal_j^\downarrow$ for each $j \in [n]$ almost surely.

\paragraph{Partially rounded system vs.\ fully rounded system.}
We next compare the partially rounded and fully rounded systems. Claim~(ii) establishes the post-joining queue ordering $Q_j^{+\dd}(t)
\le
Q_j^{+\downarrow}(t)$ for each $j \in [n]$ and $t\in[T]$. Moreover, under the common-uniform service coupling, $B_{j,t}^\dd =
B_{j,t}^\downarrow$. Consequently,
\begin{align*}
	D_{j,t}^\dd
	=
	\mathbb I
	\left\{
	Q_j^{+\dd}(t)>0
	\right\}
	B_{j,t}^\dd
	\le
	\mathbb I
	\left\{
	Q_j^{+\downarrow}(t)>0
	\right\}
	B_{j,t}^\downarrow
	=
	D_{j,t}^\downarrow.
\end{align*}
Summing over $t\in[T]$ gives $
\Dcal_j^\dd
\le
\Dcal_j^\downarrow$ for each $j \in [n]$ almost surely.

\paragraph{Revenue comparison.}
Combining the two comparisons above, we obtain that
$\Dcal_j^\dd
\le
\Dcal_j^\downarrow
\le
\Dcal_j$ almost surely for all $j\in[n]$
Multiplying by the nonnegative revenue $r^\downarrow_j$ and summing over all services yields
\[
\sum_{j\in[n]}r^\downarrow_j\Dcal_j^\dd
\le
\sum_{j\in[n]}r^\downarrow_j\Dcal_j.
\]
Taking expectations, and recalling that the original system is operated under the constructed policy $\pi$, while the virtual fully rounded system has exactly the same law as the fully rounded system operated under $\pi^\dd$, we conclude that
\[
\Rcal^\dd(\pi^\dd)
=
\Ebb
\left[
\sum_{j\in[n]}r^\downarrow_j\Dcal_j^\dd
\right]
\le
\Ebb
\left[
\sum_{j\in[n]}r^\downarrow_j\Dcal_j
\right]
=
\Rcal(\pi).
\]
Along with the second inequality in Lemma~\ref{lemma:QPTAS_rounding-rewards}, this proves the desired policy-transfer guarantee.

\subsection{Proof of Claim~\ref{claim:QPTAS-selecting-epsilons}}
\label{subsec:QPTAS-choosing-eps-proof}
Notice that
\begin{align}
	\frac{a_j^\downarrow(q)}{a_j(q)}
	=
	\frac{p_j^\downarrow}{p_j} \cdot
	\gamma^{-q \cdot
		\left(
		\frac{1}{\xi_j^{A\downarrow}}
		-
		\frac{1}{\xi_j^A}
		\right)}
	\ge
	\frac{1}{1+\epsilon} \cdot
	\gamma^{-q \cdot
		\left(
		\frac{1}{\xi_j^{A\downarrow}}
		-
		\frac{1}{\xi_j^A}
		\right)}
	\geq
	\frac{1}{1+\epsilon} \cdot
	\gamma^{-q \cdot
		\frac{\epsilon_\xi}{\xi_j^A}
	},
	\label{eq:QPTAS-a-rounding-ratio}
\end{align}
where in the last inequality we invoke $\xi_j^{A\downarrow}
\ge {\xi_j^A} \slash (1+\epsilon_\xi)$. For every $q$ such that $q < \Lambda_j$, we have
\[
\frac{q}{\xi_j^A}
\le
\frac{\Lambda_j}{\xi_j^A}
=
\frac{ \xi^{S \downarrow}_j \cdot (1 + \epsilon_{\xi}) }{\xi_j^A} \cdot \log_\gamma \left(   \frac{ T}{  \xi^{S \downarrow}_j \cdot \epsilon }  \right)
\leq
2 \cdot \log_\gamma
\left(
\frac{T^2}{\epsilon}
\right)
\leq 4 \cdot \log_{\gamma}  \left(
\frac{T}{\epsilon}
\right) := G
\]
where we invoke that $1 / T \leq \xi^{S \downarrow}_j \leq \xi^{S}_j = \xi^A_j$ and $1 + \epsilon_\xi \leq 2$ in the second inequality. Therefore,
\begin{equation*}
	\frac{a_j^\downarrow(q)}{a_j(q)}
	\ge
	\frac{1}{1+\epsilon} \cdot
	\gamma^{- \eps_\xi \cdot G} \geq (1 - \epsilon) \cdot (1 - \eps_\xi  G \cdot \ln \gamma),
\end{equation*}
where we use ${1}/({1+x})\ge1-x$ and $\gamma^{-x}
=
e^{-x\ln\gamma}
\ge
1-x \cdot \ln\gamma$, for any $x > 0$. Therefore,
\begin{align*}
	\frac{a_j^\downarrow(q)}{a_j(q)}
	\ge
	(1-\epsilon)
	\left(
	1-\epsilon_\xi G \ln\gamma
	\right) \ge
	1-\epsilon-\epsilon_\xi G \cdot \ln\gamma = 1 - 5 \epsilon,
\end{align*}
where the equality holds since $\epsilon_\xi G \ln \gamma = \frac{\epsilon}{ \ln(T / \epsilon)  } \cdot 4 \cdot  \frac{  \ln (T / \epsilon)  }{ \ln \gamma  } \cdot \ln \gamma = 4 \epsilon$.

\section{Omitted Proofs from Section~\ref{sec:LP}}

\subsection{Proof of Proposition \ref{prop:LP_upperbound}}
\label{subsec:LP-upperbound-proof}

Fix an arbitrary admissible policy $\pi$, and consider the stochastic process induced by this policy. For any event $\Acal$, let $\Pbb_{\pi}\left( \Acal \right)$ denote the probability that $\Acal$ occurs under this stochastic process. For every service $j$, queue state
$q$, and period $t$, define $
\bar{y}_{j,q,t}:= \Pbb_{\pi} \! \left( Q_j(t) = q \right)$ and $
\bar{x}_{j,q,t}:=\Pbb_{\pi}\!\left( Q_j(t)=q,\ J_t=j\right)$,
where $J_t$ denotes the service recommended in period $t$. Thus,
$\bar{y}_{j,q,t}$ is the probability that service $j$ has queue length $q$ at the beginning of period $t$, while $\bar{x}_{j,q,t}$ is the probability that service
$j$ has queue length $q$ and is recommended in that period.

These probabilities satisfy the LP constraints. In particular,
$\bar x_{j,q,t}\leq \bar y_{j,q,t}$, and, because at most one service is recommended in
each period, $ \sum_{j \in [n]} \sum_{q \in [t-1]_0} \bar{x}_{j,q,t}\leq 1$. Moreover, conditional on $Q_j(t)  = q$ and $J_t = j$, an accepted arrival joins
service $j$ with probability $\bar a_{j,t}(q)$. Hence the probability mass that
enters the service step with queue length $q+1$ is
$\bar a_{j,t}(q) \bar x_{j,q,t}$, while the remaining mass is
$\bar y_{j,q,t}-\bar a_{j,t}(q) \bar x_{j,q,t}$ and enters with queue length $q$. Applying the
service-transition kernel to these two masses gives exactly the flow
constraint (3) of the LP. The initial-state constraints follow directly from
the empty initial queues. Therefore, $(\bar x,\bar y)$ is LP-feasible.

Regarding the objective, we first show that the expected net revenue of policy $\pi$ follows~\eqref{eq:LP-original-objective}. Note that the expected revenue contributed by service $j$ from state $q$ in period $t$ is $r_j \bar a_{j,t}(q) \bar x_{j,q,t}$.
Summing over all $j,q,t$ gives exactly the first term of the LP objective~\eqref{eq:LP-original-objective} evaluated at the
induced solution $( \bar x, \bar y)$. Moreover, the expected terminal queue length of service $j$, which is also the expected number of customers receiving refunds from that service, is 
\[ \Ebb_{\pi} \left[ Q_j(T+1)\right] = \sum_{q = 0}^T q \cdot \Pbb_{\pi}( Q_j(T+1) = q  ) = \sum_{q = 0}^T q \cdot \bar{y}_{j,q,T+1}.\]
Multiplying by the refund amount $r_j$ and summing over $j$ gives exactly the second term of the LP objective~\eqref{eq:LP-original-objective}. Thus, every policy induces an LP-feasible
solution with exactly the same expected net revenue as~\eqref{eq:LP-original-objective}. 

In the remaining part of the proof, we show that the objective of~\ref{eq:MOLP} equals to the objective function~\eqref{eq:LP-original-objective} for any feasible solution of the LP. We first define the terminal-backlog potential, which represents
the expected number of customers remaining unserved at the end
of the horizon among the $q$ customers present in service $j$
at the beginning of period $t$:
\begin{equation}
	h_{j,t}(q)
	:=
	\E\!\left[
	\left(
	q-\sum_{\tau=t}^{T}B_{j,\tau}
	\right)^+
	\right],
	\qquad t \in [T+1], \, q \in [t]_0,
	\label{eq:terminal-backlog-potential}
\end{equation}
where the empty sum at $t=T+1$ is zero, so $h_{j,T+1}(q)=q$. We state two claims as follows.

\begin{claim}
	\label{claim:backlog_potential}
	For every $j$, $t \in [T]$, and $q \in \Zp$, we have $
		h_{j,t}(q)
		=
		\sum_{q' \ge 0} K_j(q,q')\cdot h_{j,t+1}(q')$ 
	and $
		h_{j,t} (q+1) - h_{j,t}(q)
		=
		1-\kappa_{j,q,t}$.
\end{claim}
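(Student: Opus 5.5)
For the first identity, I will condition on the first-period service capacity. Fix $j$, $t\in[T]$ and $q\in\Zp$, and write $S_{t+1}:=\sum_{\tau=t+1}^{T}B_{j,\tau}$. This sum is independent of $B_{j,t}$ and equals $0$ when $t=T$. The elementary identity
\[
(q-b-s)^+=\bigl((q-b)^+-s\bigr)^+
\]
holds for nonnegative integers $b,s$. Indeed, if $q\ge b$ both sides agree, and if $q<b$ both sides are $0$ because $s\ge0$. Applying it gives
\[
h_{j,t}(q)=\E\Bigl[\bigl((q-B_{j,t})^+-S_{t+1}\bigr)^+\Bigr].
\]
Next I condition on $(q-B_{j,t})^+=q'$. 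This event has probability $K_j(q,q')$ by the definition of the transition kernel, and by independence the conditional expectation is $\E[(q'-S_{t+1})^+]=h_{j,t+1}(q')$. This is just the tower property, so this step is routine. At $t=T$ it reduces to $h_{j,T+1}(q')=q'$.

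For the increment identity, I will use a pathwise argument rather than induction. Set $S:=\sum_{\tau=t}^{T}B_{j,\tau}$, a nonnegative integer random variable. For an integer $s\ge0$, the difference $(q+1-s)^+-(q-s)^+$ equals $1$ when $s\le q$ and $0$ when $s\ge q+1$, so
\[
(q+1-S)^+-(q-S)^+=\mathbb I\{S\le q\}=1-\mathbb I\{S\ge q+1\}.
\]
Taking expectations and comparing with the definition of the completion factor in \eqref{eq:kappa} gives $h_{j,t}(q+1)-h_{j,t}(q)=1-\kappa_{j,q,t}$.

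The only step needing care is the case split in the integer identities, in particular the boundary $s=q$ versus $s=q+1$. These rely on $B_{j,\tau}$ being integer valued, which holds by \eqref{eq:service-law}.

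If one wants a cross-check consistent with the recursion for $\kappa$, the increment identity can also be derived by induction on $t$ from the first identity. At $t=T+1$, $h_{j,T+1}(q+1)-h_{j,T+1}(q)=1$ and $\kappa_{j,q,T+1}=0$. Stepping back one period uses $K_j(q+1,q'+1)=\xi_{j,q-q'}$, matching the stated backward recursion for $\kappa$. The pathwise argument above avoids this bookkeeping, so I would present that version and mention the recursion only as a remark.
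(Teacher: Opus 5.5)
Your proof is correct and follows essentially the same route as the paper. You obtain the first identity from $(q-b-s)^+=\bigl((q-b)^+-s\bigr)^+$ together with conditioning on the period-$t$ capacity, using independence of $\sum_{\tau=t+1}^{T}B_{j,\tau}$; you obtain the second from $(q+1-S)^+-(q-S)^+=\mathbb I\{S\le q\}$ and taking expectations. The inductive cross-check via the $\kappa$-recursion is also valid, but, as you note, unnecessary.
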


\begin{proof}
	Let
	\(
	R_{j,t+1}:=\sum_{\tau=t+1}^{T}B_{j\tau}
	\).
	For any \(b \in  \Zp\), we know that $
	\bigl(q-b-R_{j,t+1}\bigr)^+
	=
	\bigl((q-b)^+-R_{j,t+1}\bigr)^+$. Conditioning first on \(B_{j,t}\) and following the definition of \(K_j\), we have
	\begin{align*}
		h_{j,t}(q) & = \E\!\left[\E\!\left[  \left(
		q- B_{j,t} - \sum_{\tau=t+1}^{T}B_{j,\tau} \right)^+  \bigg| B_{j,t}   \right] \right]\\
		& = \E\!\left[\E\!\left[  \left(
		\left(q- B_{j,t}\right)^+ - \sum_{\tau=t+1}^{T}B_{j,\tau} \right)^+  \bigg| B_{j,t}   \right] \right] \\
		& =
		\E\!\left[
		h_{j,t+1}((q-B_{j,t})^+)
		\right] = \sum_{q' \ge 0} K_j(q,q')\cdot h_{j,t+1}(q').
	\end{align*}
	For the second equation in the claim, we define $S:=\sum_{\tau=t}^{T}B_{j,\tau}$. Since $S$ is nonnegative and integer valued, we have $(q+1-S)^+-(q-S)^+
	=
	\mathbb{I}\{S\le q\}$. Taking expectations yields
	\begin{align*}
		h_{j,t}(q+1)-h_{j,t}(q) =\Pp(S\le q) =1-\Pp(S\ge q+1)  =1-\kappa_{j,q,t}.
	\end{align*}
\end{proof}

\begin{claim}
	For every feasible solution \((x,y)\) of \ref{eq:MOLP} and every service $j$,
	\begin{equation}
		\sum_{q=0}^{T} q\,y_{j,q,T+1}
		=
		\sum_{t=1}^{T}\sum_{q=0}^{t-1}
		\bigl(1-\kappa_{j,q,t} \bigr) \cdot
		\bar a_{j,t}(q) \cdot x_{j,q,t}.
		\label{eq:terminal-backlog-identity}
	\end{equation}
\end{claim}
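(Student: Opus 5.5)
The natural route is a potential-function (telescoping) argument built on the terminal-backlog potential $h_{j,t}$ from Claim~\ref{claim:backlog_potential}. Fix a feasible $(x,y)$ and a service $j$, and define the aggregate potential
\[
\Phi_t := \sum_{q\ge 0} h_{j,t}(q)\, y_{j,q,t}, \qquad t\in[T+1].
\]
The boundary values are easy. Since $h_{j,T+1}(q)=q$, we get $\Phi_{T+1}=\sum_{q} q\,y_{j,q,T+1}$, which is the left-hand side of~\eqref{eq:terminal-backlog-identity}. Constraints (4)--(5) give $y_{j,0,1}=1$, and $h_{j,1}(0)=\E[(0-\sum B)^+]=0$, so $\Phi_1=0$. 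Hence the left-hand side equals $\sum_{t=1}^T(\Phi_{t+1}-\Phi_t)$, and it suffices to show that each increment satisfies
\[
\Phi_{t+1}-\Phi_t=\sum_{q=0}^{t-1}(1-\kappa_{j,q,t})\,\bar a_{j,t}(q)\,x_{j,q,t}.
\]

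To compute $\Phi_{t+1}$, substitute the flow constraint (3) for $y_{j,q',t+1}$ and exchange the order of the sums over $q$ and $q'$. This gives
\[
\Phi_{t+1}=\sum_{q}\Big[(y_{j,q,t}-\bar a_{j,t}(q)x_{j,q,t})\sum_{q'}K_j(q,q')h_{j,t+1}(q') + \bar a_{j,t}(q)x_{j,q,t}\sum_{q'}K_j(q+1,q')h_{j,t+1}(q')\Big].
\]
The first identity of Claim~\ref{claim:backlog_potential} turns each inner sum into $h_{j,t}(q)$ and $h_{j,t}(q+1)$ respectively. Therefore
\[
\Phi_{t+1}=\Phi_t+\sum_q \bar a_{j,t}(q)x_{j,q,t}\big(h_{j,t}(q+1)-h_{j,t}(q)\big).
\]
The second identity of the claim replaces the difference by $1-\kappa_{j,q,t}$, which yields exactly the desired increment.

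The only step that needs care is index bookkeeping. In constraint (3), $q'$ ranges over $[t]_0$ and $q$ over $[t-1]_0$. I need to check that $K_j(q,q')$ and $K_j(q+1,q')$ vanish for $q'$ beyond these ranges, which holds since $K_j(q,q')=0$ for $q'>q$. This ensures that truncating the sums loses no mass and that the interchange of summation is legitimate. Similarly, $y_{j,q,t}=0$ for $q\ge t$ follows inductively from constraints (3)--(5), so $\Phi_t$ is a finite sum matching the LP's index ranges.

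Everything else is a direct application of Claim~\ref{claim:backlog_potential}. Once the identity is established, combining it with the revenue term in~\eqref{eq:LP-original-objective} shows that the two objectives coincide for every feasible solution.
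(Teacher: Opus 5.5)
Your proposal is correct and follows essentially the same route as the paper. Both arguments multiply the flow constraint~(3) by $h_{j,t+1}(q')$ and collapse the inner sums with the first identity of Claim~\ref{claim:backlog_potential}, then use the second identity to produce the factor $1-\kappa_{j,q,t}$ and telescope over $t$ using $h_{j,1}(0)=0$ and $h_{j,T+1}(q)=q$. Your index bookkeeping (the support of $K_j(q,\cdot)$ and the vanishing of $y_{j,q,t}$ for $q\ge t$) makes explicit what the paper leaves implicit.
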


\begin{proof}
	Fix \(j\) and write
	\(
	u_{j,q,t}:=\bar a_{j,t}(q)x_{jqt}
	\).
	We first multiply the period-$t$ flow constraint (3) by
	$h_{j,t+1}(q')$ and sum over $q'$:
	\[
	\sum_{q'=0}^{t} h_{j,t+1}(q') \cdot y_{j,q',t+1} = \sum_{q'=0}^{t}  h_{j,t+1}(q') \cdot \sum_{q=0}^{t-1} \bigg[
	\bigl(y_{j,q,t}- u_{j,q,t}\bigr) \cdot K_j(q,q') + u_{j,q,t} \cdot K_j(q+1,q') \bigg].
	\]
	Applying the first equation in Claim~\ref{claim:backlog_potential} to the right-hand side, we obtain
	\begin{align*}
		\sum_{q'=0}^{t}h_{j,t+1}(q')y_{j,q',t+1}
		= \, &
		\sum_{q=0}^{t-1}
		\bigl(y_{j,q,t}-u_{j,q,t}\bigr) h_{j,t}(q)
		+
		\sum_{q=0}^{t-1} u_{j,q,t} \cdot h_{j,t}(q+1)\notag\\
		={}&
		\sum_{q=0}^{t-1}y_{j,q,t} h_{j,t}(q)
		+
		\sum_{q=0}^{t-1} u_{j,q,t} \cdot
		\bigl(h_{j,t}(q+1)-h_{j,t}(q)\bigr)\notag\\
		={}&
		\sum_{q=0}^{t-1}y_{j,q,t} h_{j,t}(q)
		+
		\sum_{q=0}^{t-1}u_{j,q,t}(1-\kappa_{j,q,t}),
	\end{align*}
	where the last equality uses the second equation in Claim~\ref{claim:backlog_potential}.
	
	Summing the equation above over $t=1,\ldots,T$ telescopes the terms and gives
	\[
	\sum_{q=0}^{T} h_{j,T+1}(q)y_{j,q,T+1}
	-
	\sum_{q=0}^{0} h_{j,1}(q)y_{j,q,1}
	=
	\sum_{t=1}^{T}\sum_{q=0}^{t-1}
	u_{j,q,t}(1-\kappa_{j,q,t}).
	\]
	By $h_{j,1}(0)=0$ and $h_{j,T+1}(q)=q$ and by substituting
	\(u_{j,q,t}=\bar{a}_{j,t}(q)x_{j,q,t}\), we prove
	\eqref{eq:terminal-backlog-identity}.
\end{proof}

Using Identity~\eqref{eq:terminal-backlog-identity}, one can easily verify that the objective of~\ref{eq:MOLP} equals to the objective function~\eqref{eq:LP-original-objective}.

\subsection{Proof of Theorem~\ref{thm:LPGA_approx_ratio}}
\label{subsec:proof_of_LP_performance}

We prove Theorem~\ref{thm:LPGA_approx_ratio} following the technical overview in Section~\ref{subsec:LP-main-theorem}. Throughout the proof, let $\mathcal F_t$ denote the complete information
available at the beginning of period $t$, including the actual and virtual
queue states and all randomness realized before period $t$.

\subsubsection{Step 1: Compare the actual and virtual queues.}

Recall that {\LPGA} does not observe the latent service-capacity variables
$B_{j,t}$. On the analysis probability space, however, we may represent the
actual queue evolution as
\[
Q_j(t+1)
=
\bigl(Q_j^+(t)-B_{j,t}\bigr)^+,
\]
where $B_{j,t}$ has distribution $\{\xi_{j,b}\}_{b\ge0}$ and is independent
across services and periods and of the randomness realized before service
in period $t$. The construction of $\widehat B_{j,t}$ in Step~5 can be viewed as recovering,
or conditionally resampling, this latent service capacity from the observed
actual queue transition. If $Q_j(t+1)>0$, then the actual transition
identifies the latent capacity exactly:
\[
B_{j,t}
=
Q_j^+(t)-Q_j(t+1)
=
\widehat B_{j,t}.
\]
If $Q_j(t+1)=0$, the observed transition reveals only that
$B_{j,t}\ge Q_j^+(t)$. Conditional on this event, the latent capacity has
distribution
\[
\Pp\!\left(
B_{j,t}=b
\,\middle|\,
Q_j^+(t),\,Q_j(t+1)=0
\right)
=
\frac{\xi_{j,b}}{\sum_{b' \geq Q_j^+(t) } \xi_{j,b'} },
\qquad b\ge Q_j^+(t),
\]
which is exactly the conditional distribution used to generate
$\widehat B_{j,t}$ in \eqref{eq:bhat-conditional}. Consequently, for the
analysis we may couple the construction with the latent actual capacity
so that $\widehat B_{j,t}=B_{j,t}$ almost surely. This coupling is purely an analytical device and does not require {\LPGA} to
observe the latent service capacities.

We first record a simple property of the virtual accepted-arrival construction; its proof follows directly from Step~5 of {\LPGA} and Equation~\eqref{eq:attenuation}, and is omitted.
\begin{claim}\label{claim:virtual-arrival}
	Suppose $Q_j(t)\le\widehat Q_j(t)$ for all $j \in [n]$. Then, conditional on \(\mathcal F_t\), the entire virtual-arrival vector $\{ \hat{A}_{j,t} \}_{j \in [n]}$ satisfies that
	\begin{equation}
		\widehat A_{j,t} \, \big| \, \Fcal_t
		\sim
		\operatorname{Bernoulli}(w_{j,t} \bar a_{j,t}(Q_j(t)))
		=
		\operatorname{Bernoulli}\!\left(
		\theta_{j,\widehat Q_j(t),t}
		\bar a_{j,t}(\widehat Q_j(t))
		\right),
		\label{eq:ahat-kernel}
	\end{equation}
	and the coordinates are conditionally independent.  Moreover, $A_{j,t} \le \widehat A_{j,t}$ almost surely.
\end{claim}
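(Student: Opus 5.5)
The plan is to condition on $\mathcal F_t$ and follow Steps 1--5 of \LPGA{} to find the conditional law of each $\widehat A_{j,t}$. The randomness used in period $t$ splits into independent pieces: the activation indicators $I_{j,t}\sim\operatorname{Bernoulli}(w_{j,t})$, independent across $j$; the arrival indicator; the customer's joining decision for the selected service; and the fresh Bernoulli draws for activated but unselected services. The hypothesis $Q_j(t)\le\widehat Q_j(t)$, together with monotonicity of $\bar a_{j,t}(\cdot)$, gives $w_{j,t}\in[0,1]$, so the construction is well defined.

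\textbf{Marginal law.} Fix $j$. On $\{I_{j,t}=0\}$ we have $\widehat A_{j,t}=0$. On $\{I_{j,t}=1\}$ there are two cases:
\begin{itemize}
\item If $J_t=j$, then $\widehat A_{j,t}=A_{j,t}$. Given the selection, the customer arrives with probability $\lambda_t$ and joins with probability $a_j(Q_j(t))$, so $\widehat A_{j,t}\sim\operatorname{Bernoulli}(\bar a_{j,t}(Q_j(t)))$.
\item If $J_t\neq j$, then $\widehat A_{j,t}$ is drawn fresh as $\operatorname{Bernoulli}(\bar a_{j,t}(Q_j(t)))$.
\end{itemize}
The key point is that the arrival indicator is independent of $(I_{k,t})_k$ and hence of $J_t$, because $J_t$ is chosen before the arrival is observed. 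So on $\{I_{j,t}=1\}$, and conditional on the whole activation vector $(I_{k,t})_k$, $\widehat A_{j,t}$ is $\operatorname{Bernoulli}(\bar a_{j,t}(Q_j(t)))$ regardless of which activated service is selected. Therefore
\[
\Pp(\widehat A_{j,t}=1\mid\mathcal F_t)=w_{j,t}\,\bar a_{j,t}(Q_j(t)).
\]
By \eqref{eq:attenuation} this equals $\theta_{j,\widehat Q_j(t),t}\,\bar a_{j,t}(\widehat Q_j(t))$; the case $\bar a_{j,t}(Q_j(t))=0$ is covered because then $\bar a_{j,t}(\widehat Q_j(t))=0$ as well.

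\textbf{Conditional independence.} This is the main obstacle, since the selected service's outcome is tied to the shared arrival event. Condition on the activation vector $S_t$, which is a product of independent Bernoullis. Given $S_t$, the outcomes $\{\widehat A_{k,t}\}_{k\in S_t}$ are generated as follows: the selected service $J_t$ (a deterministic function of $S_t$ and $\mathcal F_t$) uses the arrival-and-join indicator, and every other activated service uses its own fresh coin. These are independent draws, each $\operatorname{Bernoulli}(\bar a_{k,t}(Q_k(t)))$, and each depends only on $k$'s own parameter, not on $S_t$. Outside $S_t$ the outcomes are zero.

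Hence $\widehat A_{k,t}=I_{k,t}\cdot C_{k,t}$, where the $C_{k,t}\sim\operatorname{Bernoulli}(\bar a_{k,t}(Q_k(t)))$ are mutually independent and independent of $(I_{k,t})_k$. Formally, one realizes the variables by letting $C_{J_t,t}$ be the joining outcome and the other $C_{k,t}$ be the fresh coins. Because the joint law of $(C_{k,t})_{k\in S_t}$ given $S_t$ is the product measure regardless of $S_t$, the factorization holds. Products of independent pairs are independent across $k$, which gives conditional independence of the coordinates.

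\textbf{Pathwise domination.} $A_{j,t}\neq0$ only when $j=J_t$, and then $\widehat A_{j,t}=A_{j,t}$. In every other case $A_{j,t}=0\le\widehat A_{j,t}$. So $A_{j,t}\le\widehat A_{j,t}$ almost surely.
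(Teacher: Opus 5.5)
Your proof is correct and takes essentially the route the paper indicates. The paper omits the argument, saying only that it follows from Step~5 of \LPGA{} and the attenuation identity~\eqref{eq:attenuation}. You supply those details, together with the needed observation that the period-$t$ arrival and joining randomness is independent of the activation vector, which gives the factorization $\widehat A_{k,t}=I_{k,t}C_{k,t}$ with independent factors.
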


Notice that $A_{j,t} = \widehat{A}_{j,t}$ if $j$ is selected by {\LPGA}, i.e., $j = J_t$; for other $j$, we have $A_{j,t} = 0$. Thus, $A_{j,t} \le \widehat A_{j,t}$ for all $j \in [n]$. We now establish that the each virtual queue is weakly more congested than its actual counterpart.

\begin{lemma}\label{lem:path-dom}
	For every $j \in [n]$ and $t \in [T+1]$, we have $Q_j(t)\le\widehat Q_j(t)$ almost surely.
\end{lemma}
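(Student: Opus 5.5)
We argue by induction on $t$, applying the same one-step monotonicity as in the coupling claims of Appendix~\ref{sec:appendix-QPTAS}. For the base case, both systems are initialized empty, so $Q_j(1)=\widehat Q_j(1)=0$ for every $j$. For the inductive step, assume $Q_j(t)\le \widehat Q_j(t)$ for all $j\in[n]$. This hypothesis is exactly what is needed for the activation probabilities $w_{j,t}$ in~\eqref{eq:activation_set_prob} to lie in $[0,1]$: by monotonicity~\eqref{eq:monotone-a}, $\bar a_{j,t}(Q_j(t))\ge \bar a_{j,t}(\widehat Q_j(t))$. Hence period $t$ of {\LPGA} is well defined, and Claim~\ref{claim:virtual-arrival} applies, giving $A_{j,t}\le \widehat A_{j,t}$ almost surely for every $j$.

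We first compare the post-arrival queues. Since $A_{j,t}\le \widehat A_{j,t}$ and $Q_j(t)\le\widehat Q_j(t)$, we get
\[
Q_j^+(t)=Q_j(t)+A_{j,t}\le \widehat Q_j(t)+\widehat A_{j,t}=:\widehat Q_j^+(t).
\]
This needs no integrality argument, unlike the proofs in Appendix~\ref{sec:appendix-QPTAS}. The reason is that the actual arrival is either equal to the virtual one (when $j=J_t$) or zero.

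Next we handle the service step using the analytical coupling set up just before Claim~\ref{claim:virtual-arrival}. On the analysis probability space, $\widehat B_{j,t}=B_{j,t}$ almost surely, where $B_{j,t}$ is the latent capacity driving the actual queue, $Q_j(t+1)=(Q_j^+(t)-B_{j,t})^+$. Since $x\mapsto (x-b)^+$ is nondecreasing for each fixed $b$,
\[
Q_j(t+1)=\bigl(Q_j^+(t)-B_{j,t}\bigr)^+\le \bigl(\widehat Q_j^+(t)-\widehat B_{j,t}\bigr)^+=\widehat Q_j(t+1),
\]
which closes the induction.

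The main care point is legitimizing the identification $\widehat B_{j,t}=B_{j,t}$. In particular, we must check that the conditional resampling in~\eqref{eq:bhat-conditional} agrees in law with the latent capacity given the observed transition, jointly with everything else in $\mathcal F_t$ and the period-$t$ arrivals. The reason this holds is that $B_{j,t}$ is independent of $\mathcal F_t$ and of the period-$t$ arrival and activation randomness. Given $Q_j^+(t)$ and $Q_j(t+1)$, its conditional law is therefore either a point mass (when $Q_j(t+1)>0$) or the truncated law $\xi_{j,b}/\sum_{b'\ge Q_j^+(t)}\xi_{j,b'}$ on $b\ge Q_j^+(t)$. Replacing the actual latent variable by this resample thus preserves the joint law of the whole process. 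We will state this explicitly rather than rely on the informal remark, since the induction uses the almost-sure equality along the entire path.
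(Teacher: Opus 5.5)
Your proof is correct and follows essentially the same route as the paper. Both argue by induction on $t$: Claim~\ref{claim:virtual-arrival} gives $A_{j,t}\le\widehat A_{j,t}$, hence $Q_j^+(t)\le\widehat Q_j^+(t)$, and the analytical coupling $\widehat B_{j,t}=B_{j,t}$ together with the monotonicity of $x\mapsto(x-b)^+$ closes the step. Your extra remarks (that the inductive hypothesis is what makes $w_{j,t}\in[0,1]$, and the explicit justification of the conditional-resampling coupling) only spell out what the paper states informally just before Claim~\ref{claim:virtual-arrival}.
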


\begin{proof}
	We proceed by induction on $t$. The claim is immediate at $t=1$, since
	$Q_j(1)=\widehat Q_j(1)=0$ for every $j$. Suppose that $Q_j(t)\le \widehat Q_j(t)$ for every $j$. By
	Claim~\ref{claim:virtual-arrival}, $A_{j,t} \le \widehat A_{j,t}$ almost surely, and hence $Q_j^+(t)
	=
	Q_j(t)+A_{j,t}
	\le
	\widehat Q_j(t)+\widehat A_{j,t}
	=
	\widehat Q_j^+(t)$. Under the coupling described above, $\widehat B_{j,t}=B_{j,t}$ almost surely. We thus have $Q_j(t+1)
	=
	\bigl(Q_j^+(t)-B_{j,t}\bigr)^+
	\le
	\bigl(\widehat Q_j^+(t)-\widehat B_{j,t}\bigr)^+
	=
	\widehat Q_j(t+1)$, which completes the induction.
\end{proof}

\subsubsection{Step 2: Connect the virtual queues to \ref{eq:MOLP}.} 
\label{subsec:LP-proof-Step2}

We next characterize the virtual queue process and establish its connection to the optimal LP solution. The following lemma shows that its marginal occupancy probabilities coincide with the optimal LP variables $\bm y^*$ and that the virtual queues remain mutually independent across services.

\begin{lemma}
	\label{lem:virtual-marginals}
	For every $j\in[n]$, $t\in[T+1]$, and feasible queue state $q$, we have $\Pp\!\left(\widehat Q_j(t)=q\right)
	=
	y^*_{j,q,t}$. Moreover, for every fixed $t$, the virtual queues
	$\widehat Q_1(t),\ldots,\widehat Q_n(t)$ are mutually independent.
\end{lemma}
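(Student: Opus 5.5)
We prove a stronger statement by induction on $t$: conditional on the virtual state $\widehat{\bm Q}(t)$, and in fact on all of $\mathcal F_t$, the vector $\widehat{\bm Q}(t+1)$ is generated coordinatewise independently. Each coordinate $j$ evolves by the one-step kernel
\[
P_{j,t}(q,q') := \bigl(1-\theta_{j,q,t}\bar a_{j,t}(q)\bigr) K_j(q,q') + \theta_{j,q,t}\bar a_{j,t}(q) K_j(q+1,q'),
\]
which depends only on $\widehat Q_j(t)$. Once this is shown, $\{\widehat{\bm Q}(t)\}$ is a product of independent inhomogeneous Markov chains started at $\widehat Q_j(1)=0$. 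Independence across $j$ is then immediate. The marginals satisfy $\Pp(\widehat Q_j(t+1)=q')=\sum_q \Pp(\widehat Q_j(t)=q)P_{j,t}(q,q')$.

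\textbf{Inductive step.} Suppose $\Pp(\widehat Q_j(t)=q)=y^*_{j,q,t}$. Since $\theta_{j,q,t}y^*_{j,q,t}=x^*_{j,q,t}$ (both sides vanish when $y^*_{j,q,t}=0$, by Constraint~(1)), the recursion above is exactly Constraint~(3) of \ref{eq:MOLP}. Hence $\Pp(\widehat Q_j(t+1)=q')=y^*_{j,q',t+1}$. The base case is Constraints~(4)--(5).

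\textbf{Arrival half of the kernel.} By Lemma~\ref{lem:path-dom}, $Q_j(t)\le \widehat Q_j(t)$ holds, so Claim~\ref{claim:virtual-arrival} applies. Conditional on $\mathcal F_t$, the indicators $\widehat A_{j,t}$ are independent $\operatorname{Bernoulli}(\theta_{j,\widehat Q_j(t),t}\bar a_{j,t}(\widehat Q_j(t)))$.

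\textbf{Service half and the main obstacle.} The main obstacle is the service step. The virtual capacities $\widehat B_{j,t}$ are built from the actual transitions, and the actual transitions depend on $A_{j,t}$, hence on the selection $J_t$, which couples services. To handle this, use the analytic coupling from Step~1 of the proof of Theorem~\ref{thm:LPGA_approx_ratio}, under which $\widehat B_{j,t}=B_{j,t}$ almost surely. Under this coupling, $\widehat B_{j,t}$ is the latent capacity $B_{j,t}$ itself. This latent capacity is drawn independently of everything realized before service in period $t$, including the activation indicators, $J_t$, the arrival, and all $\widehat A_{k,t}$.

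The point to check carefully is that the resampling rule~\eqref{eq:bhat-conditional} reproduces the true conditional law of $B_{j,t}$ given the observed transition. Given that, the joint law of $(\widehat A_{j,t},\widehat B_{j,t})_{j}$ conditional on $\mathcal F_t$ factorizes: the $B_{j,t}$ are i.i.d.\ across $j$ with law $\{\xi_{j,b}\}$ and independent of the vector $(\widehat A_{j,t})_j$, whose coordinates are conditionally independent by Claim~\ref{claim:virtual-arrival}.

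\textbf{Conclusion.} The update $\widehat Q_j(t+1)=(\widehat Q_j(t)+\widehat A_{j,t}-\widehat B_{j,t})^+$ then gives, conditional on $\mathcal F_t$, independent coordinates with transition probabilities $P_{j,t}(\widehat Q_j(t),\cdot)$. These depend on $\mathcal F_t$ only through $\widehat Q_j(t)$. Taking the tower property over $\mathcal F_t$ yields both the Markov-product structure and the independence of $\widehat Q_1(t+1),\ldots,\widehat Q_n(t+1)$, which completes the induction.
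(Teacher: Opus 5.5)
Your proposal is correct and follows essentially the same route as the paper: induction on $t$, Claim~\ref{claim:virtual-arrival} for the arrival part, the identity $\theta_{j,q,t}y^*_{j,q,t}=x^*_{j,q,t}$ with Constraint~(3) for the marginals, and a product-kernel argument (the conditional law depends only on each service's own virtual state) for independence. You are more explicit than the paper on two points it passes over quickly: the $y^*_{j,q,t}=0$ case, and the use of the coupling $\widehat B_{j,t}=B_{j,t}$ to show that the virtual service capacities are independent across services and of the virtual arrivals.
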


\begin{proof}
	We prove both statements simultaneously by induction on $t$. At $t=1$,
	all virtual queues are deterministically zero. Hence $\Pp(\widehat Q_j(1)=0)=1=y^*_{j,0,1}$, and independence is immediate. Suppose that both statements hold at period $t$. For each $j$ and
	$q\in\{0,\ldots,t-1\}$, define
	\[
	p_{j,q,t}
	:=
	\theta_{j,q,t}\bar a_{j,t}(q).
	\]
	By Claim~\ref{claim:virtual-arrival}, conditional on
	$\widehat Q_j(t)=q$, the virtual accepted-arrival indicator $\widehat{A}_j(t)$ is independent Bernoulli
	with parameter $p_{j,q,t}$. Together with the virtual service-capacity
	construction and $\widehat Q_j(t+1)
	= \bigl( \widehat Q_j(t)+\widehat A_{j,t}-\widehat B_{j,t} \bigr)^+$, we have
	\[
	\Pp\!\left(
	\widehat Q_j(t+1)=q'
	\, \middle| \,
	\widehat Q_j(t)=q
	\right)  =
	(1-p_{j,q,t}) \cdot K_j(q,q')
	+
	p_{j,q,t} \cdot K_j(q+1,q').
	\]
	Applying the law of total probability and the induction hypothesis that $\Pp\! \left( \widehat Q_j(t)=q \right)  = y^*_{j,q,t}  $ gives
	\begin{align*}
		\Pp(\widehat Q_j(t+1)=q')
		= & \sum_{q = 0}^{t-1}  \Pp\! \left( \widehat Q_j(t)=q \right) \cdot  \Pp\!\left(
		\widehat Q_j(t+1)=q'
		\, \middle| \,
		\widehat Q_j(t)=q
		\right) 
		\\
		= & \sum_{q=0}^{t-1}
		y^*_{j,q,t} \cdot
		\left[
		(1-p_{j,q,t}) \cdot K_j(q,q')
		+
		p_{j,q,t} \cdot K_j(q+1,q')
		\right].
	\end{align*}
	Since $p_{j,q,t} = \theta_{j,q,t}\bar a_{j,t}(q)$ and $ y^*_{j,q,t}\theta_{j,q,t} = x^*_{j,q,t}$,
	we obtain
	\[
	\Pp(\widehat Q_j(t+1)=q')
	=
	\sum_{q=0}^{t-1}
	\Big[
	\bigl(y^*_{j,q,t}
	-\bar a_{j,t}(q)x^*_{j,q,t}\bigr)K_j(q,q')
	+
	\bar a_{j,t}(q)x^*_{j,q,t}K_j(q+1,q')
	\Big].
	\]
	By the flow constraint of \ref{eq:MOLP}, the right-hand side is exactly
	$y^*_{j,q',t+1}$. This proves the first statement at period $t+1$.

	It remains to establish mutual independence of the virtual queues at time $t+1$. By the induction hypothesis, $\widehat Q_1(t),\ldots,\widehat Q_n(t)$ are mutually independent. Moreover, conditional on the entire virtual state vector $\bm{\widehat{Q}}(t) := \bigl(\widehat Q_1(t),\ldots,\widehat Q_n(t)\bigr)$, the random transitions of the virtual queues are mutually independent across services, and the conditional transition law of virtual service $j$ depends on $\bm{\widehat{Q}}(t)$ only through its own coordinate $\widehat Q_j(t)$. Therefore, $\widehat Q_1(t+1),\ldots,\widehat Q_n(t+1)$ are mutually independent. This completes the induction.

\end{proof}

Lemma~\ref{lem:virtual-marginals} establishes the key connection between
the virtual system and \ref{eq:MOLP}. Although the virtual queues evolve
dynamically under \texttt{LPGA}, their marginal occupancy distributions
coincide exactly with those prescribed by the optimal LP solution. Moreover, the virtual queues are mutually independent across services. The matching marginals allow us to connect the expected contribution of each virtual service with its corresponding term in the LP objective. Meanwhile, their independence allows us to analyze the loss incurred when multiple services are activated in the same period but only one
is selected. We now use these properties to establish
the net revenue guarantee.

\subsubsection{Step 3: Compare the net revenue of {\LPGA} with the LP objective.}
\label{subsubsec:LP-proof-step3-revenue}

\paragraph{Per-period analysis.} For each service $j$ and period $t$, define
\[
C_{j,t}
:=
\mathbb{I}\{
\text{a customer joins service $j$ in period $t$ and is completed by the end of period $T$}
\}.
\]
We can thus write the net revenue attributable to period $t$ as $G_t^{\ALG} :=
\sum_{j=1}^n r_j C_{j,t}$ and also \begin{equation}
	R^{\ALG} =
	\sum_{t=1}^{T} G_t^{\ALG}.
	\label{eq:pathwise-completion-accounting}
\end{equation}

In what follows, we first focus on a specific fixed time period $t$ and thus suppress the time index. We write $q_j:=Q_j(t)$ and $\widehat{q}_j:=\widehat Q_j(t)$, and define the actual and virtual completion-adjusted values
\begin{equation*}
	v_j
	:=
	r_j\bar a_{j,t}(q_j)\kappa_{j,q_j,t},
	\qquad
	\widehat v_j
	:=
	r_j\bar a_{j,t}(\widehat q_j)\kappa_{j,\widehat q_j,t}.
\end{equation*}
Also write $\theta_j:=\theta_{j,\widehat q_j,t}$ and $w_j:=w_{j,t}$. Note that Lemma~\ref{lem:path-dom} gives \(q_j\le\widehat q_j\).  Since both
\(\bar a_{j,t}(q)\) and \(\kappa_{j,q,t}\) are nonincreasing in \(q\), we have $v_j\ge\widehat v_j$. Under these definitions, we have that if $\bar a_{j,t}(q_j)>0$, then
\begin{align}
	w_jv_j =
	\theta_j \cdot
	\frac{\bar a_{j,t}(\widehat q_j)}{\bar a_{j,t}(q_j)} \cdot
	r_j \bar a_{j,t}(q_j)\kappa_{j,q_j,t} = 
	\theta_j r_j\bar a_{j,t}(\widehat q_j)\kappa_{j,q_j,t} \geq
	\theta_j r_j\bar a_{j,t}(\widehat q_j)
	\kappa_{j,\widehat q_j,t} = 
	\theta_j\widehat v_j.
	\label{eq:mean-dom-new}
\end{align}
If \(\bar a_j(q_j)=0\), both sides of
\eqref{eq:mean-dom-new} are zero.  Hence \eqref{eq:mean-dom-new} always
holds.

\paragraph{Reward random variables.}
We next introduce two sets of random variables that will be used to compare
the actual and virtual systems. Conditional on $\mathcal F_t$, define
\[
X_j:=v_j I_{j,t},
\]
where $I_{j,t}\sim\operatorname{Bernoulli}(w_j)$ is the independent
activation indicator used in Step~1 of \LPGA. Thus,
\[
X_j=
\begin{cases}
	v_j, & \text{with probability }w_j,\\
	0, & \text{with probability }1-w_j.
\end{cases}
\]
Similarly, conditional of $\Fcal_t$, define mutually independent
random variables
\[
Y_j=
\begin{cases}
	\widehat v_j, & \text{with probability }\theta_j,\\
	0, & \text{with probability }1-\theta_j.
\end{cases}
\]
By construction, conditional on $\mathcal F_t$, both $(X_j)_{j\in[n]}$ and $(Y_j)_{j\in[n]}$ are mutually independent. We will later also use the fact that $(Y_j)_{j\in[n]}$ are mutually independent
even \emph{without} conditioning on $\mathcal F_t$. Indeed, $Y_j$ depends only on the virtual queue state $\widehat Q_j(t)$ and
its own auxiliary randomization, which is independent across services
conditional on the virtual queue states and has parameter
$\theta_{j,\widehat Q_j(t),t}$. By Lemma~\ref{lem:virtual-marginals}, the virtual queue states
$\widehat Q_1(t),\ldots,\widehat Q_n(t)$ are mutually independent.
Therefore, $Y_1,\ldots,Y_n$ are mutually independent.

The following lemma compares $\max_{j\in[n]}X_j$
and $\max_{j\in[n]}Y_j$ conditional on $\Fcal_t$. It provides the key bridge between the actual
activation process and its virtual counterpart. We defer the proof to Appendix~\ref{subsubsec:proof-maximum-comparison}.

\begin{lemma}
	\label{lem:maximum-comparison}
	Conditional on $\mathcal F_t$, we have
		$\Ebb\!\left[\max_{j\in[n]}X_j\mid\mathcal F_t\right]
		\ge
		\Ebb\!\left[\max_{j\in[n]}Y_j\mid\mathcal F_t\right]$.
\end{lemma}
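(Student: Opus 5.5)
The plan is a hybrid (coordinate-by-coordinate replacement) argument. Conditional on $\mathcal F_t$, the quantities $v_j,\widehat v_j,w_j,\theta_j$ are fixed numbers, and both families $(X_j)_{j\in[n]}$ and $(Y_j)_{j\in[n]}$ are mutually independent two-point variables. On a common probability space, I will define hybrid vectors
\[
Z^{(k)} := (X_1,\ldots,X_k,Y_{k+1},\ldots,Y_n), \qquad k=0,\ldots,n,
\]
with all $2n$ variables independent given $\mathcal F_t$. This does not change the conditional laws of $\max_j X_j$ or $\max_j Y_j$. It then suffices to show $\Ebb[\max_j Z^{(k)}_j\mid\mathcal F_t]\ge\Ebb[\max_j Z^{(k-1)}_j\mid\mathcal F_t]$ for each $k$, and to telescope from $k=1$ to $k=n$.

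\textbf{The single-coordinate step.} Fix $k$ and let $M:=\max_{j\neq k}Z^{(k)}_j$. This $M$ is a nonnegative random variable, independent of both $X_k$ and $Y_k$, and has the same law in $Z^{(k)}$ and $Z^{(k-1)}$. For any nonnegative $Z$ independent of $M$,
\[
\Ebb[\max(Z,M)]=\Ebb[M]+\Ebb[(Z-M)^+].
\]
Define $g(s):=\Ebb[(s-M)^+]$ for $s\ge0$. Then $\Ebb[(X_k-M)^+]=w_k\,g(v_k)$ and $\Ebb[(Y_k-M)^+]=\theta_k\,g(\widehat v_k)$, using $g(0)=0$ because $M\ge0$. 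The step therefore reduces to showing
\[
w_k\,g(v_k)\ge\theta_k\,g(\widehat v_k).
\]

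\textbf{Key inequality.} The function $g$ is convex and nondecreasing with $g(0)=0$. Hence $g(s)/s$ is nondecreasing on $(0,\infty)$. If $\widehat v_k=0$ or $\theta_k=0$, the right-hand side vanishes and there is nothing to prove. Otherwise, \eqref{eq:mean-dom-new} gives $w_kv_k\ge\theta_k\widehat v_k>0$, so $v_k>0$. Moreover $v_k\ge\widehat v_k$, which follows from Lemma~\ref{lem:path-dom} together with the monotonicity of $\bar a_{j,t}$ and $\kappa_{j,\cdot,t}$. Combining these,
\[
w_k g(v_k)=w_kv_k\cdot\frac{g(v_k)}{v_k}\ge\theta_k\widehat v_k\cdot\frac{g(\widehat v_k)}{\widehat v_k}=\theta_k g(\widehat v_k).
\]

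\textbf{Main obstacle.} The main difficulty is conceptual rather than computational: $X_k$ need not stochastically dominate $Y_k$, since $w_k\le\theta_k$ is possible. A direct coupling therefore fails. The argument must instead use only the mean domination \eqref{eq:mean-dom-new} together with $v_k\ge\widehat v_k$, and the convexity of $s\mapsto\Ebb[(s-M)^+]$ is exactly what makes these two facts sufficient. The remaining care is bookkeeping: keeping the independence of the hybrid coordinates conditional on $\mathcal F_t$, and handling the degenerate case $\bar a_{j,t}(q_j)=0$, which is covered by the zero case above.
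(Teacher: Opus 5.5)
Your proposal is correct and follows essentially the paper's proof. Both use the same one-coordinate-at-a-time hybrid replacement, and both reduce each step via $\max\{m,z\}=m+(z-m)^+$ to comparing $w_k\,\Ebb[(v_k-M)^+]$ with $\theta_k\,\Ebb[(\widehat v_k-M)^+]$. The difference is only in order of operations: the paper proves this inequality pointwise for each deterministic $m$, using $1-m/v_k\ge 1-m/\widehat v_k$, and then averages over $M_k$; you average first and use that $g(s)/s$ is nondecreasing for the convex $g$ with $g(0)=0$, which is the same monotone-secant fact.
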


\paragraph{Connecting $X_j$ with the {\LPGA} output.} We next connect Lemma \eqref{lem:maximum-comparison} to the actual revenue earned by {\LPGA} in period
$t$. Since {\LPGA} selects the activated service with the largest actual
completion-adjusted value $v_j$, the maximum $\max_{j\in[n]}X_j$ has a direct revenue interpretation, as formalized in the following lemma. 

\begin{lemma}
	\label{lem:completion-reward-max}
	Conditional on \(\mathcal F_t\), we have $
		\E[G_t^{\ALG}\mid\mathcal F_t]
		=
		\E\!\left[\max_{j\in[n]}X_j\mid\mathcal F_t\right]$.
\end{lemma}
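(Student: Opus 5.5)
We condition on \(\mathcal F_t\) throughout and first reduce \(G_t^{\ALG}\) to the single selected service. Only \(J_t\) can receive an actual joining customer in period \(t\), since \(A_{j,t}=0\) for \(j\neq J_t\). Hence \(C_{j,t}=0\) for \(j\neq J_t\), and
\[
G_t^{\ALG}=\sum_{j\in[n]} r_j\,\mathbb I\{J_t=j\}\,C_{j,t}.
\]
The selection \(J_t\) is a function of the activation indicators \((I_{j,t})_{j\in[n]}\) and of the \(\mathcal F_t\)-measurable values \(v_j\). It is therefore determined before the arrival, joining, and service randomness of period \(t\).

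The key step is to compute \(\E[C_{j,t}\mid \mathcal F_t, I_{\cdot,t}]\) on the event \(\{J_t=j\}\). The arrival indicator is independent of the activation step, so a customer arrives and joins \(j\) with probability \(\lambda_t a_j(q_j)=\bar a_{j,t}(q_j)\). Conditional on joining at pre-arrival length \(q_j\), FIFO places the customer behind exactly \(q_j\) incumbents. The customer therefore completes by the end of period \(T\) if and only if \(\sum_{\tau=t}^T B_{j,\tau}\ge q_j+1\), which has probability \(\kappa_{j,q_j,t}\) by \eqref{eq:kappa}.

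This relies on the future capacities \(B_{j,\tau}\), \(\tau\ge t\), being independent of \(\mathcal F_t\), of the activation and selection randomness, and of the current joining event. That independence holds because the latent capacities are exogenous and i.i.d.\ across periods. The virtual-queue updates and the auxiliary \(\widehat A_{k,t}\) draws never feed back into the actual service process. Under the analysis coupling \(\widehat B_{j,t}=B_{j,t}\), the quantity \(\widehat B\) is just a relabeling of \(B\), so no conditioning on \(B\) is introduced. We also note that later recommendations add customers only behind the current one, so they cannot affect whether this customer completes.

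Putting these together,
\[
\E[r_jC_{j,t}\mid\mathcal F_t,I_{\cdot,t}]\,\mathbb I\{J_t=j\}=r_j\,\bar a_{j,t}(q_j)\,\kappa_{j,q_j,t}\,\mathbb I\{J_t=j\}=v_j\,\mathbb I\{J_t=j\}.
\]
Summing over \(j\) gives \(\E[G_t^{\ALG}\mid\mathcal F_t,I_{\cdot,t}]=v_{J_t}\), with the convention \(v_{J_t}=0\) if \(S_t=\emptyset\).

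Finally, we identify \(v_{J_t}\) with \(\max_j X_j\). By Step~2 of \LPGA, \(J_t\in\arg\max_{j\in S_t}v_j\). Since \(X_j=v_jI_{j,t}\) and all \(v_j\ge0\), we get \(\max_j X_j=\max_{j\in S_t}v_j=v_{J_t}\) when \(S_t\neq\emptyset\). When \(S_t=\emptyset\), both sides equal \(0\). Ties do not matter because tied services have equal \(v_j\).

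Taking conditional expectation over the activation indicators given \(\mathcal F_t\), by the tower property, yields the lemma. The one step that needs care is the independence of future capacities from the selection and coupling randomness. The rest is bookkeeping.
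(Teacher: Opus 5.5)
Your proof is correct and follows essentially the same route as the paper. Both condition on the activation set $S_t$ (equivalently, the indicators $I_{j,t}$), use FIFO and the completion factor to obtain $\E[G_t^{\ALG}\mid S_t,\mathcal F_t]=r_{J_t}\,\bar a_{J_t,t}(q_{J_t})\,\kappa_{J_t,q_{J_t},t}=v_{J_t}=\max_{j\in[n]}X_j$, and then average over $S_t$. Your explicit argument that future capacities are independent of the selection and virtual-queue randomness, and that later arrivals join behind the tagged customer, spells out a point the paper treats only briefly.
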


\begin{proof}
	We condition further on the activation set $S_t = \{ j \in [n] \, : \, I_{j,t} = 1 \}$. If $S_t=\emptyset$,
	then no service is recommended and hence $\Ebb\!\left[G_t^{\ALG}\mid S_t,\mathcal F_t\right]=0$. At the same time, $X_j = v_j I_{j,t}=0$ for every $j$ if $S_t = \emptyset$, so the desired equality holds. Now suppose $S_t\neq\emptyset$. By the selection rule of \LPGA, the
	contingent recommendation $J_t$ satisfies
	\[
	J_t\in\arg\max_{j\in S_t}
	\left\{
	r_j\bar a_{j,t}(Q_j(t))\kappa_{j,Q_j(t),t}
	\right\}
	=
	\arg\max_{j\in S_t} \left\lbrace v_j \right\rbrace.
	\]
	Conditional on $\mathcal F_t$ and $S_t$, if $J_t=j$, a customer joins service $j$ with probability $\bar a_{j,t}(q_j)$. Conditional
	on the joining, FIFO implies that this tagged customer is completed by the
	end of period $T$ if and only if $\sum_{\tau=t}^{T} B_{j\tau}\ge q_j+1$. Indeed, the tagged customer has exactly $q_j$ customers ahead upon joining,
	while all subsequent arrivals join behind it and therefore do not affect
	its completion. By the definition of the completion factor, its conditional
	probability of completion is therefore $\kappa_{j,q_j,t}$. Hence
	\[
	\Ebb\!\left[
	G_t^{\ALG}
	\,\middle|\,
	S_t,\mathcal F_t
	\right]
	=
	r_j\bar a_{j,t}(q_j)\kappa_{j,q_j,t}
	=
	v_j
	=
	\max_{k\in S_t}v_k.
	\]
	Since $X_j=v_j I_{j,t}$, we have $\max_{j\in[n]}X_j = \max_{j\in S_t}v_j$. Taking expectation over the activation set $S_t$ conditional on $\mathcal F_t$
	gives
	\[
	\Ebb\!\left[G_t^{\ALG}\mid\mathcal F_t\right]
	=
	\Ebb\!\left[\max_{j\in[n]}X_j\mid\mathcal F_t\right],
	\]
	as desired.
\end{proof}

\paragraph{Connecting $Y_j$ with the LP objective.}

We next connect random variables $Y_j$ with
the virtual queues and further relate them to the objective of
\ref{eq:MOLP}. Recall that, conditional on $\mathcal F_t$, the random variables
$Y_1,\ldots,Y_n$ are mutually independent by construction. Moreover, their
conditional distributions depend on the virtual queue states through
\[
Y_j=
\begin{cases}
	r_j\bar a_{j,t}(\widehat Q_j(t))
	\kappa_{j,\widehat Q_j(t),t},
	& \text{with probability }
	\theta_{j,\widehat Q_j(t),t},\\
	0, & \text{otherwise}.
\end{cases}
\]
By Lemma~\ref{lem:virtual-marginals}, we have
$\Pp(\widehat Q_j(t)=q)=y^*_{j,q,t}$. Therefore,
\begin{align*}
	\Ebb[Y_j]  = \sum_{q=0}^{t-1} \Pr \left( \widehat Q_j(t) = q \right) \cdot \theta_{j,q,t}  r_j \bar a_{j,t}(q) \kappa_{j,q,t} =
	\sum_{q=0}^{t-1}
	y^*_{j,q,t}\theta_{j,q,t}
	r_j\bar a_{j,t}(q)\kappa_{j,q,t} =
	\sum_{q=0}^{t-1} x^*_{j,q,t}
	r_j \bar a_{j,t}(q) \kappa_{j,q,t},
\end{align*}
where the second equality follows from
$y^*_{j,q,t}\theta_{j,q,t}=x^*_{j,q,t}$. Consequently,
\begin{equation}
	\sum_{j=1}^n\Ebb[Y_j]
	=
	\sum_{j=1}^n\sum_{q=0}^{t-1}
	r_j\kappa_{j,q,t}\bar a_{j,t}(q)x^*_{j,q,t},
	\label{eq:Ymean-period-LP}
\end{equation}
which is exactly the contribution of period $t$ to the objective value of \ref{eq:MOLP}.

What remains in connecting $\Ebb \left[ \sum_{j=1}^n Y_j \right]$ to the RHS of the inequality in Lemma~\ref{lem:maximum-comparison}, which is $\Ebb \left[ \max_{j \in [n]}  Y_j \, \big| \, \Fcal_t \right]$. We achieve this via a correlation gap for independent variables. Specifically, we recall that $Y_1,\ldots,Y_n$ are mutually independent even without conditioning on $\Fcal_t$. We thus have the following lemma and defer the proof to Appendix~\ref{subsubsec:proof-of-corr-gap}

\begin{lemma}
	\label{lem:corr-gap-Y}
	For every period $t$, we have $
		\Ebb\!\left[\max_{j\in[n]}Y_j\right]
		\ge
		\left(1-\frac{1}{e}\right)
		\sum_{j=1}^n\Ebb[Y_j].$
\end{lemma}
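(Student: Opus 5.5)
The plan is to prove the standard correlation-gap bound for independent two-point random variables directly. Each $Y_j$ takes a random value, since the virtual queue state is itself random. So I first condition on the virtual state vector and treat each $Y_j$ as a general nonnegative random variable. Unconditionally, the $Y_j$ are mutually independent and nonnegative with $\sum_j \Pp(Y_j>0)\le \sum_j\sum_q x^*_{j,q,t}\le 1$ by Constraint~(2) of \ref{eq:MOLP}. Write $\mu_j:=\Pp(Y_j>0)$, so $\sum_j\mu_j\le 1$. This is the only structural fact needed, and the bound reduces to the classical inequality $\Ebb[\max_j Y_j]\ge(1-\prod_j(1-\mu_j))\cdot\frac{\sum_j\Ebb[Y_j]}{\sum_j\mu_j}$. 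Combined with $1-\prod_j(1-\mu_j)\ge 1-e^{-\sum_j\mu_j}\ge(1-1/e)\sum_j\mu_j$ for $\sum_j\mu_j\le1$, which holds by concavity of $s\mapsto 1-e^{-s}$ on $[0,1]$, this gives the claim.

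To prove the classical inequality, I would use the layer-cake formula $\Ebb[\max_jY_j]=\int_0^\infty \Pp(\max_j Y_j>s)\,ds=\int_0^\infty\bigl(1-\prod_j(1-\Pp(Y_j>s))\bigr)ds$, which uses independence. Set $g_j(s):=\Pp(Y_j>s)\le\mu_j$ and $G(s):=\sum_j g_j(s)\le\sum_j\mu_j\le 1$. By AM--GM, $1-\prod_j(1-g_j(s))\ge 1-e^{-G(s)}$. Concavity of $1-e^{-u}$ with value $0$ at $u=0$ gives $1-e^{-u}\ge(1-1/e)u$ for $u\in[0,1]$. Hence the integrand is at least $(1-1/e)G(s)$, and integrating yields $\Ebb[\max_jY_j]\ge(1-1/e)\int_0^\infty\sum_j\Pp(Y_j>s)\,ds=(1-1/e)\sum_j\Ebb[Y_j]$. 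This route is cleaner than the two-step version above, because it never needs the ratio form.

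The main thing to check carefully is that $\sum_j \Pp(Y_j>0)\le 1$ unconditionally, which requires the unconditional independence and marginals already established. Here $\Pp(Y_j>0)\le\Ebb[\theta_{j,\widehat Q_j(t),t}]=\sum_q y^*_{j,q,t}\theta_{j,q,t}=\sum_q x^*_{j,q,t}$ by Lemma~\ref{lem:virtual-marginals}, and summing over $j$ and invoking Constraint~(2) gives at most $1$. The unconditional mutual independence of $Y_1,\ldots,Y_n$ was argued just before the lemma, also from Lemma~\ref{lem:virtual-marginals}, and it is exactly what licenses the product formula for $\Pp(\max_jY_j>s)$. The remaining steps are elementary real-variable inequalities, so the only real obstacle is bookkeeping: the bound must be applied to the unconditional law of $(Y_j)$, not the law conditional on $\mathcal F_t$, since conditionally the activation probabilities need not sum to at most one.
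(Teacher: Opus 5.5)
Your proposal is correct and follows essentially the same route as the paper. The paper proves a general rank-one correlation-gap lemma for independent nonnegative variables with $\sum_j \Pp(Y_j>0)\le 1$, using the tail-integral formula, the product form of $\Pp(\max_j Y_j\ge z)$, the bound $1-x\le e^{-x}$, and concavity of $1-e^{-s}$ on $[0,1]$; it then verifies $\sum_j\Pp(Y_j>0)\le\sum_j\sum_q x^*_{j,q,t}\le 1$ via Lemma~\ref{lem:virtual-marginals} and Constraint~(2), relying on the unconditional independence of the $Y_j$. You can drop the opening remark about conditioning on the virtual state vector, which conflicts with your correct final point that the bound must be applied to the unconditional law, and also the unneeded ratio-form detour.
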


\subsubsection{Completion of the approximation proof}

Fix period $t$.  By Lemmas~\ref{lem:maximum-comparison} and ~\ref{lem:completion-reward-max} and taking expectation over $\Fcal_t$, we have
\[
\E[G_t^{\ALG}]
\ge
\E[\max_jY_j(t)].
\]
Applying Lemma~\ref{lem:corr-gap-Y} and Equation~\eqref{eq:Ymean-period-LP}, we further establish that
\begin{align*}
	\E[G_t^{\ALG}]
	\ge
	\left(1-\frac1e\right)
	\sum_{j=1}^n\E[Y_j(t)]
	=
	\left(1-\frac1e\right)
	\sum_{j=1}^n\sum_{q=0}^{t-1}
	r_j\kappa_{jqt}\bar a_{j,t}(q)x^*_{jqt}.
\end{align*}
Summing over $t=1,\ldots,T$ and using Equation \eqref{eq:pathwise-completion-accounting} yields
\[
\E[R^{\ALG}]
\ge
\left(1-\frac1e\right)
\sum_{t=1}^{T}\sum_{j=1}^n\sum_{q=0}^{t-1}
r_j\kappa_{jqt}\bar a_{j,t}(q)x^*_{jqt} = \left(1-\frac1e\right)Z^{\LP},
\]
Since $Z^{\LP}\ge\OPT$ by Proposition~\ref{prop:LP_upperbound}, we thus complete the proof of Theorem~\ref{thm:LPGA_approx_ratio}.

\subsection{Omitted Proofs from Appendix~\ref{subsec:proof_of_LP_performance}}

We abuse notation to recycle $X$, $Y$, $X_j$, and $Y_j$ from Appendix~\ref{subsec:proof_of_LP_performance}. However, these variables should follow their definitions stated in this section.

\subsubsection{Proof of Lemma~\ref{lem:maximum-comparison}}
\label{subsubsec:proof-maximum-comparison}

We first establish the following elementary algebraic lemma.

\begin{lemma}
	\label{lem:one-coordinate-spread}
	Let \(X\) and \(Y\) be nonnegative independent random variables such that
	\[
	X=
	\begin{cases}
		v, & \text{with probability }p,\\
		0, & \text{with probability }1-p,
	\end{cases}
	\qquad
	Y=
	\begin{cases}
		\widehat v, & \text{with probability }\widehat p,\\
		0, & \text{with probability }1-\widehat p,
	\end{cases}
	\]
	where $v\ge\widehat v\ge0$ and $pv\ge\widehat p\,\widehat v$. For every deterministic \(m\ge0\),  $\E[\max\{m,X\}]
		\ge
		\E[\max\{m,Y\}]$.
\end{lemma}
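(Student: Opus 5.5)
The plan is to compute both sides explicitly, using the two-point structure of $X$ and $Y$, and then reduce the comparison to the two hypotheses $v\ge\widehat v$ and $pv\ge\widehat p\,\widehat v$. (Independence of $X$ and $Y$ plays no role here, since each side involves only one of the variables.) For any two-point variable $Z$ taking value $u$ with probability $\rho$ and $0$ otherwise, and any deterministic $m\ge 0$, we have
\[
\E[\max\{m,Z\}] = m + \rho\,(u-m)^+ ,
\]
because $\max\{m,0\}=m$ and $\max\{m,u\}=m+(u-m)^+$. It therefore suffices to prove that $p\,(v-m)^+\ge \widehat p\,(\widehat v-m)^+$ for every $m\ge0$.

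We split into cases according to the position of $m$.

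\emph{Case $m\ge \widehat v$.} The right-hand side is zero, while the left-hand side is nonnegative, so the inequality holds.

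\emph{Case $m<\widehat v\le v$.} Both positive parts are active, and we must show $p(v-m)\ge \widehat p(\widehat v-m)$. We rewrite both sides using the fractions $1-m/v$ and $1-m/\widehat v$:
\[
p(v-m)=pv\Bigl(1-\tfrac{m}{v}\Bigr)\ge pv\Bigl(1-\tfrac{m}{\widehat v}\Bigr)\ge \widehat p\,\widehat v\Bigl(1-\tfrac{m}{\widehat v}\Bigr)=\widehat p(\widehat v-m).
\]
The first inequality uses $v\ge\widehat v$, which gives $1-m/v\ge 1-m/\widehat v$. The second inequality uses $pv\ge\widehat p\,\widehat v$ together with the fact that $1-m/\widehat v\ge0$ in this case. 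The degenerate situation $\widehat v=0$ falls entirely under the first case, so no division by zero occurs.

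The only step that is not routine is the rescaling in the second case: the mean-domination hypothesis $pv\ge\widehat p\,\widehat v$ alone does not control the truncated means $p(v-m)^+$ and $\widehat p(\widehat v-m)^+$, and the value-domination $v\ge\widehat v$ is exactly what allows the threshold $m$ to be absorbed through the factor $1-m/v\ge 1-m/\widehat v$. Once this is written out, the lemma follows.
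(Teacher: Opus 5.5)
Your proof is correct and follows the paper's argument essentially verbatim: the same reduction via $\max\{m,z\}=m+(z-m)^+$ to $p(v-m)^+\ge\widehat p\,(\widehat v-m)^+$, and the same rescaling chain through the factors $1-m/v$ and $1-m/\widehat v$. The only difference is that you apply $v\ge\widehat v$ before $pv\ge\widehat p\,\widehat v$ in the chain, whereas the paper does the reverse; both orders are valid.
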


\begin{proof}
	Using \(\max\{m,z\}=m+(z-m)^+\), it suffices to prove $p \cdot (v-m)^+
	\ge
	\widehat p \cdot (\widehat v-m)^+$.
	If \(\widehat p(\widehat v-m)^+=0\), the claim is immediate.  Otherwise
	\(\widehat p>0\) and \(\widehat v>m\).  Since \(v\ge\widehat v\), we
	also have \(v>m\).  Therefore $p(v-m)
		=pv\left(1-\frac{m}{v}\right) \ge
		\widehat p\,\widehat v
		\left(1-\frac{m}{v}\right) \ge
		\widehat p\,\widehat v
		\left(1-\frac{m}{\widehat v}\right) =
		\widehat p(\widehat v-m)$.
\end{proof}

Now, we prove Lemma~\ref{lem:maximum-comparison} by iteratively applying Lemma~\ref{lem:one-coordinate-spread}.

\begin{proof}
	Fix a realization of \(\mathcal F_t\) for which the stated conditional
	independence and distributional assumptions of $(X_j)_{j \in [n]}$ and $(Y_j)_{j \in [n]}$ hold.  For
	\(k=0,1,\ldots,n\), define
	\[
	Z^{(k)}
	:=
	\max\{Y_1,\ldots,Y_k,X_{k+1},\ldots,X_n\},
	\]
	with \(Z^{(0)}=\max_jX_j\) and \(Z^{(n)}=\max_jY_j\). Fix \(k\) and let $M_k
	:=
	\max\{Y_1,\ldots,Y_{k-1},X_{k+1},\ldots,X_n\}$.  By construction,
	\(X_k\) and \(Y_k\) are each independent of \(M_k\).  Moreover, conditional on $\Fcal_t$, variables $X_k$ and $Y_k$ satisfy the condition in Lemma~\ref{lem:one-coordinate-spread} since $v_k \geq \widehat{v}_k$ and $w_k v_k \geq \theta_k \hat{v}_k$ by Inequality~\eqref{eq:mean-dom-new}. Thus, conditional on
	\(M_k=m\), Lemma~\ref{lem:one-coordinate-spread} applies and gives
	\[
	\E[\max\{m,X_k\}]
	\ge
	\E[\max\{m,Y_k\}].
	\]
	Averaging over \(M_k\) yields
	\(
	\E[Z^{(k-1)}]\ge\E[Z^{(k)}]
	\).
	Iterating over \(k=1,\ldots,n\) proves Lemma~\ref{lem:maximum-comparison}.
\end{proof}

\subsubsection{Proof of Lemma~\ref{lem:corr-gap-Y}}
\label{subsubsec:proof-of-corr-gap}

To prove Lemma~\ref{lem:corr-gap-Y}, we will use the rank-one correlation-gap bound, closely related to the weighted matroid-rank correlation-gap result of
\citet[Lemma~4.1]{yan2011mechanism}. We provide a short proof for the
random-variable formulation needed here.

\begin{lemma}
	\label{lem:appendix-corr-gap}
	Let \(Y_1,\ldots,Y_n\) be independent nonnegative random variables satisfying $\sum_{j=1}^n\Pp(Y_j>0)\le1$. Then, $
		\E\left[\max_{ j \in [n]} Y_j \right]
		\ge
		\left(1-\frac1e\right)\sum_{j=1}^n\E[Y_j]$.
\end{lemma}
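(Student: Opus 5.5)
Write $\pi_j:=\Pp(Y_j>0)$, so $\sum_j\pi_j\le 1$. I will prove the bound by ordering the variables by value and reducing to a threshold computation.

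First I reduce to two-point variables. Condition on the nonzero values: let $Y_j=\tilde Y_j\,\mathbb I\{Y_j>0\}$, where $\tilde Y_j$ is drawn from the law of $Y_j$ given $Y_j>0$. The pairs $(\mathbb I\{Y_j>0\},\tilde Y_j)$ are independent across $j$. Then, layer by layer, I use
\[
\E\Big[\max_j Y_j\Big]=\int_0^\infty \Pp\Big(\max_j Y_j>s\Big)\,ds=\int_0^\infty\Big(1-\prod_j\bigl(1-\Pp(Y_j>s)\bigr)\Big)ds,
\]
together with $\sum_j\E[Y_j]=\int_0^\infty\sum_j\Pp(Y_j>s)\,ds$. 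It then suffices to show, for each fixed $s$,
\[
1-\prod_j(1-u_j)\ \ge\ \Bigl(1-\tfrac1e\Bigr)\sum_j u_j,\qquad u_j:=\Pp(Y_j>s)\le\pi_j .
\]
Here I use that $\sum_j u_j\le\sum_j\pi_j\le 1$.

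The pointwise inequality is standard. Let $S=\sum_j u_j\in[0,1]$. By $1-u\le e^{-u}$,
\[
1-\prod_j(1-u_j)\ge 1-e^{-S}.
\]
The function $S\mapsto 1-e^{-S}$ is concave and vanishes at $0$, so on $[0,1]$ it lies above its chord: $1-e^{-S}\ge (1-e^{-1})S$. Integrating over $s\ge0$ gives the lemma.

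This layer-cake argument avoids any need for independence beyond the product formula. The only point needing care is the observation that $\Pp(Y_j>s)\le \Pp(Y_j>0)$, so the constraint $\sum_j u_j\le 1$ holds at every threshold; this is where the hypothesis enters.

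For the application to Lemma~\ref{lem:corr-gap-Y}, I would check that $\sum_j\Pp(Y_j>0)\le\sum_j\E[\theta_{j,\widehat Q_j(t),t}]=\sum_j\sum_q x^*_{j,q,t}\le1$ by Lemma~\ref{lem:virtual-marginals} and Constraint~(2). I would also confirm the unconditional independence of the $Y_j$, which was argued earlier.
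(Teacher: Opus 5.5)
Your proof is correct and follows the same route as the paper: the tail-integral representation, independence to write $\Pp(\max_j Y_j>s)$ as a product, the bound $1-u\le e^{-u}$, and the chord bound $1-e^{-S}\ge(1-1/e)S$ on $[0,1]$, with $\Pp(Y_j>s)\le\Pp(Y_j>0)$ giving $\sum_j u_j\le1$ at every threshold. The opening ``reduction to two-point variables'' is never used later, so you can drop it.
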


\begin{proof}
	For \(z>0\), define $p_j(z):=\Pp(Y_j\ge z)$ and $s(z):=\sum_{j \in [n]} p_j(z)$. Since \(\{Y_j\ge z\}\subseteq\{Y_j>0\}\), we know
	\(
	0\le s(z) = \sum_{j \in [n]} \Pp(Y_j > z) \le  \sum_{j \in [n]} \Pp(Y_j > 0) \le1
	\).
	Independence implies
	\[
	\Pp\left( \max_{ j \in [n]} Y_j\ge z \right)
	=
	1-\prod_j(1-p_j(z)) \geq 1-e^{-s(z)},
	\]
	where the inequality uses \(1-x\le e^{-x}\). Note that since $1-e^{-s}$ is concave in $s$, for every $s\in[0,1]$, we have $1-e^{-s}
	\ge
	(1-s)(1-e^0)+s(1-e^{-1})
	= s (1 - e^{-1})$. Therefore
	\[
	\Pp\left( \max_{ j \in [n]} Y_j\ge z \right)
	\ge
	\left(1-\frac1e\right) \cdot s(z) = 
	\left(1-\frac1e\right)
	\sum_{j \in [n]}\Pp(Y_j\ge z).
	\]
	Integrating over \(z\ge0\) and and applying the tail-integral identity yields Lemma~\ref{lem:appendix-corr-gap}.
\end{proof}

To prove Lemma~\ref{lem:corr-gap-Y}, it suffices to show that $\sum_{j\in[n]}\Pbb(Y_j>0)\leq 1$. Once this condition is established, Lemma~\ref{lem:appendix-corr-gap}
immediately yields the result. Recall also that $Y_1,\ldots,Y_n$ are
mutually independent, as established in the discussion following their
definition in Section~\ref{subsubsec:LP-proof-step3-revenue}. To this end, observe that
\begin{align*}
	\Pp(Y_j>0) \leq
	\Ebb\!\left[
	\theta_{j,\widehat Q_j(t),t}
	\right] =
	\sum_{q=0}^{t-1}
	\Pbb\!\left(\widehat Q_j(t)=q\right)
	\theta_{j,q,t} =
	\sum_{q=0}^{t-1}
	y^*_{j,q,t}\theta_{j,q,t} = 
	\sum_{q=0}^{t-1}x^*_{j,q,t},
	\label{eq:Ypositive}
\end{align*}
where the first inequality follows the definition of $Y_j$ and the
third equality follows from Lemma~\ref{lem:virtual-marginals}. Hence, by
Constraint~(2) of \ref{eq:MOLP}, we obtain $\sum_{j=1}^n\Pp(Y_j>0)
\le
\sum_{j=1}^n\sum_{q=0}^{t-1}x^*_{j,q,t}
\le 1$.

\section{Omitted Proof from Section~\ref{sec:impatience}}

\subsection{Proof of Theorem~\ref{thm:nonmonotone-hardness}}
\label{subsec:proof-MIS-reduction}

We first construct a reduction $\Phi$ that maps any instance $\mathcal{I}$ of the Maximum Independent Set (Max-IS) problem to an instance $\Phi(\mathcal{I})$ of the general \ref{problem:DSR-abstract} problem. We then combine this reduction with the inapproximability of Max-IS \citep{haastad1999clique} to establish the inapproximability of the general \ref{problem:DSR-abstract} problem. In particular, Max-IS is NP-hard to approximate within a factor of $O(n^{1-\epsilon})$ for any constant $\epsilon>0$.

\paragraph{Maximum independent set problem.}
Let $G=(V,E)$ be an instance of Max-IS, where $|V|=n$ and $|E|=m$. A subset $S\subseteq V$ is an \emph{independent set} if no two distinct vertices in $S$ are adjacent; that is, $(u,v)\notin E$ for all distinct $u,v\in S$. The objective of Max-IS is to find an independent set of maximum cardinality, whose size we denote by $\alpha(G)$.

We fix an arbitrary ordering of the edges, denoted by $e_1,e_2,\ldots,e_m$. For each vertex $v\in V$, let $d_v=\deg(v)$ and enumerate the edges incident to $v$ as $e_{v,1},e_{v,2},\ldots,e_{v,d_v}$. For each $k\in[d_v]$, let $\tau_{v,k}\in[m]$ denote the position of $e_{v,k}$ in the global edge ordering; equivalently, $e_{v,k}=e_{\tau_{v,k}}$.

\paragraph{Constructed instance of \ref{problem:DSR-abstract}.}
We construct an instance of \ref{problem:DSR-abstract} with $n$ services and horizon $T=m+nL$, where $ L:=4mn$. Each service corresponds to a vertex in $V$. We set $\lambda_t=1$ for all $t$, so that exactly one customer arrives in every period, and set $r_j=1$ for every $j\in[n]$. All services have the same deterministic service capacity, $B_{j,t} = C$ for all $j \in [n]$ and $t \in [T]$, where $ C=T+2$. Finally, each service is initialized with $Q_j(1)=C(T-1)+1$ customers. We refer to these customers as the \emph{initial customers}.

To describe the evolution of these initial customers, define $\rho_t:=C(T-t)+1$ for $t \in [T]$. Let $\bar Q_j(t)$ denote the number of initial customers remaining in service $j$ at the beginning of period $t$. Since exactly $C$ initial customers are served in each periods, we have $\bar Q_j(t)
=
Q_j(1)-C(t-1)
=
C(T-t)+1
=
\rho_t$ for $t\in[T]$. Notice that $\bar Q_j(T)=\rho_T=1$, so an initial customer remains ahead of all newly admitted customers even at the beginning of time period $T$.

For each service $j$ and period $t$, we let \(A_{j,t}\) be the indicator that a customer arrives in period \(t\) and joins service \(j\), and define $
\Acal_j(t):=\sum_{t'=1}^t A_{j,t'}$ as the cumulative number of customers who have joined service $j$ through period $t$. Because at least one initial customer remains at the beginning of every period $t\in[T]$, none of the newly arriving customers is served before service in period \(T\). Consequently, $Q_j(t)
=
\bar Q_j(t)+\Acal_j(t-1)
=
\rho_t+\Acal_j(t-1)$ for $t \in [T]$. 

We remark that the choice $C=T+2$ ensures that all customers are served after $T$ periods, i.e., $Q_j(T+1)=0$ for every $j \in [n]$. Indeed, $Q_j(T+1)
=
\left(\bar Q_j(T)+\Acal_j(T)-C\right)^+
=
\left(1+\Acal_j(T)-C\right)^+
=0,
$
where the last equality follows from $\Acal_j(T)\le T<C-1$. Thus, by the end of period $T$, every customer has been served, regardless of whether the customer was present initially or joined during the horizon. Therefore, under any policy $\pi$, no refunds are incurred at the end of the horizon, and its net revenue is simply $\Rcal(\pi)
=
\sum_{j\in[n]}r_j\Acal_j(T)
=
\sum_{j\in[n]}\Acal_j(T)$.

\paragraph{Joining probability function.}
We now define the joining probability function $a_j(\cdot)$ for each service $j\in[n]$. First, for each edge incident to vertex $j$, we set
\[
a_j\bigl(\rho_{\tau_{j,k}}+k-1\bigr)=1,
\qquad \forall k\in[d_j].
\]
Next, define $h_j:=m+L(j-1)+1$ and set
\[
a_j\bigl(\rho_{h_j+\ell}+d_j+\ell\bigr)=1,
\qquad \forall \ell\in[L-1]_0.
\]
For all queue lengths not specified above, we set $a_j(q)=0$. We note that the queue lengths specified in the two constructions above do not overlap: the first is bounded below by $\rho_m$, whereas the second is bounded above by $\rho_{m+1}+m<\rho_m$.

\paragraph{Approximation-preserving mapping.}
We establish the following two claims, which together provide an approximation-preserving mapping between Max-IS and the general \ref{problem:DSR-abstract}.

\begin{claim}
	\label{claim:MIS-DSR-map-1}
	For any independent set $S\subseteq V$, there exists a policy $\pi_S$ for the constructed instance of \ref{problem:DSR-abstract} such that $\Rcal(\pi_S)\ge L|S|$.
\end{claim}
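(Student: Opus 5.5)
The plan is to build a deterministic, open-loop policy $\pi_S$ and check by induction that every customer it recommends joins with probability one. Everything in the constructed instance is deterministic: $\lambda_t=1$, $B_{j,t}=C$, and joining probabilities take values in $\{0,1\}$. Also, before period $T$'s service, service $j$ has the deterministic length $Q_j(t)=\rho_t+\Acal_j(t-1)$. So joining at a recommended service $j$ in period $t$ happens exactly when $\rho_t+\Acal_j(t-1)$ lies in the support where $a_j=1$. The policy $\pi_S$ has two phases.
\begin{itemize}
\item \emph{Edge phase.} In each period $t\in[m]$, if the edge $e_t$ has an endpoint in $S$, recommend that endpoint; otherwise recommend nothing. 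Since $S$ is independent, $e_t$ has at most one endpoint in $S$, so this is well defined.
\item \emph{Block phase.} In each period $t=h_j+\ell$ with $j\in S$ and $\ell\in[L-1]_0$, recommend $j$; in the other block periods recommend nothing.
\end{itemize}
The block periods $h_j,\dots,h_j+L-1$ partition $\{m+1,\dots,m+nL\}=\{m+1,\dots,T\}$, so the policy is defined in every period.

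\emph{Edge-phase claim.} For every $j\in S$ and $k\in[d_j]$, I will show that at period $\tau_{j,k}$ we have $\Acal_j(\tau_{j,k}-1)=k-1$ and that the customer joins. The induction is on $k$. Before $\tau_{j,1}$, service $j$ is never recommended, so $\Acal_j=0$. If the claim holds up to $k-1$, then between $\tau_{j,k-1}$ and $\tau_{j,k}$ service $j$ is recommended only in its own edge periods. Hence exactly $k-1$ customers have joined, the queue length is $\rho_{\tau_{j,k}}+k-1$, and $a_j$ equals $1$ there, so the customer joins. For $j\in S$ this gives $\Acal_j(m)=d_j$.

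\emph{Block-phase claim.} For $j\in S$, service $j$ is never recommended between period $m+1$ and $h_j-1$, since those periods belong to other vertices' blocks. So $\Acal_j(h_j-1)=d_j$. A second induction on $\ell$ then shows that at period $h_j+\ell$ the queue length is $\rho_{h_j+\ell}+d_j+\ell$, a point where $a_j=1$. Thus the customer joins and $\Acal_j$ increases by one. This gives $L$ joins for each $j\in S$ during its block.

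Finally, the construction guarantees $Q_j(T+1)=0$, so no refunds are incurred and $\Rcal(\pi_S)=\sum_j\Acal_j(T)\ge\sum_{j\in S}(d_j+L)\ge L|S|$.

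The only delicate point is the exact count $\Acal_j(\tau_{j,k}-1)=k-1$. It relies on independence of $S$: service $j$ is never recommended in a period belonging to an edge it shares with another chosen vertex, and it never misses one of its own edge periods. Both hold because $\pi_S$ recommends $j$ at every edge incident to $j$ and at no other edge period.
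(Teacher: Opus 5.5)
Your proposal is correct and follows the paper's proof essentially step for step. It uses the same two-phase policy, the same induction on $k$ giving $Q_j(\tau_{j,k})=\rho_{\tau_{j,k}}+k-1$, and the same block-phase argument with queue length $\rho_{h_j+\ell}+d_j+\ell$. Like the paper, you implicitly assume the incident edges are enumerated so that $\tau_{j,1}<\cdots<\tau_{j,d_j}$; the definition of $a_j$ requires this ordering anyway.
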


\begin{claim}
	\label{claim:MIS-DSR-map-2}
	For any policy $\pi$ for the constructed instance of \ref{problem:DSR-abstract}, we can construct an independent set $S_\pi\subseteq V$ such that $
	|S_\pi|\ge \frac{\Rcal(\pi)-m}{L}$.
\end{claim}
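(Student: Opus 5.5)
The plan is to read off the structure of the constructed instance: joining can only happen at a very rigid, essentially deterministic set of (service, period, cumulative-count) triples. The revenue of any sample path is then at most $m$ plus $L$ times the size of a set of ``fully served'' vertices, and that set is forced to be independent. Throughout I use that $\lambda_t=1$, $B_{j,t}=C$ deterministically, and that every $a_j(\cdot)\in\{0,1\}$. Hence, given the policy's own randomization, the whole trajectory is deterministic.

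\textbf{Step 1 (matching lemma).} I first show that $\rho_t+x=\rho_{t'}+y$ with $x,y\in\{0,\ldots,T\}$ forces $t=t'$ and $x=y$. This holds because $\rho_t-\rho_{t'}=C(t'-t)$ and $C=T+2>T\ge|x-y|$. Since $Q_j(t)=\rho_t+\Acal_j(t-1)$ and $\Acal_j(t-1)\le T$, a recommended customer joins service $j$ in period $t$ (probability one) exactly in the following cases, and otherwise joins with probability zero:
\begin{itemize}
\item $t=\tau_{j,k}$ for some $k$, and $\Acal_j(t-1)=k-1$;
\item $t=h_j+\ell$ for some $\ell\in[L-1]_0$, and $\Acal_j(t-1)=d_j+\ell$.
\end{itemize}
In particular, service $j$ can only gain customers in its own edge periods and in its own block $\{h_j,\ldots,h_j+L-1\}$. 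Moreover, the block periods lie after period $m$.

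\textbf{Step 2 (pathwise accounting).} Fix a realization $\omega$ of the policy's randomization. Call vertex $j$ \emph{full} if $\Acal_j(m)=d_j$. Because $\Acal_j$ can only increase at the $d_j$ periods $\tau_{j,1}<\cdots<\tau_{j,d_j}$, this means $j$ joined in every one of its incident edge periods. (Isolated vertices are trivially full.) If $j$ is not full, then at $h_j$ we have $\Acal_j(h_j-1)=\Acal_j(m)<d_j$, so no joining occurs at $h_j$. Inductively, the count stays below $d_j+\ell$ throughout the block, so the block contributes nothing. If $j$ is full, the block contributes at most $L$. Since at most one customer arrives per period, the first $m$ periods contribute at most $m$ in total. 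Hence, pathwise,
\[
\sum_{j}\Acal_j(T)\le m+L\,|S(\omega)|,
\]
where $S(\omega)$ is the set of full vertices.

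\textbf{Step 3 (independence and derandomization).} Let $e=(u,v)$ be an edge. In period $\tau_e$ only one service is recommended, so at most one of $u,v$ joins then. Consequently $u$ and $v$ cannot both be full, and $S(\omega)$ is an independent set. Taking expectations in Step 2 gives $\Rcal(\pi)\le m+L\,\Ebb|S(\omega)|$. Hence some realization satisfies $|S(\omega)|\ge(\Rcal(\pi)-m)/L$. I set $S_\pi$ to be that set. Constructively, one can take the realization maximizing $|S(\omega)|$ by simulating the deterministic dynamics along the policy's decisions; for a deterministic policy this is a single simulation.

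The only delicate point is Step 1, namely ensuring that no accidental match between the two families of specified queue lengths, or across periods, lets a service gain customers elsewhere. This is exactly where $C>T$ and the non-overlap remark in the construction are used, so I expect it to be routine once written carefully.
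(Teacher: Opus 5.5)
Your proof is correct and follows the same route as the paper. It uses the same set $S_\pi$ of vertices that receive a join in every incident edge period, the same independence argument via one recommendation per period, and the same bound $\Rcal(\pi)\le m+L|S_\pi|$. Your matching lemma ($C>T$ forces equal periods and equal offsets) and your pathwise-then-average step for randomized policies make explicit two points the paper leaves implicit. The first is that joins can occur only at the designated period--count pairs. The second is that, for a randomized policy, $S_\pi$ must be taken from a sufficiently good realization.
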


We now show that these two claims imply the desired inapproximability result. Suppose, toward a contradiction, that there exists a polynomial-time $(n^{1-\epsilon})$-approximation algorithm for \ref{problem:DSR-abstract}, for some constant $\epsilon>0$. Let $S^*$ be a maximum independent set of $G$, so that $|S^*|=\alpha(G)$, and let $\pi^*$ be an optimal policy for the constructed \ref{problem:DSR-abstract} instance. Applying the assumed approximation algorithm yields a policy $\tilde\pi$ satisfying 
$\Rcal(\tilde\pi)
\ge
{\Rcal(\pi^*)}/{n^{1-\epsilon}}$. By Claim~\ref{claim:MIS-DSR-map-2}, we can construct from $\tilde\pi$ an independent set $S_{\tilde\pi}$ satisfying
\[|S_{\tilde\pi}| \ \ge \ \frac{\Rcal(\tilde\pi)-m}{L} \ \ge \ \frac{1}{n^{1-\epsilon}} \cdot \frac{\Rcal(\pi^*)}{L}-\frac{m}{L} \ \ge \ \frac{1}{n^{1-\epsilon}} \cdot \frac{\Rcal(\pi_{S^*})}{L}-\frac{m}{L} \ \ge \ \frac{|S^*|}{n^{1-\epsilon}}-\frac{m}{L} 
\ge \frac{3}{4} \cdot \frac{|S^*|}{n^{1-\epsilon}}
\]
where the third inequality follows from the optimality of $\pi^*$ and the fourth follows from Claim~\ref{claim:MIS-DSR-map-1}. Particularly, $\pi_{S^*}$ is the corresponding policy for the independent set $S^*$. For the last inequality, recall that $L=4mn$, so that
\[
\frac{m}{L}  =\frac{1}{4n}
\le
\frac{1}{4} \cdot \frac{|S^*|}{n^{1-\epsilon}}\]
where we use $|S^*|\ge1$ and $n^{1-\epsilon}\le n$. Finally, for all sufficiently large $n$, we have $\frac{3}{4n^{1-\epsilon}}
\ge
\frac{1}{n^{1-\epsilon/2}}$. Consequently, $|S_{\tilde\pi}| / |S^*|
\ge {n^{1-\epsilon/2}}$. Thus, the assumed approximation algorithm for \ref{problem:DSR-abstract} would yield a polynomial-time $n^{1-\epsilon/2}$-approximation algorithm for MaxIS, contradicting its known inapproximability \citep{haastad1999clique}.

\paragraph{Proof of Claim~\ref{claim:MIS-DSR-map-1}} Fix an independent set $S\subseteq V$. We construct a policy $\pi_S$ in two phases.

During the first $m$ periods, corresponding to the edges $e_1,\ldots,e_m$, policy $\pi_S$ operates as follows. In period $t \in [m]$, if edge $e_t$ is incident to some vertex $j\in S$, then $\pi_S$ recommends service $j$; otherwise, it makes no recommendation. This policy is well defined because $S$ is an independent set, so no edge can have both endpoints in $S$.

Fix $j \in S$. We show by induction on $k=1,\ldots,d_j$ that every recommendation of service $j$ during the first $m$ periods results in a join. For the base case $k=1$, no customer has previously joined service $j$, so $Q_j(\tau_{j,1})=\rho_{\tau_{j,1}}$. By construction, $a_j(\rho_{\tau_{j,1}})=1$, and hence the customer joins service $j$. Now suppose the result holds for the first $k-1$ incident edges of $j$. By the time period $\tau_{j,k}$ begins, the policy has recommended service $j$ exactly $k-1$ times, and, by the induction hypothesis, all $k-1$ recommendations have resulted in joins. Therefore, $\Acal_j(\tau_{j,k}-1)=k-1$, and hence $
Q_j(\tau_{j,k})
=
\rho_{\tau_{j,k}}+k-1$. By construction, we know that $a_j\bigl(\rho_{\tau_{j,k}}+k-1\bigr)=1$, so the recommendation in period $\tau_{j,k}$ also results in a join. Thus, by induction, all $d_j$ recommendations of service $j$ during the first $m$ periods result in joins. Consequently, by the end of the first $m$ periods, every service $j\in S$ has received exactly $d_j$ newly arriving customers.

We next consider the $L$-period block associated with each vertex $j$, consisting of periods $h_j,\ldots,h_j+L-1$. For each $j\in S$, policy $\pi_S$ recommends service $j$ throughout its entire block. Since service $j$ has accumulated exactly $d_j$ joins during the first $m$ periods, in period $h_j+\ell$ its queue length is $Q_j(h_j+\ell)=\rho_{h_j+\ell}+d_j+\ell$, for each $\ell\in[L-1]_0$. By the definition of $a_j(\cdot)$, the joining probability at each of these queue lengths is one. Hence, all $L$ recommendations result in joins, yielding $L$ additional joins for every $j\in S$.

Since this holds for every $j\in S$, the policy generates at least $L \cdot |S|$ joins in the second phase alone. As each join contributes one unit of net revenue in the constructed instance, $\Rcal(\pi_S)\ge L|S|$.

\paragraph{Proof of Claim~\ref{claim:MIS-DSR-map-2}}

Fix any policy $\pi$. For each service $j\in[n]$, let $
\Acal^m_j:=\Acal_j(m)$ denote the number of customers who join service $j$ during the first $m$ periods. Define $
S_\pi:=\{j\in[n]:  \Acal^m_j = d_j\}$.

We first show that $S_\pi$ is an independent set. By construction of $a_j(\cdot)$, during the first $m$ periods, a customer can join service $j$ only in one of the periods $\tau_{j,1},\ldots,\tau_{j,d_j}$ corresponding to an edge incident to $j$. Hence, $ \Acal^m_j \le d_j$, and $\Acal^m_j = d_j$ only if a customer joins service $j$ in every one of these $d_j$ periods. Now suppose that two adjacent vertices $i,j$ both belong to $S_\pi$, and let $e_t=(i,j)$ be their common edge. Then both services $i$ and $j$ must receive a join in period $t$. This is impossible because the platform can recommend at most one service in each period. Therefore, no two vertices in $S_\pi$ are adjacent, and $S_\pi$ is an independent set.

We next bound the revenue generated in the second phase. Consider the $L$-period block associated with service $j$. If $j\notin S_\pi$, then $D_j<d_j$. In this case, service $j$ cannot receive any join during its block. Indeed, at period $h_j+\ell$, its queue length cannot equal $
\rho_{h_j+\ell}+d_j+\ell$, which is the only queue length associated with an opportunity to join service $j$ in that period. Thus, only services in $S_\pi$ can generate joins during the second phase. Since each such service has a block of $L$ periods, the total number of joins in the second phase is at most $L|S_\pi|$.

Finally, at most one customer can join in each of the first $m$ periods, so the first phase generates at most $m$ joins. Therefore, $\Rcal(\pi) \le
m+L|S_\pi|.$ Rearranging gives $
|S_\pi|
\ge
\frac{\Rcal(\pi)-m}{L}$.

\end{appendices}

\end{document}